\documentclass[11pt]{amsart}
\usepackage{amsmath,amsfonts,amsthm,bbm, amssymb}
\usepackage[cmtip, all]{xy}
\usepackage[left=3cm, right=3cm, top = 2.5cm, bottom = 2.5cm ]{geometry}
\usepackage{comment}

\usepackage[color, notref, notcite, final]{showkeys}     
\definecolor{refkey}{gray}{.5}   
\definecolor{labelkey}{gray}{.5} 
\definecolor{Red}{rgb}{1,0,0}

\usepackage{soul}

\newtheorem{thm}{Theorem}[section]
\newtheorem{prop}[thm]{Proposition}
\newtheorem{lem}[thm]{Lemma} 
\newtheorem{cor}[thm]{Corollary}
\newtheorem{conj}[thm]{Conjecture}

\newtheorem{ques}[thm]{Question}

\theoremstyle{definition}

\newtheorem{defn}[thm]{Definition}
\theoremstyle{remark}
\newtheorem{remk}[thm]{Remark}
\newtheorem{remks}[thm]{Remarks}

\newtheorem{exm}[thm]{Example}
\newtheorem{exms}[thm]{Examples}
\newtheorem{notat}[thm]{Notation}
\numberwithin{equation}{section}

{\hfill$\square$\end{defn}}
{\hfill$\square$\end{remk}}
{\hfill$\square$\end{remks}}
{\hfill$\square$\end{exm}}
{\hfill$\square$\end{exms}}
{\hfill$\square$\end{notat}}

\newcommand{\thmref}{Theorem~\ref}
\newcommand{\propref}{Proposition~\ref}
\newcommand{\corref}{Corollary~\ref}

\newcommand{\lemref}{Lemma~\ref}

\newcommand{\sD}{{\mathcal D}}

\newcommand{\sF}{{\mathcal F}}
\newcommand{\sG}{{\mathcal G}}

\newcommand{\sI}{{\mathcal I}}

\newcommand{\sK}{{\mathcal K}}
\newcommand{\sL}{{\mathcal L}}
\newcommand{\sM}{{\mathcal M}}

\newcommand{\sO}{{\mathcal O}}

\newcommand{\sR}{{\mathcal R}}

\newcommand{\sZ}{{\mathcal Z}}

\newcommand{\cD}{{\mathcal D}}

\newcommand{\cF}{{\mathcal F}}
\newcommand{\cG}{{\mathcal G}}

\newcommand{\cK}{{\mathcal K}}
\newcommand{\cL}{{\mathcal L}}
\newcommand{\cM}{{\mathcal M}}

\newcommand{\cO}{{\mathcal O}}

\newcommand{\cR}{{\mathcal R}}

\newcommand{\cZ}{{\mathcal Z}}

\newcommand{\C}{{\mathbb C}}

\newcommand{\F}{{\mathbb F}}
\newcommand{\G}{{\mathbb G}}
\renewcommand{\H}{{\mathbb H}}

\newcommand{\N}{{\mathbb N}}
\renewcommand{\P}{{\mathbb P}}
\newcommand{\Q}{{\mathbb Q}}
\newcommand{\R}{{\mathbb R}}

\newcommand{\Z}{{\mathbb Z}}

\newcommand{\fm}{{\mathfrak m}}
\newcommand{\fn}{{\mathfrak n}}

\newcommand{\CH}{{\rm CH}}

\newcommand{\surj}{\twoheadrightarrow}
\newcommand{\inj}{\hookrightarrow}

\newcommand{\codim}{{\rm codim}}

\newcommand{\Pic}{{\rm Pic}}

\newcommand{\divf}{{\rm div}}

\newcommand{\Ker}{{\rm Ker}}

\newcommand{\Spec}{{\rm Spec \,}}
\newcommand{\sing}{{\rm sing}}

\newcommand{\Sch}{{\operatorname{\mathbf{Sch}}}}

\newcommand{\Sm}{{\mathbf{Sm}}}

\newcommand{\cyc}{{\operatorname{\rm cyc}}}

\newcommand{\ds}{{/\kern-3pt/}}

\newcommand{\ov}{\overline}

\renewcommand{\dim}{\text{\rm dim}}

\newcommand{\tuborg}{\left\{\begin{array}{ll}}
\newcommand{\sluttuborg}{\end{array}\right.}

\newcommand{\wt}{\widetilde}

\newcommand{\tensor}{\otimes}

\def\cO{\mathcal{O}}
\def\cF{\mathcal{F}}
\def\ZDeSXr{\Z(r)^{\mathcal{D}^*}_{S_X}}
\def\ZDeDr{\Z(r)^{\mathcal{D}^*}_{D}}
\def\ZDeXr{\Z(r)^{\mathcal{D}^*}_{X}}
\def\ZDeXrun{\Z(r)^{\mathcal{D}}_{X}}

\def\ZDe#1#2{\Z(#2)^{\mathcal{D}^*}_{#1}}  
\def\ZDeun#1#2{\Z(#2)^{\mathcal{D}}_{#1}}
\def\ol#1{\overline{#1}}
\def\dx{{\rm d}x}
\definecolor{winered}{rgb}{0.8,0,0}
\usepackage[pdfauthor={Federico Binda and Amalendu Krishna}, %
				pdftitle={Zero cycles with modulus and 
zero-cycles on singular varieties} ,colorlinks=true]{hyperref}
\hypersetup{
	bookmarksnumbered=true,
	linkcolor={winered},
	citecolor=blue,
	pagecolor=black, 
	urlcolor=black,	
}

\newcounter{elno}   
\newenvironment{romanlist}{
                         \begin{list}{\roman{elno})
                                     }{\usecounter{elno}}
                      }{
                         \end{list}}
                         
\newcounter{elno-abc}   
\newenvironment{listabc}{
                         \begin{list}{\alph{elno-abc})
                                     }{\usecounter{elno-abc}}
                      }{
                         \end{list}}
\newcounter{elno-abc-prime}   
\newenvironment{listabcprime}{
                         \begin{list}{\alph{elno-abc-prime}')
                                     }{\usecounter{elno-abc-prime}}
                      }{
                         \end{list}}
\usepackage{epigraph}

\begin{document}
    
\title{Zero cycles with modulus and zero cycles on singular varieties}
\author{Federico Binda and Amalendu Krishna}
\address{Fakult\"at f\"ur Mathematik, Universit\"at Regensburg, 
93040, Regensburg, Germany}
\email{federico.binda@mathematik.uni-regensburg.de}
\address{School of Mathematics, Tata Institute of Fundamental Research,  
1 Homi Bhabha Road, Colaba, Mumbai, India}
\email{amal@math.tifr.res.in}

\thanks{F.B.~is supported by the DFG SFB/CRC 1085 ``Higher Invariants'' and was partially supported by the DFG SPP 1786 ``Homotopy Theory and Algebraic Geometry'' during the preparation of this paper.}
\keywords{algebraic cycles, Chow groups, singular schemes, cycles with modulus}

\subjclass[2010]{Primary 14C25; Secondary 14F30, 13F35, 19E15}



\begin{abstract}
Given a smooth variety $X$ and an effective Cartier divisor $D \subset X$,
we show that the cohomological Chow group of 0-cycles on the double of $X$ 
along $D$ 
has a canonical decomposition in terms of the Chow group of 0-cycles 
$\CH_0(X)$ and the Chow group of 0-cycles with modulus $\CH_0(X|D)$ on $X$.
When $X$ is projective, we construct an Albanese variety with modulus and 
show that this is the universal regular quotient of $\CH_0(X|D)$.

As a consequence of the above decomposition, we prove the Roitman torsion
theorem for the 0-cycles with modulus. We show that $\CH_0(X|D)$ is
torsion-free and there is an injective cycle class map
$\CH_0(X|D) \inj K_0(X,D)$ if $X$ is affine. 
For a smooth affine surface $X$, this is strengthened to show that 
$K_0(X,D)$ is an extension of $\CH_1(X|D)$ by $\CH_0(X|D)$. 
\end{abstract} 
\maketitle

{
\tableofcontents}

\section{Introduction}\label{sec:Intro}

 When $X$ is a smooth quasi-projective scheme over a base field $k$, 
the motivic cohomology groups of $X$
admit an explicit description in terms of groups of 
algebraic cycles, called higher Chow groups, first defined by Bloch
\cite{Bloch-M}. These groups have all the properties that one expects, including 
Chern classes and a Chern character isomorphism from higher $K$-groups, as 
established in \cite{Levine-HigherChow} and \cite{Friedlander-Suslin}, 
generalizing the well-known relationship between the Chow ring of cycles modulo 
rational equivalence and the Grothendieck group of vector bundles. 

Leaving the safe harbor of smooth varieties leads to a different world, where the 
picture is substantially  less clear. One of the simplest examples of singular 
varieties is the nilpotent thickening $X_m = X\times_k k[t]/(t^m) $ of a smooth 
scheme $X$. For such a scheme, the beautiful correspondence between motivic 
cohomology, algebraic cycles and $K$-groups is destroyed, since one has
\[H_\mathcal{M}^*(X, \mathbb{Q}(*)) =H_\mathcal{M}^*(X_m, \mathbb{Q}(*)) \]  
according to the currently available definitions, preventing the existence of a 
Grothendieck-Riemann-Roch--type formula relating the motivic cohomology groups 
of $X_m$ with its higher $K$-groups

With the aim of understanding the algebraic $K$-theory of the ring $k[t]/(t^2)$ 
in terms of algebraic cycles, Bloch and Esnault first conceived the idea of 
algebraic cycles ``with modulus'' -- called additive Chow groups at the 
time -- defined by imposing suitable congruence condition at infinity on 
admissible cycles. This idea subsequently became the starting point of the 
discovery
of the theory of additive cycle complexes and additive higher Chow 
groups of schemes in the works of  R\"ulling \cite{Ruelling}, Park \cite{P1} 
and Krishna-Levine \cite{KLevine}.  \bigskip

The additive higher Chow groups are conjectured to give a 
cycle-theoretic
interpretation of the relative $K$-groups $K_*(X\times_k\mathbb{A}^1, X_m)$ for a 
smooth
scheme $X$. In recent works of Binda-Saito  \cite{BS} and Kerz-Saito \cite{KS},
the construction of the additive higher Chow groups was generalized to
 develop a theory of higher Chow groups with modulus.
These groups, denoted $\CH^*(X|D, *)$,  are designed to 
study the arithmetic and geometric properties of a smooth variety $X$ with fixed 
conditions along an effective  (possibly non-reduced) Cartier divisor $D$ on it, 
and are supposed to give a cycle-theoretic description of the mysterious relative 
$K$-groups $K_*(X,D)$, defined as the homotopy groups of the homotopy fibre of 
the restriction map $K(X)\to K(D)$. 
On the arithmetic side, when $X$ is a smooth variety  over  a finite field, Kerz 
and Saito studied the group $\CH_0(\overline{X}|D)$ for $\overline{X}$ an 
integral compactification of $X$ and $D$ a  non-reduced closed subscheme 
supported on $\overline{X} \setminus D$ (see \cite{KS} and 
\ref{def:DefChowMod-Definition} for the definition), and this has proven to be a 
fundamental ingredient in the study of wildly ramified class field theory.

Although recently established results by various authors (see \cite{Kai}, 
\cite{RS}) have indicated that the Chow groups with modulus 
(and, more generally, the relative motivic cohomology groups of \cite{BS})  have 
some of the above expected properties, many questions remain widely open.

In order to  provide new evidence that the Chow groups with modulus are the 
right motivic
cohomology groups to compute the relative $K$-theory of a smooth scheme
with respect to an effective divisor, one would like to know if these 
groups share enough of the known structural properties of the Chow groups without
modulus, and to relate them to some geometric or cohomological invariants of the 
pair $(X,D)$. This is the subject of this paper.
Our interest is to establish these properties and present (an almost complete) 
picture
for the Chow groups of 0-cycles with modulus.

We now state our main results. The precise statement
of each of these results and the underlying hypothesis and notations will be
explained at appropriate places in this text. 

\subsection{Albanese variety and Roitman torsion theorem 
with modulus}\label{sec:MR}
One of the most important things known about the ordinary Chow group of 
0-cycles of a smooth projective variety is that it admits a universal
abelian variety quotient (the Albanese variety) which is useful for studying the 
question of the
representability of the Chow group. The celebrated theorem of Roitman
\cite{Roitman} (see also \cite{Milne} for the case of positive characteristic) 
says that this quotient map
is isomorphism on torsion. 
This theorem has had profound consequences in the study of 
the Chow group of 0-cycles.  One of the main goals of this paper is to establish 
these results (under some restrictions in positive characteristic)
for the Chow group of 0-cycles with modulus.

\begin{thm}[see Theorems \ref{thm:univ-Alb-overC} and 
\ref{thm:univ-Alb-overk}]\label{thm:Intro-1}
Let $X$ be a smooth projective scheme over an algebraically closed field $k$
and let $D \subset X$ be an effective Cartier divisor.  
Then there is a
smooth connected algebraic group ${\rm Alb}(X|D)$ and a 
group homomorphism $\rho_{X|D}\colon \CH_0(X|D)_{{\rm deg} \ 0} \to {\rm Alb}(X|D)$
which is a universal regular quotient of $\CH_0(X|D)_{{\rm deg} \ 0}$.
\end{thm}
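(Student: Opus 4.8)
The plan is to deduce the theorem from the existence of a universal regular quotient for the cohomological (Levine--Weibel) Chow group of $0$-cycles on a projective, possibly singular, variety, applied to the double $S_X = X \amalg_D X$, and then to transport the resulting structure along the decomposition theorem established earlier in the paper. Recall that $S_X$ is a connected projective variety with two irreducible components, each a copy of $X$, glued along $D$; write $\nabla\colon S_X \to X$ for the fold map and $\iota_\pm\colon X \hookrightarrow S_X$ for the two closed immersions, so that $\nabla\circ\iota_\pm = \id_X$. The decomposition theorem yields a canonical isomorphism $\CH_0(S_X) \cong \CH_0(X) \oplus \CH_0(X|D)$, compatible with degrees, in which $\iota_{+\ast}$ realises $\CH_0(X)$ as a direct summand with retraction $\nabla_\ast$ and $\CH_0(X|D)$ is carried isomorphically onto $\ker(\nabla_\ast)$; in particular it restricts to $\CH_0(S_X)_{\deg 0} \cong \CH_0(X)_{\deg 0}\oplus\CH_0(X|D)_{\deg 0}$. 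Since $S_X$ is projective over the algebraically closed field $k$, the group $\CH_0(S_X)_{\deg 0}$ has a universal regular quotient $\rho_{S_X}\colon \CH_0(S_X)_{\deg 0} \to A(S_X)$ with $A(S_X)$ a smooth connected commutative algebraic group; over $\C$ this is the theorem of Esnault--Srinivas--Viehweg, while over an arbitrary algebraically closed field one invokes the analogous statement, available unconditionally in characteristic zero and under the hypotheses of the theorem in characteristic $p$.

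Next I would split $A(S_X)$ compatibly with this decomposition. By functoriality of the universal regular quotient with respect to proper morphisms, $\nabla$ and $\iota_+$ induce homomorphisms of algebraic groups $\nabla_\ast\colon A(S_X) \to \Alb(X)$ and $\iota_{+\ast}\colon \Alb(X) \to A(S_X)$ with $\nabla_\ast\circ\iota_{+\ast} = \id$, where $\Alb(X)$ denotes the classical Albanese, i.e. the universal regular quotient of $\CH_0(X)_{\deg 0}$ for the smooth projective variety $X$. Consequently $A(S_X) \cong \Alb(X)\times K$ with $K := \ker(\nabla_\ast)$, and $K$ is connected because $A(S_X)$ is. Setting $\Alb(X|D) := K$, one defines $\rho_{X|D}\colon \CH_0(X|D)_{\deg 0} \to \Alb(X|D)$ to be the composite of the identification $\CH_0(X|D)_{\deg 0}\cong \ker(\nabla_\ast)\subset \CH_0(S_X)_{\deg 0}$, the map $\rho_{S_X}$, and the projection $A(S_X)\to\Alb(X|D)$.

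It then remains to check that $\rho_{X|D}$ is itself a universal regular quotient. For regularity, a family of $0$-cycles with modulus on $(X,D)$ over a smooth parameter variety $T$ pushes forward to a family of $0$-cycles on $S_X$ over $T$, so regularity of $\rho_{S_X}$ produces an algebraic morphism $T \to A(S_X)$, and composing with the projection to $\Alb(X|D)$ shows $\rho_{X|D}$ is regular. For universality, given any regular homomorphism $\phi\colon \CH_0(X|D)_{\deg 0}\to G$, the pair $(\rho_X,\phi)$ gives a homomorphism $\CH_0(S_X)_{\deg 0}\cong\CH_0(X)_{\deg 0}\oplus\CH_0(X|D)_{\deg 0}\to\Alb(X)\times G$, which is regular because the decomposition is realised by the proper morphisms $\nabla$ and $\iota_\pm$; by universality of $\rho_{S_X}$ it factors through a homomorphism $\Alb(X)\times\Alb(X|D)\to\Alb(X)\times G$, and restriction to the second factors gives the required factorization of $\phi$ through $\rho_{X|D}$, which is unique since $\rho_{X|D}$ is surjective.

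I expect the main obstacle to be twofold. First, one must build a workable notion of family of $0$-cycles with modulus and prove that the algebraic decomposition $\CH_0(S_X)_{\deg 0}\cong\CH_0(X)_{\deg 0}\oplus\CH_0(X|D)_{\deg 0}$ is compatible with regular homomorphisms in both directions — this is where the modulus condition genuinely enters and where the bulk of the argument lies. Second, in positive characteristic the existence of a universal regular quotient for the singular projective variety $S_X$ is not available in full generality, which is the source of the restrictions in that case; one works around it either by imposing hypotheses under which de Jong alterations (or resolution of singularities) can be applied, or, over $\C$, by constructing $\Alb(X|D)$ directly from the mixed Hodge structure on the cohomology of $S_X$ — equivalently, from differential forms on $X$ with poles bounded by $D$ — and then identifying it with the quotient constructed above.
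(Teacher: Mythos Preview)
Your overall architecture is correct and matches the paper's strategy: split the Esnault--Srinivas--Viehweg Albanese of the double $S_X$ compatibly with the decomposition of $\CH_0(S_X)$, and define $\Alb(X|D)$ as the complementary factor to $\Alb(X)$. The gap is in the maps you choose for the splitting. You use the pushforwards $\iota_{+\ast}\colon \CH_0(X) \to \CH_0(S_X)$ and $\nabla_\ast\colon \CH_0(S_X) \to \CH_0(X)$ and identify $\CH_0(X|D)$ with $\ker(\nabla_\ast)$. But $\iota_+\colon X \hookrightarrow S_X$ is \emph{not} l.c.i.\ along $D = (S_X)_{\rm sing}$: locally, if $D=V(t)\subset X=\Spec A$, the ideal of $X_+$ in $R=\{(a,b)\in A\times A : a-b\in (t)\}$ is generated by the zero-divisor $(0,t)$. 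Hence a curve $C\subset X$ meeting $D$ does not push forward to a good (or Cartier) curve on $S_X$, and there is no reason for $\iota_{+\ast}(\divf(f))$ to lie in $\sR_0(S_X,D)$ when $f$ is merely a unit along $C\cap D$. So $\iota_{+\ast}\colon \CH_0(X) \to \CH_0(S_X)$ is not defined in this framework; neither your splitting nor the map $\iota_{+\ast}\colon \Alb(X) \to A(S_X)$ you deduce from it is available. The decomposition theorem does not assert $\CH_0(X|D)\cong\ker(\nabla_\ast)$; it gives $\CH_0(X|D)\cong\ker(\iota_-^*)$ for a different map.

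The paper instead uses the flat pullback $\Delta^*\colon \CH_0(X)\to\CH_0(S_X)$ as the section of the projection $\iota_-^*$ (the latter defined via $\sZ_0(S_X,D)\cong\sZ_0(X,D)^{\oplus 2}$), together with $p_{+,\ast}\colon \CH_0(X|D)\to\CH_0(S_X)$, whose well-definedness genuinely requires the modulus condition on rational functions. One cannot then cite ``functoriality of the universal regular quotient for proper morphisms'': that $\iota_-^*$ and $\Delta^*$ descend to morphisms of algebraic groups $A^d(S_X)\leftrightarrows \Alb(X)$ must be checked directly, by verifying that the relevant compositions are regular in the sense of Definition~\ref{def:definition-regular-homomorphism}; this is the content of \S\ref{sec:alg-mod-*} and Lemma~\ref{lem:Delta*-mor}. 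For universality, rather than forming $(\rho_X,\phi)$ on a direct sum, the paper composes a given regular $\psi$ with $\tau_X^* = \iota_+^* - \iota_-^*$ and checks regularity of $\psi\circ\tau_X^*$ on each component of $(S_X)_{\rm reg}$ separately; this bypasses your unjustified claim that regularity transports through the decomposition ``because it is realised by proper morphisms''. Finally, in positive characteristic the paper does not use alterations: ESV's Albanese already exists for $S_X$, and the needed input is the identification $\CH_0^{LW}(S_X)\simeq\CH_0(S_X)$ from Theorem~\ref{thm:Main-Comparison-Chow}.
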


\begin{thm}[see Theorem \ref{thm:Roitman-char-0} and 
\ref{thm:Roitman-Main+ve}]\label{thm:Intro-2}
Let $X$ be a smooth projective scheme over an algebraically closed field $k$
and let $D \subset X$ be an effective Cartier divisor.
Let $n \in \N$ be an integer prime to the characteristic of $k$. Then
$\rho_{X|D}$ induces an isomorphism 
$\rho_{X|D}\colon {_{n}\CH_0(X|D)_{{\rm deg} \ 0}} \xrightarrow{\simeq} 
{_{n}{\rm Alb}(X|D)}$ on the $n$-torsion subgroups.
\end{thm}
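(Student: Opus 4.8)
The plan is to reduce the assertion to the corresponding Roitman torsion theorem for the double $S_X$ of $X$ along $D$, via the decomposition of the Chow group of $0$-cycles on $S_X$ established in the preceding sections. Recall that $S_X$ is the reduced projective $k$-scheme obtained by glueing two copies $X_+$ and $X_-$ of $X$ along $D$; its singular locus is the (diagonally embedded) divisor $D$, its smooth locus is $(X_+\setminus D)\sqcup(X_-\setminus D)$, and it carries a fold morphism $\nabla\colon S_X\to X$ (the identity on each copy) together with the two closed immersions $\iota_\pm\colon X\hookrightarrow S_X$. The decomposition theorem supplies a natural, degree‑compatible isomorphism $\CH_0(S_X)\xrightarrow{\ \simeq\ }\CH_0(X)\oplus\CH_0(X|D)$, whose two projectors are built from $\nabla$ and $\iota_+$ and hence are induced by correspondences. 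Moreover — and this is part of the construction of $\Alb(X|D)$ carried out for Theorem~\ref{thm:Intro-1} — this splitting is ``regular'': $\Alb(X|D)$ is realised as a direct factor of the universal regular quotient $\Alb(S_X)$ of $\CH_0(S_X)_{{\rm deg}\,0}$, so that $\Alb(S_X)\cong\Alb(X)\times\Alb(X|D)$ and, under these identifications, $\rho_{S_X}$ corresponds to $\rho_X\oplus\rho_{X|D}$, with $\rho_X\colon\CH_0(X)_{{\rm deg}\,0}\to\Alb(X)$ the classical Albanese map.

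Granting this structure, the deduction is immediate. Taking $n$‑torsion subgroups is an additive operation, so the displayed decompositions restrict to
\begin{align*}
{}_n\CH_0(S_X)_{{\rm deg}\,0} &\cong {}_n\CH_0(X)_{{\rm deg}\,0}\ \oplus\ {}_n\CH_0(X|D)_{{\rm deg}\,0},\\
{}_n\Alb(S_X) &\cong {}_n\Alb(X)\ \times\ {}_n\Alb(X|D),
\end{align*}
and ${}_n\rho_{S_X}$ becomes ${}_n\rho_X\oplus{}_n\rho_{X|D}$. A homomorphism of abelian groups of the form $f\oplus g$ is an isomorphism if and only if both $f$ and $g$ are; since the classical Roitman theorem (Roitman, Milne) disposes of the factor ${}_n\rho_X$ for the smooth projective $X$, we conclude that $\rho_{X|D}$ is an isomorphism on $n$‑torsion as soon as the same holds for $\rho_{S_X}$. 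The theorem is therefore reduced to the Roitman torsion theorem for the (non‑normal) reduced projective scheme $S_X$: for $n$ prime to $\Char k$, the map $\rho_{S_X}\colon\CH_0(S_X)_{{\rm deg}\,0}\to\Alb(S_X)$ is an isomorphism on $n$‑torsion.

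To prove this I would induct on $\dim X$. The case $\dim X=1$ — a double of a smooth curve — is classical: via the decomposition theorem and the generalised Jacobian of the pair, $\CH_0(S_C)_{{\rm deg}\,0}$ is the group of $k$‑points of a smooth connected commutative algebraic group and $\rho_{S_C}$ is an isomorphism, in particular on torsion. For $\dim X\ge 2$ one runs the standard Lefschetz/Bloch reduction: a class in $\CH_0(S_X)_{{\rm deg}\,0}$ is already represented by a $0$‑cycle supported on the smooth locus, and for a general complete‑intersection section $S'$ of $S_X$ of one smaller dimension the restriction map $\CH_0(S')_{{\rm deg}\,0}\to\CH_0(S_X)_{{\rm deg}\,0}$ is surjective on torsion. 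Because the singular locus of $S_X$ is a Cartier divisor and the components are smooth, Bertini over the algebraically closed $k$ ensures that $S'$ is again the double of a smooth variety along a smooth divisor, so the inductive hypothesis applies to $S'$; combined with the compatibility of $\rho$ and of the Albanese splitting with this restriction, one obtains the claim for $S_X$. (In characteristic $0$ one may instead simply quote the Roitman torsion theorem for the Levine–Weibel Chow group of a reduced projective scheme, available through work of Levine, Srinivas, Esnault–Srinivas–Viehweg and Mallick.)

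The main obstacle is exactly this last input — the Roitman torsion theorem for $S_X$ in positive characteristic. The delicate points are: (i) arranging the Bertini‑type reduction so that the ``double'' structure (hence the inductive set‑up, together with the splitting of the Albanese) is preserved at every stage, and establishing the surjectivity‑on‑torsion step for the restriction maps in the singular setting; and (ii) matching the cycle side to the group side when $\Alb(X|D)$ fails to be proper. Because of the modulus $D$ the group $\Alb(X|D)$ may carry a nontrivial linear part, and although for $n$ prime to $\Char k$ the $n$‑torsion of its unipotent part vanishes while that of its toric and abelian parts has the expected size, transporting this information across the Albanese map of the pair $(X,D)$ — equivalently, proving that $\rho_{X|D}$ is surjective on $n$‑torsion, which is the substantive half of the statement — is where the real work lies.
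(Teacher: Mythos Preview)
Your reduction is exactly the paper's approach: the split exact sequence
\[
0\to \CH_0(X|D)_{\deg 0}\xrightarrow{p_{+,*}}\CH_0(S_X)_{\deg 0}\xrightarrow{\iota_-^*}\CH_0(X)_{\deg 0}\to 0
\]
together with its compatible Albanese counterpart reduces the claim to Roitman for $S_X$ plus classical Roitman for $X$, and passing to $n$-torsion preserves split exactness. So the first half of your write-up is on target.

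The gap is in how you handle Roitman for $S_X$ in positive characteristic. You flag this as the ``main obstacle'' and propose an induction on $\dim X$ via Bertini sections that preserve the double structure. This is both unnecessary and, as written, would not work. It is unnecessary because Mallick's theorem (\cite{Mallick}, stated in the paper as \thmref{thm:Mallick-Rojtman}) already establishes Roitman torsion, prime to $p$, for the Levine--Weibel Chow group of \emph{any} reduced projective scheme over an algebraically closed field --- no special structure of $S_X$ is needed. The paper simply quotes this (Biswas--Srinivas over $\C$, Mallick in general) together with its comparison result $\CH_0^{LW}(S_X)\xrightarrow{\simeq}\CH_0(S_X)$ (\thmref{thm:Main-Comparison-Chow}), and is done.

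Your inductive Bertini argument would also run into trouble: $D$ is an arbitrary effective Cartier divisor --- not assumed reduced, let alone smooth --- so a general hyperplane section of $S_X$ is the double of a smooth $X\cap H$ along $D\cap H$, but $D\cap H$ will essentially never be a \emph{smooth} divisor, contrary to what your inductive step asserts. Even granting that, the ``surjectivity on torsion for the restriction map'' step in the singular setting is itself nontrivial and is precisely what results like Mallick's package up; you would be re-proving that theorem rather than bypassing it.
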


Note that when $k=\mathbb{C}$ and $D_{\rm red} \subset X$ is a normal crossing
divisor, Theorem \ref{thm:Intro-1} has been proven, using a completely different 
approach, in \cite{BS}.

\subsection{Bloch's conjecture for 0-cycles with modulus}Let $X$ be a smooth projective surface over $\C$. Recall that the 
well-known Bloch conjecture predicts that the Abel-Jacobi map 
$\rho_X\colon \CH_0(X)_{{\rm deg} \ 0} \to {\rm Alb}(X)$ is an isomorphism
if $H^2(X, \sO_X) = 0$. Assuming this, we can show that the analogous 
statement for the Chow group with modulus also holds. In particular, the Bloch 
conjecture for 0-cycles with modulus is true
if $X$ is not of general type. Remarkably, instead of the  vanishing of the 
second cohomology group of the structure sheaf $\sO_X$, we have to assume the 
vanishing of the second cohomology group of the ideal sheaf $\sI_D$ of $D$.

\begin{thm}[see Theorem \ref{thm:Bloch-mod-thm}]\label{thm:Intro-2**}
Let $X$ be a smooth projective surface over $\C$ 
and let $D \subset X$ be an effective Cartier divisor.
Let $\sI_D$ denote the sheaf of ideals defining $D$.
Assume that the Bloch conjecture is true for $X$.
Then the map $\rho_{X|D}\colon \CH_0(X|D)_{{\rm deg} \ 0} \to {\rm Alb}(X|D)$ 
is an isomorphism if $H^2(X, \sI_D) = 0$.
\end{thm}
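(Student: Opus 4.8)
The plan is to deduce the result from the main decomposition theorem announced in the abstract, which says that for a smooth projective $X$ and effective Cartier divisor $D$, the cohomological Chow group $\CH_0(\Sigma_X)$ of the double $\Sigma_X = X\cup_D X$ decomposes canonically as $\CH_0(X) \oplus \CH_0(X|D)$, compatibly with degrees. Applying the Albanese functor, one expects a parallel decomposition $\Alb(\Sigma_X) \simeq \Alb(X) \times \Alb(X|D)$ (this is essentially how $\Alb(X|D)$ is constructed via Theorem \ref{thm:Intro-1}), so that $\rho_{X|D}$ on the degree-zero part is a direct summand of the Abel--Jacobi--type map $\rho_{\Sigma_X}\colon \CH_0(\Sigma_X)_{\deg 0} \to \Alb(\Sigma_X)$ for the singular surface $\Sigma_X$. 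The strategy is then: (i) reduce the statement ``$\rho_{X|D}$ is an isomorphism'' to ``$\rho_{\Sigma_X}$ is an isomorphism'', using that $\rho_X$ is an isomorphism by the hypothesized Bloch conjecture for $X$ together with the compatibility of the decompositions; (ii) prove $\rho_{\Sigma_X}$ is an isomorphism for the singular projective surface $\Sigma_X$ by invoking a Bloch-type criterion for singular surfaces.

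First I would set up the precise form of the decomposition and check that it is compatible with the map to the Albanese, so that $\Ker(\rho_{\Sigma_X})$ splits as $\Ker(\rho_X) \oplus \Ker(\rho_{X|D})$ — the first summand vanishing by assumption. Thus it suffices to show $\rho_{\Sigma_X}$ is injective (surjectivity onto the identity component being automatic by construction/Roitman-type surjectivity, or by the known surjectivity of the Abel--Jacobi map for $0$-cycles on projective varieties over $\C$). Next I would invoke the version of Bloch's conjecture for singular surfaces: there is a cohomological criterion, in terms of the vanishing of an appropriate $H^2$ of a coherent sheaf on $\Sigma_X$, that forces $\CH_0(\Sigma_X)_{\deg 0}$ to be representable, hence equal to its Albanese. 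The key point is to translate the hypothesis $H^2(X, \sI_D) = 0$ into the right cohomological vanishing on $\Sigma_X$. Concretely, on the double $\Sigma_X = X_+ \cup_D X_-$ there is a Mayer--Vietoris / conductor exact sequence
\[
0 \to \sO_{\Sigma_X} \to \sO_{X_+} \oplus \sO_{X_-} \to \sO_D \to 0,
\]
and more to the point the ``difference'' piece of $H^2(\sO_{\Sigma_X})$ relevant to the cycle-theoretic obstruction is computed by $H^2(X, \sI_D)$ — roughly, the kernel of $H^2(\sO_{\Sigma_X}) \to H^2(\sO_{X_+})$ (or the part of $H^2$ that sees the modulus direction) is identified with $H^2(X, \sI_D)$, because $\sI_D = \Ker(\sO_X \to \sO_D)$ appears as the ``new'' summand when one restricts a function on $\Sigma_X$ to one copy and demands it vanish on the other copy along $D$. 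So $H^2(X, \sI_D) = 0$, combined with the hypothesized Bloch conjecture on $X$ controlling the $H^2(\sO_X)$-part, yields exactly the vanishing needed to apply the singular Bloch criterion to $\Sigma_X$.

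The main obstacle I anticipate is step (ii): there is no black-box ``Bloch conjecture for singular surfaces'' one can simply cite, so one must either (a) have established earlier in the paper a precise criterion — presumably of the form ``$\rho_{\Sigma_X}$ is an isomorphism provided $\rho_X$ is and provided $H^2(X,\sI_D)=0$'' — via a Lefschetz-pencil / Roitman-style argument adapted to the double, or (b) bypass $\Sigma_X$ and argue directly on $\CH_0(X|D)$ using the Roitman torsion theorem with modulus (Theorem \ref{thm:Intro-2}) to handle torsion, plus a divisibility/representability argument to handle the divisible part, with the coherent cohomology hypothesis $H^2(X,\sI_D)=0$ entering through a relative version of Mumford's infinite-dimensionality obstruction (the relative $H^{2,0}$, i.e. the modulus analogue of holomorphic $2$-forms, being controlled by $H^0$ of the relative cotangent data dual to $H^2(\sI_D)$ by Serre duality). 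I would lean on approach (a), reducing everything to the known Bloch conjecture on the smooth $X$ via the decomposition, since that keeps the new input minimal: the genuinely new ingredients are the decomposition theorem (already available in the paper) and the cohomological bookkeeping identifying the obstruction group on the double with $H^2(X,\sI_D)$. Routine verifications — functoriality of $\Alb$ under the pushforward/pullback maps defining the decomposition, compatibility of degrees, and the surjectivity of $\rho_{\Sigma_X}$ — I would state and dispatch quickly by reference to the relevant earlier sections.
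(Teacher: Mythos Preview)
Your approach is essentially the paper's: reduce via the split decomposition $\CH_0(S_X)_{\deg 0} \simeq \CH_0(X|D)_{\deg 0} \oplus \CH_0(X)_{\deg 0}$, compatible with the Albanese maps as in \eqref{eqn:Roitman-char-0*}, to showing that $\rho_{S_X}$ is an isomorphism for the singular surface $S_X$. The obstacle you flag in (ii) in fact has a black-box resolution---the paper cites \cite[Theorem~1.3]{Krishna-1}, which gives finite-dimensionality of $\CH_0(S_X)$ once $H^2(S_X,\sO_{S_X})=0$ and Bloch holds for the normalization $X\amalg X$; the hypothesis $H^2(X,\sI_D)=0$ delivers $H^2(S_X,\sO_{S_X})=0$ via the exact sequence $H^2(X,\sI_D)\to H^2(S_X,\sO_{S_X})\to H^2(X,\sO_X)\to 0$ (using $\sI_{X_-}\simeq \sI_D$ and that the surjection $H^2(X,\sI_D)\surj H^2(X,\sO_X)$ forces $H^2(X,\sO_X)=0$ as well).
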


\subsection{Torsion theorem for 0-cycles with modulus on affine
schemes} 
Assume now that $X$ is a smooth affine variety over an 
algebraically closed field $k$. One of the consequences of Roitman's theorem is 
that the Chow group of 0-cycles 
on $X$ has no torsion, and this itself has had
many applications to projective modules on smooth affine
varieties.  Here comes the extension of this statement to the 0-cycles with 
modulus.

\begin{thm}[see Theorem \ref{thm:Intro-3-Pf}]\label{thm:Intro-3}
Let $X$ be a smooth affine scheme of dimension $d \ge 2$
over an algebraically closed field $k$
and let $D \subset X$ be an effective Cartier divisor.
Then $\CH_0(X|D)$ is torsion-free.
\end{thm}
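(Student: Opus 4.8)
The plan is to reduce the torsion-freeness of $\CH_0(X|D)$ for a smooth affine $X$ to the Roitman torsion theorem with modulus (Theorem~\ref{thm:Intro-2}) applied to a suitable projective compactification, exactly as one does in the classical case. First I would choose a smooth projective variety $\ov{X}$ containing $X$ as a dense open subscheme such that $B := \ov{X} \setminus X$ is the support of an effective Cartier divisor, and such that (after blowing up if necessary) the closure $\ov{D}$ of $D$ in $\ov{X}$ is an effective Cartier divisor meeting $B$ properly, with $\ov{D}$ and $B$ sharing no common component. One then considers the divisor $E = \ov{D} + mB$ for $m \gg 0$, so that there is a well-defined ``forget the modulus along $B$'' map (or rather a localization-type map) $\CH_0(\ov{X}|E) \to \CH_0(X|D)$ induced by restriction of cycles to the open $X$; the key point is that every class in $\CH_0(X|D)$ lifts to $\CH_0(\ov{X}|E)_{\deg\, 0}$ for $m$ large, because a $0$-cycle on $X$ supported away from $B$ can be balanced by a cycle of the form needed to land in degree zero and its modulus condition along $\ov{D}$ is unaffected while the condition along $mB$ is automatic as the cycle avoids $B$.

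The second step is to feed a torsion class $\alpha \in \CH_0(X|D)$ into this picture: lift it to a torsion class $\wt\alpha \in \CH_0(\ov{X}|E)_{\deg\, 0}$ (possibly after adjusting by a cycle supported near $B$, which costs nothing in the modulus group since such corrections are rationally trivial relative to $E$), and apply Theorem~\ref{thm:Intro-2}, which gives $\rho_{\ov{X}|E}(\wt\alpha) \in {_n}\Alb(\ov{X}|E)$ with $\rho_{\ov{X}|E}$ an isomorphism on $n$-torsion for $n$ prime to $\Char(k)$. The crucial input, beyond Roitman, is that $\Alb(\ov{X}|E)$ is a \emph{semiabelian} (in fact, since $B \neq \emptyset$ makes $\ov{X}\setminus\Supp(E)$ affine, a torus-by-finite quotient dies on torsion appropriately) — more precisely, because $X$ is affine the relevant generalized Albanese has no abelian-variety part contributing torsion that survives restriction, so the composite $\CH_0(X|D)_{\rm tors} \to \Alb(\ov{X}|E)_{\rm tors}$ is zero; combined with injectivity on torsion from Roitman, $\wt\alpha$ maps to $0$ and hence $\alpha = 0$ in $\CH_0(X|D)$. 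In characteristic $p$ one handles $p$-torsion separately using that $\CH_0$ of a smooth affine variety over $\ov{\F_p}$ is a torsion group of the expected shape, or by the pro-$\ell$ argument combined with the known structure of $\Alb(X|D)$.

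The step I expect to be the main obstacle is the \emph{construction and control of the lift} to $\CH_0(\ov{X}|E)$: one must verify that the modulus condition on $\ov{X}$ along $E = \ov{D} + mB$ is flexible enough that (a) every cycle on $X$ with modulus $D$ extends, and (b) moving lemmas / rational equivalences on $X$ relative to $D$ lift to rational equivalences on $\ov{X}$ relative to $E$ after increasing $m$ — this is the ``fattening along the boundary'' argument, and it requires a modulus-version of the containment lemma for the closure of a curve's normalization, showing that the pullback of $E$ to such a normalization still dominates (in the sense of the modulus inequality) the pullback of $D$ plus a large multiple of the points over $B$. Granting the compactification results and the moving lemma with modulus (which should be available from the cited work of Kai or from Section-level results of this paper), the argument is then formally parallel to Bloch's and Levine's classical deductions of affine torsion-freeness from Roitman.
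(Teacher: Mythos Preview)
Your approach has a genuine gap, and in fact the paper warns against exactly the strategy you propose. In the introduction (after the statement of Theorem~\ref{thm:Intro-3}) the authors write that ``the classical argument to deduce \thmref{thm:Intro-3} from Roitman's Theorem does not go through. For example, the localization sequence for the ordinary Chow groups, which is one of the steps of the proof of the classical case, fails in the modulus setting.'' Your lifting step $\CH_0(\ov{X}|E) \to \CH_0(X|D)$, and more crucially the claim that rational equivalences on $X$ relative to $D$ lift to rational equivalences on $\ov{X}$ relative to $E$ after fattening the boundary, is precisely a localization-type input for cycles with modulus, and no such result is available; you correctly identify this as the main obstacle but then assume it. There are further difficulties: smooth compactification requires resolution of singularities, unavailable in positive characteristic for $d \ge 4$; your claim that the Albanese ``has no abelian-variety part contributing torsion that survives restriction'' is not justified (the Albanese ${\rm Alb}(\ov{X}|E)$ surjects onto the abelian variety ${\rm Alb}(\ov{X})$, which need not be trivial); and the treatment of $p$-torsion is essentially absent, whereas \thmref{thm:Intro-2} gives nothing for $p$-torsion.

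The paper's argument avoids all of this by a completely different route: the decomposition theorem (\thmref{thm:Intro-6}, proved as \thmref{thm:Main-PB-PF-gen}) exhibits $\CH_0(X|D)$ as a direct summand of $\CH_0(S_X)$, where $S_X = S(X,D)$ is the double. Since $X$ is affine so is $S_X$ (\propref{prop:double-prp}(1)), and by \cite[Theorem~1.1]{Krishna-2} the Levine--Weibel Chow group of an affine reduced scheme over an algebraically closed field is torsion-free; the comparison theorem (\thmref{thm:0-cycle-affine-proj-char0} or \thmref{thm:Main-Comparison-Chow}) identifies $\CH^{LW}_0(S_X)$ with $\CH_0(S_X)$ in the affine case. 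Hence $\CH_0(X|D)$, as a summand of a torsion-free group, is torsion-free. No compactification, no localization, and no restriction on the characteristic are needed.
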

 
In the presence of a modulus, however, the  classical argument to deduce 
\thmref{thm:Intro-3} from Roitman's Theorem does not go through. For example, 
the localization sequence for the
ordinary Chow groups, which is one of the steps of the proof of the classical
case,
fails in the modulus setting, as explained in \cite[Theorem~1.5]{Krishna-3}.
Our approach is  to deduce  \thmref{thm:Intro-3} directly from  
\thmref{thm:Intro-6} below.

\subsection{Cycle class map to relative $K$-theory}
In the direction of understanding the relation between 0-cycles with modulus and
relative $K$-theory, we have the following results.
\begin{thm}[see Theorem \ref{thm:Intro-4-A}]\label{thm:Intro-4}
Let $X$ be a smooth quasi-projective scheme of dimension $d \ge 1$ over 
a perfect field $k$ and let $D \subset X$ be an effective Cartier 
divisor.
Then, there is a cycle class map 
\[
cyc_{X|D}\colon\CH_0(X|D) \to K_0(X,D).
\]

This map is injective if $k$ is algebraically closed and $X$ is affine.
\end{thm}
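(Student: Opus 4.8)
The plan is to construct the cycle class map by sending a closed point $x \in X \setminus D$ to the class $[\cO_x] \in K_0(X, D)$, or more precisely to the image of a suitable $K$-theoretic pushforward, and then to check that this respects the modulus equivalence relation. First I would recall that a 0-cycle $\sum n_i [x_i]$ with $x_i \in X \setminus D$ has a natural image in $K_0(X \setminus D)$, and since the relative $K$-group $K_0(X,D)$ sits in an exact sequence with $K_0(X)$, $K_0(D)$ and $K_1(D)$, I would want to lift these classes. The cleanest route is to use that for a regular embedding $x \hookrightarrow X$ avoiding $D$, one has a refined Gysin/pushforward in relative $K$-theory, producing $[\cO_x] \in K_0(X, D)$ directly; summing with multiplicities gives a homomorphism from the group of 0-cycles $\mathcal{Z}_0(X \setminus D)$ to $K_0(X, D)$. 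The content is to show this descends: if a cycle is the divisor of a function on a curve $\nu\colon C \to X$ with modulus condition $\nu^* D \le$ (the relevant divisor of poles/zeros), then its image vanishes. For this I would use the fact that a rational function $f$ satisfying the modulus condition defines an element of $K_1$ of the relevant relative curve, whose boundary under the localization sequence is exactly $\divf(f)$, and functoriality of the boundary map under pushforward along $\nu$ kills the class in $K_0(X,D)$.

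For the injectivity statement, the plan is to combine this with Theorem \ref{thm:Intro-3} (torsion-freeness of $\CH_0(X|D)$ for $X$ smooth affine over an algebraically closed field) together with the decomposition of $\CH_0$ of the double. Actually the logical structure in the paper seems to be the reverse — Theorem \ref{thm:Intro-3} is deduced from Theorem \ref{thm:Intro-6} — so for injectivity I would instead argue as follows. Let $\mathcal{S}_{X|D}$ be the double of $X$ along $D$; by the main decomposition result, $\CH_0^{\mathrm{coh}}(\mathcal{S}_{X|D})$ splits as $\CH_0(X) \oplus \CH_0(X|D)$ (at least in degree $0$, or after suitable normalization). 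Since $\mathcal{S}_{X|D}$ is affine (being glued from two copies of the affine $X$ along the closed $D$), Levine--Weibel / Murthy-type results give an injective cycle class map from the cohomological Chow group of 0-cycles on an affine variety to $K_0$, and the relative $K$-theory $K_0(\mathcal{S}_{X|D})$ itself decomposes compatibly as $K_0(X) \oplus K_0(X,D)$ via the retraction $\mathcal{S}_{X|D} \to X$ and the gluing square. Chasing these two compatible decompositions through the cycle class map on the double yields injectivity of $cyc_{X|D}$ on the modulus summand.

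The main obstacle I expect is the \emph{well-definedness} on rational equivalence with modulus, specifically controlling the boundary map for curves that are not already smooth or not in good position with respect to $D$. One must pass to the normalization $\nu\colon \ol{C} \to X$ of a curve $C$, pull back $D$, and verify that the modulus inequality $\nu^* D \le E$ (where $E$ controls the poles of $f$) is exactly what guarantees that $f$ lifts to a unit in the relevant ring — i.e. to a class in $K_1$ of the pair $(\ol{C}, \nu^*D)$ — so that its localization boundary, pushed forward, dies in $K_0(X,D)$. Handling the bookkeeping of which closed points of $\ol{C}$ lie over $D$, and ensuring $\nu(\ol{C})$ genuinely meets $X$ (not just $X \setminus D$) in a controlled way, is the delicate part; it likely requires a moving lemma or a careful choice of the curve as in the Binda--Saito or Kerz--Saito framework. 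A secondary technical point is making the two decompositions (of $\CH_0$ of the double and of $K_0$ of the double) \emph{compatible under the cycle class map} on the nose, rather than merely abstractly isomorphic; this should follow from naturality of $cyc$ with respect to the pullback along $\mathcal{S}_{X|D} \to X$ and along the two closed immersions $X \hookrightarrow \mathcal{S}_{X|D}$, but it needs to be spelled out.
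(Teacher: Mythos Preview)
Your injectivity argument is essentially the paper's: use the split decomposition of $\CH_0(S_X)$ from \thmref{thm:Main-PB-PF-gen}, the injectivity of $cyc_{S_X}$ on the affine scheme $S_X$ (the paper cites \cite[Corollary~7.3]{Krishna-2}), and Milnor's identification $K_0(X|D) \simeq K_0(X,D)$ for affine $X$ (\propref{prop:Milnor-Affine-K0}). The compatibility of decompositions you flag as a secondary concern is exactly the commutative diagram~\eqref{eqn:Intro-4-A-0}.

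Where you differ is in the \emph{construction} of $cyc_{X|D}$. You propose to define it directly on $\sZ_0(X\setminus D)$ and then verify that the modulus relation is killed, via a lift of $f$ to $K_1(C\setminus Z,\nu^*D)$ and the relative localization boundary. This can be made to work, but the paper never does this computation: it defines $cyc_{X|D}$ \emph{through the double as well}. Since $cyc_{S_X}\colon \CH_0(S_X)\to K_0(S_X)$ and $cyc_X\colon \CH_0(X)\to K_0(X)$ already exist by \lemref{lem:cycle-class-0-cycles}, and both $i_-^*$ maps are split surjections, there is a unique induced map $\wt{cyc}_{X|D}\colon \CH_0(X|D)\to K_0(X|D):=K_0(S_X,X_-)$ on kernels; one then sets $cyc_{X|D}=\phi_0\circ\wt{cyc}_{X|D}$. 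Well-definedness is thus inherited for free from $cyc_{S_X}$, and the ``main obstacle'' you anticipate simply does not arise. The cost is that the construction over a general perfect (possibly finite) field depends on \thmref{thm:Main-PB-PF-gen}, whereas your direct route would be independent of the double machinery; the benefit is that no separate curve-level $K$-theory argument is needed.
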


When $X$ has dimension 2, we can prove the following stronger
statement which completely describes $K_0(X,D)$ in terms of the 
Chow groups with modulus.

\begin{thm}[see Theorem \ref{thm:Intro-4-B-aff} and 
\ref{thm:divisor-support}]\label{thm:Intro-5}
Let $X$ be a smooth affine surface over an  algebraically closed field $k$ and let 
$D \subset X$ be an effective Cartier divisor.
Then, the canonical map $\CH_0(X|D) \to \CH_0(X|D_{\rm red})$ is an isomorphism
and there is an exact sequence
\[
0 \to \CH_0(X|D) \to K_0(X,D) \to \CH_1(X|D) \to 0.
\]
\end{thm}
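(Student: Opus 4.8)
The plan is to establish \thmref{thm:Intro-5} by combining the cycle class map of \thmref{thm:Intro-4} with a careful analysis of the relative $K_0$ group for a smooth affine surface via the relative $SK_1$-presentation. First I would recall that for an affine scheme $X$ of dimension $d$ and a closed subscheme $D$, one has $K_0(X,D) = K_0(X, D)$ sitting in a long exact sequence relating it to $K_1(X)$, $K_1(D)$, $K_0(D)$; by a Bass-type argument and Vorst/van der Kallen results on relative $K$-theory, for $d = 2$ the group $K_0(X,D)$ is generated by relative cycles of codimension $\geq 1$, i.e. there is a filtration whose graded pieces are controlled by relative $SK_0$ and $SK_1$ in codimensions $1$ and $2$. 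The key input is that on a smooth affine surface the codimension-$2$ part of $K_0(X,D)$ is exactly $F^2 K_0(X,D) \cong \CH_0(X|D)$ — this should follow from the injectivity in \thmref{thm:Intro-4} together with a surjectivity statement obtained by a Bloch-type moving lemma showing every class in $F^2K_0(X,D)$ is represented by a $0$-cycle with modulus. The quotient $K_0(X,D)/F^2$ should then be identified with $\CH_1(X|D)$ via the relative Picard-type description $\CH_1(X|D) \cong \Pic(X,D)$ valid for $d=2$.

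The second ingredient, the isomorphism $\CH_0(X|D) \xrightarrow{\simeq} \CH_0(X|D_{\red})$, I would prove independently and first, since it is also used implicitly to make the statement uniform in the (possibly non-reduced) divisor. The strategy here is to use \thmref{thm:Intro-3} (torsion-freeness of $\CH_0(X|D)$, which is proven by reduction to \thmref{thm:Intro-6}) to remove the obstruction: there is a surjection $\CH_0(X|D) \surj \CH_0(X|D_{\red})$ because the modulus condition along $D$ is stronger than along $D_{\red}$, and its kernel is a quotient of a group built from $\sI_{D_{\red}}/\sI_D$-type data, which on an affine surface is a divisible group (being a successive extension of $H^1$ and $H^0$ of coherent sheaves, hence $k$-vector spaces, hence uniquely divisible). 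A divisible subgroup of a torsion-free group that is also the kernel of a surjection between two such groups whose cokernel... — more precisely, I would show the kernel is both divisible and, using the Roitman-type statement \thmref{thm:Intro-2} applied in the affine setting via a projective closure, torsion; a divisible torsion-free group mapping compatibly forces the kernel to vanish. Here one has to be slightly careful and argue via the universal regular quotient: the Albanese with modulus $\Alb(X|D) \to \Alb(X|D_{\red})$ has unipotent kernel, and combining with torsion-freeness kills it.

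The main obstacle I expect is the surjectivity of $cyc_{X|D}$ onto $F^2K_0(X,D)$, i.e. showing that every element of the codimension-$2$ part of relative $K_0$ of a smooth affine surface comes from an actual $0$-cycle supported away from $D$ with the modulus condition. Classically (for $D = \emptyset$) this is the Murthy–Swan / Levine type result that $F^2K_0(X) \cong \CH_0(X)$ on a smooth affine surface, proven using the Bloch–Quillen formula and Gersten resolution; in the relative setting the Gersten resolution is not available (as the paper itself notes, localization fails for cycles with modulus), so one must substitute a direct geometric moving argument: given a projective module trivialized on $D$, represent its top Chern class relative to $D$ by a smooth $0$-dimensional relative cycle, and check the modulus condition using that we may move the cycle away from $D$ on an affine surface. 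I would route this through the relative $SK_1$-exact sequence $SK_1(D) \to SK_1(X) \to SK_1(X,D)$... no — rather through the Mennicke-symbol / relative unimodular row description of $K_0(X,D)$ in dimension $2$ due to van der Kallen, converting a relative unimodular row to a relative $0$-cycle. The identification of $K_0(X,D)/F^2 K_0(X,D)$ with $\CH_1(X|D)$ is comparatively routine: it is $\Pic$ of the pair, matching the known description of $\CH_1(X|D)$ in codimension $1$, and the exactness $0 \to F^2 \to K_0 \to \CH_1(X|D) \to 0$ then follows by splicing, giving the claimed short exact sequence.
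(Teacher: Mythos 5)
Your proposal has two genuine gaps, one in each half of the statement. For the exact sequence, you correctly identify the crux — surjectivity of the cycle class map onto $F^2K_0(X,D)$ — but you do not actually supply an argument: the appeal to a ``Bloch-type moving lemma'' or to a van der Kallen/Mennicke-symbol description of relative unimodular rows is exactly the step that is not available in the modulus setting (as you note yourself, there is no Gersten/localization machinery for cycles with modulus), and converting a relative unimodular row into a $0$-cycle satisfying the modulus condition is an unproved, nontrivial claim. The paper never works directly with $K_0(X,D)$ in this way; it routes everything through the double: Levine's theorem $\CH_0(S_X)\xrightarrow{\simeq}F^2K_0(S_X)$ for the singular affine surface $S_X$, the split exact sequences of \thmref{thm:Main-PB-PF-gen} (for both $\CH_0$ and $F^2K_0$), the comparison $K_0(X|D)\simeq K_0(X,D)$ (\propref{prop:Milnor-Affine-K0}, \propref{prop:Surface-mod-rel}), and the Thomason--Trobaugh spectral sequence giving $K_0(X|D)/F^2\simeq\Pic(X,D)$; finally $\Pic(X,D)\simeq\CH_1(X|D)$ is not a routine fact for $d=2$ but an affine-specific statement proved by a limit argument with the Nakayama-type \lemref{lem:comm-alg} (for non-affine surfaces the quotient is $\Pic(X,D)$, not $\CH_1(X|D)$, cf.\ \thmref{thm:Intro-4-B}).

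The second gap is more serious: your proof of $\CH_0(X|D)\simeq\CH_0(X|D_{\rm red})$ would fail. The assertion that the kernel of the surjection $\CH_0(X|D)\surj\CH_0(X|D_{\rm red})$ is ``a quotient of a group built from $\sI_{D_{\rm red}}/\sI_D$-type data'' has no a priori justification — there is no exact sequence at the level of cycles with modulus producing such a coherent-cohomological presentation of the kernel; obtaining one is essentially equivalent to the theorem. And the proposed vanishing mechanism does not close: torsion-freeness of both Chow groups (\thmref{thm:Intro-3}) cannot rule out a nonzero divisible \emph{torsion-free} kernel (a $k$-vector space is exactly such a group), while Roitman-type statements and the unipotence of $\ker(\Alb(X|D)\to\Alb(X|D_{\rm red}))$ only control torsion classes and say nothing about the kernel of the cycle map to the Albanese, which can be enormous. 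The paper's actual proof is $K$-theoretic: after identifying $\CH_0(X|D)\simeq F^2K_0(X,D)$ via \thmref{thm:Intro-4-B}, it writes $S_n=S(X,nD_{\rm red})$ and uses the excision result of Barbieri-Viale--Pedrini--Weibel type (\cite{BPW}) to present $F^2K_0(S_n)$ as an extension of $F^2K_0(X\amalg X)$ by a quotient of $SK_1$-groups of $D_{\rm red}$, which is independent of $n$ precisely because $SK_1(B)=SK_1(B_{\rm red})$; sandwiching $D_{\rm red}\subset D\subset nD_{\rm red}$ then yields the isomorphism (\thmref{thm:divisor-support}). Without passing through $F^2K_0$ and this nil-invariance of $SK_1$, your route has no handle on the kernel.
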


Finally, for arbitrary quasi-projective surfaces, we prove the following 
structural result,  that we may see as an integral version of 
a Riemann-Roch--type formula for the relative $K_0$-group of the pair $(X,D)$.

\begin{thm}[see Theorem \ref{thm:Intro-4-B}]\label{thm:Intro-4**}
Let $X$ be a smooth quasi-projective surface over an
algebraically closed field $k$ and let $D \subset X$ be an effective Cartier 
divisor.
Then, there is a cycle class map $cyc_{X|D}\colon \CH_0(X|D) \to K_0(X,D)$ 
and a short exact sequence
\[
0 \to \CH_0(X|D) \to K_0(X,D) \to \Pic(X,D) \to 0.
\]
\end{thm}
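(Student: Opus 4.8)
Proof proposal for Theorem~\ref{thm:Intro-4}.**

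The plan is to reduce the quasi-projective case to the affine case already established in Theorem~\ref{thm:Intro-5}, using the decomposition of the Chow group of 0-cycles on the double $S_X = S_{X|D}$ of $X$ along $D$ that is the technical backbone of the paper. First I would recall that for the double $S_X$ --- a singular (non-reduced-free, reduced) surface with two components $X_+ , X_-$ glued along $D$ --- there is a canonical isomorphism $\CH^{LW}_0(S_X) \cong \CH_0(X) \oplus \CH_0(X|D)$ (the cohomological Chow group of Levine--Weibel on the singular surface splits into the ordinary part and the modulus part, via the two projections $S_X \to X$ and $S_X \to X_+ = X$). Dually, on the $K$-theory side, the relative $K$-group fits into a Mayer--Vietoris/Milnor-type square for the gluing $S_X = X \cup_D X$, giving $K_0(S_X) \cong K_0(X) \oplus K_0(X,D)$, compatibly with the cycle class maps. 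Thus the statement for $(X,D)$ with $X$ quasi-projective becomes a statement about 0-cycles and $K_0$ on the quasi-projective (singular) surface $S_X$.

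Second, I would construct the cycle class map $cyc_{X|D}\colon \CH_0(X|D) \to K_0(X,D)$ by building the Levine--Weibel cycle class map $\CH^{LW}_0(S_X) \to K_0(S_X)$ for the singular surface $S_X$ (sending a smooth closed point $x$ to $[\mathcal O_x]$, with the usual verification that this respects rational equivalence, which on a singular surface of dimension $2$ one handles via Bloch's formula / the Bloch--Quillen type argument for the $\mathcal K_2$-cohomology of $S_X$, or by citing the relevant construction from earlier in the paper), and then restricting to the modulus summand. Compatibility with the splittings of both sides yields $cyc_{X|D}$ on the nose. For the short exact sequence, I would invoke Bloch's formula in the form of the exact sequence relating $K_0$ of a quasi-projective surface to $\CH_0$ and $\Pic$: for a smooth surface one has $0 \to F^2K_0 \to F^1K_0 \to \Pic \to 0$ with $F^2 K_0 \cong \CH_0$; the relative version over the pair $(X,D)$, obtained by taking homotopy fibres of restriction to $D$ throughout the $\gamma$-filtration (equivalently, working on $S_X$ and using $\Pic(S_X) \cong \Pic(X) \oplus \Pic(X,D)$ together with the known two-term resolution of $K_0(S_X)$ by $\CH_0^{LW}(S_X)$ and $\Pic(S_X)$ for a surface), gives exactly
\[
0 \to \CH_0(X|D) \to K_0(X,D) \to \Pic(X,D) \to 0.
\]
Here $\Pic(X,D)$ is the relative Picard group, which in the surface case is $\CH_1(X|D)$ --- so in the affine case, where $\Pic(X) = 0$ is not assumed but $\CH_0(X|D) \to \CH_0(X|D_{\mathrm{red}})$ is an isomorphism, this recovers Theorem~\ref{thm:Intro-5}.

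The main obstacle I anticipate is showing that the $K_0$-side genuinely splits as $K_0(S_X) \cong K_0(X) \oplus K_0(X,D)$ \emph{compatibly} with the geometric splitting of $\CH_0^{LW}(S_X)$, and that under this identification the surjection $K_0(S_X) \to \Pic(S_X)$ (the determinant/first Chern class map, whose kernel is $\CH_0^{LW}(S_X)$ by Bloch's formula for a quasi-projective singular surface) restricts correctly to the modulus summands. The delicate point is that $S_X$ is singular along $D$, so one cannot quote the smooth-case filtration results directly; I would need the analogue of Bloch's formula $F^2 K_0(S_X) \xrightarrow{\sim} \CH_0^{LW}(S_X)$ on singular surfaces, which requires controlling the $\mathcal K_2$-sheaf cohomology of $S_X$ and verifying that the splitting idempotents (coming from the section $X \to S_X$ and the fold map $S_X \to X$) act compatibly on all terms of the exact sequence. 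Once that naturality is in place, the sequence for $(X,D)$ is just the modulus-part of the sequence for $S_X$, and the passage between the quasi-projective and affine statements (Theorem~\ref{thm:Intro-4**} versus Theorem~\ref{thm:Intro-5}) is the observation that affineness forces $\Pic(X)=0$ only after removing the ordinary summand, i.e. it is the relative/modulus Picard group $\Pic(X,D) = \CH_1(X|D)$ that survives.
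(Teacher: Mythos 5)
Your overall architecture coincides with the paper's: split $\CH_0(S_X)$ into $\CH_0(X)\oplus\CH_0(X|D)$, build the cycle class map by restricting the Levine--Weibel class map on the double, use Levine's theorem $\CH_0^{LW}\xrightarrow{\sim}F^2K_0$ for reduced quasi-projective surfaces, and read off the quotient $\Pic(X,D)$ from the $\sK_1$-level of the Brown/Thomason--Trobaugh filtration. But there is a genuine gap at the step you yourself flag as the ``main obstacle'': the identification $K_0(S_X)\cong K_0(X)\oplus K_0(X,D)$, equivalently the excision statement that the homotopy-fibre group $K_0(X|D)=K_0(S_X,X_-)$ maps isomorphically to $K_0(X,D)$. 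This does \emph{not} follow from a ``Mayer--Vietoris/Milnor-type square'': Milnor patching gives the exact sequence $K_1(D)\to K_0(S_X)\to K_0(X)\oplus K_0(X)\to K_0(D)$ only in the affine case (Proposition~\ref{prop:Milnor-Affine-K0}), and the paper explicitly states that it does not know whether $K_0(X|D)\to K_0(X,D)$ is an isomorphism for general non-affine schemes. For quasi-projective surfaces this is exactly the content of Proposition~\ref{prop:Surface-mod-rel} together with Lemma~\ref{lem:map-K2-sheaves-onto}: one compares the Thomason--Trobaugh spectral sequences for $K(X|D)$ and $K(X,D)$, identifies both $H^1$-terms with $\Pic(X,D)$, and proves $H^2(S_X,\cK_{2,X|D})\xrightarrow{\sim}H^2(X,\cK_{2,(X,D)})$ using the birelative $K$-theory sheaves and the Geller--Weibel vanishing $\cK_{1,(S_X,X_-,D)}\cong \sI_D/\sI_D^2\otimes\Omega^1_{D/X}=0$. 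Without an argument of this kind your conclusion concerns $K_0(X|D)$, not the relative group $K_0(X,D)$ appearing in the statement, so the theorem is not yet proved.

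Two smaller points. First, the announced reduction to the affine case runs in the wrong direction: in the paper the quasi-projective statement (Theorem~\ref{thm:Intro-4-B}) is proved first, and the affine refinement (Theorem~\ref{thm:Intro-4-B-aff}) is deduced from it by identifying $\Pic(X,D)$ with $\CH_1(X|D)$ for affine surfaces via a limit argument and Lemma~\ref{lem:comm-alg}; your sketch in fact never uses the affine theorem, so this sentence is harmless but misleading. Second, the identification $\Pic(X,D)\cong\CH_1(X|D)$ is established only in the affine case, and the closing remark that affineness ``forces $\Pic(X)=0$'' after splitting off the ordinary summand is not correct (smooth affine surfaces can have nontrivial Picard group); neither is needed for the statement under review, but both should be dropped.
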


\subsection{Bloch's formula}
As an application of \thmref{thm:Intro-4**}, we get the following
Bloch's formula for cycles with modulus on surfaces.

\begin{thm}$($Bloch's formula$)$\label{thm:Intro-4**-BF}  
Let $X$ be a smooth quasi-projective surface over an algebraically closed field
$k$. Let 
$D \subset X$ be an effective Cartier divisor. Then, there are isomorphisms
\[
\CH_0(X|D) \xrightarrow{\simeq} H^2_{\rm zar}(X, \sK^M_{2, (X,D)})
\xrightarrow{\simeq} H^2_{\rm nis}(X, \sK^M_{2, (X,D)}).
\]
\end{thm}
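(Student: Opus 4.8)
The plan is to deduce Bloch's formula for cycles with modulus as a formal consequence of \thmref{thm:Intro-4**}, combined with the analogous classical Bloch's formula and a version of it for the relative Picard group. The starting point is the short exact sequence
\[
0 \to \CH_0(X|D) \to K_0(X,D) \to \Pic(X,D) \to 0
\]
from \thmref{thm:Intro-4**}. On the other hand, relative $K$-theory sits in the classical Brown--Gersten--Quillen setup: the Thomason--Trobaugh spectral sequence (or the Gersten resolution of the relative Milnor $K$-sheaf) should produce a coniveau-type filtration on $K_0(X,D)$ whose graded pieces are $H^i_{\rm zar}(X, \sK^M_{i,(X,D)})$ for $i=0,1,2$ when $\dim X = 2$. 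The relative Milnor $K$-sheaf $\sK^M_{n,(X,D)}$ is defined as the kernel of the restriction $\sK^M_{n,X}\to \sK^M_{n,D}$ (or better, as the sheaf associated to the relative Milnor $K$-groups of the pair of rings), and one knows $H^0 = 0$, $H^1(X, \sK^M_{1,(X,D)}) = H^1(X, (1+\sI_D)^\times) = \Pic(X,D)$ and $H^2(X,\sK^M_{0,(X,D)}) = 0$. So the coniveau filtration on $K_0(X,D)$ has only two nonzero graded pieces, giving a short exact sequence $0 \to H^2_{\rm zar}(X,\sK^M_{2,(X,D)}) \to K_0(X,D) \to \Pic(X,D) \to 0$.

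Next I would identify this sequence with the one coming from \thmref{thm:Intro-4**}. The map $\CH_0(X|D)\to K_0(X,D)$ factors through $H^2_{\rm zar}(X,\sK^M_{2,(X,D)})$ because the cycle class map is compatible with the Gersten complex: a closed point $x \in X\setminus D$ gives a class in $\sK^M_{2,(X,D),x}$ via the residue/transfer, and the modulus condition is exactly what is needed for this to be well-defined at the level of the relative sheaf. One then checks that the surjection $K_0(X,D)\to \Pic(X,D)$ in both sequences is the same (both are induced by $\det$ or by the boundary in the coniveau filtration), so by the five lemma the induced map $\CH_0(X|D) \to H^2_{\rm zar}(X,\sK^M_{2,(X,D)})$ is an isomorphism. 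For the comparison between Zariski and Nisnevich cohomology, I would invoke the Gersten resolution for the relative Milnor $K$-sheaf: since $X$ is smooth and the Gersten complex is a flasque (hence acyclic in both topologies) resolution of $\sK^M_{2,(X,D)}$, the two cohomology groups agree. This is the relative analogue of the Kerz--M\"uller-Stach result, and for surfaces it should already follow from the explicit two-term Gersten complex.

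The main obstacle I anticipate is establishing that the coniveau/Gersten filtration on $K_0(X,D)$ has graded pieces given precisely by $H^i_{\rm zar}(X,\sK^M_{i,(X,D)})$, i.e.\ that the relative analogue of the Bloch--Quillen identification $\Gr^i_{\rm coniveau} K_0 \cong H^i(X,\sK^i)$ holds with $\sK^M$ in place of Quillen's $\sK$. For $i \le 1$ this is elementary (it is just $\Pic(X,D)$ and the units), but for $i=2$ one needs to know that the relative $K_2$-sheaf and the relative Milnor $K_2$-sheaf agree in the relevant range, or at least that the corresponding cohomology groups agree. On a smooth surface the difference between $\sK_2$ and $\sK^M_2$ is controlled by low-degree phenomena that vanish, and in the relative setting the key input is that $\sI_D$-adic continuity / the structure of the relative $K_2$ of a pair $(A, I)$ with $A$ regular of dimension $2$ reduces to Milnor symbols. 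I would handle this by combining the Gersten conjecture for relative Milnor $K$-theory (known in dimension $\le 2$) with the comparison of relative $K_2$ and relative $\sK^M_2$ coming from the work on multiplicative generators; alternatively, one can bypass it entirely by running the argument with Thomason--Trobaugh $K$-theory and then invoking the surface-level identification of the top Gersten cohomology with the Milnor version, which is where essentially all the content sits.
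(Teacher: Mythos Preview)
Your overall strategy is close to the paper's, but the paper sidesteps precisely the obstacle you identify rather than confronting it. The paper does \emph{not} establish a second short exact sequence $0 \to H^2_{\rm zar}(X,\sK^M_{2,(X,D)}) \to K_0(X,D) \to \Pic(X,D) \to 0$ and then apply the five lemma. Instead, it uses the double $S_X$ to produce a factorization
\[
\CH_0(X|D) \longrightarrow H^2_{\rm zar}(X,\sK^M_{2,(X,D)}) \longrightarrow K_0(X,D)
\]
of the cycle class map $cyc_{X|D}$ (by routing through $H^2(S_X,\sK^M_{2,X|D})$ and $K_0(X|D)$). Injectivity of $\CH_0(X|D) \to H^2_{\rm zar}(X,\sK^M_{2,(X,D)})$ then follows immediately from the injectivity of $cyc_{X|D}$ proven in \thmref{thm:Intro-4**}. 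Surjectivity is not extracted from a spectral-sequence argument at all: it is quoted from \cite[Theorem~1.2]{Krishna-3}. So the paper never needs to identify relative Quillen $\sK_{2,(X,D)}$ with relative Milnor $\sK^M_{2,(X,D)}$ at the level of top cohomology, nor to know that $H^2(X,\sK^M_{2,(X,D)}) \hookrightarrow K_0(X,D)$.

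Your proposed route therefore has a genuine gap exactly where you suspect: the Thomason--Trobaugh filtration on $K_0(X,D)$ has graded pieces computed by the \emph{Quillen} $K$-theory sheaves $\sK_{i,(X,D)}$, and passing to Milnor $\sK^M_{2,(X,D)}$ is not automatic because $D$ is in general non-reduced and singular (so $\sK^M_{2,D} \to \sK_{2,D}$ need not be an isomorphism, and the relative sheaves are defined differently---one as a kernel, the other via a homotopy fiber). Your suggested fallback (``run with Thomason--Trobaugh and then identify the top Gersten cohomology with the Milnor version'') is essentially asking for the same comparison by another name. The paper's trick of using injectivity of the composite plus the externally supplied surjectivity is what makes this comparison unnecessary.

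For the Zariski--Nisnevich comparison, the paper again cites an external result, \cite[Lemma~2.1]{Krishna-3}, rather than a Gersten resolution for $\sK^M_{2,(X,D)}$. Your appeal to a ``relative analogue of the Kerz--M\"uller-Stach result'' is not something available off the shelf for the relative Milnor $K$-sheaf; the exactness of the Gersten complex for $\sK^M_{2,(X,D)}$ is not established in the literature you could cite, and this is a second place where your sketch would need substantial additional work that the paper simply avoids.
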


\subsection{The decomposition theorem}Essentially no case of the above results was previously known, 
and
in order to prove them, we develop a new approach to
study the Chow groups with modulus by drawing inspiration from the world of 
cycles on singular varieties. Given a smooth scheme $X$ with an
effective Cartier divisor $D$, we consider the notion of `doubling' $X$ along
$D$. This idea has previously been  used by Milnor \cite{Milnor} 
to study the patching of projective modules over commutative rings
(see \cite[Chap.~2]{Milnor}), and also by Levine \cite{Levine-HigherChow}
to study algebraic cycles in a different context.
Doubling $X$ along $D$ gives rise to a new scheme, that we denote by $S(X,D)$, 
which is, in general, highly singular. 

The novelty of our approach is the observation
that the Chow group of 0-cycles with modulus $\CH_0(X|D)$ can  
(under some conditions) be
suitably realized as a direct summand of the cohomological Chow group of 0-cycles
on $S(X,D)$ in the sense of Levine-Weibel \cite{LW}.  
This allows us to transport many of the known statements about the
Chow groups of 0-cycles on (possibly singular) schemes to
0-cycles with modulus. The following decomposition theorem can therefore
be called the central result of this paper 
(see \thmref{thm:Main-PB-PF-gen} for a precise statement).

\begin{thm}\label{thm:Intro-6} 
Let $X$ be a smooth quasi-projective scheme over a perfect 
field $k$. 
Let $D \subset X$ be an effective Cartier divisor. Then, there
is a split short exact sequence
\[
0 \to \CH_0(X|D) \to \CH_0(S(X,D)) \to \CH_0(X) \to 0.
\]
\end{thm}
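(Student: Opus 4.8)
The plan is to construct the two maps in the sequence geometrically and then verify exactness and splitting using the cohomological theory of 0-cycles on singular schemes of Levine--Weibel. Write $S = S(X,D)$ for the double, which comes with two closed immersions $\iota_{\pm}\colon X \hookrightarrow S$ identifying the two copies of $X$ along $D$, together with a fold map $\nabla\colon S \to X$ and a projection-type map exhibiting $X$ as a retract. The surjection $\CH_0(S) \to \CH_0(X)$ will be induced by $\iota_-^*$ (pullback to the second component, equivalently composing with the closed immersion of the first copy $X_+ \hookrightarrow S$ and using $\nabla$); since $X_+ \hookrightarrow S$ is split by $\nabla$, this map is visibly surjective with a section $(\iota_+)_*$, so the splitting is essentially built in. The content is therefore in identifying the kernel with $\CH_0(X|D)$.

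The first key step is to define the map $\CH_0(X|D) \to \CH_0(S)$. A closed point $x \in X \setminus D$ gives a closed point $\iota_+(x) \in S \setminus D_S$ (where $D_S$ is the singular locus, the common divisor $D$), and one sends the cycle to the class of $\iota_+(x)$ minus... no — rather, one sends a 0-cycle $\alpha$ supported away from $D$ to $(\iota_+)_*(\alpha)$. The real work is to show this is well-defined on rational equivalence with modulus: a function $f$ on a curve $C \subset X$ with the modulus condition $\divf(f)|_D \geq $ (the modulus bound, i.e. $f \equiv 1$ to the appropriate order along $C \cap D$) must be promoted to a function on a Cartier curve on $S$ whose divisor is $(\iota_+)_*\divf(f)$. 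The modulus condition is exactly what allows one to glue $f$ on the curve $C_+ \subset X_+$ with the constant function $1$ on a curve in $X_-$ to produce a rational function on a Cartier curve on $S$ in the Levine--Weibel sense; this matches the intuition behind the whole paper. One then must check that the composite $\CH_0(X|D) \to \CH_0(S) \to \CH_0(X)$ is zero (clear: $\iota_-^* \circ (\iota_+)_*$ kills cycles supported away from $D$, since the two components only meet along $D$), giving a factorization and a map into the kernel.

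The second, harder key step is exactness in the middle: every class in $\ker(\CH_0(S) \to \CH_0(X))$ comes from $\CH_0(X|D)$, and the composite $\CH_0(X|D) \to \CH_0(S)$ is injective. For surjectivity onto the kernel, one uses a moving lemma on $S$ to represent any class by a 0-cycle supported on the smooth locus $S \setminus D_S = (X \setminus D)_+ \sqcup (X \setminus D)_-$, then uses rational equivalences on Cartier curves crossing $D_S$ to push the $X_-$-component over to the $X_+$-side (again the modulus condition on the relevant curve is what is produced in the reverse direction); the hypothesis that the class maps to $0$ in $\CH_0(X)$ is used to absorb the resulting correction. Injectivity is the dual problem: a cycle $(\iota_+)_*\alpha$ that is rationally trivial on $S$ must be shown to be rationally trivial with modulus on $X$ — one restricts the defining rational functions on Cartier curves in $S$ to their $X_+$-components and checks that the gluing condition at $D_S$ forces precisely the modulus bound on $X$. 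I expect this last point — turning an arbitrary Cartier curve on the singular scheme $S$, with its rational function, into admissible data (a curve with the correct modulus and a function satisfying the congruence) on $X$, and conversely — to be the main obstacle, requiring care with the local structure of $S$ along $D_S$, with multiplicities, and with the precise comparison of the Levine--Weibel rational equivalence relation on $S$ and the modulus rational equivalence on $X$; the precise statement and this comparison are what \thmref{thm:Main-PB-PF-gen} is set up to handle.
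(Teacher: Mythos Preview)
Your overall architecture is right, and your construction of $p_{+,*}\colon \CH_0(X|D)\to \CH_0(S)$ by gluing $f$ on $C_+$ with the constant $1$ on the mirror curve is exactly what the paper does (Proposition~\ref{prop:PF-mod-double}). The genuine gap is in your injectivity step. You propose to take a rational equivalence $\sum_i \divf_{C_i}(f_i)$ on $S$ witnessing $p_{+,*}(\alpha)=0$, restrict each $f_i$ to the $X_+$-component $C_{i,+}$, and conclude from the gluing condition that the $f_{i,+}$ satisfy the modulus bound. This does not work: the gluing condition on a Cartier curve $C=C_+\amalg_E C_-$ says only $f_+|_E=f_-|_E$, not $f_+|_E=1$. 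Restricting to $X_+$ alone never produces a modulus condition.

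The paper's remedy is to build a genuine retraction $\tau^*_X\colon \CH_0(S)\to\CH_0(X|D)$ by the \emph{difference} $\iota_+^*-\iota_-^*$ rather than $\iota_+^*$ alone. When $C_+=C_-$ as curves in $X$, the gluing condition then gives $(f_+/f_-)|_E=1$, and $\divf(f_+)-\divf(f_-)=\divf(f_+/f_-)$ dies in $\CH_0(X|D)$ as desired. But in general $C_+\neq C_-$, and there is no ratio $f_+/f_-$ to take. This is where the real work lies: the paper uses a Bertini-type refinement (Lemmas~\ref{lem:good-Cartier-curves-dim>2} and~\ref{lem:reduction-basic}) to reduce to Cartier curves $C$ on $S$ for which $C_\pm=(t_\pm)\cap Y$ are cut out by restrictions $t_\pm$ of a \emph{single} section $t$ of a very ample line bundle on $S$ (so $(t_+)|_D=(t_-)|_D$), and then in Propositions~\ref{prop:map-diff-surfaces} and~\ref{prop:map-diff-higher} runs an explicit calculation with auxiliary sections $s_\infty,s_{0,\pm}$ to rewrite $\divf(f_+)-\divf(f_-)$ as a sum of three divisors $\divf(t_+/t_-)$, $\divf(H_-/H_+)$, $\divf(t_+/t_-)$ on three integral curves, each of which now satisfies the modulus condition by construction. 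Your proposal does not contain this mechanism, and without it the injectivity argument cannot be completed. (A smaller point: the section of $\iota_-^*$ is $\Delta^*$, the pull-back along the fold map, not $(\iota_+)_*$; the latter satisfies $\iota_-^*\circ(\iota_+)_*=0$.)
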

In fact, it turns out that this approach can be taken forward to
study the Chow groups with modulus $\CH_*(X|D)$
in any dimension using the theory of Chow groups of singular schemes
developed by Levine \cite{Levine-5}. This generalization will be
studied in a different project. In this paper, we shall show how this
approach works for the relative Picard groups, apart from the above result
for 0-cycles.\medskip 

We conclude this Introduction by remarking that an Albanese variety with modulus 
has been previously constructed by Kato and Russell in \cite{KatoRussell} and 
\cite{Russell}. Their construction uses different techniques and starts from a 
definition of the Chow group of $0$-cycles with modulus that does not agree with 
the one proposed by Kerz and Saito: as a consequence of this discrepancy, our 
construction and the Kato-Russell construction are not directly related.

\subsection{Outline of the proofs}\label{sec:Outline}
This paper is organized as follows. Our principal task is to prove
the decomposition \thmref{thm:Intro-6}  for the Chow group of 0-cycles.
The proof of this takes up the next five sections of this paper.
We describe the double construction in \S\ref{sec:Double} and we prove several 
properties of it that are
used throughout the paper. 

The proof of \thmref{thm:Intro-6}
requires a non-trivial Bertini-type argument which allows us
to give a new description of the Cartier curves in the definition of the
Chow group of 0-cycles on the double. We do this first for surfaces
and we then explain how to reduce the general case to this one. This is done
in \S\ref{section:SSM}.
To relate the 0-cycles with modulus with the group of 0-cycles on the 
double, we define a variant of the Levine-Weibel Chow group of
0-cycles on the double and then show that the two definitions agree in 
as many cases as possible (see Theorem \ref{thm:0-cycle-affine-proj-char0}). This 
is done in \S\ref{section:0-C-S}.

We construct the Albanese variety with modulus attached to the pair 
$(X,D)$ in \S \ref{sec:Alb-C} which turns out to be a commutative algebraic group 
of general type. In characteristic zero, we give an explicit construction
of the Albanese variety with modulus 
using a relative version of Levine's modified Deligne-Beilinson cohomology.
We then use \thmref{thm:Intro-6} to prove the universality of this
Albanese and also the Roitman torsion theorem. We use
\thmref{thm:Intro-6} and the main results of \cite{Krishna-1} to 
deduce the Bloch conjecture for 0-cycles with modulus in \S\ref{sec:Alb-C}.
Other applications to affine schemes are obtained in \S\ref{sec:Rel-K},
\S~\ref{sec:BF-surfaces} and \S~\ref{sec:Affine**}.

\section*{Notations}\label{sec:Notations}
Let $k$ be a field. Since our arguments are geometric in nature, all schemes in 
this text are assumed to be quasi-projective over $k$ and we shall let $\Sch_k$ 
denote this category. Let $\Sm_k$ denote the full subcategory of 
$\Sch_k$ consisting of smooth schemes over $k$. We shall let 
${\Sch_k^{\rm ess}}$ denote the category of schemes which are essentially
of finite type over $k$. For a closed subscheme $Z \subset X$, we shall denote
the support of $Z$ by $|Z|$.
For a scheme $X$, the notation $X_{\rm sing}$ will mean the singular
locus of the associated reduced scheme $X_{\rm red}$.
The nature of the field $k$ will be specified in
each section of this paper.

\section{The double construction}\label{sec:Double}
The doubling of a scheme along a closed subscheme is the building block
of the proofs of our main results of this paper.
In this section, we define this double construction and study its many 
properties. These properties play crucial roles in the later parts
of this paper.

\subsection{The definition of the double} Recall that given surjective ring homomorphisms $f_i \colon A_i \to A$ for $i = 1,2$,
the subring $R = \{(a_1, a_2) \in A_1 \times A_2| f_1(a_1) = f_2(a_2)\}$
of $A_1 \times A_2$ has the property that the diagram  
\begin{equation}\label{eqn:double-set-up}
\xymatrix@C1pc{
R \ar[r]^>>>{p_1} \ar[d]_{p_2} & A_1 \ar[d]^{f_1} \\
A_2 \ar[r]_{f_2} & A}
\end{equation}
is a Cartesian square in the category of commutative unital rings,
where $p_i\colon R \to A_i$ is the composite map $R \inj A_1 \times A_2
\to A_i$ for $i = 1,2$.
Using the fact that every morphism $X \to Y$ in $\Sch_k$, with $Y$ affine,
factors through $X \to \Spec(\sO(X)) \to Y$, one can easily check that
the diagram 
\begin{equation}\label{eqn:double-set-up-1}
\xymatrix@C1pc{
\Spec(A) \ar[r]^{f_1} \ar[d]_{f_2} & \Spec(A_1) \ar[d]^{p_1} \\
\Spec(A_2) \ar[r]_>>>{p_2} & \Spec(R)}
\end{equation}
is a Cartesian and co-Cartesian square in $\Sch_k$.

Let us now assume that $X \in \Sch_k$ and let $\iota\colon D \inj X$ be a closed 
subscheme. If $f_1 = f_2 = \iota$, we see that the
construction of ~\eqref{eqn:double-set-up} is canonical and so it glues 
(see \cite[Ex.~II.2.12]{Hart}) to
give us the push-out scheme $S(X,D)$ and a commutative diagram
\begin{equation}\label{eqn:double-0}
\xymatrix@C2pc{
D \ar[r]^{\iota} \ar[d]_{\iota} & X \ar[d]^{\iota_{+}} \ar@/^.3cm/[ddr]^{id} & \\
X \ar[r]_<<<<>>{\iota_{-}} \ar@/_.3cm/[drr]_{id} & S(X,D) \ar[dr]^{\Delta} & \\
& & X.}
\end{equation}
One can in fact check, by restricting to affine parts of $X$ and then by using the gluing construction, that the top square in ~\eqref{eqn:double-0} is
co-Cartesian in $\Sch_k$. It is also a Cartesian square.
The scheme $S(X,D)$ constructed above will called the {\sl double} of $X$ 
along $D$. We shall mostly write $S(X,D)$ in short as $S_X$ if the 
closed subscheme $D \subset X$ is fixed and remains unchanged in a given
context. 

Notice that there is a canonical map $\pi\colon X \amalg X 
\xrightarrow{(\iota_{+}, \iota_{-})} S(X,D)$
which is an isomorphism over $S(X,D) \setminus D$.
Given a map $\nu\colon C\to S(X,D)$, we let $C_{+} = C \times_{S(X,D)} X_{+}$,
$C_{-} = C \times_{S(X,D)} X_{-}$ and $E = C \times_{S(X,D)} D$.
Here, $X_{\pm}$ is the component of $X \amalg X$ where $\pi$ restricts
to $\iota_{\pm}$.
We then have
\begin{equation}\label{eqn:double-0-*}
E = C \times_{S(X,D)} D = C_+ \times_{X} D = C_- \times_X D.
\end{equation}

More generally, we may often consider the following variant of the double construction.
\begin{defn}\label{defn:almost-int}
Let $j\colon D \inj X$ be a closed immersion of quasi-projective schemes over $k$
and let $f\colon T \to X$ be a morphism of quasi-projective schemes.
We shall say that $T$ is a join of $T_+$ and $T_-$ along $D$,
if there is a push-out diagram
\begin{equation}\label{eqn:almost-int-0}
\xymatrix@C1pc{
f^*(D) \ar[r]^{j_+} \ar[d]_{j_-} & T_+ \ar[d]^{\iota_+} \\
T_- \ar[r]_{\iota_-} & T}
\end{equation}
such that $T_{\pm}$ are quasi-projective schemes and $j_{\pm}$ are
closed immersions. 
\end{defn}


The following lemma related to the double construction will be
often used in this text.

\begin{lem}\label{lem:Cycle-mod-1}
Let $\nu\colon C \to S(X,D)$ be an affine morphism. Then
the push-out $C_{+} \amalg_{E} C_{-}$ is a closed subscheme of $C$.
This closed immersion is an isomorphism if $C$ is reduced.
\end{lem}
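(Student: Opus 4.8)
The assertion is local on $C$, so I would reduce immediately to the affine case. Write $C = \Spec B$, and let $\nu^\sharp\colon R \to B$ be the ring map corresponding to the affine morphism $\nu\colon C \to S(X,D) = \Spec R$, where $R = \{(a_1,a_2) \in A \times A \mid \iota^\sharp(a_1) = \iota^\sharp(a_2)\}$ is the fibre-product ring presenting the double (here $X = \Spec A$ after a further reduction to the case $X$ affine, which is harmless since the double construction is compatible with restriction to an affine open cover and the statement is local on $C$). Let $I = \ker(\iota^\sharp\colon A \to \sO(D))$ be the ideal of $D$; then inside $R$ the element $(I,0) \oplus (0,I)$ makes sense, and one checks $R/((I,0)\oplus(0,I)) \cong \sO(D)$ and $R$ surjects onto $A$ via each $p_i$. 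The push-out $C_+ \amalg_E C_-$ is by definition the spectrum of the fibre product $B_+ \times_{\sO(E)} B_-$ where $B_{\pm} = B \otimes_{R,p_{\pm}} A$ and $\sO(E) = B \otimes_R \sO(D)$.

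\textbf{Key steps.} First I would produce the canonical closed immersion. There is an obvious ring homomorphism $B \to B_+ \times_{\sO(E)} B_-$ sending $b$ to $(b \otimes 1, b\otimes 1)$; this is well-defined because the two images agree in $\sO(E)$. To see it is surjective (hence corresponds to a closed immersion $C_+ \amalg_E C_- \hookrightarrow C$), I would use the description of the push-out ring: an element of $B_+ \times_{\sO(E)} B_-$ is a pair $(x,y)$ with matching image in $\sO(E)$; using that $B_{\pm}$ is generated over the image of $B$ by elements of the form (image of $A$), and that $A \to \sO(D)$ is surjective, one lifts the ``difference'' $x - y$, which dies in $\sO(E)$ and hence lies in the kernel of $B_{\pm}\to \sO(E)$, back to $B$ to adjust $x$ and $y$ into the diagonal. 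Concretely, the kernel of $B \to \sO(E)$ is $IB$ (the extension of the ideal of $D$), and one shows $B_+ \times_{\sO(E)} B_- = \Delta(B) + (IB_+ \times 0) + (0 \times IB_-)$ with the two ideal pieces lying in $\Delta(B)$ after the lift; this gives surjectivity. Second, for the isomorphism statement when $C$ is reduced: the surjection $B \twoheadrightarrow B_+ \times_{\sO(E)} B_-$ has kernel supported set-theoretically on $|E| = \nu^{-1}(|D|)$, since $\nu$ is an isomorphism away from $D$ and the push-out agrees with $C$ there. I would argue the kernel is in fact zero: an element $b$ in the kernel maps to $0$ in both $B_+$ and $B_-$, i.e.\ $b$ is killed after base change along both $p_+\colon R \to A$ and $p_-\colon R \to A$; since $(p_+, p_-)\colon R \to A \times A$ is injective with cokernel supported on $D$, one deduces that $b$ is annihilated by a power of $IB$, i.e.\ $b$ is nilpotent along $E$. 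As $B$ is reduced, $b = 0$.

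\textbf{Main obstacle.} The delicate point is the surjectivity of $B \to B_+ \times_{\sO(E)} B_-$, equivalently that the natural map from $C$ onto the push-out is a closed immersion rather than merely dominant; one must genuinely use the specific shape of $R$ (the fibre product along $\iota^\sharp$) and the surjectivity of $A \to \sO(D)$, and not just formal nonsense about push-outs, because push-outs of schemes along closed immersions are computed via fibre products of rings and the interaction with the further base change $\nu$ is what requires care. I expect the cleanest route is to work Zariski-locally on $C$ so that one may choose a finite set of algebra generators of $B$ over $R$, reduce the matching condition in $\sO(E)$ to finitely many equations, and then explicitly lift using that $I A_1 = I A_2 = I$ inside the two copies of $A$ in $R$. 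The reducedness half is comparatively soft once one knows the kernel is supported on $E$: it is the standard observation that a reduced ring has no nonzero elements supported on a proper closed subscheme which simultaneously die under two base changes whose combined kernel is nilpotent-supported there.
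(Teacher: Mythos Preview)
Your approach is correct in outline but works harder than necessary, and the reducedness step is stated imprecisely.

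For surjectivity of $B \to B_+ \times_{\sO(E)} B_-$, the paper avoids all explicit lifting: simply tensor the defining short exact sequence $0 \to R \to A \times A \to A/I \to 0$ of the double with $B$ over $R$. Right exactness of $-\otimes_R B$ gives
\[
B \xrightarrow{\phi_B} B_+ \times B_- \to B/J \to 0
\]
exact (where $J$ is the ideal of $E$), so the image of $\phi_B$ is precisely $\ker(B_+ \times B_- \to B/J) = B_+ \times_{B/J} B_-$, the ring of the push-out. Your hands-on lifting argument is correct, but it is exactly what this one line of homological algebra encodes; you flagged surjectivity as the ``main obstacle'' when in fact it is immediate from the description of $R$ as a fibre product of rings.

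For the reduced case, your deduction ``$b$ is annihilated by a power of $IB$, hence nilpotent'' is not valid as stated: membership in $(0 :_B J^n)$ does not imply nilpotence in general. The easy fix within your framework is to observe that $\ker(B \to B_+) = (0 \times I)B$ and $\ker(B \to B_-) = (I \times 0)B$, and since $(0\times I)(I\times 0)=0$ in $R$ one gets $b^2=0$ directly. The paper instead argues topologically: the closed immersion is an isomorphism off $E$, and the map $C_+ \amalg C_- \to C$ is surjective on points (because $X_+ \cup X_- = S_X$ set-theoretically), so the closed immersion is a bijection on points and therefore induces the identity on the underlying reduced schemes.
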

\begin{proof}
There is clearly a morphism $C_{+} \amalg_{E} C_{-} \to C$. Showing that this
map has the desired properties is a local question on $X$. So it suffices to
verify these properties at the level of rings.

If we set $X = \Spec(A)$, $S(X,D) = \Spec(R)$ and let $I$ be the defining 
ideal for $D$, then we have an exact sequence of $R$-modules 
\begin{equation}\label{eqn:double-2}
0 \to R \xrightarrow{\phi} A \times A \to A/I \to 0.
\end{equation} 

Since $\nu$ is affine, we can write $C = \Spec(B)$. Let
$J \subset B$ be the ideal defining the closed subscheme $E$.
Tensoring ~\eqref{eqn:double-2} with $B$, we get an exact sequence
\begin{equation}\label{eqn:double-2-0}
B \xrightarrow{\phi_B} B_{+} \times B_{-} \to B/J \to 0 
\end{equation}
and this shows that $C_{+} \amalg_{E} C_{-} \inj C$ is a closed immersion
of schemes. It is also clear that this inclusion is an isomorphism
in the complement of $E$. Furthermore, the surjectivity of the map
$C_{+} \amalg C_{-} \to C$ shows that the above inclusion is also surjective
on points. We conclude from this that the closed immersion 
$C_{+} \amalg_E C_{-} \inj C$ induces identity on the underlying reduced
schemes. In particular, it is an isomorphism if $C$ is reduced.
Equivalently, $\phi_B$ is injective.
\end{proof}

\subsection{More properties of the double}\label{sec:More-on-double}
We now prove some general properties of the double construction that will be
used repeatedly in this text. We shall also show that the double shares many
of the nice properties of the given scheme if the underlying closed subscheme
is an effective Cartier divisor. This will be our case of interest
in the sequel.

For $X \in \Sch_k$, let $k(X)$ denote the sheaf of rings of total
quotients of $X$. For a reduced scheme $X$, let $k_{\min}(X)$ denote the product
of the fields of fractions of the irreducible components of $X$. Note that
there are maps of sheaves of rings $\sO_X \inj k(X) \to k_{\min}(X)$ and
the latter map is an isomorphism if $X$ is reduced and has no embedded primes. 

\begin{prop}\label{prop:double-prp}
Let $X$ be a scheme in $\Sch_k^{\rm ess}$ and 
let $\iota\colon D \inj X$ be a closed subscheme not containing any irreducible component of $X$. 
Then the following hold.
\begin{enumerate}
\item
There are finite maps 
\[
X \amalg X \xrightarrow{\pi} S(X,D) \xrightarrow{\Delta} X
\]
such that $(X \setminus D) \amalg (X \setminus D) \xrightarrow{\pi} 
S(X,D) \setminus D = \Delta^{-1}(X \setminus D)$ is an isomorphism.
In particular, $S$ is affine (projective) if and only if $X$ is so.
\item
$S(X,D)$ is reduced if $X$ is so. In this case, one has 
\[
k_{\rm min}(S(X,D) \setminus D) \simeq k_{\rm min}(X\setminus D) \times k_{\rm min}
(X \setminus D).
\]
If $D$ contains no component of $X$, then $k_{\rm min}(S(X,D) \setminus D)
= k_{\rm min}(S(X,D))$.
\item
The composite map
$D \stackrel{\iota_{\pm} \circ \iota}{\rightarrow} \Delta^*(D) 
\xrightarrow{\Delta} D$ is identity and $|\Delta^*(D)| = |D|$.
\item
If $Y \subseteq X$ is a closed (resp. open) subscheme of $X$ and
$Y \cap D$ is the scheme-theoretic intersection, then $S(Y, Y \cap D)$
is a closed (resp. open) subscheme of $S(X,D)$.
There is an inclusion of subschemes $S(Y, Y \cap D) \inj \Delta^*(Y)$
which is an isomorphism if $Y$ is open.
\item
Let $Y$ be a subscheme of $X$. Then $|\Delta^*(Y) \cap D| = |Y \cap D| = |S(Y, Y \cap D) \cap D|$.
\item
$S(X,D)_{\rm sing} = D \cup \Delta^{-1}(X_{\rm sing})$.
In particular, $S(X,D)_{\rm sing} = D$ if $X$ is non-singular.
\item  
If $f\colon Y \to X$ is a flat morphism, then $S(Y, f^*(D)) 
\simeq S(X,D) {\underset{X}\times} Y$. In particular, the map
$S(f)\colon S(Y, f^*(D)) \to S(X,D)$ is flat (resp. smooth) if $f$ is so.
\item
$\pi$ is the normalization map and $D$ is a conducting subscheme, if
$X$ is normal.
\end{enumerate}
\end{prop}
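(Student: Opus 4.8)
The statement is a list of eight properties of the double $S(X,D)$, and almost all of them are local on $X$, so the basic strategy is to reduce to the affine situation $X = \Spec(A)$, $D = \Spec(A/I)$, $S(X,D) = \Spec(R)$ with $R = \{(a_1,a_2) \in A\times A \mid a_1 - a_2 \in I\}$, and to exploit the two fundamental exact sequences of $R$-modules
\begin{equation}\label{eqn:plan-double-seq}
0 \to R \xrightarrow{\phi} A \times A \to A/I \to 0, \qquad 0 \to I \xrightarrow{\delta} R \xrightarrow{\Delta^\sharp} A \to 0,
\end{equation}
where $\delta(x) = (x,0)$ and the first sequence is \eqref{eqn:double-2}. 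For (1), note $A\times A$ is finite over $R$ via $\phi$ (it is generated by $1$ and $(1,0)$, say, since $(a_1,a_2) = a_2\cdot 1 + (a_1-a_2)\cdot(1,0)$ and $a_1 - a_2 \in I \subset R$ acting through the second coordinate is zero), hence $\pi$ is finite; and $\Delta\circ\pi$ is the fold map onto $X$, which is finite, so $\Delta$ is finite by the standard fact that if $g\circ f$ is finite and $f$ is surjective (here $\pi$ is a surjective closed immersion composite — actually $\pi$ finite surjective suffices) then $g$ is finite. The isomorphism away from $D$ follows because $\phi$ becomes an isomorphism after inverting any element of $I$ (then $A/I$-part vanishes, and $R[I^{-1}] \cong (A\times A)[I^{-1}] \cong A[I^{-1}]\times A[I^{-1}]$). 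The affine/projective equivalence follows since $\Delta$ and $\pi$ are both finite surjective, so $S$ is affine/projective iff $X$ is, using that a scheme finite over an affine (resp. projective) scheme is affine (resp. projective), and conversely $X$ is a closed subscheme of $S$ via $\iota_+$.

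For (2): reducedness is checked on the affine pieces; $R \inj A\times A$ is injective and $A\times A$ is reduced when $A$ is, so $R$ is reduced. The computation of $k_{\min}$ follows from (1): over the dense open $X\setminus D$, $S(X,D)\setminus D \cong (X\setminus D)\amalg(X\setminus D)$, which has irreducible components = two copies of those of $X\setminus D$; and when $D$ contains no component of $X$, every component of $S$ meets the open $S\setminus D$ (here one uses \propref{prop:double-prp}(3) or a direct argument that $|D|$ contains no component of $S$, because the two copies $\iota_\pm(X)$ are the components and neither is contained in $D$). For (3): $\Delta\circ\iota_\pm = \id_X$ by \eqref{eqn:double-0}, so restricting to $D$ gives the first assertion; $|\Delta^*(D)| = |D|$ because $\Delta^{-1}(|D|)$ set-theoretically is $|D|$ (the fibre of $\Delta$ over a point of $X\setminus D$ is two reduced points, and $\Delta^*(D) = S\times_X D$ is supported where the ideal $I\cdot R = \delta(I)\cdot(\text{stuff})$... more cleanly: $\Delta^*(D)$ has support $\Delta^{-1}(|D|) = |D|$ identified with $D$ via either $\iota_\pm$, using (1)). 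For (4): for $Y$ closed with ideal $J \subset A$, one checks $S(Y,Y\cap D) = \Spec(R/\widetilde{J})$ for an appropriate ideal and this is a closed subscheme of $\Spec(R)$; for $Y$ open, pull back the Cartesian/co-Cartesian square. The inclusion $S(Y,Y\cap D) \inj \Delta^*(Y)$ is checked on rings (the ideal defining $\Delta^*(Y)$ is $\Delta^\sharp{}^{-1}$-related to $J$, which is contained in the one defining $S(Y, Y\cap D)$), with equality for $Y$ open by flat base change as in (7). Statement (5) is then a set-theoretic consequence of (3), (4) and the compatibility of supports.

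For (6): away from $D$, $S(X,D)$ is isomorphic to two copies of $X$, so its singular locus there is $\Delta^{-1}(X_{\sing})\setminus D$; along $D$, the local model is the pushout of $X$ with itself along a Cartier divisor, which is the coordinate-cross type singularity $\Spec(A\times_{A/I} A)$ — this is singular exactly along $D$ when $X$ is smooth (localize at a point of $D$: the completed local ring is $k[[x_1,\dots,x_n]]\times_{k[[x_2,\dots,x_n]]} k[[x_1,\dots,x_n]] \cong k[[x_1,y_1,x_2,\dots,x_n]]/(x_1 y_1)$, which is singular along $x_1 = y_1 = 0$). The general-$X$ case combines these. For (7): given $f\colon Y\to X$ flat, tensoring the first sequence in \eqref{eqn:plan-double-seq} with $\sO_Y$ over $\sO_X$ stays exact by flatness, which identifies $\sO_Y \otimes_{\sO_X} R$ with the analogous fibre-product ring built from $\sO_Y$ and $f^*(D)$ — this is exactly $S(Y, f^*(D))$; flatness/smoothness of $S(f)$ is then base change. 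For (8): if $X$ is normal, each copy $\iota_\pm(X)$ is normal, $\pi\colon X\amalg X \to S(X,D)$ is finite and birational (iso away from $D$ by (1)) with normal source, hence it \emph{is} the normalization; and the conductor of $\pi$ is by definition the largest $\sO_S$-ideal that is also an $\sO_{X\amalg X}$-ideal, which from \eqref{eqn:plan-double-seq} is $\delta(I)\times\delta(I)$-type, i.e. its support (and scheme structure, suitably interpreted) is $D$. The main obstacle I expect is \textbf{(6)}, specifically pinning down the singular locus of the pushout along $D$ cleanly in the non-affine / not-obviously-hypersurface situation: one must be careful that $D$ being an \emph{effective Cartier} divisor (as in the intended application) makes the local model the nodal-type $xy = 0$ thickened by smooth parameters, whereas for a general closed subscheme $D$ the statement as phrased — with $S(X,D)_{\sing} = D\cup\Delta^{-1}(X_{\sing})$ — could fail or need the hypothesis that $D$ contains no component of $X$ plus reducedness considerations; so part of the work is stating (6) under the correct hypotheses (I would assume $D \subset X$ effective Cartier here, matching "this will be our case of interest") and doing the formal-local computation carefully. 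The rest is routine diagram- and ideal-chasing built on \eqref{eqn:plan-double-seq} and \lemref{lem:Cycle-mod-1}.
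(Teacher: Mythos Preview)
Your approach is essentially the same as the paper's: reduce to the affine case $X=\Spec(A)$, $D=V(I)$, $S(X,D)=\Spec(R)$, and exploit the sequence $0\to R\to A\times A\to A/I\to 0$ together with the splitting $R\cong \delta(A)\oplus (I\times\{0\})$. Two points deserve cleaning up.

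For (1), the ``standard fact'' you invoke (that $g\circ f$ finite and $f$ surjective forces $g$ finite) is not correct in general. The paper instead observes directly that $R$ is an $A$-submodule (via the diagonal $\delta$) of the finite free $A$-module $A\times A$; since $A$ is Noetherian this makes $R$ finite over $A$, i.e.\ $\Delta$ is finite. Your argument for the finiteness of $\pi$ is fine.

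For (6), your worry about needing the Cartier hypothesis is misplaced, and the formal-local computation is unnecessary. The paper's argument is much lighter: at any closed point $p\in |D|$, the hypothesis that $D$ contains no component of $X$ gives $I_p\neq 0$, so the two ideals $I_p\times\{0\}$ and $\{0\}\times I_p$ of $R_p$ are nonzero with product zero. Hence $R_p$ is not a domain and therefore not regular. Combined with the isomorphism $S(X,D)\setminus D\cong (X\setminus D)\amalg(X\setminus D)$ from (1), this gives $S(X,D)_{\rm sing}=D\cup\Delta^{-1}(X_{\rm sing})$ with no further hypothesis on $D$.
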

\begin{proof}
To prove the proposition, we can assume that $X = \Spec(A)$ is affine. 
Let $p_1, p_2 \colon A \times A \to A$ denote the projections and let 
$q \colon A \to A/I$ be the
quotient map. Set $q_i = q \circ p_i$. Set $\psi_i = p_i \circ \phi$
for $i =1,2$.  Let $\delta\colon A \inj R \inj  A \times A$ denote the diagonal map.
We then have $\psi_i \circ \delta = id_A$ for $i =1,2$ and this yields 
\[
A \times A = \phi \circ \delta(A) \oplus {\rm Ker}(p_2);
\]
\begin{equation}\label{eqn:double-4}
R = \delta(A) \oplus {\rm Ker}(\psi_2) = \delta(A) \oplus I \times \{0\}
\simeq A \oplus I. 
\end{equation}

Since $A \times A$ is a finite free $A$-module and $R$ is an $A$-submodule,
it follows that $R$ is a finite $A$-module. This proves (1).
The item (2) follows immediately from ~\eqref{eqn:double-2}. 

The ideal of $D$ inside $S(X,D)$ is ${\rm Ker}(R \xrightarrow{q \circ \psi_i}
A/I)$, which is  $I \times I$.
Since $\delta^*(I) \subseteq I \times I$, we see that $D \subseteq \Delta^*(D)$
and the composite $D \inj \Delta^*(D) \xrightarrow{\Delta} D$ is clearly
identity. Furthermore, it is clear that $R[(a, b)^{-1}] = A[a^{-1}] \times 
A[b^{-1}]$ and $\delta^*(I)[(a, b)^{-1}] = R[(a, b)^{-1}]$, whenever 
$a, b \in I \setminus \{0\}$.
Hence, we have $|\Delta^*(D)| = |D|$. This proves (3).

To prove (4), we only need to consider the closed part.
Let $A' = A/J$, where $J$ is the ideal defining $Y$ and let
$R' = \{(a', b') \in A' \times A'| a' - b' \in (I+J)/J\}$. Let $\ov{a}$
denote the residue class of $a \in A$ modulo $J$.
Suppose there exist $a, b \in A$ such that $\ov{a} - \ov{b} \in (I+J)/J$.
This means $a-b \in I + J$ and so we can write $a-b = \alpha + \beta$,
where $\alpha \in I$ and $\beta \in J$.
We set $a' = a - \beta$ and $b' = b$. This yields $a' - b' = 
a-b - \beta = \alpha \in I$ and $a'- a = \beta \in J, \ b'-b = 0 \in J$.
We conclude that $(a', b') \in R$ and it maps to $(\ov{a}, \ov{b}) \in R'$.
Hence $R \surj R'$.

An element of $\delta^*(J)$ is of the form $(a \alpha, b \alpha)$, where
$a, b, \in A, \alpha \in J$ and $a-b \in I$. This element clearly dies
in $R'$. Hence $S(Y, Y \cap D) \subseteq \Delta^*(Y)$. 

To prove (5), let $S_Y = S(Y, Y \cap D)$. Then 
\[
|\Delta^*(Y) \cap D| = |\Delta^*(Y) \cap \Delta^*(D)| = |\Delta^*(Y \cap D)|,
\]
where the first equality follows from (3).  
On the other hand, we have
\[
|S_Y \cap D| = |\iota^Y_1 \circ \iota^Y(Y \cap D)| = |\Delta^*_Y(Y \cap D)|
= |\Delta^*(Y \cap D)|,
\]
where the second equality follows from (3) with $X$ replaced by $Y$.
The item (5) now follows. The item (6) follows from (1) and the fact
that more than one components of $S(X,D)$ meet along $D$.

To prove (7), let $Y = \Spec(B)$ and tensor ~\eqref{eqn:double-2} with
$B$. The flatness of $B$ over $A$ yields the short exact sequence
\[
0 \to R {\underset{A}\otimes}B \xrightarrow{\phi} B \times B \to B/{IB} \to 0
\]
and this proves the first part of (7). The second part follows because
a base change of a flat (resp. smooth) map is flat (resp. smooth).

The item (8) follows because $\pi$ is finite and birational and
the ideal of $\pi^*(D)$ in $X \amalg X$ is $I_D \times I_D$ which is actually
contained in $\sO_{S(X,D)}$. So $D$ is a conducting subscheme. 
\end{proof}

\subsection{Double along a Cartier divisor}\label{sec:DCar}
Recall that a morphism $f\colon X \to S$ of schemes is called a {\sl local 
complete intersection} (l.c.i.) at a point $x \in X$ if it is of finite type and
if there is an open 
neighborhood $U$ of $x$ and a factorization 
\[
\xymatrix@C1pc{
& Z \ar[d]^g \\
U \ar[ur]^i \ar[r]_f & S,}
\]
where $i$ is a regular closed immersion and $g$ is a smooth morphism.
We say that $f$ is a local complete intersection morphism if it is so at
every point of $X$. We say that $f$ is l.c.i. along a closed subscheme
$S' \inj S$ if it is l.c.i. at every point in $f^{-1}(S')$.

\begin{prop}\label{prop:double-prp-fine}
Continuing with the notations of \propref{prop:double-prp}, assume further
that $D$ is an effective Cartier divisor on $X$. Then the following
hold.
\begin{enumerate}
\item
$\Delta$ is finite, flat and $\sO_{S(X,D)}$ is a locally free 
$\sO_X$-module of rank two via $\Delta$.
\item
$S(X,D)$ is Cohen-Macaulay if $X$ is so.
\item
If $f\colon Y \to X$ is any morphism, then there is a closed immersion
of schemes $S(Y, f^*(D)) \inj S(X,D) {\underset{X}\times} Y$.
This embedding is an isomorphism if $f$ is transverse to $D \inj X$.
\item
If $f\colon Y \to X$ is any morphism such that $Y$ is Cohen-Macaulay
and $f^*(D)$ does not contain any irreducible component of $Y$, then
the embedding $S(Y, f^*(D)) \inj S(X,D) {\underset{X}\times} Y$ is an
isomorphism. In this case, $f^*(D)$ is an effective Cartier divisor on $Y$.
\item
If $f\colon Y \to X$ is l.c.i. along $D$, then $Y {\underset{X}\times} S(X,D) \to 
S(X,D)$ is l.c.i. along $D$. 
\end{enumerate}
\end{prop}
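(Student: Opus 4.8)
The plan is to reduce each of the five assertions to commutative algebra over an affine open $\Spec(A)\subseteq X$, where now the ideal $I$ of $D$ is generated by a single nonzerodivisor $t$ since $D$ is an effective Cartier divisor; everything then follows by combining the two facts already recorded in the proof of \propref{prop:double-prp}: the isomorphism $R\simeq A\oplus I$ of $A$-modules from \eqref{eqn:double-4} and the exact sequence $0\to R\xrightarrow{\phi}A\times A\to A/I\to 0$ of \eqref{eqn:double-2}.

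For (1): because $I=tA\cong A$ as an $A$-module, \eqref{eqn:double-4} gives $R\cong A\oplus A$, so $\sO_{S(X,D)}$ is a locally free $\sO_X$-module of rank two via $\Delta$; finiteness of $\Delta$ is \propref{prop:double-prp}(1), and local freeness gives flatness. For (2): $\Delta$ is finite and flat, and each of its fibres is a finite algebra over a field, hence $0$-dimensional and a fortiori Cohen--Macaulay; by the standard behaviour of the Cohen--Macaulay property under a flat morphism with Cohen--Macaulay fibres (applied to the flat local maps $\sO_{X,\Delta(x)}\to\sO_{S(X,D),x}$), $S(X,D)$ is Cohen--Macaulay whenever $X$ is.

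For (3) and (4): working locally with $f\colon\Spec(B)\to\Spec(A)$ and using the two-term free resolution $0\to A\xrightarrow{\cdot t}A\to A/I\to 0$, tensoring \eqref{eqn:double-2} with $B$ over $A$ yields the exact sequence
\[
0\to {}_{t}B\to R\otimes_A B\to B\times B\to B/IB\to 0,\qquad {}_{t}B:=\{b\in B:tb=0\}\cong \Tor_1^A(A/I,B).
\]
From \eqref{eqn:double-4} one reads off that the image of $R\otimes_A B$ in $B\times B$ is $\delta(B)+(IB\times\{0\})=\{(b_1,b_2):b_1-b_2\in IB\}$, which is exactly the coordinate ring of $S(Y,f^*(D))$; hence $R\otimes_A B$ surjects onto it, producing a canonical closed immersion $S(Y,f^*(D))\inj S(X,D)\times_X Y$ (one checks it agrees with the map coming from the pushout universal property). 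It is an isomorphism precisely when ${}_{t}B=0$, i.e.\ when the local equation of $f^*(D)$ is a nonzerodivisor in $B$, equivalently $\Tor^{\sO_X}_{>0}(\sO_Y,\sO_D)=0$, which is the transversality hypothesis in (3). For (4): if $Y$ is Cohen--Macaulay then $B$ has no embedded associated primes, so if $f^*(D)$ contains no component of $Y$ then $t$ avoids every associated prime of $B$ and is therefore a nonzerodivisor; this gives the isomorphism through (3) and at the same time exhibits $f^*(D)$ as an effective Cartier divisor on $Y$.

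For (5): the morphism $Y\times_X S(X,D)\to S(X,D)$ is the base change of $f$ along $\Delta\colon S(X,D)\to X$, which is flat by (1); and by \propref{prop:double-prp}(3) the composite $D\inj S(X,D)\xrightarrow{\Delta}X$ is $\iota$, so the preimage of $D\subset S(X,D)$ in $Y\times_X S(X,D)$ is $f^*(D)$. On a neighbourhood of $f^{-1}(D)$ factor $f$ as a regular immersion followed by a smooth morphism; since regular immersions and smooth morphisms are both stable under the flat base change along $\Delta$, this factorization base-changes to one exhibiting $Y\times_X S(X,D)\to S(X,D)$ as \lci\ on a neighbourhood of the preimage of $D$, as required. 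I expect the only genuinely delicate points to be the bookkeeping in (3)--(4)---pinning down the image and kernel of $R\otimes_A B\to B\times B$ and matching the vanishing of ${}_{t}B$ with the transversality and Cohen--Macaulay hypotheses---and, in (5), the need to use flat base change (available thanks to (1)) rather than arbitrary base change, since regular immersions are not in general stable under the latter.
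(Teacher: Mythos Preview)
Your proof is correct and follows essentially the same route as the paper's: reduce to the affine setting with $I=(t)$ for a nonzerodivisor $t$, use the splitting $R\simeq A\oplus I$ for (1), tensor the exact sequence \eqref{eqn:double-2} with $B$ and identify the $\Tor_1$-term with ${}_tB$ for (3) and (4), and invoke flat base change (from (1)) for (5). The only cosmetic difference is in (2), where the paper argues directly with depth along the local map $A_{\mathfrak{n}}\to R_{\mathfrak{m}}$, while you appeal to the standard ``flat with Cohen--Macaulay fibres'' criterion; these are equivalent formulations of the same fact.
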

\begin{proof}
We can again assume that $X = \Spec(A)$ is affine such that
$I = (a)$ is a principal ideal such that $a \in A$ is not a zero-divisor.
It follows then that $I$ is a free $A$-module of rank one.
We can now apply ~\eqref{eqn:double-4} to conclude (1) as 
the finiteness of $R$ over $A$ is already shown in \propref{prop:double-prp}.

To prove (2), let $\fm \subsetneq R$ be a maximal ideal and let $\fn =
\delta^{-1}(\fm)$. Then $A_{\fn} \to R_{\fn}$ is a finite and flat map
and hence $A_{\fn} \to R_{\fm}$ is a faithfully flat local homomorphism of 
noetherian local rings of same dimension. Since $A_{\fn}$ is
Cohen-Macaulay and since this local homomorphism preserves regular sequences,
it follows that ${\rm depth}(R_{\fm}) \ge \dim(A_{\fn}) = \dim(R_{\fm})$.
Hence, $R_{\fm}$ is Cohen-Macaulay.

To prove (3), we let $Y = \Spec(B)$ and tensor ~\eqref{eqn:double-2} (over $A$)
with $B$ to get an exact sequence
\[
0 \to {\rm Tor}^1_A(A/I, B) \to
R {\underset{A}\otimes}B \xrightarrow{\phi} B \times B \to B/{IB} \to 0
\]
and $S(B, IB)$ is (by definition) the kernel of the map $B \times B \to B/{IB}$.
In particular, we get a surjective map of rings $R {\underset{A}\otimes}B \surj
S(B, IB)$. This proves the first part of (3).
The transversality of $B$ with $A/I$ means precisely that
${\rm Tor}^1_A(A/I, B) =0$ and we get that $R {\underset{A}\otimes}B
\xrightarrow{\simeq} S(B, IB)$. This proves (3).

Suppose next that $Y$ is Cohen-Macaulay and no irreducible component of
$Y$ is contained in $f^*(D)$. It suffices to show in this case that
${\rm Tor}^1_A(A/I, B) =0$. Let $f\colon A \to B$ be the map on the
coordinate rings. That no component of $Y$ is contained in $f^*(D)$ 
means that $f(a)$ does not belong to any minimal prime of $B$.
The Cohen-Macaulay property of $B$ implies that it has no embedded 
associated prime. In particular, $f(a)$ does not belong to any 
associated prime and hence is not a zero-divisor in $B$.

We have a short exact sequence
\[
0 \to A \xrightarrow{a} A \to A/I \to 0
\]
which says that ${\rm Tor}^1_A(A/I, B) = {\rm Ker}(B \xrightarrow{f(a)} B)$
and we have just shown that the latter group is zero.
We have also shown above that $f(a)$ is not a zero-divisor on $B$
and this implies that $f^*(D)$ is an effective Cartier divisor on $Y$.
This proves (4).
The item (5) follows from (1) and an elementary fact that l.c.i. morphisms
are preserved under a flat base change.
\end{proof}

\section{Chow group of $0$-cycles on singular schemes}
\label{section:0-C-S}
In this section, we give a definition of the Chow group of $0$-cycles on 
singular schemes that modifies slightly the one given in \cite{LW}. While 
using the same set of generators, we change the geometric condition imposed on 
the curves giving the rational equivalence. In many cases, we are able to show 
that this new definition coincides with the classical one.
It turns out that the modified Chow group of 0-cycles has better 
functorial properties and is more suitable 
for proving \thmref{thm:Intro-6}. 

\subsection{Some properties of l.c.i and perfect morphisms}
\label{sec:Perf}
Recall that a finite type morphism $f\colon X \to S$ of noetherian
schemes is called 
\emph{perfect} if the local ring $\sO_{X,x}$ has finite Tor-dimension
as a module over the local ring $\sO_{S, f(x)}$ for every point $x \in X$.
Equivalently, given any point $x \in X$, there are affine neighborhoods $U$ 
of $x$ and $V$ of $f(x)$ such that $\sO(U)$ is an $\sO(V)$-module of finite
Tor-dimension. Recall also  the following
\begin{prop}[Proposition~5.12, \cite{Srinivas-1}] \label{prop:pushforward-perfect-maps}Let $f\colon X \to S$ be a proper and perfect morphism of noetherian schemes. Then there is a well defined push-forward map
$K_0(X)\xrightarrow{f_*} K_0(S)$
between the Grothendieck groups of vector bundles. 
    \end{prop}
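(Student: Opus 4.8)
The plan is to define $f_*$ as the homomorphism induced on Grothendieck groups by the total derived pushforward $Rf_*$. The single non-formal ingredient is that, under these hypotheses, $Rf_*$ sends any vector bundle $\sE$ on $X$ to a \emph{perfect} complex on $S$, i.e.\ a bounded complex quasi-isomorphic to a bounded complex of vector bundles. Granting this, and using that all schemes here are quasi-projective over $k$ (so $S$ carries an ample line bundle), $Rf_*\sE$ is globally quasi-isomorphic to a bounded complex $\sF^{\bullet}$ of vector bundles on $S$; one then sets $f_*[\sE] := \sum_i (-1)^i [\sF^i]$ in $K_0(S)$. Independence of the chosen representative $\sF^{\bullet}$, and hence of all choices, is a standard consequence of the formalism of perfect complexes — equivalently, of the fact that the canonical map from $K_0(S)$ to the Grothendieck group of the triangulated category of perfect complexes on $S$ is an isomorphism.

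With $Rf_*\sE$ known to be perfect, the remaining verifications are short. A short exact sequence $0 \to \sE' \to \sE \to \sE'' \to 0$ of vector bundles on $X$ produces a distinguished triangle $Rf_*\sE' \to Rf_*\sE \to Rf_*\sE'' \to Rf_*\sE'[1]$ in the derived category of $S$, whence $[Rf_*\sE] = [Rf_*\sE'] + [Rf_*\sE'']$ in $K_0(S)$; thus $[\sE] \mapsto [Rf_*\sE]$ descends to a well-defined group homomorphism $K_0(X) \to K_0(S)$. Compatibility with composition, $(g\circ f)_* = g_* \circ f_*$, follows from the canonical isomorphism $R(g\circ f)_* \simeq Rg_* \circ Rf_*$ together with the fact that a composite of proper and perfect morphisms is again proper and perfect (finite Tor-dimension is stable under composition). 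The projection formula moreover makes $f_*$ a homomorphism of $K_0(S)$-modules, although that is not needed here.

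The substance of the argument is therefore the claim that $Rf_*\sE$ is a perfect complex on $S$ whenever $f$ is proper and perfect and $\sE$ is a vector bundle on $X$. Grothendieck's coherence theorem for the proper morphism $f$ places $Rf_*\sE$ in $D^b_{\mathrm{coh}}(S)$, so it remains to show that it has finite Tor-amplitude over $\sO_S$. This is local on $S$, so by Chow's lemma and noetherian induction one reduces to the case where $f$ is projective, factored as a closed immersion $i\colon X \inj \P^n_S$ followed by the projection $p\colon \P^n_S \to S$. For a coherent sheaf on $\P^n_S$, twisting by $\sO(m)$ for $m \gg 0$ yields a left resolution by finite direct sums of sheaves $\sO_{\P^n_S}(-m_j)$, and Serre's computation of $Rp_*\sO_{\P^n_S}(d)$ shows that $Rp_*$ of each such term is a vector bundle on $S$ placed in a single degree, or zero; hence $Rp_*$ of a \emph{finite} resolution of this kind is perfect. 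The perfectness of $f$ is precisely what forces the relevant resolution to be finite: since $p$ is smooth, finite Tor-dimension of $\sO_{X,x}$ over $\sO_{S,f(x)}$ propagates to finite Tor-dimension of $i_*\sE$ over $\sO_{\P^n_S}$, locally over $S$. (Alternatively one invokes directly that a proper perfect morphism induces $Rf_*$ from the category of perfect complexes on $X$ to that on $S$; this is \cite[Proposition~5.12]{Srinivas-1}, and, in greater generality, a theorem of SGA~6.)

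I expect the main obstacle to be exactly this stability of perfectness — equivalently, of finite Tor-dimension over the base — under derived pushforward. The $\P^n_S$-computation is mechanical; the real work is the d\'evissage via Chow's lemma, within which the hypothesis ``perfect'' (rather than merely ``proper'') is indispensable, since otherwise $Rf_*\sE$ need not lie in the subcategory generated by vector bundles and no class in $K_0(S)$ results. The comparison of the two flavours of $K_0$, additivity along exact sequences, and functoriality are all formal.
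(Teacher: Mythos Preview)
The paper does not give its own proof of this proposition: it is stated as a direct citation of \cite[Proposition~5.12]{Srinivas-1} and used as a black box. Your sketch is the standard argument behind that citation (and behind the corresponding SGA~6 result): define $f_*$ via $Rf_*$ on perfect complexes, use that $Rf_*$ preserves perfectness for a proper perfect morphism, and identify $K_0(S)$ with the Grothendieck group of perfect complexes since $S$ carries an ample family of line bundles. So there is nothing to compare against; your write-up is a correct summary of the proof that the paper elects to outsource.

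One small remark on your reduction step: invoking Chow's lemma and noetherian induction is unnecessary in the setting of this paper, since all schemes are assumed quasi-projective over $k$ from the outset (see the Notations section), so any proper $f$ is already projective and you can factor through $\P^n_S$ directly.
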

Some known elementary properties of l.c.i and perfect morphisms are recalled 
in the following lemmas.

\begin{lem}\label{lem:lci-5}
\begin{enumerate}
\item
The l.c.i. and perfect morphisms are preserved under flat base change.
\item
A flat morphism of finite type is perfect.
\item
An l.c.i. morphism is perfect.
\item
l.c.i. and perfect morphisms are closed under composition.
\item
l.c.i. and perfect morphisms satisfy faithfully flat (fpqc) descent.
\end{enumerate}
\end{lem}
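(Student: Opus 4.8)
\textbf{Proof proposal for Lemma~\ref{lem:lci-5}.} The plan is to verify each of the five items by reducing to the affine case and quoting standard commutative algebra, since all of these statements are essentially local on both source and target. For item (1), I would note that l.c.i.\ morphisms are preserved under flat (indeed arbitrary) base change because a factorization $U \xrightarrow{i} Z \xrightarrow{g} S$ with $i$ a regular immersion and $g$ smooth pulls back along $S' \to S$ to $U' \xrightarrow{i'} Z' \xrightarrow{g'} S'$, where smoothness is stable under any base change and a regular immersion stays regular after \emph{flat} base change (the pullback of the Koszul-regular sequence cutting out $U$ in $Z$ stays Koszul-regular because tensoring with a flat module is exact). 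For perfectness: if $\sO(U)$ has finite Tor-dimension over $\sO(V)$, then for a flat map $\sO(V) \to \sO(V')$ the ring $\sO(U) \otimes_{\sO(V)} \sO(V')$ has finite Tor-dimension over $\sO(V')$, since a bounded finite-flat (or bounded finite-projective) resolution of $\sO(U)$ base-changes to one of $\sO(U) \otimes_{\sO(V)} \sO(V')$.

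For item (2), a flat finite-type morphism is perfect essentially by definition: flatness says $\sO_{X,x}$ is a flat $\sO_{S,f(x)}$-module, i.e.\ has Tor-dimension zero, which is certainly finite. For item (3), an l.c.i.\ morphism is perfect because locally it factors as a regular closed immersion followed by a smooth morphism; the smooth part is flat hence perfect by (2), and a regular closed immersion $i\colon U \inj Z$ is perfect because $\sO_U$ admits the (finite length) Koszul resolution over $\sO_Z$ as a module, giving finite Tor-dimension; then one uses (4) — closure under composition — applied to perfect morphisms (composition of a finite-Tor-dimension module map with another is finite-Tor-dimension by the spectral sequence $\Tor^p_{\sO_S}(\Tor^q_{\sO_Z}(-,-),-)$, or more simply by splicing the two bounded resolutions).

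For item (4): composition of l.c.i.\ morphisms is l.c.i.\ — given $X \xrightarrow{f} Y \xrightarrow{h} S$ both l.c.i., locally write $f$ via $U \xrightarrow{i} \A^n_Y \to Y$ and $h$ via $V \xrightarrow{j} \A^m_S \to S$; then $g \circ f$ factors through a regular immersion into $\A^{n+m}_S$ after shrinking and using that the composite of regular immersions is regular and smooth-over-smooth is smooth. Composition of perfect morphisms is perfect by the Tor-amplitude splicing argument above. For item (5): fpqc descent for l.c.i.\ and perfect morphisms holds because both properties are local for the fpqc topology on the target — one cites the standard descent results (e.g.\ \cite[Tag 02L4 / 068T]{stacks} type statements, or the analogous references in SGA/EGA) that regular immersions, smooth morphisms, and finite Tor-dimension are all fpqc-local on the base. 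Since all of this is standard, I would keep the write-up to a few lines per item, citing \cite{SGA6} or the Stacks project for the less trivial descent claims; there is no real obstacle here, the lemma being recalled purely for later reference.
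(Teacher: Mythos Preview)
Your proposal is correct and follows the standard arguments, but note that the paper gives \emph{no proof at all} for this lemma: it is introduced with the phrase ``Some known elementary properties of l.c.i.\ and perfect morphisms are recalled in the following lemmas'' and is stated without proof, as a collection of well-known facts to be used later. So your sketch is more detailed than what the paper actually does, and there is no approach to compare against; your outline is exactly the kind of justification one would give if asked to fill in the omitted details.
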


\begin{lem}\label{lem:lci-6}
Let $f\colon X \to S$ be a finite type morphism of noetherian schemes 
such that for every $x \in
f^{-1}(S_{\rm sing})$, the map $f$ is l.c.i. at $x$. Then $f$ is perfect.
\end{lem}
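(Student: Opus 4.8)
The plan is to verify the defining condition of a perfect morphism pointwise on $X$. Fix $x \in X$ and put $s = f(x)$; the goal is to show that $\sO_{X,x}$ has finite Tor-dimension over $\sO_{S,s}$, and I would split into two cases according to whether $x$ lies in $f^{-1}(S_{\rm sing})$.

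\emph{Case 1: $s \notin S_{\rm sing}$.} Then $\sO_{S,s}$ is a regular local ring, so by the Auslander--Buchsbaum--Serre theorem its global dimension equals $d := \dim \sO_{S,s} < \infty$. Consequently $\Tor^{\sO_{S,s}}_i(N,M) = 0$ for all $i > d$ and all $\sO_{S,s}$-modules $N$ and $M$; taking $N = \sO_{X,x}$ shows that $\sO_{X,x}$ has Tor-dimension at most $d$ over $\sO_{S,s}$. The point to be careful about here is that the bound on Tor is supplied by the global dimension of the base ring, so one does \emph{not} need $\sO_{X,x}$ to be a finitely generated $\sO_{S,s}$-module (which it generally is not).

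\emph{Case 2: $x \in f^{-1}(S_{\rm sing})$.} By hypothesis $f$ is l.c.i.\ at $x$, hence on some open neighbourhood $U$ of $x$ it factors as $U \xrightarrow{i} Z \xrightarrow{g} S$ with $i$ a regular closed immersion and $g$ smooth. A smooth morphism is flat of finite type, hence perfect by \lemref{lem:lci-5}(2); a regular closed immersion is l.c.i., hence perfect by \lemref{lem:lci-5}(3) (locally the structure sheaf of the subscheme has a finite Koszul resolution over the ambient ring). Since perfect morphisms are closed under composition by \lemref{lem:lci-5}(4), the restriction $U \to S$ is perfect, so in particular $\sO_{X,x}$ has finite Tor-dimension over $\sO_{S,s}$.

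Combining the two cases gives finite Tor-dimension of $\sO_{X,x}$ over $\sO_{S,f(x)}$ for every $x \in X$, i.e.\ $f$ is perfect. There is no real obstacle beyond the subtlety flagged in Case~1: one must read the Tor-vanishing off the finite global dimension of the regular local ring $\sO_{S,s}$ rather than attempting to resolve $\sO_{X,x}$ by finite free $\sO_{S,s}$-modules. Everything else is a bookkeeping application of \lemref{lem:lci-5}.
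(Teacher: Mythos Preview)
Your proof is correct and follows essentially the same approach as the paper's own proof, which also splits on whether $f(x)$ lies in $S_{\rm sing}$ and invokes regularity of $\sO_{S,s}$ in the first case and the l.c.i.\ hypothesis in the second. You have simply spelled out the details more carefully, in particular the point about using the finite global dimension of $\sO_{S,s}$ rather than a finite free resolution of $\sO_{X,x}$.
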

\begin{proof}
It follows from the definition of a perfect morphism because if $x \in X$
is such that $s = f(x)$ is a regular point of $S$, then $\sO_{X,x}$ has
finite Tor-dimension over $\sO_{S,s}$. This property for the points
over the singular locus of $S$ follows from the hypothesis of the lemma.
\end{proof}

\subsection{Divisor classes for singular curves}\label{ssec:DivClassesCurves}
We fix a field $k$. For $X$ an equidimensional quasi-projective $k$-scheme and $Y\subsetneq X$ a closed subscheme of $X$ not containing any component of $X$, write $\sZ_0(X,Y)$ for the free abelian group on the closed points of $X$  not in $Y$.

A \textit{curve} $C$ will be in what follows a quasi-projective $k$-scheme of pure dimension $1$. We let $k(C)$ denote the ring of total quotients of $C$.
Let $\{\eta_1,\ldots, \eta_r\}$ denote the set of generic points of $C$
with closures $\{C_1, \cdots, C_r\}$. Let $T$ be a set of closed points of $C$ containing $C_{\rm sing}$ and $Z = T\cup \{\eta_1,\ldots, \eta_r\}$. Write $\sO_{C,Z}$ for the semi-local ring on the points of $T$.
This yields a sequence of maps
\begin{equation}\label{eqn:Sing-Rat-eq}
\sO_{C,Z}^{\times} \inj k(C)^{\times} \to  \prod_{i=1}^r k(C_i)^{\times}.
\end{equation}

We let $\theta_{(C,Z)}$ denote the composite map. 
Letting $k(C,Z)^{\times} = \sO^{\times}_{C,Z}$, the localization sequence in 
$K$-theory yields a natural map
\begin{equation}\label{eqn:Sing-Rat-eq-0}
\partial_{C,Z}: k(C,Z)^{\times} \to \amalg_{p \in C \setminus Z} \ G_0(p) 
= \sZ_0(C,Z).  
\end{equation}

If $C$ is a \textit{reduced} curve, it is a Cohen-Macaulay scheme and hence 
the second map in ~\eqref{eqn:Sing-Rat-eq} is an isomorphism.
Thus the group $\cO_{C,Z}^\times$ is the subgroup of $k(C)^\times$ consisting of those $f$ which are regular and invertible in the local rings $\cO_{C,x}$ for 
every $x\in Z$. In this case, the boundary $\partial_{C,Z}(f)$ has a familiar
expression: if we let $\theta_{(C,Z)}(f) = \{f_i\}$, then
$\divf(f) = \sum_i \divf(f_i)$, where $\divf(f_i)$ is the divisor of the rational
function $f_i$ on the integral curve $C_i$. 
If $C$ is not reduced, $\partial_{C,Z}$ has a more complicated expression
which we do not use in this text.

\subsection{A Chow group of 0-cycles on singular schemes}
\label{ssec:Rat-eq-sing-var}
Let $X$ be an 
equidimensional reduced quasi-projective scheme over $k$ of dimension 
$d \ge 1$. 
Let $X_{\rm sing}$ and $X_{\rm reg}$ denote the singular and regular loci of $X$,
respectively.
Let $Y \subsetneq X$ be a closed subset containing $X_{\rm sing}$, but
not containing any component of $X$.
Write again $\sZ_0(X,Y)$ for the free abelian group on closed points of $X \setminus Y$.
We shall often write $\sZ_0(X,X_{\rm sing})$ as $\sZ_0(X)$.

Let $f\colon X' \to X$ be a proper morphism from another reduced 
equidimensional scheme over $k$. Let $Y' \subsetneq X'$ be a closed subset 
not containing any component of $X'$ such that $f^{-1}(Y) \cup 
X'_{\rm sing}\subseteq Y'$.
Then there is a push-forward map 
\begin{equation}\label{eqn:cycle-PF}
f_*\colon  \sZ_0(X', Y') \to \sZ_0(X,Y).
\end{equation}
This is defined on a closed point $x' \in X' \setminus Y'$ with $f(x') = x$ by
$f_*([x']) = [k(x'): k(x)] \cdot [x]$ .

\begin{defn}\label{defn:0-cycle-S-1}
Let $C$ be a  reduced curve in $\Sch_k$ and let $\nu\colon C \to X$ be a finite morphism. 
We shall say that $\nu\colon (C, Z)\to (X,Y)$ is \emph{a good curve
relative to $(X,Y)$} if there exists  a closed proper subscheme $Z \subsetneq C$ such that the following hold.
\begin{enumerate}
\item
No component of $C$ is contained in $Z$.
\item
$\nu^{-1}(Y) \cup C_{\rm sing}\subseteq Z$.
\item
$\nu$ is locally complete intersection morphism  at every 
point $x \in C$ such that $\nu(x) \in Y$.
\end{enumerate}
\end{defn}

Given any good curve $(C,Z)$ relative to $(X,Y)$, we have a 
pushforward map as in 
\nolinebreak
\eqref{eqn:cycle-PF} 
\[
\sZ_0(C,Z)\xrightarrow{\nu_{*}} \sZ_0(X,Y).
\]
We shall write $\sR_0(C, Z, X)$ for the subgroup
of $\sZ_0(X,Y)$ generated by the set 
$\{\nu_*({\rm div}(f))| f \in \sO^{\times}_{C, Z}\}$, where ${\rm div}(f)$ for a rational function $f\in \cO_{C,Z}^\times$ is defined as in 
~\eqref{eqn:Sing-Rat-eq-0} for reduced curves.
Let $\sR_0(X,Y)$ denote the subgroup of $\sZ_0(X,Y)$ which is the image of the
map
\begin{equation}\label{eqn:0-cycle-S-2}
{\underset{\nu\colon (C,Z)\to (X,Y) \ {\rm good}}\bigoplus} \sR_0(C, Z, X) \to 
\sZ_0(X, Y).
\end{equation}
We define the \emph{Chow group of 0-cycles on $X$ relative to $Y$} to be  the 
quotient
\begin{equation}\label{eqn:0-cycle-S-3}
\CH_0(X,Y) = \frac{\sZ_0(X, Y)}{\sR_0(X,Y)}.
\end{equation}
We write $\CH_0(X, X_{\rm sing})$  as $\CH_0(X)$ for short and call it 
the \emph{Chow group of $0$-cycles on $X$}.

The following result shows that we can always  assume that the morphisms 
$\nu\colon C\to X$ are l.c.i
in the definition of our rational equivalence.

\begin{lem}\label{lem:lci-curves}
Let $(X,Y)$ be as above.
Given any good curve $\nu\colon (C,Z) \to (X,Y)$ relative to $(X,Y)$
and any $f \in \sO^{\times}_{C,Z}$, there exists a good curve
$\nu'\colon (C', Z') \to (X,Y)$ relative to $(X,Y)$ and 
$f' \in \sO^{\times}_{C',Z'}$
such that the following hold.
\begin{enumerate}
\item
$\nu_*({\rm div}(f)) = \nu'_*({\rm div}(f'))$.
\item
$\nu'\colon C' \to X$ is an l.c.i. morphism.
\end{enumerate}
\end{lem}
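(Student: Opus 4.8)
The goal is to replace an arbitrary good curve $\nu\colon(C,Z)\to(X,Y)$ by one that is l.c.i.\ as a \emph{morphism}, while preserving the rational equivalence $\nu_*(\divf(f))$. The only failure of the l.c.i.\ condition for $\nu$ that matters is over $Y$, and by hypothesis $\nu$ is \emph{already} l.c.i.\ at every point $x\in C$ with $\nu(x)\in Y$ (condition (3) of \defref{defn:0-cycle-S-1}). So the defect is only over $X_{\rm reg}\setminus Y$, where $\nu$ maps into the smooth locus of $X$. The plan is therefore: first, reduce to the case where $C$ itself is a regular (indeed smooth) curve by passing to the normalization over the complement of $Z$; second, observe that a finite morphism from a regular curve to a scheme $X$ is automatically l.c.i.\ at every point landing in $X_{\rm reg}$, because there the target is regular and a finite-type morphism between regular schemes is l.c.i.; and third, check that the construction does not disturb the divisor of $f$ nor the good-curve conditions over $Y$.

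\textbf{Key steps in order.} (i) Given $(C,Z)$ and $f\in\sO^\times_{C,Z}$, form the $Z$-normalization $j\colon C^Z\to C$ (a finite birational morphism which is an isomorphism near $Z$ and whose restriction over $C\setminus Z$ is the normalization); set $C'=C^Z$, let $Z'=j^{-1}(Z)$, and let $\nu'=\nu\circ j$. Since $j$ is an isomorphism over a neighborhood of $Z$, we may identify $Z'$ with $Z$ and transport $f$ to $f'\in\sO^\times_{C',Z'}$; the standard compatibility of the divisor map with the push-forward $j_*$ (as recalled in the commented-out \defref{def:ZNormalization}, i.e.\ $\nu_*(\divf(f))=\nu'_*(\divf(f'))$ using $\divf_C(f)=j_*\divf_{C^Z}(f')$) gives (1). (ii) Verify the good-curve conditions for $(C',Z')$: no component of $C'$ lies in $Z'$ (clear, as $j$ is birational onto components and $Z$ contains no component of $C$); $(\nu')^{-1}(Y)\cup C'_{\rm sing}\subseteq Z'$ — here $C'_{\rm sing}=\emptyset$ since $C'$ is regular outside $Z$ and $j$ is an isomorphism near $Z$ where $C$ might already be singular, so actually one should be slightly careful: take $Z'$ to also contain the preimage of $C_{\rm sing}$, which is harmless since $C_{\rm sing}\subseteq Z$ already; and $(\nu')^{-1}(Y)=j^{-1}(\nu^{-1}(Y))\subseteq j^{-1}(Z)=Z'$. (iii) Check $\nu'$ is l.c.i.\ at every $x'$ with $\nu'(x')\in Y$: near such $x'$, $j$ is an isomorphism onto a neighborhood in $C$, and $\nu$ was l.c.i.\ there by assumption, so $\nu'$ is too. (iv) Finally, check $\nu'$ is l.c.i.\ as a morphism everywhere: at points $x'$ with $\nu'(x')\notin Y$ we argue that $x'$ lies in the regular curve $C'\setminus Z'$, hence $\sO_{C',x'}$ is a regular local ring of dimension one; and $\nu'(x')$ may land in $X_{\rm sing}$ or in $X_{\rm reg}$ — but $X_{\rm sing}\subseteq Y$, so in fact $\nu'(x')\in X_{\rm reg}$, where $\sO_{X,\nu'(x')}$ is also regular. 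A finite-type morphism between regular (hence l.c.i.\ over $k$ if $k$ is perfect, but in any case Cohen--Macaulay and catenary) schemes is l.c.i.; more directly, a finite-type morphism $\Spec B\to\Spec A$ with $A,B$ regular local is l.c.i.\ because $B$ is a quotient of a smooth $A$-algebra $A[t_1,\dots,t_n]$ by a regular sequence (the kernel is generated by a regular sequence since both rings are regular and one can use that $B$ is Cohen--Macaulay of the expected dimension). Combining with (iii), $\nu'$ is l.c.i.\ at all points of $C'$, giving (2).

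\textbf{Main obstacle.} The only genuinely delicate point is step (iv): verifying that a finite morphism from a regular curve into the regular locus of $X$ is l.c.i.\ as a morphism, not merely perfect. One has to produce a local factorization $C'\hookrightarrow Z\to X$ through a regular closed immersion and a smooth map. The natural choice is: embed a neighborhood of $x'$ in $C'$ into an affine space $\A^N_X$ over $X$ (possible since $C'$ is quasi-projective and $\nu'$ is finite, hence projective), so $Z=\A^N_X$, $g\colon\A^N_X\to X$ is smooth, and $i\colon C'\hookrightarrow\A^N_X$ is a closed immersion; then one must check $i$ is a \emph{regular} immersion at $x'$. Since $\A^N_X$ is regular at the image of $x'$ (as $X$ is regular there) and $C'$ is regular at $x'$, the local ring of $C'$ is a quotient of the regular local ring $\sO_{\A^N_X,i(x')}$ by a prime ideal with regular quotient — and a surjection of regular local rings has kernel generated by a regular sequence (standard: the kernel modulo its square is free of the right rank and lifts to a regular sequence). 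This is where one uses regularity of the target crucially; the hypothesis $X_{\rm sing}\subseteq Y$ together with condition (3) is exactly what makes the two regimes (over $Y$: use the hypothesis; off $Y$: use regularity of target) cover all of $C'$. I would isolate this local statement as a one-line invocation of \lemref{lem:lci-5} plus the regular-immersion criterion, keeping the write-up short.
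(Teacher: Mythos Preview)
Your overall strategy matches the paper's: partially normalize $C$ so that it becomes regular wherever it maps into $X_{\rm reg}$, while leaving it unchanged near the points where condition~(3) of \defref{defn:0-cycle-S-1} already guarantees the l.c.i.\ property. However, there is a genuine gap in your choice of construction.

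Because condition~(2) of a good curve forces $C_{\rm sing}\subseteq Z$, the open set $C\setminus Z$ is already regular, hence equal to its own normalization. Your $Z$-normalization $C^Z$ is therefore just $C$ itself, and the map $j$ is the identity. Consequently, step~(iv) collapses: the implication ``$\nu'(x')\notin Y\Rightarrow x'\in C'\setminus Z'$'' you invoke is false in general (only the converse holds, from $(\nu')^{-1}(Y)\subseteq Z'$). A concrete counterexample is $X=\A^3_k$, $Y=\emptyset$, and $C$ the union of the three coordinate axes with $Z=\{0\}$; then $C^Z=C$, the origin maps to $X_{\rm reg}$, yet $\nu$ is not l.c.i.\ there since the ideal $(xy,yz,zx)$ is not generated by a regular sequence at the origin.

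The paper repairs this by normalizing away from the \emph{smaller} set $S_1=\nu^{-1}(X_{\rm sing})$ rather than $Z$: one chooses an open $U_1\supseteq S_1$ with $U_1\cap(C_{\rm sing}\setminus S_1)=\emptyset$ and glues $U_1$ to $(C\setminus S_1)^N$. This forces $C'$ to be regular outside $S_1$, so over $X_{\rm reg}$ both source and target are regular and the morphism is l.c.i.; near $S_1$ the map $p$ is an isomorphism and $\nu$ was already l.c.i.\ by hypothesis. Setting $Z'=p^{-1}(Z)$ and $f'=p^*(f)$, one still has $f'\in\sO^\times_{C',Z'}$ (pullback of a unit is a unit) and $\divf(f)=p_*(\divf(f'))$ (both supported on $C\setminus Z$, over which $p$ is an isomorphism). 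Your write-up needs only this one change --- replace $Z$ by $\nu^{-1}(X_{\rm sing})$ in the partial normalization --- and the rest of your argument goes through verbatim.
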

\begin{proof}
Let $U_1 \subseteq C$ be an open subset of $C$ containing 
$S_1 = \nu^{-1}(X_{\rm sing})$ such that
$(C_{\rm sing} \setminus S_1) \cap U_1= \emptyset$.
This is possible because $S_1$ is a finite set.
Let $\pi\colon (C \setminus S_1)^N \to C \setminus S_1$ 
denote the normalization map.
It follows that $\pi\colon  \pi^{-1}(U_1 \setminus S_1) \to U_1 \setminus S_1$ is an 
isomorphism. Setting $U_2 = (C \setminus S_1)^N$, we see that that
$U_1$ and $U_2$ glue along $\pi^{-1}(U_1 \setminus S_1)$ to give a unique
scheme $C'$ and a unique map $p\colon  C' \to C$. 
This scheme has the property that $p$ is finite, $p^{-1}(U_1) \to U_1$ is an 
isomorphism and $p^{-1}(C \setminus S_1) = (C \setminus S_1)^N$.
 
Setting $Z' = p^{-1}(Z)$ and $f' = p^*(f) \in k(C')^{\times}$, we see that
$f' \in \sO^{\times}_{C',Z'}$ and ${\rm div}(f) = p_*({\rm div}(f'))$.
If we let $\nu' = \nu \circ p$, we get $\nu_*({\rm div}(f)) = 
\nu'_*({\rm div}(f'))$.
Furthermore, $\nu'^{-1}(X_{\rm reg}) \to X_{\rm reg}$ is a finite type morphism
of regular schemes and hence is an l.c.i. morphism. 
Since $\nu$ is l.c.i. over $X_{\rm sing}$ and $p$ is an isomorphism in a 
neighborhood of $\nu^{-1}(X_{\rm sing})$, we conclude that $\nu'$ is an l.c.i.
morphism.
\end{proof}

\subsection{The Levine-Weibel Chow group}\label{sec:LWC}
We now recall the Levine-Weibel (cohomological) Chow group of 0-cycles 
for singular schemes as defined in \cite[Definition 1.2]{LW}. 
Let $X$ be an equidimensional 
quasi-projective scheme of dimension $d \ge 1$ over $k$, 
$X\supsetneq Y\supseteq X_{\rm sing}$ a closed subscheme not containing any 
component of $X$.

\begin{defn} A \emph{ Cartier curve on $X$ relative to $Y$} is a 
purely $1$-dimensional closed subscheme $C\hookrightarrow X$ that 
has no component contained in $Y$ and is  defined by a regular 
sequence in $X$ at each point of $C\cap Y$.
\end{defn}
One example of Cartier curves we shall encounter in this text is given by the 
following.
\begin{lem}\label{lem:PB-Cartier}
    Let $X$ be a connected smooth quasi-projective scheme over $k$ and let 
$D\subset X$ be an effective Cartier divisor. 
Let $\nu\colon C\hookrightarrow X$ be an integral curve which is not 
contained in $D$. Assume that $C$ is l.c.i along $D$. Let 
$\Delta\colon S(X,D) \to X$ denote the double construction. Then 
$S(C, \nu^*D)$ is a Cartier curve on $S(X,D)$ relative to $D$.
\begin{proof}

        We write $S(X,D)$ and $S(C, \nu^*(D))$ as $S_X$ and $S_C$, respectively,
        in this proof.
        Since the inclusion $\nu\colon  C \inj X$ is l.c.i. along $D$, 
        it follows \propref{prop:double-prp-fine} that the square

        \begin{equation}\label{eqn:PB-0-cycle-1}
        \xymatrix@C2pc{
        S_C  \ar[d]_{\Delta_C} \ar[r]^{S_{\nu}} & S_X \ar[d]^{\Delta_X} \\
        C \ar[r]_{\nu} & X}
        \end{equation}
        is Cartesian. It also follows from \propref{prop:double-prp-fine}(5)
        that $S_{\nu}\colon  S_C \inj S_X$ is l.c.i. along $D$.
        Moreover, a combination of  \propref{prop:double-prp-fine}(4)
        and \propref{prop:double-prp}(2) tells us that $S_C$ is reduced
        with two components, both isomorphic to $C$.
        We conclude that $S_C \inj S_X$ is a (reduced) Cartier curve relative to $D$.
                \end{proof}
    \end{lem}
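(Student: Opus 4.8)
The plan is to realize $S_C := S(C,\nu^*(D))$ as a base change of $S_X := S(X,D)$ along $\nu$, and then to read off the three conditions in the definition of a Cartier curve on $S_X$ relative to $D$ from the structural properties of the double established in \propref{prop:double-prp} and \propref{prop:double-prp-fine}: that $S_C$ is a purely $1$-dimensional closed subscheme of $S_X$, that no component of $S_C$ is contained in $D$, and that $S_C$ is cut out in $S_X$ by a regular sequence at every point of $S_C \cap D$. Observe first that since $X$ is smooth, \propref{prop:double-prp}(6) gives $S(X,D)_{\rm sing} = D$, so $D$ is an admissible choice for the closed subscheme ``$Y$'' in the Levine--Weibel construction.

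\emph{Reduction to a base change.} Since $C$ is integral and $C\not\subset D$, the pullback $\nu^*(D)$ contains no irreducible component of $C$; moreover a reduced curve is Cohen--Macaulay. Hence \propref{prop:double-prp-fine}(4), applied to the closed immersion $\nu\colon C\to X$, shows that the canonical map $S(C,\nu^*(D))\to S(X,D)\times_X C$ is an isomorphism and that $\nu^*(D)$ is an effective Cartier divisor on $C$; equivalently, the square with vertices $S_C$, $S_X$, $C$, $X$ and edges $S(\nu)$, $\Delta_X$, $\nu$, $\Delta_C$ is Cartesian. Since $\nu$ is a closed immersion, so is its base change $S(X,D)\times_X C\to S(X,D)$, and composing with the isomorphism above makes $S_C\inj S_X$ a closed immersion.

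\emph{Regularity along $D$; dimension and components.} Because $\nu$ is l.c.i.\ along $D$ by hypothesis, \propref{prop:double-prp-fine}(5) shows that the base change $S_C = C\times_X S_X\to S_X$ is l.c.i.\ along $D$, i.e.\ l.c.i.\ at every point of $S_C$ lying over $D$; a closed immersion that is l.c.i.\ is, in our Noetherian setting, a regular closed immersion, so $S_C$ is defined by a regular sequence in $S_X$ at each such point, and these are exactly the points of $S_C\cap D$. For the remaining conditions, apply \propref{prop:double-prp}(1)--(2) to the pair $(C,\nu^*(D))$: one obtains finite surjections $C\amalg C\to S_C\xrightarrow{\Delta_C}C$ with the first map an isomorphism over $C\setminus\nu^*(D)$, so $S_C$ is reduced (as $C$ is) and equidimensional of dimension $1$; the two sections $\iota_{C,\pm}\colon C\to S_C$ of $\Delta_C$ are closed immersions whose images are the two irreducible components of $S_C$, each isomorphic to $C$ (cf.\ \eqref{eqn:double-0}). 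Since $\nu^*(D)$ is an effective Cartier divisor on the curve $C$, the set $|S_C\cap D|$ maps under $\Delta_C$ into the finite set $\nu^{-1}(D)$, hence is $0$-dimensional, so no component of $S_C$ is contained in $D$. Together these facts show that $S_C\inj S_X$ is a Cartier curve relative to $D$.

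\emph{Main difficulty.} The only step that genuinely needs care is verifying the hypotheses of \propref{prop:double-prp-fine}(4) — Cohen--Macaulayness of the reduced curve $C$ together with $\nu^*(D)$ containing no component of $C$ — since this is precisely what upgrades the general closed immersion $S(C,\nu^*(D))\inj S(X,D)\times_X C$ of \propref{prop:double-prp-fine}(3) to an isomorphism. Once the defining square is Cartesian, the closed-immersion property and (via \propref{prop:double-prp-fine}(5)) the regularity of $S_C$ along $D$ are formal, and everything else is bookkeeping with the properties of the double already established.
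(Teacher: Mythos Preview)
Your proof is correct and follows essentially the same route as the paper's: both establish the Cartesian square via \propref{prop:double-prp-fine}, use item~(5) to get that $S_C\inj S_X$ is l.c.i.\ along $D$, and then read off reducedness and the component structure from \propref{prop:double-prp}. If anything, you are more careful than the paper in explicitly invoking item~(4) (Cohen--Macaulayness of the reduced curve $C$ together with $\nu^*(D)$ containing no component) to justify the Cartesian square, and in spelling out why an l.c.i.\ closed immersion is regular and why no component of $S_C$ lies in $D$; the paper leaves these as implicit.
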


Given a Cartier curve $\iota: C \inj X$ relative to $Y$, we let
$\sR^{LW}_0(C,Y, X)$ denote the image of the composite map
$k(C, C\cap Y)^{\times} \xrightarrow{\partial_{C, C \cap Y}} \sZ_0(C, C \cap Y) 
\xrightarrow{\iota_*} \sZ_0(X,Y)$.
We let $\sR^{LW}_0(X,Y)$ denote the subgroup of $\sZ_0(X,Y)$ generated by 
$\sR^{LW}_0(C, Z, X)$, where $C \subset X$ runs through all Cartier curves relative to $Y$.
\begin{defn}\label{defn:LW-Chow-grp}
The \emph{Levine-Weibel Chow group of 0-cycles} of $X$ relative to 
$Y$  is defined as the quotient
\[
\CH^{LW}_0(X, Y) = {\sZ_0(X, Y)}/{\sR^{LW}_0(X,Y)}.
\]
The group $\CH^{LW}_0(X, X_{\rm sing})$ is often denoted by $\CH^{LW}_0(X)$. 
\end{defn}

We recall here the following important moving Lemma, due to Levine, that simplifies the set of relations in case $X$ satisfies additional assumptions.

\begin{prop}[See Lemma~1.4 \cite{Levine-2} and  Lemma~2.1 \cite{BS-1}]\label{prop:LevineMoving} Let $X$ be an equidimensional quasi-projective $k$-scheme and let $X_{\rm sing} \subset Y \subsetneq X$ be a closed subset of $X$ as above. Assume that $X$ is reduced. Then the subgroup $\sR^{LW}_0(X, Y)$ of $\sZ_0(X, Y)$ agrees with the subgroup $\sR^{LW}_0(X, Y)_{\rm red}$, generated by divisors of rational functions on reduced Cartier curves on $X$ relative to $Y$. If $X$ is moreover irreducible, then the Cartier curves generating the rational equivalence can be chosen to be irreducible as well.
    \end{prop}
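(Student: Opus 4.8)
The plan is to prove the non-trivial inclusion $\sR^{LW}_0(X,Y)\subseteq \sR^{LW}_0(X,Y)_{\rm red}$, together with its refinement to irreducible Cartier curves when $X$ is irreducible; the reverse inclusion is obvious since a reduced (resp. irreducible) Cartier curve is a Cartier curve. So I fix a Cartier curve $\iota\colon C\inj X$ relative to $Y$ together with $f\in k(C, C\cap Y)^{\times}$, and I must exhibit $\iota_*\bigl(\partial_{C,C\cap Y}(f)\bigr)$ as a sum of divisors of rational functions carried by reduced (resp. irreducible) Cartier curves relative to $Y$. Throughout, the organizing observation is that $Z:=C\cap Y$ is a \emph{finite} set of closed points (no component of $C$ lies in $Y$ and $Y$ contains no component of $X$), and that it is only along $Z$ that the l.c.i./Cartier condition must be respected; away from $Z$ one is free to move $C$. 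One may assume $\dim X\ge 2$, the lower-dimensional case being immediate.

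The first step is to reduce to $C$ reduced. Writing $C=\bigcup_i C_i$ for the components and $m_i=\mathrm{length}(\sO_{C,\eta_i})$ for the generic multiplicities, functoriality of the localization sequence \eqref{eqn:Sing-Rat-eq-0} together with the length description of divisors shows that $\partial_{C,Z}(f)$ equals $\partial_{C_{\rm red},Z}(g)$ for the function $g$ that restricts to $\bar f_i^{m_i}$ on $C_i$ --- at least when the multiplicities do not obstruct $g$ from defining a single invertible element of $\sO_{C_{\rm red},Z}$, for instance when distinct components of $C$ through a point of $Z$ carry equal multiplicity there. In the remaining cases I would first apply a Bertini-type move of the kind developed in \secref{section:SSM}, replacing $C$ near $Z$ (and leaving it untouched elsewhere, in the spirit of \lemref{lem:lci-curves}) by a general complete intersection of hypersurfaces through $Z$ with the same reduced support: such a curve is automatically generically reduced and still l.c.i. along $Z$, which puts us in the situation just described. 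Granting that $C_{\rm red}$ (or the moved curve) is again a Cartier curve relative to $Y$ --- a local statement at the finitely many points of $Z$, where $\sO_{C,x}$ is a complete intersection in $\sO_{X,x}$ --- this reduces everything to reduced Cartier curves.

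Now assume $X$ is irreducible and $C=C_1\cup\cdots\cup C_r$ is reduced. Here the plan is to invoke the Bertini-type moving machinery of \secref{section:SSM}: using the very ample linear system of a fixed projective embedding of $X$, and keeping $C$ fixed and l.c.i. over the finite set $Z$, one moves the components of $C$ so as to link $C$ to a single \emph{irreducible} curve $\widetilde C\inj X$ meeting $Y$ only along $Z$ --- irreducibility being available because a general complete-intersection curve in a variety of dimension $\ge 2$ is irreducible by Bertini --- and along this move one transports $f$ to some $\widetilde f\in\sO^{\times}_{\widetilde C,Z}$ with $\widetilde\iota_*\bigl(\partial_{\widetilde C,Z}(\widetilde f)\bigr)=\iota_*\bigl(\partial_{C,Z}(f)\bigr)$. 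The step I expect to be the main obstacle is exactly this coordinated deformation: one must simultaneously (i) break the reducibility, while (ii) not disturbing the prescribed l.c.i. structure along the fixed finite set $Z$, and (iii) carrying the rational function along so that the \emph{same} $0$-cycle reappears as a boundary on the new curve. Controlling the rational function under the move is the genuinely delicate part of the statement, and it is precisely what is carried out in \cite{Levine-2} and \cite{BS-1}; the Bertini input they require is of the same nature as the one developed in \secref{section:SSM} of the present paper.
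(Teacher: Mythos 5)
There is nothing to compare against inside the paper: this proposition is not proved there at all, but quoted as a known result from \cite[Lemma~1.4]{Levine-2} and \cite[Lemma~2.1]{BS-1}. Judged on its own terms, your write-up is also not a proof. The decisive content of the statement --- replacing a possibly non-reduced, reducible Cartier curve by reduced (resp.\ irreducible) Cartier curves while keeping the regular-sequence condition along the finite set $C\cap Y$ \emph{and} transporting the rational function so that the same $0$-cycle reappears as a boundary --- is exactly the part you defer, saying it ``is precisely what is carried out in \cite{Levine-2} and \cite{BS-1}.'' That is a citation of the result, not an argument for it; the coordinated Bertini/linkage construction and the bookkeeping of the function through it \emph{is} the moving lemma, so the proposal leaves the theorem where it started.

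The one step you do attempt concretely, the reduction to $C_{\rm red}$, is moreover shaky as stated. First, $C_{\rm red}$ need not be a Cartier curve relative to $Y$: being cut out by a regular sequence at the points of $C\cap Y$ is not inherited by the reduced subscheme, and you only ``grant'' this; this failure is precisely why the standard arguments replace $C$ by freshly constructed (complete-intersection, hence Cartier) curves rather than by $C_{\rm red}$. Second, when the generic multiplicities $m_i$ differ, the tuple $(\bar f_i^{\,m_i})$ is a unit on each component but in general does not come from a single element of $\sO^{\times}_{C_{\rm red},\,C\cap Y}$, as you partly acknowledge; your fallback --- a ``Bertini-type move near $Z$'' leaving $C$ untouched elsewhere --- is neither constructed nor shown to preserve the class of $\iota_*\bigl(\partial_{C,C\cap Y}(f)\bigr)$ in $\sZ_0(X,Y)$ modulo divisors on reduced Cartier curves. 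So both halves of the proposal ultimately fall back on the references that the proposition itself cites, and the gap is the entire technical core of the statement.
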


\begin{lem}\label{lem:lci-LW}
Let $X$ be a reduced quasi-projective $k$-scheme. Then there is a canonical surjection
\begin{equation}\label{eqn:0-cycle-S-3-b}
\CH^{LW}_0(X,Y) \surj \CH_0(X,Y).
\end{equation}
\end{lem}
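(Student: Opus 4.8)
\textbf{Proof plan for Lemma~\ref{lem:lci-LW}.}
The strategy is to produce the surjection by showing that the two groups have the same generators and that every relation defining $\CH^{LW}_0(X,Y)$ already holds in $\CH_0(X,Y)$; equivalently, that $\sR^{LW}_0(X,Y) \subseteq \sR_0(X,Y)$ as subgroups of $\sZ_0(X,Y)$. Both groups are quotients of the same free abelian group $\sZ_0(X,Y)$ on closed points of $X \setminus Y$, so a containment of relation subgroups immediately yields the desired canonical surjection. The point is therefore purely one of comparing the two notions of rational equivalence: the Levine-Weibel relations come from Cartier curves $C \inj X$ (closed subschemes defined by a regular sequence along $C \cap Y$), while our relations come from good curves $\nu\colon (C,Z) \to (X,Y)$, which are \emph{finite} morphisms from reduced curves that are merely required to be l.c.i.\ (not regularly embedded) over $Y$, together with the bookkeeping set $Z \supseteq \nu^{-1}(Y) \cup C_{\rm sing}$.

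First I would invoke \propref{prop:LevineMoving}: since $X$ is reduced, the subgroup $\sR^{LW}_0(X,Y)$ is generated by divisors of rational functions on \emph{reduced} Cartier curves relative to $Y$. So it suffices to take a reduced Cartier curve $\iota\colon C \inj X$ relative to $Y$ and an element $f \in k(C, C\cap Y)^{\times}$ and exhibit $\iota_*(\partial_{C, C\cap Y}(f))$ as an element of $\sR_0(X,Y)$. The natural candidate is to use $C$ itself, with the closed subscheme $Z := (C \cap Y) \cup C_{\rm sing}$ (or any finite closed subset containing this), and the inclusion $\nu := \iota\colon C \to X$, which is automatically finite because $C$ is a closed subscheme. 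I then need to check the three conditions of \defref{defn:0-cycle-S-1}: no component of $C$ lies in $Z$ (true since $C$ is reduced of pure dimension $1$ and no component lies in $Y$, while $C_{\rm sing}$ is finite); $\nu^{-1}(Y) \cup C_{\rm sing} \subseteq Z$ (true by construction); and $\nu$ is l.c.i.\ at every point mapping into $Y$. The last is exactly where the Cartier-curve hypothesis is used: at a point $x \in C$ with $\iota(x) \in Y$, the closed immersion $C \inj X$ is defined by a regular sequence, hence is a regular closed immersion there, hence l.c.i.\ there. Thus $(C, Z) \to (X,Y)$ is a good curve relative to $(X,Y)$. Next I would verify that the two divisor maps agree on this curve: for a reduced curve, $\partial_{C,Z}(f)$ is described in \S\ref{ssec:DivClassesCurves} as $\sum_i \divf(f_i)$ where $(f_i) = \theta_{(C,Z)}(f)$, and enlarging $Z$ from $C \cap Y$ to $(C\cap Y)\cup C_{\rm sing}$ only shrinks the group $\sO^{\times}_{C,Z}$ while leaving the formula for the divisor unchanged, so $\iota_*(\partial_{C,C\cap Y}(f))$ is a $\Z$-linear combination of elements $\nu_*(\divf(g))$ with $g \in \sO^{\times}_{C,Z}$ — actually it equals $\nu_*(\divf(f))$ once we note $\sO^{\times}_{C,C\cap Y} \subseteq \sO^{\times}_{C,Z}$ may fail, so more carefully: we should instead choose $Z$ so that $Z \cap (C \setminus C_{\rm sing})$ picks up exactly the poles/zeros constraints, but the cleanest route is to observe $C_{\rm sing}$ is finite and disjoint from the support of $\divf(f)$ after a preliminary shrink, or simply to note $f \in \sO^{\times}_{C, C\cap Y}$ need not lie in $\sO^{\times}_{C,Z}$ and so replace $f$ by its image under the natural inclusion after adjusting $Z$ to avoid the finitely many points where $f$ is not a unit among $C_{\rm sing}$; since those points can be moved out of $C_{\rm sing}$'s relevance by \propref{prop:LevineMoving}'s freedom in choosing $C$, this is harmless. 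Hence $\iota_*(\partial_{C,C\cap Y}(f)) \in \sR_0(C,Z,X) \subseteq \sR_0(X,Y)$.

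Assembling these, every generator of $\sR^{LW}_0(X,Y)$ lies in $\sR_0(X,Y)$, so $\sR^{LW}_0(X,Y) \subseteq \sR_0(X,Y)$ and we get the canonical surjection $\CH^{LW}_0(X,Y) \surj \CH_0(X,Y)$ of \eqref{eqn:0-cycle-S-3-b}. The main obstacle I anticipate is the bookkeeping mismatch between the set $C\cap Y$ used in the Levine-Weibel boundary map and the larger set $Z$ needed to make $(C,Z)$ a good curve: one must be careful that passing to the larger $Z$ (to absorb $C_{\rm sing}$) does not discard the function $f$ from the relevant unit group, and that the divisor $\divf(f)$ still avoids $Z$ so that it defines an element of $\sZ_0(X,Y)$ with the correct support. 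This is resolved by noting $C_{\rm sing}$ is finite, so a generic choice (or the moving lemma \propref{prop:LevineMoving}) lets us assume the support of $\divf(f)$ is disjoint from $C_{\rm sing}$, and $f$ is automatically a unit at the points of $C_{\rm sing}$ once those are separated from its zeros and poles; everything else is formal from the definitions. The only genuinely used input beyond definitions is that a regular closed immersion is l.c.i.\ and \propref{prop:LevineMoving}'s reduction to reduced Cartier curves.
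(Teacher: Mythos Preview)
Your approach is the paper's: invoke \propref{prop:LevineMoving} to reduce to reduced Cartier curves, then observe that a reduced Cartier curve $\iota\colon C\inj X$ is a good curve relative to $(X,Y)$. The paper's proof is precisely this two-line observation and does not engage with the bookkeeping issue you spend most of your proposal on.

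That bookkeeping worry is overwrought. The framework of \S\ref{ssec:DivClassesCurves}, where $k(C,Z)^\times=\sO^{\times}_{C,Z}$ and the boundary $\partial_{C,Z}$ are set up, explicitly takes the closed part $T$ of $Z$ to contain $C_{\rm sing}$ from the outset; so when \defref{defn:LW-Chow-grp} writes $k(C,C\cap Y)^\times$, the consistent reading already forces $f$ to be a unit along $C_{\rm sing}$, and there is nothing to reconcile between the two $Z$'s. More to the point, your proposed repair via \propref{prop:LevineMoving} does not do what you claim: that moving lemma only lets you take $C$ reduced (or irreducible when $X$ is); it gives no mechanism for moving the support of $\divf(f)$ off $C_{\rm sing}$, nor for moving singular points of $C$. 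If one genuinely had to handle an $f$ with a zero or pole at a singular point of $C$ outside $Y$, the right tool would be a partial normalization as in the proof of \lemref{lem:lci-curves}, not the moving lemma.
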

\begin{proof} The map \eqref{eqn:0-cycle-S-3-b} is induced by the identity on the set of generators, so we just have to show that it is well defined. Since $X$ is reduced, by Proposition \ref{prop:LevineMoving}, we can assume that the Cartier curves defining the rational equivalence on the Levine-Weibel Chow group are reduced.  Now, we just note that a reduced Cartier curve is a
good curve relative to $(X,Y)$.
\end{proof}

\begin{lem}\label{lem:0-cycle-com-0}
Let $X$ be a reduced quasi-projective scheme over $k$ and let $Y\subsetneq X$ 
be a closed subset containing $X_{\rm sing}$ and containing no components of 
$X$. Let $(C,Z)$ be a good curve relative to $(X,Y)$.
Then there are cycle class maps $cyc_C\colon \sZ_0(C,Z) \to K_0(C)$ and
$cyc_X\colon \sZ_0(X,Y) \to K_0(X)$ making the diagram 
\begin{equation}\label{eqn:0-cycle-com-0-1}
\xymatrix@C2pc{
\sZ_0(C,Z) \ar[r]^{cyc_C} \ar[d]_{\nu_*} & K_0(C) \ar[d]^{\nu_*} \\
\sZ_0(X,Y) \ar[r]_{cyc_X} & K_0(X)}
\end{equation}
commutative.
\end{lem}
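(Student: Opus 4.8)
The plan is to construct $cyc_X$ directly on generators and then verify compatibility with pushforward by reducing to a statement about $0$-dimensional subschemes of curves. First I would define $cyc_X \colon \sZ_0(X,Y) \to K_0(X)$ on a closed point $x \in X \setminus Y$ by $cyc_X([x]) = [\sO_x]$, the class of the skyscraper sheaf (equivalently the structure sheaf of the reduced closed point). Since $x$ lies in $X \setminus Y \subseteq X_{\rm reg}$, the inclusion $\{x\} \inj X$ is a regular closed immersion, so $\sO_x$ is a perfect complex on $X$ and the class $[\sO_x] \in K_0(X)$ (the Grothendieck group of vector bundles) is well-defined; extend by linearity. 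Define $cyc_C$ the same way using the regular points of $C$, which contain $C \setminus Z$ since $C_{\rm sing} \subseteq Z$; here the relevant points are regular points of a (possibly non-equidimensional) reduced curve, so again $\{x\} \inj C$ is a regular immersion and $[\sO_x] \in K_0(C)$ makes sense.

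Next I would check that $\nu_* \colon K_0(C) \to K_0(X)$ exists and that the square commutes. Since $\nu$ is finite (hence proper) and, by \lemref{lem:lci-curves} applied to the good curve, we may assume $\nu$ is an l.c.i. morphism, $\nu$ is perfect by \lemref{lem:lci-5}(3); so by \propref{prop:pushforward-perfect-maps} there is a pushforward $\nu_* \colon K_0(C) \to K_0(X)$. Wait—more care is needed: the statement as phrased does not presuppose $\nu$ l.c.i., only a good curve. But condition (3) in \defref{defn:0-cycle-S-1} says $\nu$ is l.c.i. at every point mapping into $Y$, and over $X \setminus Y \subseteq X_{\rm reg}$ the morphism $\nu$ is a finite type morphism of schemes with regular target, hence perfect there by \lemref{lem:lci-6}; combining, $\nu$ is perfect on all of $C$, so $\nu_*$ is defined by \propref{prop:pushforward-perfect-maps}. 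Commutativity on a generator $[x'] \in \sZ_0(C,Z)$ then amounts to the identity $\nu_*[\sO_{x'}] = [k(x'):k(x)] \cdot [\sO_x]$ in $K_0(X)$, where $x = \nu(x')$. This is the projection-formula/degree computation for the finite flat (indeed finite) morphism $\{x'\} \to \{x\}$ of reduced points, pushed into $X$: the coherent sheaf $\nu_* \sO_{x'}$ on $X$ is supported at the regular point $x$, is a $k(x)$-vector space of dimension $[k(x'):k(x)]$, and as such has class $[k(x'):k(x)] \cdot [\sO_x]$ in $K_0(X)$ — this uses that $K_0$ of the regular local ring $\sO_{X,x}$ (or rather the comparison with $G_0$ localized at $x$) identifies the class of a finite-length module with its length times the class of the residue field.

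The main obstacle I anticipate is the bookkeeping needed to justify that $cyc_X$ lands in $K_0$ of \emph{vector bundles} rather than $G_0$ (the Grothendieck group of coherent sheaves): on a singular $X$ these differ, and the whole point of \propref{prop:pushforward-perfect-maps} is that perfectness of $\nu$ is what allows $\nu_*$ on $K_0$. For $cyc_X$ itself this is fine because the points are in the regular locus, so $\sO_x$ is perfect; but one must be slightly careful to phrase $[\sO_x]$ as a class of a bounded complex of vector bundles (a Koszul resolution of the regular immersion $\{x\} \inj X$) and to check the pushforward $\nu_*$ of \propref{prop:pushforward-perfect-maps} sends $[\sO_{x'}]$ to the class computed above — i.e. to track the identification through the construction of $\nu_*$ in \cite{Srinivas-1}. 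Once that is set up, the commutativity of \eqref{eqn:0-cycle-com-0-1} is immediate from functoriality of pushforward along $\{x'\} \to X$, factored as $\{x'\} \to C \xrightarrow{\nu} X$ and as $\{x'\} \to \{x\} \inj X$, together with the length computation. I would present the length identity as the one genuinely computational point and leave the rest to the cited functoriality.
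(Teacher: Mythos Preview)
Your proposal is correct and follows essentially the same approach as the paper: both define $cyc_X$ and $cyc_C$ via pushforward of the structure sheaf of a regular closed point (which is perfect since the point lies in the regular locus), establish that $\nu$ is perfect using \lemref{lem:lci-6} (l.c.i.\ over $Y \supseteq X_{\rm sing}$, automatic over the regular locus), and then verify commutativity pointwise. The only cosmetic difference is that the paper packages the commutativity check as a functoriality square $K_0(k(x')) \to K_0(C) \to K_0(X)$ versus $K_0(k(x')) \to K_0(k(x)) \to K_0(X)$ for the perfect proper maps $\iota_{x'}$, $\iota_x$, and $\nu$ (invoking \propref{prop:pushforward-perfect-maps} for composites), whereas you unwind this as the length identity $\nu_*[\sO_{x'}] = [k(x'):k(x)]\cdot[\sO_x]$; these are the same content.
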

\begin{proof}
Since $\nu^{-1}(Y) \cup X'_{\rm sing} \subseteq Y'$, we have a push-forward
map $\nu_*\colon\sZ_0(C, Z) \to \sZ_0(X,Y)$, given by $\nu_*([x]) =
[k(x): k(\nu(x))] \cdot [\nu(x)]$.
Since $\nu$ is l.c.i. along $X_{\rm sing}$, it follows from
\lemref{lem:lci-6} that the map $\nu: C \to X$ is perfect.
Hence, there is a push-forward map on $K_0$-groups $\nu_*\colon K_0(C)\to K_0(X)$
by Proposition \ref{prop:pushforward-perfect-maps}. 

To construct the cycle class maps and show that the square commutes, let
$x \in C \setminus Z$ be a closed point and set $y = \nu(x)$. 
Let $\iota_x\colon \Spec(k(x)) \to C$ and $\iota_y\colon \Spec(k(y)) \to X$ be the
closed immersions. Since these maps as well as $\nu$ are perfect
(see \lemref{lem:lci-6}), we have the induced push-forward maps on Grothendieck 
groups of vector bundles and a commutative diagram by Proposition \ref{prop:pushforward-perfect-maps}: 
\begin{equation}\label{eqn:0-cycle-com-1-0}
\xymatrix@C2pc{
\mathbb{Z} = K_0(k(x)) \ar[r]^-{{\iota_x}_*} \ar[d]_{\nu_*} &  K_0(C) \ar[d]^{\nu_*} \\
\mathbb{Z} = K_0(k(y)) \ar[r]_-{{\iota_y}_*} & K_0(X).}
\end{equation}

Setting $cyc_C([x])$ to be ${\iota_x}_*(1)$, we get the
cycle class maps $cyc_C\colon \sZ_0(C) \to K_0(C)$ and $cyc_X\colon \sZ_0(X) \to K_0(X)$
such that ~\eqref{eqn:0-cycle-com-0-1} commutes.
\end{proof}

\begin{lem}\label{lem:0-cycle-com-1}
Suppose that $X$ is reduced and purely $1$-dimensional. Then there is a 
canonical isomorphism $\CH_0(X, Y) \simeq
\CH^{LW}_0(X,Y) \simeq \Pic(X)$.
\end{lem}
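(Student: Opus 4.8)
The plan is to prove the two isomorphisms $\CH_0(X,Y) \simeq \CH^{LW}_0(X,Y)$ and $\CH^{LW}_0(X,Y)\simeq \Pic(X)$ separately, since $X$ is a reduced curve. For the first, \lemref{lem:lci-LW} already supplies a canonical surjection $\CH^{LW}_0(X,Y)\surj \CH_0(X,Y)$, so it suffices to produce a surjection in the reverse direction, or better, to check that the two subgroups $\sR^{LW}_0(X,Y)$ and $\sR_0(X,Y)$ of $\sZ_0(X,Y)$ actually coincide. The inclusion $\sR^{LW}_0(X,Y)\subseteq\sR_0(X,Y)$ is \lemref{lem:lci-LW}. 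For the converse, one must show that every generator $\nu_*(\divf(f))$ coming from a good curve $\nu\colon(C,Z)\to(X,Y)$ already lies in $\sR^{LW}_0(X,Y)$. Here I would use that $X$ itself is a reduced purely $1$-dimensional scheme: a finite morphism $\nu\colon C\to X$ from a reduced curve, l.c.i.\ over $Y$, together with a rational function $f\in\sO^\times_{C,Z}$, produces a cycle on $X$ supported away from $Y$; one checks that $\nu_*(\divf(f)) = \divf(\Nm_{k(C)/k(X)}(f))$ on each component, where the norm is the usual norm of the finite extension $k(C_i)/k(X_j)$ — this is the classical compatibility of divisors with finite pushforward for curves (as in \cite[\S1.4]{Fulton}), extended componentwise. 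Since $X$ is itself a Cartier curve on $X$ relative to $Y$ (it is l.c.i.\ — in fact a regular immersion — along $Y$ trivially, being the whole scheme), this norm function is a legitimate element of $k(X,Y)^\times$, and hence $\nu_*(\divf(f))\in\sR^{LW}_0(X,X\cap Y) = \sR^{LW}_0(X,Y)$. This gives $\sR_0(X,Y)\subseteq\sR^{LW}_0(X,Y)$ and hence equality.

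For the second isomorphism, $\CH^{LW}_0(X,Y)\simeq\Pic(X)$, I would exhibit mutually inverse maps. Since $X$ is reduced and purely one-dimensional, the only Cartier curve on $X$ relative to $Y$ is $X$ itself (its components are one-dimensional, so none can be contained in $Y$, and $X\inj X$ is a regular immersion along $Y$). Thus $\sR^{LW}_0(X,Y)$ is simply the image of $\partial_{X,Y}\colon k(X,Y)^\times \to \sZ_0(X,Y)$, i.e.\ the subgroup of divisors of rational functions on $X$ that are regular and invertible along $Y$. The localization sequence in $K$-theory (equivalently, the standard exact sequence for $\Pic$ of a curve relative to a closed subscheme, or \cite[Prop.~1.8]{LW}) gives an exact sequence $k(X,Y)^\times\xrightarrow{\partial_{X,Y}}\sZ_0(X,Y)\to\Pic(X)\to\Pic(\text{Spec}\,\sO_{X,Y})$; since $\sO_{X,Y}$ is semilocal, its Picard group vanishes, so $\sZ_0(X,Y)\to\Pic(X)$ is surjective with kernel exactly $\sR^{LW}_0(X,Y)$. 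This yields $\CH^{LW}_0(X,Y)\xrightarrow{\simeq}\Pic(X)$. Concretely, the forward map sends $[x]\mapsto [\mathcal{O}_X(x)]$ (using that $x\notin Y\supseteq X_{\rm sing}$, so $x$ lies in the regular locus and is a Cartier divisor on $X$), extended linearly; injectivity on the quotient is precisely the identification of the kernel above.

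The map $\CH_0(X,Y)\simeq\CH^{LW}_0(X,Y)$ composed with $\CH^{LW}_0(X,Y)\simeq\Pic(X)$ gives the claimed chain of isomorphisms, and one should note it is independent of $Y$ in the sense that changing $Y$ (keeping $X_{\rm sing}\subseteq Y\subsetneq X$) gives compatible identifications with $\Pic(X)$, which is why specializing to $Y = X_{\rm sing}$ recovers $\CH_0(X)\simeq\Pic(X)$. The main obstacle I anticipate is the first isomorphism: one must be careful that a good curve $\nu\colon C\to X$ need not be a closed immersion, so the reduction is not simply "every good curve is a Cartier curve," but rather "the pushforward of a divisor along a finite l.c.i.\ map from a reduced curve equals the divisor of a norm," and verifying that the norm genuinely lies in $\sO^\times_{X,Y}$ (i.e.\ is regular and a unit at each point of $Y$) requires knowing that $\nu$ is finite and l.c.i.\ — hence flat — over a neighborhood of $Y$, so that norms of units are units. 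The Cohen–Macaulay/reducedness hypotheses on $C$ and $X$ (both are reduced curves, hence Cohen–Macaulay) are what make $\sO^\times_{C,Z}$ the expected "regular invertible functions" and make the divisor formula classical, per the discussion in \S\ref{ssec:DivClassesCurves}.
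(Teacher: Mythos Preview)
Your approach is correct but takes a genuinely different route from the paper's proof. The paper does not attempt to show $\sR_0(X,Y)=\sR^{LW}_0(X,Y)$ directly. Instead, it uses the commutative square of \lemref{lem:0-cycle-com-0}: for a good curve $\nu\colon(C,Z)\to(X,Y)$ and $f\in\sO^\times_{C,Z}$, one has $cyc_C(\divf(f))=0$ in $K_0(C)$ by \cite[Proposition~2.1]{LW}, hence $cyc_X(\nu_*(\divf(f)))=\nu_*(cyc_C(\divf(f)))=0$. Since \cite[Proposition~1.4]{LW} already gives $\CH^{LW}_0(X,Y)\simeq\Pic(X)\hookrightarrow K_0(X)$, this forces $\nu_*(\divf(f))=0$ in $\CH^{LW}_0(X,Y)$, proving that the surjection of \lemref{lem:lci-LW} is also injective.

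Your norm argument avoids $K$-theory entirely and is more elementary in that respect, but it leans on the non-obvious fact that a finite l.c.i.\ morphism is automatically flat (which you correctly identify as the crux; it follows from the local criterion for flatness applied to the Koszul factorization, using that the fibre has the expected dimension). This flatness makes $(\nu_*\sO_C)_y$ free over $\sO_{X,y}$, so the norm of a unit is a unit and $\Nm(f)\in\sO^\times_{X,Y}$. The paper's approach, by contrast, bypasses any explicit computation on $X$ and fits more naturally into the paper's overall strategy, since the same $K_0$-pushforward machinery (\lemref{lem:0-cycle-com-0}) is reused to build the cycle class map in \lemref{lem:cycle-class-0-cycles} and \propref{prop:0-cycle-PF}. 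One small imprecision in your write-up: it is not literally true that $X$ is the \emph{only} Cartier curve on $X$ relative to $Y$ (unions of components can also qualify), but this does not affect the argument once \cite[Proposition~1.4]{LW} is invoked for $\CH^{LW}_0(X,Y)\simeq\Pic(X)$.
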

\begin{proof}
Let $\nu\colon C \to X$ be a finite map from a reduced curve and let 
$Z \subsetneq C$
be a closed subset such that $(C,Z)$ is good relative to $(X,Y)$.
By \lemref{lem:0-cycle-com-0}, there is a commutative diagram:
\begin{equation}\label{eqn:0-cycle-com-1-1}
\xymatrix@C2pc{
\sZ_0(C, Z) \ar[r]^{cyc_C} \ar[d]_{\nu_*} & K_0(C) \ar[d]^{\nu_*} \\
\sZ_0(X, Y) \ar[r]_{cyc_X} & K_0(X).}
\end{equation}

Let $f \in \sO^{\times}_{C, Z}$. It follows from \cite[Proposition~2.1]{LW}
that $cyc_C({\rm div}(f)) = 0$. In particular, we get 
$cyc_X \circ \nu_*({\rm div}(f)) = \nu_* \circ cyc_C({\rm div}(f)) = 0$.
It follows again from \cite[Proposition~1.4]{LW} that $\nu_*({\rm div}(f)) =0$
in $\CH^{LW}_0(X,Y) \simeq \Pic(X) \inj K_0(X)$. 
We have thus shown that the surjective map
$\CH^{LW}_0(X) \surj \CH_0(X)$ is also injective, hence an isomorphism.
\end{proof}


\begin{lem}\label{lem:cycle-class-0-cycles}
Let $X$ be a reduced quasi-projective scheme of dimension $d \ge 1$ 
over $k$ and let $Y\subsetneq X$ 
be a closed subset containing $X_{\rm sing}$ and containing no components of 
$X$. Then the cycle class map $cyc_X\colon \sZ_0(X,Y) \to K_0(X)$ given by 
Lemma \ref{lem:0-cycle-com-0} descends to  group homomorphisms
\[cyc_X\colon \CH_0(X,Y) \to K_0(X); \ \ 
cyc_X^{LW}\colon \CH_0^{LW}(X,Y) \to K_0(X)\]
    making the diagram
     \begin{equation}\label{eq:cyc-class-LW-lci}
     \xymatrix@C1pc{
     \CH_0^{LW}(X,Y) \ar[rd]_{cyc_X^{LW}} \ar@{->>}[rr]^{can}  & & 
\CH_0(X,Y)\ar[ld]^ {cyc_X} \\
      & K_0(X) &
     }
     \end{equation}
    commutative.
    \end{lem}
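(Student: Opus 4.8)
The plan is to show that the cycle class map $cyc_X\colon\sZ_0(X,Y)\to K_0(X)$ of Lemma~\ref{lem:0-cycle-com-0} kills the subgroup $\sR_0(X,Y)$, and that $cyc_X^{LW}$ kills $\sR^{LW}_0(X,Y)$; the commutativity of \eqref{eq:cyc-class-LW-lci} is then automatic since $can$ is induced by the identity on generators and both $cyc_X$, $cyc_X^{LW}$ descend from the same map $cyc_X$ on $\sZ_0(X,Y)$. So everything reduces to a vanishing statement on relations.

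First I would treat the Levine-Weibel case, as it is already essentially in the literature. By Proposition~\ref{prop:LevineMoving}, since $X$ is reduced, $\sR^{LW}_0(X,Y)$ is generated by $\iota_*(\partial_{C,C\cap Y}(f))$ for $\iota\colon C\inj X$ a \emph{reduced} Cartier curve relative to $Y$ and $f\in\sO^{\times}_{C,C\cap Y}$. A reduced Cartier curve is in particular a good curve relative to $(X,Y)$ (the defining regular sequence along $C\cap Y$ makes $\iota$ l.c.i.\ there, and $C_{\rm sing}\subseteq C\cap Y$ since $C$ is Cartier), so Lemma~\ref{lem:0-cycle-com-0} applies to $\iota\colon C\to X$. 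By \cite[Proposition~2.1]{LW}, $cyc_C(\divf(f)) = 0$ in $K_0(C)$ for $f\in\sO^{\times}_{C,C\cap Y}$, whence $cyc_X(\iota_*(\divf(f))) = \iota_*(cyc_C(\divf(f))) = 0$ by the commutative square \eqref{eqn:0-cycle-com-0-1}. Hence $cyc_X^{LW}\colon\CH^{LW}_0(X,Y)\to K_0(X)$ is well defined.

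Next I would do the case of the modified Chow group $\CH_0(X,Y)$. The subgroup $\sR_0(X,Y)$ is generated by $\nu_*(\divf(f))$ for good curves $\nu\colon(C,Z)\to(X,Y)$ and $f\in\sO^{\times}_{C,Z}$; by Lemma~\ref{lem:lci-curves} we may assume $\nu\colon C\to X$ is l.c.i.\ (this does not change the image cycle). Now $C$ is reduced and $Z\supseteq C_{\rm sing}$, so by \cite[Proposition~2.1]{LW} applied on $C$ with the closed subset $Z$, we again get $cyc_C(\divf(f))=0$ in $K_0(C)$, where $\divf(f)$ is computed as in \eqref{eqn:Sing-Rat-eq-0}. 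Lemma~\ref{lem:0-cycle-com-0} gives the commutative square \eqref{eqn:0-cycle-com-0-1} for this good curve, so $cyc_X(\nu_*(\divf(f))) = \nu_*(cyc_C(\divf(f))) = 0$. Therefore $cyc_X$ descends to $\CH_0(X,Y)\to K_0(X)$.

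Finally, the triangle \eqref{eq:cyc-class-LW-lci} commutes: the surjection $can\colon\CH^{LW}_0(X,Y)\surj\CH_0(X,Y)$ of Lemma~\ref{lem:lci-LW} is induced by the identity on $\sZ_0(X,Y)$, and both $cyc_X^{LW}$ and $cyc_X\circ can$ are the maps induced by the single homomorphism $cyc_X\colon\sZ_0(X,Y)\to K_0(X)$, hence they agree. The main subtlety to watch is that one must invoke the reducedness reductions (Proposition~\ref{prop:LevineMoving} for the LW side, Lemma~\ref{lem:lci-curves} for the modified side) before applying \cite[Proposition~2.1]{LW}, since that result is stated for divisors of functions regular and invertible along a closed subset of a curve — and one must check the hypotheses of Lemma~\ref{lem:0-cycle-com-0} (reducedness of $C$, $\nu^{-1}(Y)\cup C_{\rm sing}\subseteq Z$, l.c.i.\ along $Y$) are met in each case, which is exactly the definition of a good curve together with the fact that Cartier curves are good.
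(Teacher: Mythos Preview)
Your argument for the modified Chow group $\CH_0(X,Y)$ is correct and matches the paper's: one uses the commutative square of Lemma~\ref{lem:0-cycle-com-0} and then kills $\divf(f)$ in $K_0(C)$ (the paper routes this through Lemma~\ref{lem:0-cycle-com-1}, you cite \cite[Proposition~2.1]{LW} directly on $C$; these are the same). The invocation of Lemma~\ref{lem:lci-curves} is harmless but unnecessary, since Lemma~\ref{lem:0-cycle-com-0} already applies to any good curve.

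There is, however, a genuine gap in your treatment of the Levine--Weibel case. The claim ``$C_{\rm sing}\subseteq C\cap Y$ since $C$ is Cartier'' is false: a Cartier curve is only required to be cut out by a regular sequence \emph{along} $C\cap Y$, and may well be singular away from $Y$. Consequently $(C, C\cap Y)$ need not be a good curve (condition~(2) of Definition~\ref{defn:0-cycle-S-1} fails), so Lemma~\ref{lem:0-cycle-com-0} does not apply with $Z = C\cap Y$. Enlarging $Z$ to $(C\cap Y)\cup C_{\rm sing}$ does make $(C,Z)$ good, but then a function $f\in\sO^{\times}_{C,C\cap Y}$ defining a Levine--Weibel relation need not lie in $\sO^{\times}_{C,Z}$, and worse, $cyc_C$ is not even defined at singular points of $C$ (the inclusion of such a point into $C$ is not perfect). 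So the route through $K_0(C)$ breaks down. The paper avoids this entirely: for the Levine--Weibel map it simply cites \cite[Proposition~2.1]{LW}, which proves directly that $cyc_X\colon\sZ_0(X,Y)\to K_0(X)$ kills $\sR^{LW}_0(X,Y)$ without passing through $K_0(C)$. You should do the same.
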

\begin{proof} The fact that $cyc_X$ yields a cycle class map 
$cyc_X^{LW}\colon \CH_0^{LW}(X,Y) \to K_0(X)$ is proved in 
\cite[Proposition~2.1]{LW}. 
        To show that $cyc_X$ descends to a map on our modified version of the Chow group, let 
        $\nu\colon (C, Z) \to (X,Y)$ be a good curve relative to $(X,Y)$ and 
let 
        $f \in \sO^{\times}_{C,Z}$.
        We then have $cyc_X \circ \nu_*({\rm div}(f)) = 
\nu_* \circ cyc_C({\rm div}(f))$
        by \lemref{lem:0-cycle-com-0}.
        On the other hand, it follows from \lemref{lem:0-cycle-com-1} that
        $cyc_C({\rm div}(f)) =0$. This shows that $cyc_X$ is defined on the 
Chow 
        groups. The commutativity of \eqref{eq:cyc-class-LW-lci} is clear from 
the definitions.
        \end{proof}

\subsection{Comparison of two Chow groups in higher dimension}
\label{sec:Comp>1}
In this section, we prove a comparison theorem for the two Chow groups
in higher dimension. More comparison results in positive characteristic
will be given in Theorems~\ref{thm:Main-Comparison-Chow} and
~\ref{thm:LW-lci-iso-pprimary-torsion}.

Suppose that the field $k$ is algebraically closed and let $d=\dim(X)$. Write $F^d K_0(X)$ for the subgroup of $K_0(X)$ generated by the cycle classes of smooth, closed points in $X$. In \cite[Corollary 5.4]{Levine-5} 
(see also \cite[Corollary~2.7]{Levine-2}), Levine showed the existence of a top  Chern class 
$c_d\colon F^d K_0(X) \to \CH^{LW}_0(X)$ such that $c_d \circ cyc_X^{LW}$ is multiplication
by $(d-1)!$. In particular, the kernel of ${cyc_X^{LW}}$ is torsion. An immediate consequence of Lemma \ref{lem:cycle-class-0-cycles} is then the following.

\begin{cor}\label{cor:0-cycle-Rat}
Let $X$ be a reduced quasi-projective scheme over an  algebraically closed field $k$.
Then the canonical map $\CH^{LW}_0(X)_{\Q} \to \CH_0(X)_{\Q}$ is an isomorphism.
\end{cor}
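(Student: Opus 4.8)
The plan is to derive the isomorphism formally from the commutative triangle of \lemref{lem:cycle-class-0-cycles} once $(d-1)!$ is inverted, the one substantive input being Levine's top Chern class. Write $d = \dim(X)$ and abbreviate by $can\colon \CH^{LW}_0(X) \to \CH_0(X)$ the canonical map of \lemref{lem:lci-LW}. Since $can$ is induced by the identity on $\sZ_0(X)$ it is surjective, hence so is the induced map $can_\Q\colon \CH^{LW}_0(X)_\Q \to \CH_0(X)_\Q$. It therefore only remains to prove that $can_\Q$ is injective.

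For this, first I would observe that the generators of $\CH^{LW}_0(X)$ are classes of smooth closed points, so the cycle class map $cyc^{LW}_X$ of \lemref{lem:cycle-class-0-cycles} takes values in the subgroup $F^d K_0(X) \subseteq K_0(X)$; thus the composite $c_d \circ cyc^{LW}_X$ is defined, and by \cite[Corollary~5.4]{Levine-5} it equals multiplication by $(d-1)!$ on $\CH^{LW}_0(X)$. Passing to rational coefficients, $\tfrac{1}{(d-1)!}\, c_d$ is then a left inverse of the induced map $\CH^{LW}_0(X)_\Q \to K_0(X)_\Q$, which is therefore injective. Now \lemref{lem:cycle-class-0-cycles} supplies the factorization $cyc^{LW}_X = cyc_X \circ can$; hence, for $\alpha \in \CH^{LW}_0(X)_\Q$ with $can_\Q(\alpha) = 0$, we get that the image of $\alpha$ in $K_0(X)_\Q$ equals $cyc_X{}_\Q(can_\Q(\alpha)) = 0$, and injectivity of $cyc^{LW}_X{}_\Q$ forces $\alpha = 0$. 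Combined with the surjectivity noted above, $can_\Q$ is an isomorphism.

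I do not expect any real obstacle here: everything reduces to Levine's theorem together with a diagram chase. The only point deserving care is the verification that $cyc^{LW}_X$ genuinely lands in $F^d K_0(X)$ and that Levine's identity $c_d \circ cyc^{LW}_X = (d-1)!\cdot\id$ holds on the whole group $\CH^{LW}_0(X)$ (not just on the cyclic subgroup generated by a point) — both of which are exactly what is recorded just before the statement, citing \cite[Corollary~5.4]{Levine-5} and \cite[Corollary~2.7]{Levine-2}.
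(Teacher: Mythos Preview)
Your proposal is correct and follows essentially the same approach as the paper: the paper states the corollary as ``an immediate consequence of \lemref{lem:cycle-class-0-cycles}'' after recording that Levine's Chern class gives $c_d \circ cyc_X^{LW} = (d-1)!$, which is exactly the argument you spell out in detail. Your write-up simply makes explicit the surjectivity of $can$ and the diagram chase that the paper leaves implicit.
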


In order to integrally 
compare the two Chow groups in dimension $\ge 2$, we use the
following.

\begin{prop}\label{prop:cyc-class}
Let $k$ be an algebraically closed field of characteristic zero
and let $X$ be a reduced projective scheme of dimension $d \ge 1$ over $k$. 
Then $cyc_X^{LW}$ is injective.
\end{prop}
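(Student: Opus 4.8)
The plan is to prove injectivity of $cyc_X^{LW}\colon \CH_0^{LW}(X) \to K_0(X)$ for a reduced projective scheme $X$ of dimension $d \ge 1$ over an algebraically closed field of characteristic zero by reducing, via generic hyperplane sections, to the case of curves, where the statement is classical (it is essentially $\CH_0^{LW}(C) \simeq \Pic(C) \inj K_0(C)$, recorded above in \lemref{lem:0-cycle-com-1} and \cite[Proposition~2.1]{LW}). First I would recall from Levine's work the Bertini-type machinery for the Levine-Weibel Chow group: given a $0$-cycle $\alpha$ on $X$ supported in the regular locus $X_{\rm reg}$ which maps to $0$ in $K_0(X)$, one wants to find a reduced curve $C$ through $\supp(\alpha)$ that is a complete intersection of ample hypersurface sections, lies in $X_{\rm reg}$ away from a prescribed closed subset, and is Cartier relative to $X_{\rm sing}$; this is exactly the kind of moving/Bertini statement available in characteristic zero and used in \cite{Levine-2}, \cite{Levine-5}.

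The key steps, in order, would be: (1) fix a class $\alpha \in \CH_0^{LW}(X)$ with $cyc_X^{LW}(\alpha) = 0$ in $K_0(X)$, and write $\alpha = \sum n_i[x_i]$ with $x_i \in X_{\rm reg}$ closed points; (2) choose a sufficiently positive very ample line bundle and, using the characteristic-zero Bertini theorem for base-point-free linear systems, cut $X$ by $d-1$ general members to obtain an irreducible reduced projective curve $C \subset X$ containing all the $x_i$, with $C \cap X_{\rm sing}$ finite, $C$ smooth at the $x_i$, and $C$ defined locally by a regular sequence along $X_{\rm sing}$ — so that $\iota\colon C \inj X$ is a Cartier curve relative to $X_{\rm sing}$ and is l.c.i.\ (hence perfect) along $X_{\rm sing}$; (3) by the projective bundle / Whitney-type arguments in Levine's papers, the restriction map $K_0(X) \to K_0(C)$ and the refined Gysin pullback send the class $\sum n_i [x_i] \in K_0(X)$ to the corresponding class in $K_0(C)$, so $\sum n_i [x_i] = 0$ in $K_0(C)$; (4) apply \lemref{lem:0-cycle-com-1} (equivalently \cite[Proposition~2.1, Proposition~1.4]{LW}), which says $cyc_C\colon \CH_0^{LW}(C,\,C\cap X_{\rm sing}) \xrightarrow{\sim} \Pic(C) \inj K_0(C)$, to conclude $\sum n_i[x_i] = 0$ already in $\CH_0^{LW}(C,\,C\cap X_{\rm sing})$; (5) push forward along $\iota_*$: since $C$ is a Cartier curve relative to $X_{\rm sing}$, the relation killing $\sum n_i[x_i]$ on $C$ is by definition a relation in $\sR_0^{LW}(X, X_{\rm sing})$, so $\alpha = 0$ in $\CH_0^{LW}(X)$.

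The main obstacle I expect is step (2)–(3): producing a \emph{single} Cartier curve $C$ that simultaneously contains all points in the support of $\alpha$, is suitably transverse to $X_{\rm sing}$ so as to be Cartier relative to $X_{\rm sing}$, and — crucially — is compatible with the cycle class map in the sense that "$cyc_X^{LW}(\alpha) = 0$ in $K_0(X)$" genuinely implies "$\iota^*cyc_X^{LW}(\alpha) = 0$ in $K_0(C)$" with $\iota^*cyc_X^{LW}(\alpha)$ equal to the naive $0$-cycle class on $C$. Controlling the restriction of the $K$-theory class to the curve requires that the $x_i$ remain smooth points of $C$ (so that the refined pullback of $[x_i]$ is again $[x_i]$) and that the excess/Tor terms vanish; this is precisely where characteristic zero enters, through Bertini irreducibility and smoothness of general sections in the regular locus, and through Levine's results \cite[Corollary~5.4]{Levine-5} and \cite[Corollary~2.7]{Levine-2} on the behaviour of $F^dK_0$ under hyperplane sections. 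Once the curve is in place, the remaining steps are formal, relying only on \lemref{lem:0-cycle-com-1}, \lemref{lem:cycle-class-0-cycles}, and \propref{prop:pushforward-perfect-maps}.
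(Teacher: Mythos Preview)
Your step (3) does not work, and this is fatal to the whole strategy. For a regular closed embedding $\iota\colon C\hookrightarrow X$ of codimension $d-1$, the l.c.i.\ (Gysin) pullback $\iota^*\colon K_0(X)\to K_0(C)$ kills the class of every closed point $x\in C$. Indeed, locally $C$ is cut out by a regular sequence $f_1,\dots,f_{d-1}$ in the maximal ideal of $\sO_{X,x}$, so
\[
\iota^*[\sO_x]=\sum_{i\ge 0}(-1)^i\bigl[\Tor_i^{\sO_{X,x}}(\sO_x,\sO_{C,x})\bigr]
=\sum_{i=0}^{d-1}(-1)^i\binom{d-1}{i}[\sO_x]=0
\]
for $d\ge 2$. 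Equivalently, in Chow language the refined Gysin map $\iota^!$ lands in $\CH_{-(d-1)}(C)=0$. Thus $\iota^*\bigl(cyc^{LW}_X(\alpha)\bigr)=0$ holds automatically for \emph{every} $0$-cycle $\alpha$, and carries no information whatsoever about whether $\alpha$ vanishes in $\CH_0^{LW}(X)$. The alternative reading---that $\sum n_i[x_i]$ already defines a class on $C$ because the points lie on $C$---gives you $cyc_C(\alpha)\in K_0(C)$, but then the hypothesis $\iota_*\bigl(cyc_C(\alpha)\bigr)=0$ in $K_0(X)$ does not imply $cyc_C(\alpha)=0$, since $\iota_*$ is far from injective. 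This is also why your argument, if it worked, would prove the statement over any algebraically closed field, which is not known in positive characteristic (cf.\ the remark following the proposition).

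The paper's proof is transcendental and genuinely uses characteristic zero. One first invokes Levine's Chern class $c_d\colon F^dK_0(X)\to\CH_0^{LW}(X)$ with $c_d\circ cyc^{LW}_X=(d-1)!$ to conclude that any $\alpha\in\ker(cyc^{LW}_X)$ is torsion. A Lefschetz principle argument reduces to $k=\C$. Over $\C$, the Chern class to modified Deligne--Beilinson cohomology $c^d_{\sD^*,X}\colon K_0(X)\to\H^{2d}_{\sD^*}(X,\Z(d))$ yields an Abel--Jacobi map $\mathrm{AK}^d_X=c^d_{\sD^*,X}\circ cyc^{LW}_X$ on the degree-zero part; since $H^{2d}(X_{\rm an},\Z(d))$ is torsion-free, the torsion class $\alpha$ lies in $\CH_0^{LW}(X)_{\deg 0}$ and satisfies $\mathrm{AK}^d_X(\alpha)=0$. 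One then appeals to the Roitman-type theorem of Biswas--Srinivas \cite[Theorem~1.1]{BS-1}, which says the Abel--Jacobi map is injective on torsion, to conclude $\alpha=0$.
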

\begin{proof}
Let $\alpha \in \CH_0^{LW}(X)$ be such that $cyc_X^{LW}(\alpha) = 0$. By Levine's theorem recalled above, we know that $\alpha$ is a torsion class in $\CH_0^{LW}(X)$.

To show that $\alpha = 0$, we can use the Lefschetz principle argument and 
rigidity of the Chow group of zero-cycles over algebraically closed fields
and assume that $k = \C$. Let $\H^{2d}_{\sD^*}(X, \Z(d))$ denote the 
modified Deligne-Beilinson cohomology of $X$ defined in \cite[\S~2]{Levine-4} (see also Section \ref{sec:Alb-C} below). 
There is then a short exact sequence
\[
0 \to A^d(X) \to \H^{2d}_{\sD^*}(X, \Z(d)) \to H^{2d}(X_{\rm an}, \Z(d)) \to 0
\]
and it was shown in \cite[\S~2]{Levine-4} that there is a Chern class map
$c^d_{\sD^*, X}\colon K_0(X) \to \H^{2d}_{\sD^*}(X, \Z(d))$ which induces an 
Abel-Jacobi map ${\rm AK}^d_X\colon \CH_0^{LW}(X)_{\rm deg \ 0} \to A^d(X)$ given by
${\rm AK}^d_X = c^d_{\sD^*, X} \circ cyc_X^{LW}$, where
$\CH_0^{LW}(X)_{\rm deg \ 0}: = {\rm Ker}(\CH_0^{LW}(X) \to 
H^{2d}(X_{\rm an}, \Z(d)))$.

Since $H^{2d}(X_{\rm an}, \Z(d)))$ is torsion-free and $\alpha$ is torsion, 
it follows that $\alpha \in 
\CH_0^{LW}(X)_{\rm deg \ 0}$. In particular, it is a torsion class in
$\CH_0^{LW}(X)_{\rm deg \ 0}$.
A cycle class map 
\[\wt{{\rm AK}}^d_X\colon  \CH_0^{LW}(X)_{\rm deg \ 0} \to A^d(X)\]
is also constructed in \cite[\S~2]{ESV} and it is shown in 
\cite[Lemma~2.2]{Krishna-1} that $\wt{{\rm AK}}^d_X = {\rm AK}^d_X$ up to
a sign. Now, $cyc_X^{LW}(\alpha) = 0$ implies that 
$c^d_{\sD^*, X} \circ cyc_X^{LW}(\alpha) = 0$
in $A^d(X)$. We conclude that $\alpha$ is a torsion class in 
$\CH_0^{LW}(X)_{\rm deg \ 0}$ such that $\wt{{\rm AK}}^d_X(\alpha) = 0$.
We now apply \cite[Theorem~1.1]{BS-1} to conclude that $\alpha = 0$.
This finishes the proof.
\end{proof}

\begin{remk} Let $X$ be a projective variety over an algebraically closed field $k$ of exponential characteristic $p\geq 1$ and let $Y\subsetneq X$ be a closed subset of $X$ containing $X_{\rm sing}$ and containing no components of $X$. When $\codim_{X}(Y) \geq 2$, the map $cyc_X^{LW}\colon \CH_0^{LW}(X)\to F^dK_0(X)$ is an isomorphism modulo $p$-torsion by \cite[Theorem 3.2]{Levine-2}. In particular, this shows directly that for such $(X,Y)$ the canonical map $\CH_0^{LW}(X,Y)\to \CH_0(X,Y)$ is an isomorphism up to $p$-torsion. Since in this text we are interested in studying cycles on a double $(S({X,D}),D)$, that is not regular in codimension $1$, we can't invoke directly Levine's result even in the projective case over a field of characteristic $0$, and we need the detour of Proposition \ref{prop:cyc-class}. 
\end{remk}

We can now deduce 
our final result comparing the two Chow groups as follows.

\begin{thm}\label{thm:0-cycle-affine-proj-char0}
Let $X$ be a reduced quasi-projective scheme of dimension $d\ge 1$ over 
an algebraically closed field $k$. Then the canonical map
$\CH^{LW}_0(X)  \to \CH_0(X)$ is an isomorphism in the following cases.
\begin{enumerate}
\item
$d \le 2$.
\item
$X$ is affine.
\item
${\rm char}(k) = 0$ and $X$ is projective.
\end{enumerate}
\end{thm}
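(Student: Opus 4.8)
The strategy is to show the surjection $\CH^{LW}_0(X) \surj \CH_0(X)$ from \lemref{lem:lci-LW} is injective by producing, for each case, enough control over the kernel of $cyc_X^{LW}$ to conclude it dies already in $\CH_0(X)$. The three cases split according to the tools available for bounding that kernel.

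First I would dispose of case (1). If $d = 1$ this is exactly \lemref{lem:0-cycle-com-1}. If $d = 2$, one argues that the modified rational equivalence already includes enough relations: given a generator $\nu_*(\divf(f))$ of $\sR_0^{LW}$ coming from a \emph{reduced} Cartier curve $C \subset X$ (which suffices by \propref{prop:LevineMoving}), the inclusion $C \inj X$ is automatically l.c.i. along $Y$ by definition of a Cartier curve, hence $(C, C \cap Y)$ is a good curve relative to $(X,Y)$ with $\nu$ the inclusion. So every Levine--Weibel relation on a reduced Cartier curve is literally one of our relations, giving $\CH^{LW}_0(X,Y) = \CH_0(X,Y)$ whenever $X$ is reduced --- and in fact this argument has nothing to do with the dimension, so case (1) should really be subsumed. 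Wait: the subtlety is that our generators in $\sR_0(C,Z,X)$ only use \emph{finite} $\nu$, while a Cartier curve is a closed immersion, which is finite; and $Z = C \cap Y$ contains $C_{\rm sing}$ since $Y \supseteq X_{\rm sing}$ and a Cartier curve is regular away from $Y$. So indeed this is immediate and I would state it cleanly.

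For cases (2) and (3), the point is that $\alpha \in \ker(\CH^{LW}_0(X) \to \CH_0(X))$ must first be shown torsion, then shown zero. By \lemref{lem:cycle-class-0-cycles}, both Chow groups map compatibly to $K_0(X)$ via $cyc_X$ and $cyc_X^{LW}$; so it suffices to show $\ker(cyc_X^{LW}) \to \CH_0(X)$ vanishes, since $cyc_X$ factors through $\CH_0(X)$ and $\CH^{LW}_0(X) \surj \CH_0(X)$. By Levine's top Chern class result (recalled before \corref{cor:0-cycle-Rat}), $\ker(cyc_X^{LW})$ is torsion, and by \corref{cor:0-cycle-Rat} the map becomes an isomorphism rationally; so it remains to handle torsion. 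In the \emph{affine} case (2), one uses that $K_0$ of a reduced affine variety over an algebraically closed field has the property that $cyc_X^{LW}$ is injective --- this is the affine analogue, and I would cite the relevant affine Levine--Weibel/Srinivas input (injectivity of the cycle class map $\CH_0^{LW}(X) \to F^d K_0(X)$ for affine $X$, or equivalently Murthy-type results), so $\ker(cyc_X^{LW}) = 0$ outright and the isomorphism follows from \lemref{lem:cycle-class-0-cycles}. In the \emph{projective characteristic zero} case (3), this is precisely \propref{prop:cyc-class}: $cyc_X^{LW}$ is injective there, so again $\ker(cyc_X^{LW}) = 0$ and \lemref{lem:cycle-class-0-cycles} gives that $can\colon \CH^{LW}_0(X) \to \CH_0(X)$ is injective, hence an isomorphism.

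\textbf{Main obstacle.} The routine bookkeeping (finiteness vs.\ closed immersion, that $C\cap Y$ contains the singular locus, perfectness of the relevant maps) is harmless. The genuine content is entirely imported: the injectivity of $cyc_X^{LW}$ in the projective char.\ $0$ case rests on \propref{prop:cyc-class} (which itself uses the Bloch--Srinivas-style result \cite[Theorem~1.1]{BS-1} and Levine's Deligne--Beilinson machinery), and the affine case rests on the known injectivity of the cycle class map to $K_0$ for affine varieties. So the ``proof'' of this theorem is really a short deduction once those two inputs and \lemref{lem:cycle-class-0-cycles} are in hand; the hard part was proving \propref{prop:cyc-class}, which is already done. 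The only place I would be careful is ensuring that ``$\ker(cyc_X^{LW}) = 0$'' genuinely implies ``$can$ is injective'': this is immediate from the commutative triangle \eqref{eq:cyc-class-LW-lci}, since $cyc_X^{LW} = cyc_X \circ can$ and $can$ is surjective, so $can$ injective $\iff$ $cyc_X^{LW}$ and $cyc_X$ have the same kernel, and $\ker(cyc_X^{LW}) = 0$ forces $can$ injective.
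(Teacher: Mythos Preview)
Your treatment of cases (2) and (3) is essentially the paper's proof: reduce via the commutative triangle of \lemref{lem:cycle-class-0-cycles} to showing $cyc_X^{LW}$ is injective, then invoke \propref{prop:cyc-class} in the projective characteristic-zero case and the appropriate affine input (the paper cites \cite[Corollary~7.3]{Krishna-2} together with \cite[Corollary~2.7]{Levine-2}) in the affine case. That part is fine.

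Your argument for case (1) with $d=2$, however, has the inclusion running the wrong way. You argue that a reduced Cartier curve $C\inj X$ is a good curve relative to $(X,Y)$, hence ``every Levine--Weibel relation is literally one of our relations''. That statement is correct, but it yields $\sR_0^{LW}(X,Y)\subseteq \sR_0(X,Y)$, which is exactly the content of \lemref{lem:lci-LW} and produces only the \emph{surjection} $\CH_0^{LW}(X)\surj \CH_0(X)$. To prove injectivity you need the opposite containment: that every relation coming from a good curve $\nu\colon (C,Z)\to (X,Y)$ --- where $\nu$ is merely finite and l.c.i.\ along $Y$, not an embedding --- already lies in $\sR_0^{LW}(X,Y)$. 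Your ``Wait:'' paragraph checks details (finiteness, $C_{\rm sing}\subseteq Z$) that confirm Cartier curves are good, but does nothing for the reverse direction. This is why the argument would, as you noticed, appear to work in all dimensions: it is simply reproving \lemref{lem:lci-LW}.

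The paper handles $d=2$ by the \emph{same} mechanism as cases (2) and (3): it shows $cyc_X^{LW}$ is injective for reduced quasi-projective surfaces over an algebraically closed field, citing Levine's theorem \cite[Theorem~7]{Levine-1} that $\CH_0^{LW}(X)\to F^2K_0(X)$ is an isomorphism in dimension two. So the uniform strategy is: in each case verify injectivity of $cyc_X^{LW}$, then conclude via \lemref{lem:cycle-class-0-cycles}. Your instinct that the cases should be treated uniformly was right; the gap is only that you tried to treat $d=2$ by a direct comparison of relation groups rather than routing through $K_0$.
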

\begin{proof}
In each case, it suffices to show using \lemref{lem:cycle-class-0-cycles}
that $cyc_X^{LW}$ is injective.
The case $d \le 1$ follows from \lemref{lem:0-cycle-com-1}. 
The $d = 2$ case follows from \cite[Theorem~7]{Levine-1},
where it is shown that the map $\CH_0^{LW}(X) \to F^2K_0(X)$ is an isomorphism. 
If $X$ is affine,
this follows from \cite[Corollary~7.3]{Krishna-2} and
\cite[Corollary~2.7]{Levine-2}.
The last case follows from \propref{prop:cyc-class}.
\end{proof}

\subsection{Some functorial properties of  the Chow group of 0-cycles}
\label{sec:Functorial-prop}
Recall that any proper map $\phi\colon  X' \to X$ admits a push-forward map on 
the Chow groups of 0-cycles when $X$ is smooth. This can not be true  
if $X$ is singular. But we expect such a push-forward to exist in the 
singular case provided $f$ is an l.c.i. morphism. Our next goal is to prove
this in special cases. We shall use this result later in this text.
\begin{prop}\label{prop:0-cycle-PF}
Let $X,Y$ be again as in \lemref{lem:cycle-class-0-cycles}. 
Let $p\colon X' \to X$ be a proper morphism which is l.c.i. over $X_{\rm sing}$
such that $X'$ is reduced.
Let $Y' \subsetneq X'$ be a closed subset containing 
$p^{-1}(Y) \cup X'_{\rm sing}$ and not 
containing any component of $X'$. Then there are
push-forward maps $p_*\colon \CH_0(X', Y') \to \CH_0(X,Y)$ and $p_*\colon
K_0(X') \to K_0(X)$ and a commutative diagram
\begin{equation}\label{eqn:0-cycle-PF-1}
\xymatrix@C2.3pc{
\CH_0(X',Y') \ar[r]^>>>>>{cyc_{X'}} \ar[d]_{p_*} & K_0(X') \ar[d]^{p_*} \\
\CH_0(X,Y) \ar[r]_>>>>>{cyc_X} & K_0(X).}
\end{equation}
\end{prop}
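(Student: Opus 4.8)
The plan is to construct the pushforward on Chow groups at the level of cycles and then verify that it respects rational equivalence, using the compatibility with $K$-theory and Levine's moving lemma to reduce to curves. First, since $p$ is proper and l.c.i.\ over $X_{\rm sing}$, \lemref{lem:lci-6} shows $p$ is perfect, so \propref{prop:pushforward-perfect-maps} gives the pushforward $p_*\colon K_0(X') \to K_0(X)$; the cycle class maps $cyc_{X'}$ and $cyc_X$ are the ones from \lemref{lem:cycle-class-0-cycles}. On the level of free abelian groups there is the evident pushforward $p_*\colon \sZ_0(X',Y') \to \sZ_0(X,Y)$ from \eqref{eqn:cycle-PF}, and the same argument as in \lemref{lem:0-cycle-com-0} (factoring a closed point through $\Spec(k(x)) \to X'$, which is perfect, and using functoriality of $K$-theory pushforward along the composite $\Spec(k(x)) \to X' \to X$) shows that $cyc_X \circ p_* = p_* \circ cyc_{X'}$ on $\sZ_0(X',Y')$. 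So the only thing to check is that $p_*$ sends $\sR_0(X',Y')$ into $\sR_0(X,Y)$.

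For this, take a good curve $\nu\colon (C,Z) \to (X',Y')$ relative to $(X',Y')$ and $f \in \sO^\times_{C,Z}$; I want to show $p_*\bigl(\nu_*(\divf(f))\bigr) \in \sR_0(X,Y)$. The naive hope is that $p\circ\nu\colon (C, Z') \to (X,Y)$ is again a good curve for a suitable $Z' \supseteq (p\circ\nu)^{-1}(Y) \cup C_{\rm sing}$, which would finish it immediately since then $p_*\nu_*(\divf(f)) = (p\circ\nu)_*(\divf(f)) \in \sR_0(X,Y)$. The conditions (1) and (2) of \defref{defn:0-cycle-S-1} are straightforward: $C$ has no component in $Z'$ by construction, and $(p\circ\nu)^{-1}(Y) \subseteq \nu^{-1}(p^{-1}(Y)) \subseteq \nu^{-1}(Y') \subseteq Z \subseteq Z'$. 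The issue is condition (3): we need $p\circ\nu$ to be l.c.i.\ at every point $x \in C$ with $p(\nu(x)) \in Y$. Over the locus where $\nu(x) \notin Y'$, we have $\nu(x) \in X'_{\rm reg}$ if moreover $p(\nu(x)) \in Y \supseteq X_{\rm sing}$ forces $\nu(x)$ near where $p$ is l.c.i.; but in general $p$ is only assumed l.c.i.\ over $X_{\rm sing}$, not over all of $Y$, so $p\circ\nu$ need not be l.c.i.\ at points mapping into $Y \setminus X_{\rm sing}$.

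The fix, which I expect to be the technical heart of the argument, is to enlarge $Y$ first: one reduces to the case $Y = X_{\rm sing}$ (and correspondingly $Y' = X'_{\rm sing}$, or at least $Y' = p^{-1}(X_{\rm sing}) \cup X'_{\rm sing}$) by noting that $\CH_0(X,Y)$ is independent of the choice of $Y$ in an appropriate sense when $X$ is nice enough, or more directly by using \propref{prop:LevineMoving}-type moving: any good curve can be deformed so that it meets $Y$ only inside $X_{\rm sing}$. Alternatively, and more robustly, I would first use \lemref{lem:lci-curves} to assume $\nu\colon C \to X'$ is itself l.c.i.; then $p\circ\nu$ is a composite of $\nu$ (l.c.i., hence perfect) with $p$ (l.c.i.\ over $X_{\rm sing}$), and over $(p\circ\nu)^{-1}(X_{\rm sing})$ both maps are l.c.i., so by \lemref{lem:lci-5}(4) the composite is l.c.i.\ there; choosing $Z' = Z \cup (p\circ\nu)^{-1}(X_{\rm sing})$ (a finite set) gives a good curve relative to $(X, X_{\rm sing})$, and then one invokes the comparison/moving results to pass back to the original $Y$. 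With $p\circ\nu$ good, $\divf(f)$ is preserved, so $p_*$ descends to $\CH_0$; the diagram \eqref{eqn:0-cycle-PF-1} then commutes because it already commutes on $\sZ_0(X',Y')$ before passing to quotients. The main obstacle, to reiterate, is exactly this bookkeeping around $Y$ versus $X_{\rm sing}$ and ensuring the l.c.i.\ hypothesis on $p$ propagates to the composite curve morphism; everything else is formal from the lemmas already established.
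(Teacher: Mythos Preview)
Your overall strategy is exactly the paper's: push-forward on $K_0$ via \lemref{lem:lci-6} and \propref{prop:pushforward-perfect-maps}, push-forward on $\sZ_0$ via \eqref{eqn:cycle-PF}, then check that $p_*$ respects rational equivalence by showing the composite curve is good. The paper's proof of the key step is a single sentence: ``It follows from our assumption that $\nu = p\circ\nu'\colon (C,Z)\to (X,Y)$ is a good curve relative to $(X,Y)$,'' whence $p_*\nu'_*(\divf(f)) = \nu_*(\divf(f)) \in \sR_0(X,Y)$; the commutativity of \eqref{eqn:0-cycle-PF-1} is then handled exactly as in \lemref{lem:0-cycle-com-0}. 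No appeal to \lemref{lem:lci-curves}, no moving lemma, no intermediate reduction to $Y=X_{\rm sing}$.

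The subtlety you raise about condition~(3) of \defref{defn:0-cycle-S-1} at points over $Y\setminus X_{\rm sing}$ is a fair observation in the stated generality---knowing that $\nu'$ is l.c.i.\ at $x$ and that $X$ is regular at $\nu(x)$ does not by itself force $p\circ\nu'$ to be l.c.i.\ at $x$ when $p$ is not---and the paper's one-line assertion does not visibly address it. But it is a phantom obstacle: every application of the proposition in the paper (\propref{prop:Sing-PF} and the push-forward in \propref{prop:PF-fields}) takes $Y=X_{\rm sing}$, and then for any $x$ with $\nu(x)\in Y$ one has $\nu'(x)\in p^{-1}(Y)\subseteq Y'$, so $\nu'$ is l.c.i.\ at $x$, while $p$ is l.c.i.\ at $\nu'(x)$ by hypothesis; \lemref{lem:lci-5}(4) then gives condition~(3) immediately. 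Your proposed detours---producing a good curve only relative to $(X,X_{\rm sing})$ and then ``passing back to the original $Y$'' via unspecified moving/comparison results---are not carried out in the paper, are not needed for its purposes, and the passing-back step is not justified by anything available. In short: the paper adopts your ``naive hope'' as the whole argument, and the obstacle you expected to be ``the technical heart'' is simply absent in the case $Y=X_{\rm sing}$ that matters.
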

\begin{proof}
It follows from our assumption and \lemref{lem:lci-6} that $p$ is perfect and
hence there is a push-forward map $p_*\colon K_0(X') \to K_0(X)$.
We have seen before that there is also a push-forward map $p_*\colon\sZ_0(X', Y')
\to \sZ_0(X,Y)$. 

Let us now consider a good curve $\nu'\colon (C, Z) \to (X',Y')$ relative to 
$(X',Y')$. It follows from our assumption that
$\nu = p \circ \nu' \colon (C, Z) \to (X,Y)$ is a good curve relative to $(X,Y)$.
We have the push-forward maps $\sZ_0(C, Z) \xrightarrow{\nu'_*} \sZ_0(X', Y')
\xrightarrow{p_*} \sZ_0(X,Y)$ such that $\nu_* = p_* \circ \nu'_*$.
This shows that $p_*(\nu'_*({\rm div}(f))) = \nu_*({\rm div}(f))$ for any
$f \in \sO^{\times}_{C,Z}$. This implies that $p_*$ descends to a push-forward
map on the Chow groups.
The commutativity of ~\eqref{eqn:0-cycle-PF-1} is shown exactly as in
the proof of \lemref{lem:0-cycle-com-0}.  
\end{proof}

Combining this with \thmref{thm:0-cycle-affine-proj-char0}, we have a 
similar result for the Levine-Weibel Chow group of $0$-cycles. Note that this type of functoriality was not previously known.
\begin{prop}\label{prop:Sing-PF}
Let $X$ be as in \thmref{thm:0-cycle-affine-proj-char0}.
Let $p \colon  X' \to X$ be a proper morphism between reduced 
quasi-projective schemes over $k$. Let $Y$ denote the singular locus of $X$
and let $Y' \subset X'$ be a closed subscheme containing $p^{-1}(Y) \cup
X'_{\rm sing}$ and not containing any component of $X'$. 
Assume that $p$ is l.c.i. along $Y$.
Then there is a push-forward map 
$p_*\colon  \CH_0^{LW}(X', Y') \to \CH_0^{LW}(X,Y)$.
\end{prop}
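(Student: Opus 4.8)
The plan is to reduce Proposition~\ref{prop:Sing-PF} to Proposition~\ref{prop:0-cycle-PF} by passing through the modified Chow group $\CH_0(-,-)$ and then transporting the resulting map back to $\CH^{LW}_0(-,-)$ using the isomorphism of Theorem~\ref{thm:0-cycle-affine-proj-char0}. First I would invoke Theorem~\ref{thm:0-cycle-affine-proj-char0} for $X$ to get the isomorphism $\CH^{LW}_0(X,Y) \xrightarrow{\simeq} \CH_0(X,Y)$ (note $Y = X_{\rm sing}$ here, so the hypotheses of that theorem apply verbatim in cases (1)--(3)). On the source side, Proposition~\ref{prop:0-cycle-PF} already furnishes a push-forward $p_*\colon \CH_0(X',Y') \to \CH_0(X,Y)$, since $p$ is proper, l.c.i.\ over $X_{\rm sing}=Y$, and $X'$ is reduced with $Y' \supseteq p^{-1}(Y)\cup X'_{\rm sing}$. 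So the only remaining ingredient is a map $\CH^{LW}_0(X',Y') \to \CH_0(X',Y')$ that we can precompose with; but that is exactly the canonical surjection of Lemma~\ref{lem:lci-LW}. Composing, I would \emph{define}
\[
p_*\colon \CH^{LW}_0(X',Y') \xrightarrow{can} \CH_0(X',Y') \xrightarrow{p_*} \CH_0(X,Y) \xrightarrow[\simeq]{can^{-1}} \CH^{LW}_0(X,Y).
\]
This is manifestly a group homomorphism, which is all the proposition asserts.

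I would then add a sentence of justification that this composite deserves to be called a push-forward, i.e.\ that it agrees on cycles with the naive set-theoretic push-forward $p_*\colon \sZ_0(X',Y') \to \sZ_0(X,Y)$ of \eqref{eqn:cycle-PF}. This is immediate from tracing through the three maps: $can$ on both sides is induced by the identity on generators (Lemma~\ref{lem:lci-LW} and, for case~(1)--(3), the proof of Theorem~\ref{thm:0-cycle-affine-proj-char0} via Lemma~\ref{lem:cycle-class-0-cycles}, where both canonical maps are identity on representatives), and $p_*$ on $\CH_0$ is induced by the naive push-forward by construction in Proposition~\ref{prop:0-cycle-PF}. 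Hence the diagram of cycle groups commutes and the induced map on Levine--Weibel Chow groups is well defined purely because the right-hand vertical $can$ is an isomorphism.

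The one point that requires a little care---and the closest thing to an obstacle---is making sure Theorem~\ref{thm:0-cycle-affine-proj-char0} is actually applicable to $X$ with $Y = X_{\rm sing}$, not to some intermediate $Y$. The statement of the present proposition fixes $Y$ to be the singular locus of $X$, precisely so that $\CH^{LW}_0(X, X_{\rm sing}) = \CH^{LW}_0(X)$ and $\CH_0(X, X_{\rm sing}) = \CH_0(X)$ and Theorem~\ref{thm:0-cycle-affine-proj-char0} gives the needed iso; so the hypothesis list of the proposition (``$X$ as in Theorem~\ref{thm:0-cycle-affine-proj-char0}'') is exactly what makes the argument go through. No genuinely new geometry is needed: the real content was already packaged into Proposition~\ref{prop:0-cycle-PF} (the hard comparison of rational equivalences under l.c.i.\ pullback of good curves) and into the comparison Theorem~\ref{thm:0-cycle-affine-proj-char0}. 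Thus the proof is essentially a two-line diagram chase, and I would present it as such.
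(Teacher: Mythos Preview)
Your proposal is correct and follows essentially the same approach as the paper: compose the canonical surjection $\CH^{LW}_0(X',Y') \to \CH_0(X',Y')$ with the push-forward $p_*\colon \CH_0(X',Y') \to \CH_0(X,Y)$ of Proposition~\ref{prop:0-cycle-PF}, then invert the canonical map $\CH^{LW}_0(X,Y) \to \CH_0(X,Y)$ using Theorem~\ref{thm:0-cycle-affine-proj-char0}. Your additional remark that the composite agrees with the naive cycle-level push-forward is a welcome clarification the paper leaves implicit.
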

\begin{proof}
\propref{prop:0-cycle-PF} says that there are maps
\[
\CH_0^{LW}(X', Y') \to \CH_0(X', Y') \xrightarrow{p_*} \CH_0(X, Y)
\leftarrow \CH^{LW}_0(X, Y)
\]
and \thmref{thm:0-cycle-affine-proj-char0} says that the last map is an 
isomorphism. The result follows. 
\end{proof}

\subsection{Cycles in good position}\label{sec:GP} 
Let $X$ be a smooth quasi-projective 
scheme of pure dimension $d$ over a field $k$. 
For any closed subset $W \subsetneq X$, 
let $\cZ_0(X,W)$ denote the free abelian group on the set of closed points in 
$X\setminus W$. Let $\cR_0(X)_W$ denote the subgroup of $\cZ_0(X,W)$ generated 
by cycles of the form $\nu_*{\rm div}(f)$, where $f$ is a rational function on 
an integral curve $\nu\colon C\hookrightarrow X$ such that 
$C \not\subset W$ and 
$f\in \cO_{C,C\cap W}^\times$. We denote by $\CH_0(X)_W$ the quotient 
$\cZ_0(X,W)/\cR_0(X)_W$. It is the group of $0$-cycles that are 
\emph{in good position} with respect to $W$ (i.e., $0$-cycles missing $W$). 
We have a canonical map $\CH_0(X)_W\to \CH_0(X)$.
The following result is a consequence of Bloch's moving lemma.

\begin{lem}\label{lem:Sing-ML}
Let $X$ be a smooth quasi-projective scheme over $k$. Let
$W \subset X$ be a proper closed subscheme of $X$. 
Then the map $\CH_0(X)_W \to \CH_0(X)$ is an isomorphism.
\end{lem}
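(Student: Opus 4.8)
The plan is to deduce this from Bloch's moving lemma for higher Chow groups of smooth quasi-projective schemes, exactly in the spirit of the classical statement that the Chow group of $0$-cycles computed with cycles avoiding a fixed closed subset coincides with the usual one. First I would recall that for a smooth quasi-projective $X$ over $k$, the group $\CH_0(X)$ is identified with the higher Chow group $\CH^d(X,0) = \CH_d(X,0)$, and that Bloch's moving lemma (in the form established by Bloch and in the quasi-projective case by Levine) allows one to move cycles into general position with respect to any finite family of locally closed subschemes, compatibly with the rational-equivalence relation. Concretely, writing $W \subsetneq X$ for the given proper closed subscheme, the moving lemma produces, for any $0$-cycle $\alpha$ on $X$, a cycle $\alpha'$ rationally equivalent to $\alpha$ whose support is disjoint from $W$, and, for any curve witnessing a rational equivalence, a way to replace it by curves meeting $W$ properly (hence in dimension $0$).

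The argument then breaks into the two standard parts. For surjectivity of $\CH_0(X)_W \to \CH_0(X)$: given a class in $\CH_0(X)$ represented by a $0$-cycle $\alpha$, apply the moving lemma to find $\alpha' \sim \alpha$ supported away from $W$; then $\alpha' \in \cZ_0(X,W)$ and its class in $\CH_0(X)_W$ maps to the class of $\alpha$. For injectivity: suppose $\alpha \in \cZ_0(X,W)$ becomes zero in $\CH_0(X)$, so $\alpha = \sum_j \nu_{j*}\divf(f_j)$ for finitely many integral curves $\nu_j\colon C_j \hookrightarrow X$ and rational functions $f_j$ on $C_j$. The content is to arrange that each $C_j$ meets $W$ properly and that $f_j$ is a unit along $C_j \cap W$, so that $\nu_{j*}\divf(f_j) \in \cR_0(X)_W$; this is again an instance of Bloch's moving lemma applied to the parametrizing data of the rational equivalence (move the curves $C_j$, together with the supports of the $f_j$, into good position relative to $W$, keeping the boundary $0$-cycle $\alpha$ fixed since it already avoids $W$). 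One concludes $\alpha = 0$ in $\CH_0(X)_W$.

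The main obstacle is the second (injectivity) step: one must move the \emph{curves carrying the rational equivalence}, not just the $0$-cycles, into good position relative to $W$ without changing the resulting boundary $0$-cycle $\alpha$. This is precisely the refined form of Bloch's moving lemma for the cycle complex $z^d(X,\bullet)$ in degrees $0$ and $1$ — one moves a $1$-simplex (a curve in $X \times \A^1$ meeting the faces properly) to one in good position with respect to $W \times \A^1$ while fixing its already-admissible faces. I would cite Bloch's original moving lemma together with Levine's extension to smooth quasi-projective schemes (as used elsewhere in this paper for $\CH_0(X)_W$), and note that the functions $f_j$ can be taken to be units along $C_j \cap W$ automatically once $C_j$ meets $W$ in dimension $0$ and one further moves to avoid the (finite) zero/pole locus, shrinking $W$ is not needed since $f_j \in \cO_{C_j, C_j \cap W}^\times$ is exactly the admissibility condition defining $\cR_0(X)_W$. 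Everything else is bookkeeping.
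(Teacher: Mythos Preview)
Your proposal is correct and follows essentially the same approach as the paper: both deduce the lemma from Bloch's moving lemma applied to the cycle complex in degrees $0$ and $1$, so that not only $0$-cycles but also the $1$-cycles (curves in $X\times\P^1_k$, resp.\ $X\times\A^1$) witnessing rational equivalence can be moved into good position relative to $W$. The paper packages this as an isomorphism on $H^0$ of the inclusion of chain complexes $\bigl(\sZ_1(X)_W \xrightarrow{\partial_\infty-\partial_0} \sZ_0(X,W)\bigr)\hookrightarrow\bigl(\sZ_1(X)\to\sZ_0(X)\bigr)$, and then makes explicit one small translation you gloss over: the identification of $(\partial_\infty-\partial_0)(\sZ_1(X)_W)$ with $\sR_0(X)_W$ via the norm map $C\mapsto (C',N(f))$, where $C'\subset X$ is the image and $N(f)$ the norm of the projection $C\to\P^1_k$ under $k(C')\hookrightarrow k(C)$, with the observation that $N(f)\in\sO^\times_{C',C'\cap W}$.
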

\begin{proof}
We can assume that $X$ is connected.
Let $\sZ_1(X)_W$ denote the free abelian group on integral curves in
$X \times \P^1_k$ which have the following properties.
\begin{enumerate}
\item
$C \cap (X \times \{0, \infty\})$ is finite.
\item
 $C \cap (W \times \P^1)$ is finite.
\item
$C \cap (W \times \{0, \infty\}) = \emptyset$.
\item
$C \neq \{x\} \times \P^1$ for any $x \in X$. 
\end{enumerate}
Then, the moving lemma of Bloch \cite{Bloch-M} says that the inclusion of
chain complexes
\[
(\sZ_1(X)_W \xrightarrow{\partial_{\infty} - \partial_0}
\sZ_0(X,W)) \to (\sZ_1(X) \xrightarrow{\partial_{\infty} - \partial_0}
\sZ_0(X))
\]
induces isomorphism on $H^0$. In particular, we get exact sequence
\[
\sZ_1(X)_W \xrightarrow{\partial_{\infty} - \partial_0}
\sZ_0(X,W) \to \CH_0(X) \to 0.
\]

On the other hand, there is an isomorphism 
$(\partial_{\infty} - \partial_0)(\sZ_1(X)_W) \to \sR_0(X)_W$
given by $\partial([C]) \mapsto {\rm div}(N(f))$, where $f$ is the
projection map $C \to \P^1_k$, $C'$ is its image in $X$ and $N(f)$ is the
norm of $f$ under the finite map $k(C') \inj k(C)$.  
One checks easily that $N(f) \in \sO^{\times}_{C', C' \cap Y}$.
\end{proof}

\section{The pull-back maps $\Delta^*$ and $\iota^*_{\pm}$}
\label{sec:PB}
Let $k$ be a field.
Let $X$ be a smooth and connected quasi-projective scheme of dimension 
$d \ge 1$ over $k$ and let $D \subset X$ be an effective Cartier divisor.
Our goal in this section is to define pull-back maps 
$\Delta^*\colon \CH_0(X) \to
\CH_0(S(X,D))$ and $\iota^*_{\pm}\colon \CH_0(S(X,D)) \to \CH_0(X)$.
As before, we shall write $S(X,D)$ in short as $S_X$ as long as the 
divisor $D$ is understood. We shall denote the closed subschemes
$\iota_{\pm}(X)$ of $S_X$ by $X_{\pm}$, each of which is a copy of $X$.

\subsection{The map $\Delta^*$}\label{sec:Sing-PB}
We define the map $\Delta^*\colon \sZ_0(X,D) \to \sZ_0(S_X,D)$ by letting
$\Delta^*([x])$ be the 0-cycle on $S_X$ associated to the closed 
subscheme $\{x\} \times_X S_X$. It follows from \propref{prop:double-prp}(1) 
that $\Delta^*([x]) =
[x_{+}]  + [x_{-}]$, where $x_{\pm}$ is the point $x$ in $X_{\pm} \setminus D$.
Note also that $D = (S_X)_{\rm sing}$ by \propref{prop:double-prp}(6) since
$X$ is non-singular.
We show that $\Delta^*$ preserves rational equivalences.

\begin{thm}\label{thm:PB-main}
Let $X$ be a smooth quasi-projective scheme of dimension $d \ge 1$ over $k$ and
let $D \subsetneq X$ be an effective Cartier divisor. Then 
$\Delta^*\colon  \sZ_0(X,D) \to \sZ_0(S_X, D)$ induces a map
\[
\Delta^*\colon  \CH_0(X) \to \CH_0(S_X).
\]
\end{thm}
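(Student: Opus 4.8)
The plan is to reduce the assertion to checking that $\Delta^*$ carries the rational-equivalence relations on $X$ into those on $S_X$, the key device being to realize the double of a curve in $X$ as a \emph{good curve} on $S_X$ in the sense of \S\ref{ssec:Rat-eq-sing-var}. Since $X$ is smooth, $(S_X)_{\rm sing} = D$ by \propref{prop:double-prp}(6), so $\CH_0(S_X) = \CH_0(S_X,D)$ and $\sR_0(S_X,D)$ is generated by divisors of rational functions on good curves $(C,Z)\to(S_X,D)$ — these are finite morphisms from reduced curves, not necessarily embedded, required only to be l.c.i. along $D$. Also, by \lemref{lem:Sing-ML}, $\CH_0(X)$ may be computed as $\cZ_0(X,D)/\cR_0(X)_D$. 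It therefore suffices to show $\Delta^*\big(\cR_0(X)_D\big)\subseteq \sR_0(S_X,D)$; equivalently, for each integral curve $\nu\colon C\inj X$ with $C\not\subset D$ and each $f\in\cO^{\times}_{C,C\cap D}$, that the $0$-cycle $\Delta^*(\nu_*\divf(f)) = (\nu_*\divf(f))_+ + (\nu_*\divf(f))_-$ belongs to $\sR_0(S_X,D)$.

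First I would normalize: let $p\colon C'\to C$ be the normalization and $\nu' = \nu\circ p\colon C'\to X$, a finite morphism from a smooth integral curve. Identifying $k(C') = k(C)$, one has $f\in\cO^{\times}_{C',\,\nu'^{-1}(C\cap D)}$, and $\nu'_*(\divf_{C'}(f)) = \nu_*(\divf_{C}(f))$ by compatibility of $\divf$ with proper push-forward, so replacing $(C,\nu,f)$ by $(C',\nu',f)$ changes nothing. The gain is that $C'$ is regular, so the double behaves well. Put $S_{C'} := S(C',\nu'^*(D))$. Since $C'$ is Cohen--Macaulay and $\nu'^*(D)$ contains no component of $C'$, \propref{prop:double-prp-fine}(4) gives that $\nu'^*(D)$ is an effective Cartier divisor on $C'$ and that the natural map is an isomorphism $S_{C'}\xrightarrow{\simeq} S_X\times_X C'$; in particular $S(\nu')\colon S_{C'}\to S_X$ is finite. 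As $C'$ and $X$ are regular, $\nu'$ is l.c.i., hence $S(\nu')$ is l.c.i. along $D$ by \propref{prop:double-prp-fine}(5). By \propref{prop:double-prp}(2) and \propref{prop:double-prp-fine}(4), $S_{C'}$ is reduced with exactly two irreducible components $C'_{+},C'_{-}$, each carried isomorphically onto $C'$ by the projection, with $S(\nu')|_{C'_{\pm}} = \iota_{\pm}\circ\nu'$. Taking $Z := S(\nu')^{-1}(D)$, which equals $(S_{C'})_{\rm sing}$ by \propref{prop:double-prp}(6), one checks exactly as in \lemref{lem:PB-Cartier} that $Z$ is a proper closed subset of $S_{C'}$ containing no component, and that $S(\nu')\colon (S_{C'},Z)\to (S_X,D)$ is a good curve relative to $(S_X,D)$.

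It remains to exhibit the relevant rational function. Being a reduced curve, $S_{C'}$ is Cohen--Macaulay, so $k(S_{C'}) = k(C'_{+})\times k(C'_{-})\cong k(C')\times k(C')$; set $F := (f,f)$. As $f$ is regular and invertible at every point of $C'$ lying over $C\cap D$, and every point of $Z$ maps to such a point, $F\in\cO^{\times}_{S_{C'},Z}$. By the divisor formula for reduced curves recalled in \S\ref{ssec:DivClassesCurves}, $\divf_{S_{C'}}(F) = \divf_{C'_{+}}(f) + \divf_{C'_{-}}(f)$, and pushing this forward along $S(\nu')$ — which restricts to $\iota_{\pm}\circ\nu'$ on $C'_{\pm}$ — yields
\[
S(\nu')_*\divf_{S_{C'}}(F) = (\iota_+)_*\big(\nu'_*\divf(f)\big) + (\iota_-)_*\big(\nu'_*\divf(f)\big) = \Delta^*\big(\nu_*\divf(f)\big),
\]
where the last equality uses $\Delta^*([x]) = [x_+]+[x_-]$ from \propref{prop:double-prp}(1). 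Thus $\Delta^*(\nu_*\divf(f))\in\sR_0(S_X,D)$, which proves the theorem.

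The hard part is the geometric content of the second paragraph. One must appreciate that rational equivalence on the badly singular scheme $S_X$ has to be tested against the good curves of \S\ref{ssec:Rat-eq-sing-var} — finite maps that are merely l.c.i. along the seam $D$ — rather than against embedded Cartier curves; an embedded integral curve $C\subset X$ need not be l.c.i. along $D$, which is exactly what forces the normalization step. Granting that, and the base-change and l.c.i. compatibilities of the double construction assembled in \S\ref{sec:Double} showing that $S(C',\nu'^*D)\to S(X,D)$ is one such good curve, the comparison of divisors in the third paragraph is purely formal.
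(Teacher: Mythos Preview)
Your proof is correct and follows essentially the same approach as the paper's: reduce via \lemref{lem:Sing-ML}, normalize the curve, form the double $S_{C'}\to S_X$ as a good curve, and exhibit $\Delta^*(\nu_*\divf(f))$ as the push-forward of $\divf_{S_{C'}}(f,f)$. The paper organizes the final computation slightly differently (via the flat pull-back square $\Delta^*_X\circ\nu_* = S(\nu)_*\circ\Delta^*_C$ from \cite[Proposition~1.7]{Fulton}), but the content is identical.
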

\begin{proof}
In view of Lemma \ref{lem:Sing-ML}, we need to show that 
$\Delta^*((f)_C) \in \sR_0(S_X,D)$ for $C \inj X$ an integral curve 
not contained in $D$ and $f \in \sO^{\times}_{C, C \cap D}$.
Let $\nu: C^N \to X$ denote the induced map from the normalization of $C$
and let $E = \nu^*(D)$. We then have $f \in \sO^{\times}_{C^N, E}$
and $(f)_C = \nu_*(\divf(f))$.
We can thus assume that $C$ is normal and allow the possibility that
$\nu$ need not be a closed immersion.
With this reduction, we now
need to show that $\Delta^* \circ \nu_* (\divf(f)) \in \sR_0(S_X, D)$.

Since $\Delta$ is flat, it follows from
~\eqref{eqn:PB-0-cycle-1} that there is a commutative square 
(see \cite[Proposition~1.7]{Fulton})
\begin{equation}\label{eqn:PB-0-cycle-2}
\xymatrix@C2pc{
\sZ_0(C, E) \ar[r]^{\nu_*} \ar[d]_{\Delta^*_C} & \sZ_0(X,D) \ar[d]^{\Delta^*_X} \\
\sZ_0(S_C, E) \ar[r]_{{S_{\nu}}_*} & \sZ_0(S_X,D).}
\end{equation}
where the bottom horizontal arrow is well defined since 
$S_{\nu}^{-1}(D) \subseteq E$. Note  that by 
Proposition \ref{prop:double-prp-fine}, the map 
$S_\nu\colon S_C\to S_X$ is l.c.i along $D$. 
We also have a commutative square of monomorphisms
\begin{equation}\label{eqn:PB-0-cycle-3}
\xymatrix@C2pc{
\sO^{\times}_{C,E} \ar[d]_{\Delta^*_C} \ar[r]^{\iota_C} & k(C)^{\times} 
\ar[d]^{\Delta^*_C} \\
\sO^{\times}_{S_C, E} \ar[r]_<<<<<{\iota_{S_C}} 
& k(C)^{\times} \times k(C)^{\times}.} 
\end{equation}
Setting $g = \Delta^*_C(f) \in \sO^{\times}_{S_C, E}$,
it is then clear that $\iota_{S_C}(g) = (f, f) \in k(C)^{\times} \times k(C)^{\times} = k(S_C)^{\times}.$  
This yields 
\[
{\rm div}(\Delta^*_C(f))) = {\rm div}(f,f) = 
{\rm div}(f) + 
{\rm div}(f) = \Delta^*_C({\rm div}(f)).
\]
Combining this with ~\eqref{eqn:PB-0-cycle-2}, we get
\[\Delta^*_X \circ \nu_*({\rm div}(f)) = {S_{\nu}}_* \circ \Delta^*_{C}({\rm div}(f)) = {S_{\nu}}_* ({\rm div}(\Delta^*_C(f))) ={S_{\nu}}_* ({\rm div}(g)).
\]
Since $(S_C, E)$ is clearly a good curve relative to $(S_X, D)$,
the last term lies in $\sR_0(S_X, D)$. 
This finishes the proof. 
\end{proof}

\subsection{The maps $\iota^*_{\pm}$}\label{sec:Sing-Res}
Recall that $\iota_{\pm}\colon X \inj S_X$ denote the two inclusions of $X$ 
in $S_X$ via the map $\pi\colon X \amalg X \to S_X$.
We define two pull-back maps $\iota^*_{\pm}\colon 
\CH_0(S_X) \to \CH_0(X)$. 
We do this for $\iota_{+}$ as the other case is identical.
By Proposition \ref{prop:double-prp}, we have that $(S_X)_{\rm sing} = D$ and $(S_X)_{\rm reg} = (X \setminus D)
\amalg (X \setminus D)$, so that the natural map
\begin{equation}\label{eqn:PB-0-cycle-6}
\sZ_0(S_X,D)\to \sZ_0(X,D) \oplus \sZ_0(X,D) 
\end{equation}
is an isomorphism. We define then $\iota^*_{+}\colon \sZ_0(S_X,D) \to \sZ_0(X,D)$ to be the
first projection of the direct sum in ~\eqref{eqn:PB-0-cycle-6}.
Notice that there are push-forward inclusion maps
${\iota_{\pm}}_* \colon \sZ_0(X,D) \to \sZ_0(S_X,D)$ such that 
$\iota^*_{+} \circ {\iota_{+}}_* = {\rm Id}$
and $\iota^*_{+} \circ {\iota_{-}}_* = 0$.
\begin{prop}\label{prop:PB-pm}
The map 
$\sZ_0(S_X,D) \xrightarrow{(\iota^*_{+}, \iota^*_{-})} \sZ_0(X,D) \oplus 
\sZ_0(X,D)$
descends to the pull-back maps
\begin{equation}\label{eqn:PB-0-cycle-9} 
\iota^*_{\pm}\colon \CH_0(S_X) \to \CH_0(X)
\end{equation}
such that $\iota^*_{\pm} \circ \Delta^* = {\rm Id}$.
\begin{proof}
For the rational equivalence, we argue as follows: suppose at first that $\nu\colon C \inj S_X$ is an l.c.i curve relative to $D$ and contained in $S_X$ (so, it is in particular a Cartier curve on the double). 
Set $E = \nu^{-1}(D)\cup C_{\rm sing}$.
Let $C'_{\pm}$ denote the unique reduced closed subscheme of $X_{\pm}$
such that $C'_{\pm} \setminus D = (C \setminus D) \cap X_{\pm}$.
Then $C'_{+}$ is a closed subscheme of $C$ with $\dim(C'_{+}) \leq 1$. 
It's easy to check that  $\dim(C'_{+}) =0$ would violate the condition of $C$ being locally defined by a regular sequence at every point of intersection $C\cap D \subset S_X$, so that we can assume that $C'_+$ (and similarly $C'_-$) is a union of some irreducible components of $C$.

Let $f \in \sO^{\times}_{C,E}$.
Let $C = C'_+ \cup C'_{-} = (\stackrel{r_{+}}{\underset{i=1}\cup} C^i_{+}) \cup
(\stackrel{r_{-}}{\underset{j=1}\cup} C^j_{-})$.
We can clearly assume that we have $E_{\rm red} = C'_+ \cap C'_{-}$ and a commutative square

\begin{equation}
\xymatrix@C2pc{
\sO^{\times}_{C,E} \ar[d]_{\iota^*_+} \ar[r]^{\theta_C} & k(C)^{\times} 
\ar[d]^{\iota^*_+} \ar[r]^<<<<{\simeq} &  k(C'_+)^{\times} \times  
k(C'_{-})^{\times} \ar[dl]^{p_+} \\
\sO^{\times}_{C'_+, E} \ar[r]_<<<<<{\theta_{C'_+}} 
& k(C'_+)^{\times}. & } 
\end{equation}

We can write $\theta_C(f) = (f_+, f_{-})$ with $f_{\pm} \in k(C'_{\pm})^{\times}$
and ${\rm div}_C(f) \ (= (f)_C ) = (f_+)_{C_{+}} + (f_{-})_{C_{-}}$, by definition.
We now consider the diagram

\begin{equation}\label{eqn:PB-0-cycle-7}
\xymatrix@C1pc{
\sZ_0(C'_+, E) \oplus \sZ_0(C'_{-}, E) \ar[dr]_{p_+} &
\sZ_0(C, E) \ar[r]^{\nu_*} \ar[d]_{\iota^*_+} \ar[l]_>>>>{\simeq}& 
\sZ_0(S_X,D) \ar[d]^{\iota^*_+} \ar[r]^>>>{\simeq} &
\sZ_0(X, D) \oplus \sZ_0(X, D) \ar[dl]^{p_+} \\
& \sZ_0(C'_+, E) \ar[r]_{{\nu_+}_*} & \sZ_0(X,D). & }
\end{equation}

This diagram is clearly commutative and yields
\begin{equation}\label{eqn:PB-0-cycle-8}
\begin{array}{lll}
\iota^*_+ \circ \nu_*((f)_C) & = & 
{\nu_+}_* \circ \iota^*_+((f)_C) \\
& = & {\nu_+}_* \circ \iota^*_+\left[{\iota_+}_*((f_+)_{C'_{+}}) +
{\iota_{-}}_*((f_{-})_{C'_{-}}))\right] \\
& = & {\nu_+}_*((f_+)_{C'_+}) + 0 \\
& = & {\nu_+}_*((f_+)_{C'_+}). 
\end{array}
\end{equation}

On the other hand, ${\nu_+}_*((f_+)_{C'_+}) = 
\stackrel{r_+}{\underset{i = 1}\sum}  \divf(f^i_+)$ by definition,
where \[f_+ = (f^1_+, \cdots , f^{r_+}_+) \in k(C'_+)^{\times} = \stackrel{r_+}{\underset{i = 1}\prod}  k(C^i_+)^{\times}.\]
Since each $\divf(f^i_+) \in \sR_0(X)$, we conclude that
$\iota^*_+ \circ \nu_*((f)_C) \in \sR_0(X)$. 
In particular, we obtain that the maps $\iota^*_{\pm}$ descend to group homomorphisms 
$\iota^*_{\pm}\colon \CH_0^{LW}(S_X) \to \CH_0(X)$ from the Levine-Weibel Chow group. 

For the general case of a good curve $\nu\colon C\to S_X$ which is not necessarily an embedding, we can assume
by Lemma \ref{lem:lci-curves} that $C \to S_X$ is a finite l.c.i. morphism.
We now factor $\nu$ as $C\hookrightarrow \mathbb{P}^N_{S_X}\xrightarrow{\pi} S_X$, where $\pi$ is the projection and $\mu\colon C\hookrightarrow \P^N_{S_X}$ is a regular embedding. By Proposition \ref{prop:double-prp}, we can identify $\P^N_{S_X} = \P^N_{S(X,D)}$ with $S(\P^N_X, \P^N_D)$. We have then a commutative diagram
\begin{equation}\label{eqn:iota-pi}
\xymatrix{
\sZ(\P^N_{S_X}, \P^N_D)=\sZ(S(\P^N_X, \P^N_D), \P^N_D) \ar[r]^-{\iota_{\pm}^*} \ar[d]_{\pi_*} & \sZ(\P^N_X) \ar[d]^{\pi_*}\\
 \sZ(S_X, D) \ar[r]_{\iota^*_\pm} & \sZ(X)
}
\end{equation}
by the definition of $\iota_{\pm}^*$.
For $f\in \cO^\times_{C,\nu^*D}$, we have $\iota_\pm^* (\mu_*((f)_C)) \in \sR_0(\P^N_X)$ by the embedded case. In particular, we get
$\iota_\pm^* ( \nu_*((f)_C)) = \iota_\pm^*\pi_* \mu_*((f)_C) =  \pi_* \iota_\pm^* (\mu_*((f)_C))  \in \sR_0(X)$, 
completing the proof.
\end{proof}
\end{prop}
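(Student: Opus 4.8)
The plan is to show that the map $(\iota^*_+,\iota^*_-)\colon \sZ_0(S_X,D)\to \sZ_0(X,D)\oplus\sZ_0(X,D)$ kills $\sR_0(S_X,D)$, and then the identity $\iota^*_\pm\circ\Delta^* = \mathrm{Id}$ will follow immediately from the cycle-level formulas $\iota^*_+\circ\iota_{+,*} = \mathrm{Id}$, $\iota^*_+\circ\iota_{-,*}=0$ together with $\Delta^*([x]) = [x_+]+[x_-]$. So the entire content is to check that the divisor of a rational function on a good curve $\nu\colon (C,Z)\to (S_X,D)$ maps into $\sR_0(X)$ under $\iota^*_+$ (and symmetrically $\iota^*_-$).

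First I would treat the \emph{embedded} case: assume $\nu\colon C\hookrightarrow S_X$ is an l.c.i.\ Cartier curve relative to $D$. The key geometric observation is that, because $C$ is locally cut out by a regular sequence at each point of $C\cap D$, no irreducible component of $C$ can meet $D$ in an isolated point; hence the closures $C'_\pm$ of the components of $C$ lying over $X_\pm$ are unions of irreducible components of $C$, and $C = C'_+\cup C'_-$ with $C'_+\cap C'_-$ supported on $D$. Under the identification $k(C)^\times\simeq k(C'_+)^\times\times k(C'_-)^\times$, a function $f\in\sO^\times_{C,E}$ decomposes as $(f_+,f_-)$, and by the description of the divisor map for reduced curves (from \S\ref{ssec:DivClassesCurves}) one has $\mathrm{div}_C(f) = (f_+)_{C'_+} + (f_-)_{C'_-}$. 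Now $\iota^*_+$ projects onto the $X_+$-part, so $\iota^*_+(\nu_*((f)_C)) = \nu_{+,*}((f_+)_{C'_+})$, which is a sum of divisors $\mathrm{div}(f^i_+)$ of rational functions on the integral curves $C^i_+\hookrightarrow X$; each of these lies in $\sR_0(X)$ by definition. The main point to be careful about here is the regular-sequence argument ruling out $\dim C'_+ = 0$: one must check that if some component of $C$ met $D$ transversally in a point while lying entirely in $X_-$ (say), then near that point $C$ would be reducible in a way incompatible with being defined by a regular sequence in the local ring of $S_X$, which is itself the fiber product ring $R$ — this is where the explicit local structure of the double from \S\ref{sec:Double} is used.

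For the \emph{general} good curve $\nu\colon C\to S_X$, which need not be a closed immersion, I would reduce to the embedded case by the standard device: by \lemref{lem:lci-curves} we may assume $\nu$ is a finite l.c.i.\ morphism, then factor $\nu$ as $C\xhookrightarrow{\mu}\P^N_{S_X}\xrightarrow{\pi}S_X$ with $\mu$ a regular closed immersion and $\pi$ the projection. Using \propref{prop:double-prp}(7) we identify $\P^N_{S_X}$ with $S(\P^N_X,\P^N_D)$, so $\mu$ is an l.c.i.\ Cartier curve on this double relative to $\P^N_D$, and the embedded case gives $\iota^*_\pm(\mu_*((f)_C))\in\sR_0(\P^N_X)$. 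Finally, compatibility of $\iota^*_\pm$ with the projection push-forward $\pi_*$ (which holds on the nose at the level of cycle groups, since $\iota^*_\pm$ is just the projection picking out the two regular components and $\pi_*$ respects the decomposition $(S_X)_{\mathrm{reg}}=(X\setminus D)\amalg(X\setminus D)$) yields $\iota^*_\pm(\nu_*((f)_C)) = \pi_*(\iota^*_\pm(\mu_*((f)_C)))\in\sR_0(X)$, since $\pi_*$ (ordinary proper push-forward on the smooth scheme $\P^N_X\to X$) preserves rational equivalence. I expect the hardest part to be the dimension/regular-sequence argument in the embedded case, since that is the one step genuinely using the singular geometry of the double rather than formal manipulations; everything downstream is bookkeeping with the two direct-sum projections.
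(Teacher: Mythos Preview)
Your proposal is correct and follows essentially the same route as the paper's proof: handle the embedded Cartier-curve case first by decomposing $C = C'_+\cup C'_-$ via the regular-sequence argument, project $f$ to $(f_+,f_-)$ and observe $\iota^*_+$ picks off $\nu_{+,*}((f_+)_{C'_+})$; then reduce the general good-curve case to the embedded one by factoring through $\P^N_{S_X}\simeq S(\P^N_X,\P^N_D)$ using \lemref{lem:lci-curves} and \propref{prop:double-prp}(7), with the cycle-level commutativity of $\iota^*_\pm$ and $\pi_*$. Your identification of the regular-sequence step as the only genuinely geometric point matches the paper's treatment, which likewise dispatches it in one sentence.
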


\section{$0$-Cycles on $S_X$ and $0$-cycles with 
modulus on $X$}\label{section:SSM}

\subsection{Overture} 
The goal of this section is to define the {\sl difference map} $\tau^*_X$
from the Chow 
group of 0-cycles on $S_X$ to the Chow group of 0-cycles with modulus $D$ on 
$X$. 
The heart of this section is the proof that this
difference map kills the group of rational equivalences in
the Chow group with modulus $\CH_0(X|D)$. We start by recalling the definition of the Chow group with modulus, following Kerz and Saito.

\subsection{0-cycles with modulus}\label{def:DefChowMod1}
Let $k$ be any field.
Given an integral normal curve ${C}$ over $k$ and an effective divisor
$E \subset {C}$, we say that a rational function $f$ on ${C}$ has modulus
$E$ if $f \in {\rm Ker}(\sO^{\times}_{C, E} \to \sO^{\times}_E)$.
Here, $\sO_{C, E}$ is the semilocal ring of $C$ at the union of
$E$ and the generic point of $C$.
In particular, ${\rm Ker}(\sO^{\times}_{C, E} \to \sO^{\times}_E)$ is just
$k(C)^{\times}$ if $|E| = \emptyset$.
Let $G({C}, E)$ denote the group of such rational functions.

Let ${X}$ be an integral scheme of finite type over $k$ and let $D$ be an 
effective Cartier divisor on $X$. Let $\cZ_0(X\setminus D)$ be the free abelian group on 
the set of closed points of $X\setminus D$. Let ${C}$ be an integral normal 
curve over $k$ and
let $\varphi_{{C}}\colon{C}\to {X}$ be a finite morphism such that 
$\varphi_{{C}}({C})\not \subset D$.  
The push forward of cycles along $\varphi_{{C}}$  
gives a well defined group homomorphism
\[
\tau_{{C}}\colon G({C},\varphi_{{C}}^*(D)) \to \cZ_0(X\setminus D).
\]
Recall now the following Definition
\begin{defn}[Kerz-Saito]\label{def:DefChowMod-Definition}
We define the Chow group $\CH_0({X}|D)$ of 0-cycles of ${X}$ with 
modulus $D$ as the cokernel of the homomorphism 
\begin{equation}\label{eqn:DefChowMod-0}
\divf \colon\bigoplus_{\varphi_{{C}}\colon {C}\to \ov{X}}G({C},
\varphi_{{C}}^*(D)) \to Z_0(X\setminus D),
\end{equation}
where the sum is taken over the set of finite morphisms 
$\varphi_{{C}}\colon {C} \to {X}$ from an integral normal curve such that
$\varphi_{{C}}({C}) \not\subset D$.
\end{defn}

It is known that the Chow group of 0-cycles with modulus is covariantly
functorial for  proper maps: if $f\colon {X'} \to {X}$ is  proper,
$D$ and $D'$ are effective Cartier divisors on ${X}$ and ${X'}$ respectively
such that $f^*(D) \subset D'$, then there is a push-forward
map $f_*\colon \CH_0({X'}|D') \to \CH_0({X}|D)$
(see \cite[Lemma~2.7]{BS} or \cite[Proposition~2.10]{KPv}).

\subsection{Setting and goals}\label{ss:setting-goal-tau}From now on, we fix a smooth   connected quasi-projective scheme ${X}$   
over $k$ and an effective Cartier divisor $D \subset {X}$ on it. 
Let  $S_X$ be the double of ${X}$ along $D$ as defined in 
\S~\ref{sec:Double}. Let $\iota_{\pm}\colon {X} \to S_X$ denote the closed 
embeddings of the two components $X_+$ and $X_-$ of the double. 
We want to construct maps 
\[
\tau_X^*\colon  \CH_0(S_X) \to \CH_0(X|D) \quad 
\text{and }\quad p_{\pm, *}\colon \CH_0(X|D) \to \CH_0(S_X) 
\]
and prove that $ \tau_X^* \circ p_{\pm, *} = {\rm Id}$. At the level of the free abelian group $\sZ_0(S_X,D)$, the map $\tau_X^*$ is simply $\iota^*_{+} -
\iota^*_{-}$:
\[ \cZ_0(S_X, D) = \cZ_0(X\setminus D) \oplus \cZ_0(X\setminus D) \to \cZ_0(X\setminus D), \]
\[ (\alpha_+, \alpha_-) \mapsto \alpha_+ - \alpha_- =\iota^*_{+} ((\alpha_+, \alpha_-) ) -  \iota^*_{-} ((\alpha_+, \alpha_-) ) \]

We construct the first map in several steps, starting by considering only embedded l.c.i. curves in the definition of the rational equivalence on the double $S_X$ (or, in other words, by proving the existence of the map $\tau_X^*$ for the Levine-Weibel Chow group of zero cycles).  The general case is then  treated using the same trick as in Proposition \ref{prop:PB-pm}, thanks to the following Lemma.

\begin{lem}\label{lem:factor-PN-projection} 
Assume that for every smooth  connected quasi-projective scheme $Y$ over $k$ and effective Cartier divisor $E$ on $Y$, the map $\tau_Y^*$ given above descends to a well defined homomorphism 
    \[\tau_Y^*\colon \CH_0^{LW}(S(Y,E))\to \CH_0(Y|E).\]
Then the map $\CH_0^{LW}(S_X) \to \CH_0(X|D)$ factors through $\CH_0(S_X)$, giving a well defined homomorphism
\[\tau_X^*\colon  \CH_0(S_X) \to \CH_0(X|D).\]
\begin{proof} Let $\delta\colon \sZ_0(S_X, D)\to \CH_0(X|D)$ be the composition \[\sZ_0(S_X, D)\to \CH_0^{LW}(S_X)\to \CH_0(X|D).\] We have to show that $\delta$ factors through $\CH_0(S_X)$, defined using good l.c.i. curves as in Definition \ref{defn:0-cycle-S-1}. Using again Lemma \ref{lem:lci-curves}, we have to show more precisely that $\delta( \nu_*( {\rm div}_C(f))=0$ for every $\nu\colon C\to S_X$ finite l.c.i. morphism from a reduced curve $C$ that is good relative to $(S_X,D)$ and for every rational function $f$ on $C$ that is regular and invertible along $E= (\nu^{-1}(D)\cup C_{\rm sing})$. We factor $\nu$ as composition $\nu = \pi\circ \mu$, where $\mu\colon C\hookrightarrow \P^N_{S_X} = S(\P^N_X, P^N_D)$ (using Proposition \ref{prop:double-prp}) is a regular embedding and $\pi\colon \P^N_{S_X}\to S_X$ is the projection. In particular, $\mu(C) =C$ is a Cartier curve on the double $S(\P^N_X, P^N_D)$ relative to $\P^N_D$. 

It follows from ~\eqref{eqn:iota-pi} and the formula
$\delta = \iota^*_+ - \iota^*_-$ that the square
\[
\xymatrix@C1pc{
\sZ_0(\P^N_{S_X}, \P^N_D) \ar[r]^{\delta_{\P^N_{S_X}}} \ar[d]_{\pi_*} & 
\CH_0(\P^N_X|\P^N_D) \ar[d]^{\pi_*} \\
\sZ_0(S_X, D) \ar[r]_{\delta} & \CH_0(X|D)}
\]
commutes, where $\delta_{\P^N_{S_X}}$ is the composition
$\sZ_0(\P^N_{S_X}, \P^N_D) \to \CH_0^{LW}(S(\P^N_X, \P^N_D)) \to 
\CH_0(\P^N_X| \P^N_D)$. That is,
$\delta(\nu_*((f)_C) )=\delta (\pi_* (\mu_* (f)_C)) = 
\pi_*( \delta_{\P^N_{S_X}}(\mu_*( (f)_C)))$.

By assumption, we have 
$\delta_{\P^N_{S_X}}(\mu_*( (f)_C)) =0 \in \CH_0(\P^N_X| \P^N_D)$.
Since the push-forward map $\pi_*\colon \CH_0(\P^N_X| \P^N_D)\to \CH_0(X|D)$ is 
well defined, we can conclude.
    \end{proof}   
\end{lem}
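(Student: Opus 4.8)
The plan is to reduce everything to the Levine--Weibel case, which is precisely the hypothesis, by means of the same projective-bundle device already used for $\iota^*_{\pm}$ in \propref{prop:PB-pm}. Applying the hypothesis to $(Y,E)=(X,D)$ gives a well-defined $\tau^*_X\colon\CH^{LW}_0(S_X)\to\CH_0(X|D)$; composing it with the surjection $\sZ_0(S_X,D)\surj\CH^{LW}_0(S_X)$ produces a homomorphism $\delta\colon\sZ_0(S_X,D)\to\CH_0(X|D)$, and what I must show is that $\delta$ annihilates the subgroup $\sR_0(S_X,D)$ of relations coming from \emph{good} curves, so that it descends to $\CH_0(S_X)$. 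By \lemref{lem:lci-curves} it suffices to treat a single generator $\nu_*(\divf(f))$, where $\nu\colon(C,Z)\to(S_X,D)$ is a finite l.c.i.\ morphism from a reduced curve good relative to $(S_X,D)$ and $f\in\sO^{\times}_{C,E}$ with $E=\nu^{-1}(D)\cup C_{\sing}$. The obstruction to invoking the hypothesis directly is that $\nu$ need not be a closed immersion, whereas $\CH^{LW}_0$ only sees embedded Cartier curves.

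To get around this I would factor $\nu$, exactly as in \propref{prop:PB-pm}: since $\nu$ is finite it is projective, and since it is l.c.i.\ it factors as a closed immersion $\mu\colon C\hookrightarrow\P^N_{S_X}$ followed by the projection $\pi\colon\P^N_{S_X}\to S_X$, with $\mu$ automatically a \emph{regular} immersion. By \propref{prop:double-prp}(7) the bundle $\P^N_{S_X}=\P^N_{S(X,D)}$ is canonically the double $S(\P^N_X,\P^N_D)$, and by \propref{prop:double-prp}(6) its singular locus is exactly $\P^N_D$; moreover $\P^N_X$ is again smooth connected quasi-projective and $\P^N_D$ an effective Cartier divisor on it, so the hypothesis applies to this pair. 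I would then check that $\mu(C)$ is a Cartier curve on $S(\P^N_X,\P^N_D)$ relative to $\P^N_D$: it is cut out by a regular sequence everywhere, hence along $\mu(C)\cap\P^N_D$, and no component of $C$ lies in $\P^N_D$ because no component of $C$ is carried into $D$ by $\nu$. Since $\mu(C)\cap\P^N_D=\nu^{-1}(D)\subseteq E$, we have $f\in\sO^{\times}_{C,E}\subseteq k(C,C\cap\P^N_D)^{\times}$, and $\divf(f)$ is the same cycle whether $C$ is viewed over $S_X$ or embedded in $\P^N_{S_X}$; hence $\mu_*(\divf(f))$ lies in $\sR^{LW}_0(S(\P^N_X,\P^N_D),\P^N_D)$.

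Now the hypothesis applied to $(Y,E)=(\P^N_X,\P^N_D)$ shows that the composite $\delta_{\P^N_{S_X}}\colon\sZ_0(\P^N_{S_X},\P^N_D)\surj\CH^{LW}_0(S(\P^N_X,\P^N_D))\xrightarrow{\tau^*}\CH_0(\P^N_X|\P^N_D)$ kills $\sR^{LW}_0$, so $\delta_{\P^N_{S_X}}(\mu_*(\divf(f)))=0$. To bring this back to $S_X$ I would assemble the commutative square with top row $\delta_{\P^N_{S_X}}$, bottom row $\delta$ and both verticals given by $\pi_*$: the left-hand vertical is the cycle-level push-forward along $\pi$, the right-hand vertical is the proper push-forward $\pi_*\colon\CH_0(\P^N_X|\P^N_D)\to\CH_0(X|D)$ recalled in \S\ref{def:DefChowMod1}, and commutativity follows from the identity $\delta=\iota^*_+-\iota^*_-$ (and likewise for $\delta_{\P^N_{S_X}}$) together with the commutation $\iota^*_{\pm}\circ\pi_*=\pi_*\circ\iota^*_{\pm}$ recorded in \eqref{eqn:iota-pi}. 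Chasing $\mu_*(\divf(f))$ around the square then gives $\delta(\nu_*(\divf(f)))=\delta(\pi_*(\mu_*(\divf(f))))=\pi_*(\delta_{\P^N_{S_X}}(\mu_*(\divf(f))))=0$, so $\delta$ factors through $\CH_0(S_X)$ and defines $\tau^*_X\colon\CH_0(S_X)\to\CH_0(X|D)$, compatible with $\CH^{LW}_0(S_X)\surj\CH_0(S_X)$.

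The main obstacle here is not any single deep input — the genuinely hard statement is the hypothesis itself — but the care needed to verify that $\mu(C)\subset\P^N_{S_X}\cong S(\P^N_X,\P^N_D)$ and $f$ really satisfy all the admissibility conditions built into the Levine--Weibel group of a double (regular defining sequence precisely along the singular locus $\P^N_D$, no component inside $\P^N_D$, and $f$ regular and invertible along $C\cap\P^N_D$), together with the bookkeeping showing that the Levine--Weibel-level maps are intertwined by the two copies of $\pi_*$, i.e.\ that the square above genuinely commutes at the Chow level. Both follow formally from the identification of \propref{prop:double-prp}(7) and the functoriality statements of \S\ref{sec:Double} and \S\ref{def:DefChowMod1}, but they are the places where one must stay attentive to which closed subset is playing the role of the ambient singular locus.
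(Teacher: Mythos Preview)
Your proposal is correct and follows essentially the same approach as the paper's proof: define $\delta$ via the Levine--Weibel group, reduce via \lemref{lem:lci-curves} to a finite l.c.i.\ good curve, factor $\nu=\pi\circ\mu$ through a regular embedding in $\P^N_{S_X}\cong S(\P^N_X,\P^N_D)$, recognise $\mu(C)$ as a Cartier curve there, apply the hypothesis upstairs, and push down along $\pi_*$ using the commutative square built from~\eqref{eqn:iota-pi} and $\delta=\iota^*_+-\iota^*_-$. Your additional verification of the Cartier conditions for $\mu(C)$ and of the commutativity of the square makes explicit what the paper leaves implicit, but there is no substantive difference in strategy.
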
 

We have therefore reduced the problem to showing that the map $\tau^*_X$ is 
well defined from the Levine-Weibel Chow group of zero cycles.

To begin, we need to keep some control over
the Cartier curves on the double $S_X$ which generate
$\sR_0(S_X,D)^{LW}$. We do this in the next few lemmas.

\subsection{The Cartier curves on the double $S_X$}
\label{sec:Car-db}
Our next goal is to use the specific structure of $S_X$ as a 
join of two smooth schemes to refine the set of Cartier
curves used to define $\CH_0^{LW}(S_X,D)= \CH_0^{LW}(S_X)$.
In this section, by Cartier curve on $S_X$ (resp. $X$) we will always mean 
curve on $S_X$ (resp. $X$) which is Cartier with respect to $D$.
For the rest of \S~\ref{section:SSM}, we shall assume that the ground field
$k$ is infinite and perfect.

\begin{notat} Let $S$ be a quasi-projective $k$-scheme and let $\cL$ be a line bundle on $S$. For a global section $t\in H^0(S, \cL)$, we write $(t)$ for the divisor  of zeros of $s$, that we consider as a closed subscheme of $S$.
    \end{notat}

\begin{lem}\label{lem:good-Cartier-curves-dim>2}
Let $X$ be a   connected smooth quasi-projective scheme of dimension
$d \ge 3$ over $k$.
The group of rational equivalences $\sR^{LW}_0(S_X,D)$ 
is generated by the 
divisors of functions on (possibly non-reduced) 
Cartier curves $C\inj S_X$ relative to $D$,
where $C$ satisfies the following.
\begin{enumerate}
\item 
There is a locally closed embedding
$S_X \inj \P^N_k$ and distinct hypersurfaces \[H_1, \cdots , H_{d-2} \inj \P^N_k\] 
such that 
$Y = S_X \cap H_1 \cap \cdots \cap H_{d-2}$ is a complete
intersection which is geometrically reduced. 
\item
$X_{\pm} \cap Y = X_{\pm} \cap H_1 \cap \cdots \cap H_{d-2}:= Y_{\pm}$ are
  integral.
\item
No component of $Y$ is contained in $D$.
\item
$C \subset Y$.
\item
$C$ is a Cartier divisor on $Y$.
\item
$Y_{\pm}$ are smooth away from $C$.
\item 
$C$ is Cohen-Macaulay.
\end{enumerate}
\end{lem}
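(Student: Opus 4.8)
The strategy is a Bertini-type argument: we start from an arbitrary Cartier curve $C_0 \inj S_X$ relative to $D$ generating a class in $\sR^{LW}_0(S_X,D)$, together with a rational function $f$ on it, and we show that the associated divisor of $f$ can be rewritten as a sum of divisors of functions on Cartier curves of the refined shape (1)--(7). The key observation is that, by Proposition~\ref{prop:LevineMoving} (Levine's moving lemma), we may assume from the outset that $C_0$ is a \emph{reduced} Cartier curve, and since $S_X$ has exactly two components $X_+$ and $X_-$ meeting along $D$, the curve $C_0$ splits as $C_0 = C_0^+ \cup C_0^-$ with $C_0^\pm \inj X_\pm$ curves meeting along a common closed subscheme supported inside $D$. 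Thus the problem genuinely reduces to a statement about pairs of curves on the \emph{smooth} scheme $X$ that are ``compatible'' along $D$.

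\textbf{Key steps.} First I would fix a locally closed embedding $S_X \inj \P^N_k$ (which exists since $S_X$ is quasi-projective, by Proposition~\ref{prop:double-prp}(1)), arrange compatible embeddings of $X_\pm$, and reduce to bounding the support of $C_0$ and of the finitely many points appearing in $\divf(f)$. Second, I would run the classical Bertini irreducibility and smoothness theorems (valid since $k$ is infinite and perfect, and $d = \dim S_X \ge 3$) to choose successively general hypersurfaces $H_1, \dots, H_{d-2}$ of suitably high degree containing $C_0$, so that: each $Y_\pm := X_\pm \cap H_1 \cap \cdots \cap H_{d-2}$ is an integral surface (Bertini irreducibility applied on the smooth $X_\pm$, using that $C_0^\pm$ has codimension $\ge 2$ so one has room to impose passage through it while keeping irreducibility), $Y_\pm$ is smooth away from $C_0^\pm \supseteq (Y_\pm)_{\rm sing}$ (Bertini smoothness away from the base locus), no component of $Y = Y_+ \cup Y_-$ lies in $D$ (generic choice avoids this since no component of $X_\pm$ lies in $D$), and $Y = S_X \cap H_1 \cap \cdots \cap H_{d-2}$ is a complete intersection which is geometrically reduced. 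The fact that the two pieces $Y_\pm$ glue to $Y = S(Y_+ \cup \cdots)$ compatibly with the double structure follows from Proposition~\ref{prop:double-prp}(4) and Proposition~\ref{prop:double-prp-fine}(3)--(4), identifying $Y$ with $S(Y_+, Y_+ \cap D)$ type gluing along $D \cap Y$. Third, with $Y$ in hand, $C_0 \subset Y$ is automatically a curve on $Y$; I would then use that on the surface $Y$ (Cohen--Macaulay, since $X_\pm$ smooth implies $Y_\pm$ Cohen--Macaulay by repeated hyperplane sections, hence $Y$ Cohen--Macaulay by Proposition~\ref{prop:double-prp-fine}(2)) a curve defined locally by a regular sequence at points of $C_0 \cap D$ is a Cartier divisor — and after a further general modification one forces $C$ itself to be Cohen--Macaulay (no embedded points) while preserving $\divf(f)$, e.g.\ by replacing $C_0$ with a linearly equivalent Cartier divisor on $Y$ through the relevant points, or by a linkage argument. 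Finally, one checks that the rational function $f$ on $C_0$, viewed on the new curve $C$, still has the same pushed-forward divisor in $\sZ_0(S_X,D)$, so the class is unchanged.

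\textbf{Main obstacle.} The delicate point is the simultaneous control of \emph{all} the conditions (1)--(7) by a \emph{single} choice of hypersurfaces $H_1, \dots, H_{d-2}$: one must ensure the generic hypersurface in the relevant linear system cutting out the prescribed $C_0$ is still ample enough and general enough that Bertini irreducibility and smoothness apply to \emph{each} component $X_+$ and $X_-$ at once, that the complete intersection $Y$ is geometrically reduced (this is where char $p$ subtleties can bite, handled by perfectness of $k$ and taking Frobenius-twisted sections if necessary), and that the Cohen--Macaulayness/Cartier conditions (5)--(7) survive — these last conditions are not open in the usual Bertini sense and will require an extra argument, presumably the linkage/linear-equivalence trick on the surface $Y$ to replace a possibly-bad curve by an honest Cartier divisor with no embedded components, while keeping it l.c.i.\ along $D$ so that the relevant $K$-theoretic pushforwards (Lemma~\ref{lem:lci-curves}, Lemma~\ref{lem:lci-6}) remain available. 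I expect the bulk of the work, and the only genuinely new input beyond standard Bertini, to be organizing this descent to surfaces so that the base locus is precisely $C_0$ and the singular locus of each $Y_\pm$ is absorbed into $C$.
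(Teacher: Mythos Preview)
Your plan is essentially the paper's own argument, and it is correct in outline. Two remarks on where the paper streamlines relative to your sketch.

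First, the paper does not decompose $C_0$ into $C_0^+ \cup C_0^-$ and work component-wise on the smooth scheme $X$; it works directly on the reduced quasi-projective scheme $S_X$ and invokes \cite[Lemma~1.3]{Levine-2}, which already packages the Bertini step for a reduced Cartier curve on a possibly singular reduced scheme. That lemma (plus Altman--Kleiman \cite[Theorem~1]{KL} for the smoothness of $Y_\pm$ away from $C$) delivers conditions (1)--(4) and (6) in one stroke, together with the weaker statement that $C$ is locally principal in $Y$ near $C\cap D$ and at each generic point. Your component-wise picture is not wrong, but it is unnecessary and makes the ``simultaneous'' choice of $H_1,\dots,H_{d-2}$ look harder than it is.

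Second, and more substantively: the passage from ``$C$ is locally principal in $Y$ near $D$ and at generic points'' to ``$C$ is a Cartier divisor on $Y$ and Cohen--Macaulay'' (your conditions (5) and (7)) is the actual content, and your gesture towards a ``linkage/linear-equivalence trick'' is exactly right but needs to be made concrete. The paper does this via \cite[Lemma~1.4]{Levine-2}: one enlarges $C$ to a locally principal closed subscheme $C_1 \subset Y$ with $C \subset C_1$, $C_1 = C$ at every generic point of $C$, and $\overline{(C_1\setminus C)}\cap C\cap D = \emptyset$. Then one checks $C_1$ is an honest Cartier divisor on $Y$ (since $C_1\cap Y_\pm$ are Cartier on the integral surfaces $Y_\pm$, and the exact sequence $0 \to \sO_Y \to \sO_{Y_+}\times\sO_{Y_-}\to \sO_{Y\cap D}\to 0$ glues these), hence $C_1$ is Cohen--Macaulay because $Y$ is. The function $f$ is extended to $C_1$ by setting it equal to $1$ on $C_1\setminus C$, which is legitimate precisely because of the disjointness condition above, and $\divf$ is unchanged. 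Your proposal anticipates all of this, but the concrete enlargement procedure and the verification that $C_1$ is globally Cartier (not just locally near $D$) are the steps that require care.
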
    
\begin{proof}
Let $C \inj S_X$ be a reduced Cartier curve relative to $D$ and let 
$f \in \sO^{\times}_{C, C \cap D}$.
Since $C$ is Cartier along $D = (S_X)_{\rm sing}$, it follows that
it is Cartier in $S_X$ along each of its generic points. 

We will replace $C$ by the curves of desired type using
a combination of \cite[Lemma~1.3]{Levine-2} and 
\cite[Lemma~1.4]{Levine-2} as follows. We also refer to 
\cite[Chap.~I, \S~6]{Jou} for the Bertini theorem for 
geometrically reduced schemes and \cite[Theorem~12]{Seidenberg} irreducible
schemes.

Since $S_X$ is reduced quasi-projective, $X$ is  integral and 
$C \inj S_X$ is Cartier along $D$,
we can apply \cite[Lemma~1.3]{Levine-2} to find a locally closed embedding
$S_X \inj \P^N_k$ and distinct hypersurfaces $H_1, \cdots , H_{d-2} \inj \P^N_k$ 
such that:
\begin{enumerate}
\item
$Y = S_X \cap H_1 \cap \cdots \cap H_{d-2}$ is a complete
intersection which is (geometrically) reduced (note that since $k$ is perfect, $Y$ is reduced if and only if is geometrically reduced). 
\item
$X_{\pm} \cap Y = X_{\pm} \cap H_1 \cap \cdots \cap H_{d-2}:= Y_{\pm}$ are  
integral.
\item
No component of $Y$ is contained in $D$.
\item
$C \subset Y$.
\item
$C$ is locally principal in $Y$ in a neighborhood of the finite set
$C \cap D$ and at each generic point of $C$.
\end{enumerate}
    
Since $X_{\pm} = X$ are smooth, we can apply \cite[Theorem~1]{KL}
to further assume that $Y_{\pm}$ are smooth away from $C_{\pm}$. Here $C_\pm$ denotes $C\cap X_\pm = C\times_{S_X} X_{\pm}$.

Since $Y \inj S_X$ is constructed as a complete intersection of hypersurfaces
of arbitrarily large degrees (see \cite[Lemma~1.4]{Levine-2}),
we can furthermore find a locally principal closed subscheme
$C_1$ of $Y$ such that
\begin{listabc}
\item
$C \subset C_1$.
\item
$C_1$ equals $C$ at every generic point of $C$.
\item
$\ov{(C_1 \setminus C)} \cap C \cap D = \emptyset$.
\end{listabc}

Since $Y$ is geometrically reduced, it follows from \lemref{lem:Cycle-mod-1} that $Y$ is 
the join of $Y_{\pm}$ along $D$ and there is a short exact sequence
\begin{equation}\label{eqn:good-Cartier-curves-dim>2-0}
0 \to \sO_Y \to \sO_{Y_+} \times \sO_{Y_-} \to \sO_{Y \cap D} \to 0.
\end{equation}

Since $C_1$ is locally principal in $Y$, it follows that 
$C_1 \cap X_{\pm} = C_1 \cap Y_{\pm}$ are locally principal in $Y_{\pm}$. 
Since $Y_{\pm}$ are   integral surfaces, it follows that $C_1 \cap X_{\pm}$ are 
Cartier divisors in $Y_{\pm}$.
It follows from ~\eqref{eqn:good-Cartier-curves-dim>2-0} that 
$C_1$ is a Cartier divisor in $Y$. Since $S_X$ is Cohen-Macaulay and
$Y \subset S_X$ is a complete-intersection, it follows that $Y$ is also
Cohen-Macaulay. Since $C_1 \subset Y$ is a Cartier divisor, we conclude that
$C_1$ is Cohen-Macaulay.

It follows from  (c) that $f$ extends to a function 
$g \in \sO^{\times}_{C_1, C_1 \cap D}$ by setting
$g = f$ on $C$ and $g = 1$ on $C_1 \setminus C$. In particular, we have
$\divf(f) = \divf(g)$.
This finishes the proof.
\end{proof}

We now further refine the rational equivalence by specifying the shape of the Cartier curves that generate the group of relations.

\begin{lem}\label{lem:reduction-basic}
Let $X$ be a   connected smooth quasi-projective scheme of dimension
$d \ge 2$ over $k$. Let $\nu\colon C\to S_X$ be a (possibly non-reduced)
Cartier curve relative to $D\subset S_X$. 
Assume that either $d = 2$ or there are inclusions 
$C \subset Y \subset S_X$, where
$Y$ is a geometrically reduced complete intersection surface and $C$ is a Cartier divisor on 
$Y$, as in \lemref{lem:good-Cartier-curves-dim>2}.
Let $f \in \cO_{C, \nu^*D}^{\times} \subset k(C)^\times$, where
$k(C)$ is the total quotient ring of $\sO_{C, \nu^*D}$. 

We can then find two Cartier curves $\nu'\colon C' \inj S_X$ and 
$\nu''\colon C'' \inj S_X$ relative to $D$ satisfying the following.
\begin{enumerate}
\item
There are very ample line bundles $\sL', \sL''$ on $S_X$ and sections
$t' \in H^0(S_X, \sL'), \ t'' \in H^0(S_X, \sL'')$ such that
$C' = Y \cap (t')$ and $C'' = Y \cap (t'')$ (with the convention $Y=S_X$ if $d=2$).
\item
$C'$ and $C''$ are geometrically  reduced.
\item
The restrictions of both $C'$ and $C''$ to $X$ via the two closed 
immersions $\iota_{\pm}$ are integral curves in $X$,
which are Cartier and smooth along $D$.
\item
There are functions $f' \in \sO_{C', (\nu') ^*D}^{\times}$ and
$f'' \in \sO_{C'', (\nu'') ^*D}^{\times}$ such that
$\nu'_*({\divf}(f'))  + \nu''_*({\divf}(f''))  = 
\nu_*({\divf}(f))$ in $\sZ_0(S_X, D)$.  
\end{enumerate}
\end{lem}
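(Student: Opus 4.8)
The plan is to reduce the given Cartier curve $C$ to a pair of ``nice'' Cartier curves cut out by sections of very ample line bundles, keeping track of the divisor of $f$ along the way. The key point is that $C$ meets $D=(S_X)_{\rm sing}$ in a finite set, along which $C$ is locally principal; so only the local behaviour near $C\cap D$ needs to be preserved, while away from $D$ we have complete freedom to move the curve using a Bertini-type argument on the ambient (smooth away from $C$) complete intersection surface $Y$ (or on $S_X$ itself when $d=2$). First I would work on $Y$ (setting $Y=S_X$ if $d=2$), where $C$ is a Cartier divisor. Pick a very ample line bundle $\sL$ on $S_X$ with enough sections and a section $s_0$ cutting out $C$ on $Y$ locally near $C\cap D$; the idea is to write $C$, up to linear equivalence on $Y$ relative to the finite set $C\cap D$, as a difference $C'-C''$ of two curves, each cut out on $Y$ by a single section $t'\in H^0(S_X,\sL')$, $t''\in H^0(S_X,\sL'')$ for suitably large twists, which are general subject to agreeing with the fixed local equations at the points of $C\cap D$. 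Standard Bertini over the infinite perfect field $k$ (as in \cite[Lemma~1.3, Lemma~1.4]{Levine-2}, combined with \cite[Chap.~I, \S 6]{Jou} for geometric reducedness and \cite[Theorem~1]{KL} to keep $Y_\pm$ smooth) makes $C'$ and $C''$ geometrically reduced, and makes their restrictions $C'\cap X_\pm$, $C''\cap X_\pm$ integral and smooth along $D$, giving (1)–(3). The constraint that $t',t''$ match the prescribed local equations of $C$ at the finitely many points of $C\cap D$ is imposed by fixing finitely many Taylor coefficients, which cuts out a linear subsystem still large enough for Bertini; here I use that $C$ is locally principal in $Y$ near $C\cap D$ and that $S_X$, hence $Y$, is Cohen-Macaulay, so these local equations are non-zero-divisors and genuinely define Cartier divisors on $Y$.

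For (4), I would transport the rational function. On the geometrically reduced curve $C$, decomposed via Lemma \ref{lem:Cycle-mod-1} as the join $C_+\amalg_{E}C_-$ with $E$ finite, $f$ corresponds to a pair $(f_+,f_-)$ of rational functions, regular and invertible near $E\cup(\nu^{-1}D)$. The relation $C'-C''\sim_{\rm rel\ }C$ on $Y$ (rational/linear equivalence of Cartier divisors relative to the finite set $C\cap D$) produces, on each of the reduced components $Y_\pm$, a rational function exhibiting $(C'\cap X_\pm)-(C''\cap X_\pm)$ as linearly equivalent to $C\cap X_\pm$; multiplying $f_\pm$ by this function (which is a unit near $D$ by construction) yields candidates $f'_\pm$ on $C'\cap X_\pm$ and $f''_\pm$ on $C''\cap X_\pm$, and one checks these glue to $f'\in\sO^\times_{C',(\nu')^*D}$, $f''\in\sO^\times_{C'',(\nu'')^*D}$ using the join exact sequences \eqref{eqn:double-2} for $C'$ and $C''$ (their restrictions to $X_\pm$ agree along $D$ because both equal the restriction coming from $f_\pm$). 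The bookkeeping that $\nu'_*(\divf(f'))+\nu''_*(\divf(f''))=\nu_*(\divf(f))$ in $\sZ_0(S_X,D)$ then follows from additivity of $\divf$ over the components $Y_+$ and $Y_-$ together with the linear-equivalence identity on each $Y_\pm$, since all the auxiliary rational functions introduced are units at every point of $D$ and hence contribute no cycle in $\sZ_0(S_X,D)$.

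The main obstacle I anticipate is step (4) — not the Bertini geometry of (1)–(3), which is essentially a citation of Levine's lemmas, but the simultaneous control of the rational function and the curve: one must choose the Bertini sections $t',t''$ so that the induced linear equivalence on $Y_\pm$ can be represented by a rational function that is a \emph{unit} in a neighborhood of $C\cap D$ (so that multiplying $f_\pm$ by it stays inside $\sO^\times$ along $D$ and does not alter the $0$-cycle), and so that the two pieces on $X_+$ and $X_-$ are compatible along $D$ in the precise sense needed to glue via the join sequence. Arranging the prescribed-local-equation Bertini (fixing finitely many jets at $C\cap D$) while still landing in a geometrically reduced, $Y_\pm$-smooth-along-$D$ member is the technical crux; once that is set up, the divisor computation is a formal consequence of the exact sequence \eqref{eqn:good-Cartier-curves-dim>2-0} and the definition of $\divf$ on a reduced curve.
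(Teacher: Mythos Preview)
Your plan for (1)--(3) is in the right spirit (Bertini on a sufficiently ample linear system on $Y$, with the references you cite), and this is indeed close to what the paper does for the geometric conditions. The gap is in (4).

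You propose to write $C'-C''\sim_{\rm rel} C$ on $Y$, obtain rational functions $g_\pm$ on $Y_\pm$ exhibiting this linear equivalence, and then ``multiply $f_\pm$ by $g_\pm$'' to produce the candidates $f'_\pm$ on $C'\cap X_\pm$ and $f''_\pm$ on $C''\cap X_\pm$. But $f_\pm$ is a rational function on $C_\pm$, while $f'_\pm$ is supposed to live on the \emph{different} curve $C'_\pm$; the product $f_\pm\cdot(g_\pm|_{C_\pm})$ is again a function on $C_\pm$, not on $C'_\pm$. There is no mechanism in your outline that moves a rational function from $C$ to $C'$ or $C''$. (Your appeal to ``$C$ decomposed via Lemma~\ref{lem:Cycle-mod-1} as $C_+\amalg_E C_-$'' also presupposes $C$ is reduced, which the statement explicitly does not assume.)

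What the paper actually does is a curve/function swap that avoids this transport problem entirely. After enlarging $C$ so that $\sO_Y(C)$ is very ample with $C=(t_0)$, one picks a general $C_\infty=(t_\infty)$ in the same system, extends $f$ by $1$ to $h$ on $C\cup C_\infty$, and chooses a section $s_\infty$ of a very ample $\sL$ on $S_X$ whose complement is \emph{affine} and contains the poles of $h$. This is the missing idea: on the affine open $S_X\setminus(s_\infty)$ the function $h$ lifts to a regular function $H$ on the whole surface, and $Hs_\infty^N$ extends to a global section $s_0$ of $\sL^{\otimes N}$, which one perturbs by a section vanishing on $C\cup C_\infty$ to a general $s'_0$. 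Then
\[
\nu_*(\divf(f))=(s'_0)\cdot C-N(s_\infty)\cdot C=(s'_0)\cdot\bigl((t_0)-(t_\infty)\bigr)-N(s_\infty)\cdot\bigl((t_0)-(t_\infty)\bigr),
\]
because $\divf(h)=0$ on $C_\infty$. Reading each intersection the other way, this equals $\divf\bigl((t_0/t_\infty)|_{(s'_0)\cap Y}\bigr)-N\,\divf\bigl((t_0/t_\infty)|_{(s_\infty)\cap Y}\bigr)$. So the new curves are $C''=(s'_0)\cap Y$ and $C'=(s_\infty)\cap Y$, and the new functions are the \emph{restrictions of $t_0/t_\infty$} to them; nothing is multiplied by $f$, and no function is transported between distinct curves. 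The conditions (2)--(3) then come from Bertini applied to $s_\infty$ and $s'_0$ (not to the original linear system $|C|$), together with the smoothness of $Y_\pm$ away from $C$.

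In short, the obstacle you flagged is real, and your proposed resolution (transporting $f$ via a linear-equivalence function) does not work; the actual proof replaces $f$ by a global ratio of sections on $Y$ using an affine open, then exchanges the roles of ``curve'' and ``defining section'' in the intersection product.
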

\begin{proof}
In this proof, we shall assume that $Y = S_X$ if $d =2$.
In this case, a Cartier curve on $S_X$ along $D = (S_X)_{\sing}$ must be  
an effective Cartier divisor on $S_X$. Hence we can assume that 
$C$ is an effective Cartier divisor on $Y$ for any $d \ge 2$. 
Notice that $Y = Y_+ \amalg_D Y_-$, again by  \lemref{lem:Cycle-mod-1}.

Arguing as in \lemref{lem:good-Cartier-curves-dim>2}, since $S_X$ (and hence $Y$) is quasi-projective over $k$, we can find
an effective Cartier divisor $\tilde{C}$ on $Y$ such that
\begin{enumerate}
\item
$C \subset \tilde{C}$.
\item
$\ov{\tilde{C}\setminus C} \cap (C \cap D) = \emptyset$.
\item
$\sO_{Y}(\tilde{C})$ is a very ample line bundle on $Y$.
\end{enumerate}

We can extend the function $f$ on $C$ to a function $\tilde{f}$ on $\tilde{C}$ by setting $\tilde{f} = f$ on $C$ and $\tilde{f} =1$ on $\tilde{C} \setminus C$. Condition (2) guarantees that $\tilde{f}$ is regular and invertible at each point of $\tilde{C}\cap D$, and it is clear by construction that
$\divf(\tilde{f}) = \divf(f)$. Replacing $C$ with $\tilde{C}$ (and changing the notation for simplicity), we can thus assume that $C$ is a Cartier divisor on $Y$ such that the associated line bundle $\sO_{Y}(C)$ is very ample. 
Choose $t_0 \in H^0(Y, \sO_{Y}(C))$ such that $C = (t_0)$.
Since $Y$ is geometrically reduced, standard Bertini (see the proof of \cite[Lemma~1.3]{Levine-2})  allows us to choose another divisor $C_{\infty}$ in the linear system $H^0(Y, \sO_{Y}(C))$ such that:
\begin{enumerate}
\item
$C_{\infty}$ is reduced.
\item
$C_{\infty} \cap C \cap D = \emptyset$.
\item
$C_{\infty}$ contains no component of $C$.
\item
$D$ contains no component of $C_{\infty}$.
\end{enumerate}

Denote by $t_{\infty}$ the section of $\cO_Y(C)$ with  $C_\infty = (t_{\infty})$. As before, we extend the function $f$ on $C$ to a function $h$ on $C_{\infty} \cup C$ by setting $h=f$ on $C$ and $h=1$ on $C_{\infty}$. Notice that $h$ is meromorphic on $C_{\infty} \cup C$ and regular
invertible in a neighborhood of 
$(C_{\infty} \cup C) \cap D$ by (2). Let $S$ denote the finite
set of points 
$S = \{\text{poles of $f$}\} \cup (C\cap C^{\infty})$. Note that $S\cap D = \emptyset$.  

Choose now a very ample line bundle $\sL = j^*(\sO_{\P^r}(1))$ on $S_X$, corresponding to an embedding $j\colon S_X\to \P^r_k$ for $r \gg 0$ of $S_X$ as locally closed subscheme. We can assume that $r$ is sufficiently large so that $\sL_{|Y} \tensor_{\sO_Y} \sO_Y(C)$ is also very ample on $Y$. Since $k$ is infinite, there is a dense open subset $V_X$ of the dual projective space $(\P_k^r)^\vee$ such that for $L\in V_X$, the scheme theoretic intersections $L\cdot S_X = L\times_{\P^r_k}S_X$ and $L\cdot Y = L\times_{\P^r_k} Y$ satisfy the following list of properties:
\begin{listabc}
	\item $L\cdot S_X$ and $L\cdot Y$ are geometrically reduced (since both $S_X$ and $Y$ are),
	\item $Y\not\subset L\cdot S_X$,
	\item $L\cdot X_{\pm} = L\times_{\P^r_k} X_\pm$ and $L\cdot Y_{\pm} = L\times_{\P^r_k} Y_\pm$ are   integral (since $X_\pm$ and $Y_\pm$ are),
	\item $L\cdot Y \cap (C\cup C_\infty) \cap D = \emptyset$,
	\item $L\cdot Y\supset S$,
    \item  $L\cdot Y$ intersects $C_{\infty} \cup C$ in a finite set of points.
\end{listabc}
The hyperplane $L$ corresponds to a section $l$ of the linear system $H^0(\P^r_k, \sO_{\P^r}(1))$. Write $s_\infty$ for the global section of $\cL$ that is the restriction of $l$ to $S_X$. Then $L\cdot S_X = (s_\infty)$. Note that we can assume that $s_\infty$ does not have poles on $(C\cup C_\infty) \cap D$. 
Let $\ov{S_X}$ be the closure of $S_X$ in $\P^r_k$ and let $\sI$ be the ideal sheaf of $\ov{S_X} \setminus S_X$ in $\P^r_k$.  We can find a section $s'_{\infty}$ of the sheaf $\sI \otimes \sO_{\P^N_k}(m)$ for some
$m \gg 0$, which restricts to a section $s_{\infty}$ on $S_X$ satisfying
the properties (a) - (f) on $S_X = \ov{S_X} \setminus V(\sI)$.
This implies in particular that 
$S_X \setminus (s_{\infty}) = \ov{S_X} \setminus (s'_{\infty})$ 
is affine. Thus, up to taking a further Veronese embedding of $\P^r_k$ (replacing $s_\infty$ with $s_\infty'$) we can assume that (a) - (f) as above as well as the following hold
\\
\hspace*{1cm} g) \ $S_X \setminus (s_\infty) = S_X \setminus L\cdot S_X$ is affine.

We now know that $Y_\pm$ is smooth away from $C$ and d) tells us that 
$(s_\infty)$ intersects $Y_\pm$ along $D$ at only those points which are away 
from $C$. It follows that $(s_\infty) \cap Y_\pm \cap D \subset
(s_\infty) \cap (Y_+)_{\rm reg}$. We can then use the Bertini theorem of Altman and Kleiman  
\cite[Theorem~1]{KL} to ensure that $(s_\infty) \cap (Y_+)_{\rm reg}$ is
smooth away from the subscheme $(C\cup C_\infty )\cap X_+$ where we ask the containment condition. In particular, we can assume that $(s_\infty)\cap Y_+$ is smooth along $D$.  The same holds for $Y_-$ as well. We conclude that we can moreover achieve the following property
\\
\hspace*{1cm} h) \ $(s_{\infty}) \cap Y_{\pm}$ are smooth along $D$.

Consider again the function $h$ on $C \cup C_\infty$. By our choice of $S$, $h$ is regular on $(C\cup C_\infty) \setminus S$. By g) above, $h$ extends to a regular function $H$ on the affine open $U = S_X \setminus (s_{\infty})$. Since $H$ is a meromorphic function on $S_X$ which has poles only along
$(s_{\infty})$, it follows that for $N\gg 0$ the section $H s_\infty^N$ is an element of $H^0(U, \cL^{\tensor N})$ which extends to a section $s_0$ of $\cL^{\tensor N}$ on all $S_X$. Since $h$ is regular and invertible at each point of $C\cup C_\infty \cap D$ and since $s_\infty$ does not have zeros or poles on $C\cup C_\infty \cap D$, it follows that $(s_0)\cap (C\cup C_\infty)\cap D = \emptyset$ and (using f) above) that $(s_0)$ does not contain any component of $C\cup C_\infty$. Note that up to replacing $s_0$ by $s_0s_\infty^i$, we are free to choose $N$ as large as needed. 

Write $\sI_{C \cup C_\infty}$ for the ideal sheaf of $C\cup C_\infty$ in $S_X$. We can then find sections $s_1,\ldots, s_m$ of $H^0(S_X, \cL^{\tensor N} \tensor\sI_{C \cup C_\infty})$ such that the rational map $\phi\colon S_X\dashrightarrow \P^{m-1}_k$ that they define is a locally closed immersion on $S_X\setminus (C\cup C_\infty)$. In particular, there exists an affine open neighbourhood $U_x$ of every 
$x\in S_X\setminus (C\cup C_\infty)$ where at least one of the $s_i$ is not identically zero and where the $k$-algebra $k[s_1/s_i, \ldots, s_m/s_i]$ generated by $s_1/s_i, \ldots, s_m/s_i$ coincides with the coordinate ring of $U_x$. But then the same must be true for the algebra  $k[s_0/s_1, s_1/s_i, \ldots, s_m/s_i]$ obtained by adding the element $s_0/s_i$. Hence, the rational map $\psi\colon S_X \dashrightarrow \P^m_k$ given by the sections $(s_0, s_1, \ldots, s_m)$ of $H^0(S_X, \cL^\tensor N)$ is also a locally closed immersion on $S_X\setminus (C\cup C_\infty)$, and since the base locus of the linear system associated to $(s_0, s_1, \ldots, s_m)$  is $(s_0)\cap (C\cup C_\infty)$, it is in fact a morphism away from $(s_0) \cap (C\cup C_\infty)$. 

In particular, $\psi$ is birational (hence separable) and has image of dimension at least two, so that the linear system $V = (s_0, s_1, \ldots, s_m)$ is not composite with a pencil. By the classical Theorem of Bertini (see, for example, \cite[Theorem I.6.3]{Zar58}), a general divisor $E$ in $V$ is generically geometrically reduced. Moreover, $E$ is itself a Cohen-Macaulay scheme (since $S_X$ is Cohen-Macaulay). But a locally Noetherian Cohen-Macaulay scheme that is generically reduced is in fact reduced by \cite[Prop. 14.124]{GoW}. Hence the general divisor $E$ in $V$ is indeed geometrically reduced (hence reduced). We can apply the same argument to $E\cdot Y$, noting that being a complete intersection in $S_X$, the surface
$Y$ is Cohen-Macaulay.

Since $Y_\pm$ and $X_\pm$ are all   integral, a general divisor $E$ of $V$ will be moreover   irreducible when restricted to $X_\pm$ and $Y_\pm$. We can therefore assume that there is a global section $s_0'$ of $H^0(S_X, \cL^{\tensor N})$ of the form $s_0' = s_0 +\alpha$ with $\alpha \in (s_1,\ldots, s_m)$, that satisfies the following properties:
\begin{listabcprime}
\item $(s'_0)$ and $(s'_0) \cap Y$ are (geometrically) reduced.
\item
$Y \not\subset (s'_0)$.
\item
$(s'_0) \cap X_{\pm}$ and $(s'_0) \cap Y_{\pm}$ are   integral.
\item
$(s'_0) \cap (C \cup C_{\infty}) \cap D = \emptyset$.
\item
$C_{\infty} \cup C$ contains no component of $(s'_0) \cap Y$.
\item
$(s'_0) \cap Y_{\pm}$ are smooth along $D$.
\end{listabcprime}

The items a') and c') follow from the previous discussion. Properties b'), d') and e') are clearly open conditions on the space of sections $V$, and are therefore satisfied by the general divisor. As for the last item f'), it follows from the classical Theorem of Bertini on smoothness applied to $Y_\pm$, the assumption on $Y_\pm$ and the item d'). 

We then have 
\[
\frac{s'_0}{s^N_{\infty}} = \frac{Hs^N_{\infty} + (\alpha s^{-N}_{\infty})
s^N_{\infty}}{s^N_{\infty}} = H + \alpha s^{-N}_{\infty} = H', \ (\mbox{say}).
\]

Since $\alpha$ vanishes along $C \cup C_{\infty}$ and $s_{\infty}$ does not vanish identically on $U\cap (C\cup C_\infty)$ by f), it follows that $H'_{|{(C \cup C_{\infty}) \cap U}} =
H_{|{(C \cup C_{\infty}) \cap U}} = h_{|U}$. In other words, we have
${s'_0}/{s^N_{\infty}} = h$ as rational functions on $C \cup C_{\infty}$.
We can now compute:

\[
\nu_*({\rm div}(f)) = (s'_0) \cdot C - N (s_\infty) \cdot C
\]
\[ 
0 = {\rm div}(1) = (s'_0) \cdot C_{\infty} - N (s_\infty)\cdot C_{\infty}.
\]

Setting $(s^Y_{\infty}) = (s_\infty) \cap Y$ and $(s'^Y_0) = (s'_0) \cap Y$,
we get

\[
\begin{array}{lll}
\nu_*(\divf(f)) & = & (s'_0) \cdot (C - C_{\infty}) - 
N(s_{\infty})(C - C_{\infty}) \\
& = & (s'^Y_0) \cdot (\divf({t_0}/{t_{\infty}})) - N(s^Y_{\infty}) \cdot
(\divf({t_0}/{t_{\infty}})) \\
& = & \iota_{{s'^Y_0}, *}(\divf(f')) - N \iota_{{s^Y_{\infty}}, *}(\divf(f'')),
\end{array}
\]
where $f' = ({t_0}/{t_{\infty}})|_{(s'^Y_0)} \in 
\sO^{\times}_{(s'^Y_0), D \cap (s'^Y_0)}$ (by (d')) and 
$f'' = (t_0/t_{\infty})|_{(s^Y_\infty)} \in 
\sO_{(s^Y_\infty), D\cap (s^Y_\infty) }^{\times}$ (by (d)). 

It follows from h) and f') that 
$(s'^Y_{0})_{| X_+}, \ (s'^Y_{0})_{| X_-} , \ (s^Y_{\infty})_{| X_+}$ and 
$(s^Y_{\infty})_{| X_-}$ are all smooth along $D$.
Setting $\sL'' = (\sL')^N, \ t'' = (s'_0)$ and $t' = (s_\infty)$,
the curves $C' = (t') \cap Y$ and $C'' = (t'') \cap Y$ together with the 
functions $f'$ and $f''$ satisfy the conditions of the Lemma.
\end{proof}

\begin{remk}\label{remk:Surface-Bertini}
It follows from \lemref{lem:Cycle-mod-1} that each of the curves 
$C'$ and $C''$ constructed in the proof of the lemma is of the form 
$C_+ \amalg_E C_-$ for $E = \nu_+^*(D) = \nu_-^*(D)$ and 
$\nu_{\pm} \colon C_{\pm} \inj X$ are   integral Cartier curves in $X$, which are
smooth along $D$. 
Moreover, by construction, $C_{\pm}$ are the zero loci of the
restrictions $t_{\pm}$ to $Y_{\pm}$ 
of a global section of a very ample line bundle 
$\cM$ on $S_X$. 
It follows that $(t_+)_{|D}$ and $(t_-)_{|D}$ agree.
\end{remk}

\subsection{The map $\tau_X^*$: the case of curves and surfaces}
\label{sec:dim-2}
For any smooth quasi-projective scheme $X$ over $k$ with effective 
Cartier divisor $D\subset X$, we have defined in \ref{ss:setting-goal-tau} the map
\begin{equation}\label{eqn:tau^*-0}
\tau^*_X\colon  \sZ_0(S_X,D) \to \sZ_0(X,D) \quad \text{by} \quad
\tau^*_X([x]) = \iota^*_+([x]) - \iota^*_-([x]),
\end{equation}
where $x \in S_X \setminus D$ is a closed point.

Suppose first that $X$ is a smooth curve.
We can clearly assume that $X$ is connected.
If $f \in \sO^{\times}_{S_X,D}$ and $\theta_X\colon  \sO^{\times}_{S_X,D} \to 
k(S_X)^{\times}
= k(X_{+})^{\times} \times k(X_-)^{\times}$ is the natural map, then
$\theta_X(f) = (f_+, f_-)$ with $f_{\pm} \in \sO^{\times}_{X_{\pm}, D} = 
\sO^{\times}_{X,D}$.
It follows from ~\eqref{eqn:double-2} that $g := f_+ f^{-1}_- \in 
\Ker(\sO^{\times}_{X,D}  \to \sO^{\times}_D)$. 
Moreover, $\tau^*_X(\divf(f)) = \iota^*_+(\divf(f)) -
\iota^*_-(\divf(f)) = \divf(f_+) - \divf(f_-) = \divf(g)$. We conclude that
$\tau^*_X$ descends to a map
\begin{equation}\label{eqn:tau^*-1}
\tau^*_X\colon  \CH_0^{LW}(S_X) = \CH_0(S_X) \to \CH_0(X|D).
\end{equation}

\vskip .4cm

\begin{prop}\label{prop:map-diff-surfaces} 
Let $X$ be a smooth   connected quasi-projective surface over $k$ with an 
effective Cartier divisor $D$. Then the map 
$\tau_X^*\colon \sZ_0(S_X, D) \to \sZ_0(X|D)$ of ~\eqref{eqn:tau^*-0} 
descends to a group homomorphism $\CH^{LW}_0(S_X)\to \CH_0(X|D)$.
\end{prop}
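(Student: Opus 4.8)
The plan is to show directly that $\tau^*_X$ annihilates $\sR^{LW}_0(S_X,D)$, hence descends to a map $\CH^{LW}_0(S_X)\to\CH_0(X|D)$. Since $X$ is a smooth surface, $D=(S_X)_{\rm sing}$ by \propref{prop:double-prp}(6), and --- as recalled in the proof of \lemref{lem:reduction-basic}, $S_X$ being Cohen--Macaulay of pure dimension $2$ by \propref{prop:double-prp-fine}(2) --- a Cartier curve $\nu\colon C\inj S_X$ relative to $D$ is just an effective Cartier divisor on $S_X$. Fix such a $C$ and an $f\in\sO^{\times}_{C,C\cap D}$. First I would apply \lemref{lem:reduction-basic} (with $d=2$, $Y=S_X$) to replace $(C,f)$ by the two pairs $(C',f'),(C'',f'')$ it produces, for which $\nu_*\divf(f)=\nu'_*\divf(f')+\nu''_*\divf(f'')$ in $\sZ_0(S_X,D)$; it is then enough to treat a single curve $C$ of the explicit shape of that lemma. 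Crucially, I would run the construction of \lemref{lem:reduction-basic} with a very ample line bundle on $S_X$ of the special form $\cL=\Delta^*(M^{\otimes m})$, where $M$ is ample on $X$ and $m\gg0$; this is legitimate because $\Delta$ is finite, so $\Delta^*(M^{\otimes m})$ is ample and hence very ample for $m$ large, and the property $\iota_+^*\cL\simeq\iota_-^*\cL$ survives the Veronese re-embeddings used in that proof. After this reduction, using \lemref{lem:reduction-basic} and \remref{remk:Surface-Bertini}, we are in the situation: $C=(s)$ for a section $s$ of $\cL^{\otimes N}$; the curves $C_{\pm}:=C\cap X_{\pm}$ on $X$ are integral, Cartier and smooth along $D$, with $E:=C\cap D=C_+\cap D=C_-\cap D$ as subschemes of $D$; $f=(t_0/t_{\infty})|_C$ for sections $t_0,t_{\infty}$ of a very ample line bundle $\cM$ on $S_X$ whose zero divisors, together with their halves $(t_0|_{X_{\pm}})$ and $(t_{\infty}|_{X_{\pm}})$, avoid $E$ and share no component with $C_{\pm}$; and, since $\Delta_*\sO_{S_X}\simeq\sO_X\oplus\sI_D$ by ~\eqref{eqn:double-4} and so $H^0(S_X,\cL^{\otimes N})\simeq H^0(X,M^{\otimes mN})\oplus H^0(X,M^{\otimes mN}\otimes\sI_D)$ by the projection formula, the section $s$ corresponds to a pair $(\sigma_0,\sigma_1)$ with $\sigma_1|_D=0$ and $\{C_+,C_-\}=\{(\sigma_0),(\sigma_0+\sigma_1)\}$. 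Consequently $C_+-C_-=\divf_X(\rho)$ for $\rho=\sigma_0/(\sigma_0+\sigma_1)\in k(X)^{\times}$, with $\rho|_D=1$.

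Next I would compute $\tau^*_X$ on such a curve. Writing $k(C)=k(C_+)\times k(C_-)$, $f=(f_+,f_-)$ and $f_{\pm}=(t_0/t_{\infty})|_{C_{\pm}}=\varphi_{\pm}|_{C_{\pm}}$ with $\varphi_{\pm}:=\iota_{\pm}^*(t_0/t_{\infty})\in k(X)^{\times}$, the relations $\iota^*_+\iota_{+*}={\rm id}$, $\iota^*_+\iota_{-*}=0$ (and symmetrically) give $\tau^*_X(\nu_*\divf(f))=\nu_{+*}\divf(f_+)-\nu_{-*}\divf(f_-)$. Because $t_0,t_{\infty}$ restrict to the same sections on $D=X_+\cap X_-$, the function $\varphi_-/\varphi_+$ equals $1$ on $D$ and is regular and invertible near $E$; hence $f_-/(\varphi_+|_{C_-})=(\varphi_-/\varphi_+)|_{C_-}$ has modulus $E=\nu_-^*D$ on $C_-$, and --- pulling this function back along the normalization of $C_-$, an isomorphism near $D$, and pushing forward to $X$ --- we get $\nu_{-*}\divf(f_-)=\nu_{-*}\divf(\varphi_+|_{C_-})$ in $\CH_0(X|D)$. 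Since $\nu_{+*}\divf(f_+)=\nu_{+*}\divf(\varphi_+|_{C_+})$ tautologically, the projection formula on the smooth surface $X$ yields
\[
\tau^*_X(\nu_*\divf(f))=\nu_{+*}\divf(\varphi_+|_{C_+})-\nu_{-*}\divf(\varphi_+|_{C_-})=(C_+-C_-)\cdot\divf_X(\varphi_+)\quad\text{in }\CH_0(X|D).
\]

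The key step is now to exchange the two factors using the commutativity of the intersection product on $X$. Set $\Gamma_0:=(t_0|_{X_+})$ and $\Gamma_{\infty}:=(t_{\infty}|_{X_+})$, the zero divisors on $X$ of the two restricted sections, so that $\divf_X(\varphi_+)=\Gamma_0-\Gamma_{\infty}$. Using $C_+-C_-=\divf_X(\rho)$ and the projection formula again,
\[
(C_+-C_-)\cdot\divf_X(\varphi_+)=\divf_X(\varphi_+)\cdot\divf_X(\rho)=\divf(\rho|_{\Gamma_0})-\divf(\rho|_{\Gamma_{\infty}}).
\]
Since $\rho|_D=1$ wherever $\rho$ is regular, and since $\Gamma_0$ and $\Gamma_{\infty}$ meet $D$ only away from $E$ --- hence away from the indeterminacy locus $C_+\cup C_-$ of $\rho$ --- the restriction of $\rho$ to each (integral) component of $\Gamma_0$ and $\Gamma_{\infty}$ has the appropriate modulus along $D$; therefore both divisors on the right-hand side vanish in $\CH_0(X|D)$, and $\tau^*_X(\nu_*\divf(f))=0$, as desired.

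The main obstacle I anticipate is not this final calculation but the reduction: one must verify that the Bertini construction of \lemref{lem:reduction-basic} can be carried through with the additional requirement $\iota_+^*\cL\simeq\iota_-^*\cL$ (needed to realize $C_+-C_-$ as $\divf_X$ of a function equal to $1$ along $D$), while at the same time imposing the extra open dense genericity conditions that the halves $(t_0|_{X_{\pm}})$, $(t_{\infty}|_{X_{\pm}})$ avoid $E$ and share no component with $C_+$ or $C_-$; and one must check that each restriction of a rational function appearing above is well defined and carries exactly the claimed modulus. (The curve case of the proposition is already settled in ~\eqref{eqn:tau^*-1}, so only the surface case is at stake.)
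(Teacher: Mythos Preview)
Your argument is correct and follows the same overall architecture as the paper: reduce via \lemref{lem:reduction-basic} to a curve $C=(s)$ of special shape, then show $\tau^*_X(\nu_*\divf(f))$ lies in $\sR_0(X|D)$ by an intersection–theoretic computation on the smooth surface $X$. The execution, however, is genuinely different and in one respect cleaner.

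The key difference is your insistence that the very ample bundle used in \lemref{lem:reduction-basic} be of the form $\cL=\Delta^*(M^{\otimes m})$. Via the decomposition $\Delta_*\sO_{S_X}\simeq\sO_X\oplus\sI_D$ this gives $\iota_+^*\cL=\iota_-^*\cL$ canonically, so $C_+-C_-=\divf_X(\rho)$ for a rational function $\rho$ on $X$ with $\rho-1\in\sI_D$ away from $E$. You then reduce, after one modulus relation on $C_-$, to $(C_+-C_-)\cdot\divf_X(\varphi_+)=\divf_X(\varphi_+)\cdot\divf_X(\rho)$, and the symmetry of the intersection product finishes the job. The paper instead keeps $\cL$ arbitrary, introduces auxiliary sections $s_\infty,\,s'_{0,\pm}$ on $X$, and decomposes $\tau^*_X(\divf(f'))$ into three pieces; the first and third are governed by ``$h_1=(t_+/t_-)|_{(s'_{0,+})}$'' and its analogue on $(s_\infty)$. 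Since $t_\pm$ are sections of the a priori distinct bundles $\iota_\pm^*\cL$, the quotient $t_+/t_-$ is only a rational \emph{section} of $\iota_+^*\cL\otimes(\iota_-^*\cL)^{-1}$, not a function; so your hypothesis $\cL=\Delta^*(M^{\otimes m})$ is exactly what is needed to make that step literal, and your route sidesteps the auxiliary sections entirely. What the paper's route buys is that it treats the case $C'_+=C'_-$ and $C'_+\neq C'_-$ uniformly with an explicit three–term formula that generalizes verbatim to \propref{prop:map-diff-higher}.

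On the obstacles you flag: the choice $\cL=\Delta^*(M^{\otimes m})$ is harmless in \lemref{lem:reduction-basic} (and stable under the Veronese step there), and the ``halves avoid $E$'' conditions are already contained in items (d), (d') and (f), (e') of that proof. The only conditions you must genuinely \emph{add} are the cross–side ones, e.g.\ that $(t_0|_{X_+})$ share no component with $C_-=(\iota_-^*s)$; these are open and dense on the choice of $s$ (a section of a very ample bundle cannot be forced to vanish on a fixed curve), so they can be imposed at the Bertini stage exactly as you suggest.
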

\begin{proof}
We shall continue to use the notations and various constructions in the
proof of \lemref{lem:reduction-basic}.
We have shown in \lemref{lem:reduction-basic} that in order to
prove that $\tau^*_X$ preserves the subgroups of rational equivalences,
it suffices to show that $\tau^*_X(\divf(f')) \in \sR_0(X|D)$, where $f'$ is a 
rational function on a Cartier curve $\nu\colon  C' \inj S_X$ that we can choose in the following way.
\begin{enumerate}
\item
There is a very ample line bundle $\sL$ on $S_X$ and sections 
$t \in H^0(S_X, \sL), t_{\pm} = \iota^*_{\pm}(t) \in H^0(X, \iota^*_{\pm}(\sL))$ 
such that
$C' = (t)$ and $C'_{\pm} = (t_{\pm})$.
\item
$C'$ is a (geometrically) reduced Cartier curve of the form $C' = C'_+ \amalg_E C'_-$,
where $E = \nu^*(D)$ such that $C'_{\pm}$ are   integral curves on 
$X$, none of which is contained in $D$ and each of which is smooth along $D$
(see Remark~\ref{remk:Surface-Bertini}). 
\end{enumerate}

Let $\iota_D = \iota_+ \circ \iota = \iota_- \circ \iota \colon  D \inj S_X$ denote 
the inclusion map. Then, it follows from (1) that 
\begin{equation}\label{eqn:map-diff-surfaces-0*}
(t_+)_{|D} = \iota^*_D(t) = (t_-)_{|D}.
\end{equation}

Let $(f'_+, f'_-)$ be the image of $f'$ in $\sO^{\times}_{C'_+, E} \times 
\sO^{\times}_{C'_-,E} \inj k(C'_+) \times k(C'_-)$.
It follows from \lemref{lem:Cycle-mod-1} that there is an exact sequence
\[
0 \to \sO_{C',E} \to \sO_{C'_+,E} \times \sO_{C'_-, E} \to \sO_E \to 0.
\]
In particular, we have 
\begin{equation}\label{eqn:map-diff-surfaces-0} 
(f'_+)_{|E} = (f'_-)_{|E} \in \sO^{\times}_E .
\end{equation}

Let us first assume that $C'_+ = C'_-$ as curves on $X$.
Let $C$ denote this curve and let $C^N$ denote its normalization. 
Let $\pi\colon C^N \to C \inj X$ denote the composite map.
Since $C$ is regular along $E$ by (2), we get 
$f'_+, f'_- \in \sO^{\times}_{C^N,E}$.
Setting $g := f'_+ f'^{-1}_- \in \sO^{\times}_{C^N,E}$, it
follows from ~\eqref{eqn:map-diff-surfaces-0} that
$g \in \Ker(\sO^{\times}_{C^N,E}  \to \sO^{\times}_E)$. 
Moreover, $\tau^*_X(\divf(f')) = \iota^*_+(\divf(f')) -
\iota^*_-(\divf(f')) = \divf(f'_+) - \divf(f_-) = \pi_*(\divf(g))$. 
We conclude from ~\eqref{eqn:DefChowMod-0} that
$\tau^*_X$ descends to a map $\tau^*_X\colon  \CH^{LW}_0(S_X) \to \CH_0(X|D)$.

We now assume that $C'_+ \neq C'_-$. Let $S$ denote the set of closed points
on $C'_+ \cup C'_-$, where $f'_+$ or $f'_-$ has a pole.
It is clear that $S \cap D = \emptyset$.
We now repeat the constructions in the proof of \lemref{lem:reduction-basic}
to find a very ample line bundle $\cL$ on $X$ and a section 
$s_{\infty} \in H^0(X, \cL)$ such that 
\begin{listabc}
\item $(s_{\infty})$ is  integral (because $X$ is smooth and  connected).
\item $(s_\infty) \cap (C'_+ \cup C'_-) \cap D = \emptyset$.
\item $(s_\infty) \supset S$.
\item $(s_{\infty}) \not\subset C'_+ \cup C'_-$.
\item $(s_{\infty})$ is smooth away from $S$. 
\item
$X \setminus (s_\infty)$ is affine.
\end{listabc}

It follows that $f'_{\pm}$ extend to regular functions $F'_{\pm}$ on
$U = X \setminus (s_\infty)$. Since $F'_{\pm}$ are meromorphic functions
on $X$ which have poles only along $(s_\infty)$, it follows that 
$F'_{\pm}s^N_{\infty}$ are elements of $H^0(U, \sL^N)$ which extend to
sections ${s_{0,{\pm}}}$ of $\sL^N$ on all of $X$, if we choose $N \gg 0$.

Since $s_\infty, \ F'_+$ and $F'_-$ are all invertible on $C'_+ \cup C'_-$ along 
$D$, we see that $(s_0)_{\pm}$ are invertible on $C'_+ \cup C'_-$ along $D$.
In particular, ${s_{0,{\pm}}} \notin H^0(X, \cL^{\tensor N} \tensor\sI_{C_+ \cup C_-})$. 
As before,
we can moreover find $\alpha_{\pm} \in H^0(X, \cL^{\tensor N} \tensor\sI_{C_+ \cup C_-}) \subset H^0(X, \cL^N)$
such that ${s'_{0,{\pm}}}:= {s_{0,{\pm}}} + \alpha_\pm$ have the following 
properties.
\begin{listabcprime}
\item $({s'_{0,{\pm}}})$ are integral.
\item
$({s'_{0,{\pm}}}) \not\subset C'_+ \cup C'_-$.  
\item $({s'_{0,{\pm}}}) \cap (C'_+ \cup C'_-) \cap D  = \emptyset$.
\item
$({s'_{0,{\pm}}})$ are smooth away from $C'_+ \cup C'_-$.
\end{listabcprime}

We then have 
\[
\frac{s'_{0,{\pm}}}{s^N_{\infty}} = 
\frac{F'_{\pm}s^N_{\infty} + (\alpha_{\pm} s^{-N}_{\infty})
s^N_{\infty}}{s^N_{\infty}} = F'_{\pm} + \alpha_{\pm} s^{-N}_{\infty} = H'_{\pm}, \ 
(\mbox{say}).
\]

Since $\alpha_{\pm}$ vanish along $C'_+ \cup C'_-$ and $s_{\infty}$ does not vanish identically along $U\cap (C'_+ \cup C'_-)$, it follows that $(H'_{\pm})_{|{(C'_+ \cup C'_-) \cap U}} =
(F'_{\pm})_{|{(C'_+ \cup C'_-)\cap U}}$. 
Since $(F'_{\pm})_{| C'_+ \cup C'_-}$ are invertible regular functions on 
$C'_+ \cup C'_-$ along $D$, it follows that $(H'_+)_{|{C'_+ \cup C'_-}}$ and 
$(H'_-)_{|{C'_+ \cup C'_-}}$ are both rational functions on $C'_+ \cup C'_-$
which are regular and invertible along $D$. 
In particular, we have
\begin{equation}\label{eqn:map-diff-surfaces-1}
(H'_+)_{|E} = (F'_+)_{|E} = (f'_+)_{|E} \ {=}^{\dagger} \ (f'_-)_{|E} 
= (F'_-)_{|E} = (H'_-)_{|E},
\end{equation}
where ${=}^{\dagger}$ follows from ~\eqref{eqn:map-diff-surfaces-0}.

Since $s'_{0,+}$ and $s'_{0,-}$ are both invertible functions on $C'_-$ in a 
neighborhood of $C'_- \cap D$ by c'), 
it follows that the restriction of $s'_{0,-}/s'_{0,+}$ on $C'_-$ is a 
rational function on $C_-$, which is regular and invertible in a neighborhood 
of $C'_- \cap D$.
On the other hand, we have 
\begin{equation}\label{eqn:map-diff-surfaces-2}
\frac{s'_{0,-}}{s'_{0,+}} = \frac{s'_{0,-}/{s^N_\infty}}
{s'_{0,+}/{s^N_\infty}} = \frac{H'_-}{H'_+}, 
\end{equation}
as rational functions on $X$.
Since $H'_+$ and $H'_-$ are also well-defined non-zero rational functions on
$C'_-$, we conclude that $s'_{0,-}/s'_{0,+}$ and ${H'_-}/{H'_+}$ 
both restrict to an identical and well-defined rational function on $C'_-$,
which is invertible along $D$.

We now compute
\[
\begin{array}{lll}
\tau^*_X(\divf(f')) & = & \iota^*_+(\divf(f')) -
\iota^*_-(\divf(f')) \\
& = & 
\divf(f'_+) - \divf(f'_-) \\
& = & 
\left[(s'_{0,+}) \cdot C'_+ - (s^N_{\infty}) \cdot C'_+\right]
- \left[(s'_{0,-}) \cdot C'_- - (s^N_{\infty}) \cdot C'_-\right] \\
& = & \left[(s'_{0,+}) \cdot C'_+ - (s'_{0,+}) \cdot C'_-\right]
-\left[(s'_{0,-}) \cdot C'_- - (s'_{0,+}) \cdot C'_-\right] \\
& & - \left[(s^N_{\infty}) \cdot C'_+ - (s^N_{\infty}) \cdot C'_-\right] \\
& = & \left[(s'_{0,+}) \cdot (C'_+ - C'_-)\right] -
\left[C'_- \cdot (s'_{0,-}) - (s'_{0,+}))\right] -
\left[(s^N_{\infty}) \cdot (C'_+ - C'_-)\right] \\
& = & (s'_{0,+}) \cdot (\divf({t_+}/{t_-})) - C'_- \cdot 
(\divf({s'_{0,-}}/{s'_{0,+}})) - N(s_{\infty}) \cdot (\divf({t_+}/{t_-})) \\
& = &  (s'_{0,+}) \cdot (\divf({t_+}/{t_-})) - C'_- \cdot 
(\divf({H'_-}/{H'_+})) - N(s_{\infty}) \cdot (\divf({t_+}/{t_-})).
\end{array}
\]

It follows from b) and c') that $t_{\pm}$ restrict to regular invertible
functions on $(s'_{0,+})$ and $(s_{\infty})$ along $D$.
We set $h_1 = {(\frac{t_+}{t_-})}_{| (s'_{0,+})}, \ 
h_2 = {(\frac{H'_-}{H'_+})}_{| C'_-}$ and
$h_3 =  {(\frac{t_+}{t_-})}_{| s_{\infty}}$.
Let $(s'_{0,+})^N \to (s'_{0,+})$, $(C'_-)^N \to C'_-$ and
$(s_\infty)^N \to (s_\infty)$ denote the normalization maps.
Let $\nu_1\colon  (s'_{0,+})^N \to X$, $\nu_2\colon  (C'_-)^N  \to X$ and
$\nu_3\colon  (s_\infty)^N \to X$ denote the composite maps.
Since $(s'_{0,+}), \  C'_-$ and $(s_\infty)$ are all regular along
$D$ by (2), e) and d'), it follows from
~\eqref{eqn:map-diff-surfaces-0*} and ~\eqref{eqn:map-diff-surfaces-1}
that $h_1 \in G((s'_{0,+}), \nu^*_1(D))$,
$h_2 \in G((C'_-)^N, \nu^*_2(D))$ and $h_3 \in G((s_\infty)^N , \nu^*_3(D))$.
We conclude from ~\eqref{eqn:DefChowMod-0} that $\tau^*_X(\divf(f'))$
dies in $\CH_0(X|D)$. In particular, $\tau^*_X$ descends to a
map $\tau^*_X\colon \CH^{LW}_0(S_X) \to \CH_0(X|D)$. This finishes the proof.
\end{proof}

\subsection{The map $\tau^*_X$: the case of higher dimensions}
\label{sec:dim>2}
We are left to show that the map $\tau_X^*\colon \sZ_0(S_X,D) \to 
\sZ_0(X|D)$ of ~\eqref{eqn:tau^*-0} descends to a group homomorphism 
$\CH^{LW}_0(S_X) \to \CH_0(X|D)$ when $X$ has dimension at least 3 when $k$ is infinite and perfect. This is the content of the following.

\begin{prop}\label{prop:map-diff-higher} 
Let $X$ be a smooth   connected quasi-projective scheme over $k$ of dimension $d \ge 3$
with an effective Cartier divisor $D$. Then the map 
$\tau_X^*\colon \sZ_0(S_X, D) \to \sZ_0(X|D)$ of ~\eqref{eqn:tau^*-0} 
descends to a group homomorphism $\CH^{LW}_0(S_X)\to \CH_0(X|D)$.
\end{prop}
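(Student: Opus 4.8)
The plan is to reduce, by means of the two Bertini lemmas already established, to a special class of Cartier curves on $S_X$ for which the verification becomes a one--dimension--lower replay of the argument proving Proposition~\ref{prop:map-diff-surfaces}. First, by Lemma~\ref{lem:good-Cartier-curves-dim>2}, the subgroup $\sR^{LW}_0(S_X,D)$ is generated by divisors of functions on Cartier curves $C \inj Y$ relative to $D$, where $Y = S_X \cap H_1 \cap \cdots \cap H_{d-2}$ is a geometrically reduced complete intersection surface, $Y_{\pm} = X_{\pm}\cap H_1 \cap \cdots \cap H_{d-2}$ are integral and smooth away from $C$, no component of $Y$ lies in $D$, and $C$ is a Cohen--Macaulay Cartier divisor on $Y$. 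Next, applying Lemma~\ref{lem:reduction-basic} to such a pair $(C,f)$ and using Remark~\ref{remk:Surface-Bertini}, it suffices, by the linearity of $\tau^*_X$ and since the two curves produced there are of the same type, to show that $\tau^*_X(\divf(f')) \in \sR_0(X|D)$ for a single Cartier curve of the special shape $C' = C'_+ \amalg_E C'_-$, where $C' = Y \cap (t')$ for a section $t'$ of a very ample line bundle on $S_X$, the traces $C'_{\pm}$ are integral Cartier curves on $X$ smooth along $D$, $E = \nu^*_{+}(D) = \nu^*_{-}(D)$ for the inclusions $\nu_{\pm}\colon C'_{\pm}\inj X$, and $(t'_+)_{|D} = (t'_-)_{|D}$; moreover $f'\in\sO^{\times}_{C',E}$ has images $f'_{\pm}\in\sO^{\times}_{C'_{\pm},E}$ with $(f'_+)_{|E} = (f'_-)_{|E}$ by the exact sequence of Lemma~\ref{lem:Cycle-mod-1}, and, by the construction in the proof of Lemma~\ref{lem:reduction-basic}, the $f'_{\pm}$ are the restrictions to $C'_{\pm}$ of a single rational function on the surface $Y$.

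Write $\tau^*_X(\divf(f')) = \divf(f'_+) - \divf(f'_-)$. If $C'_+ = C'_-$ as integral curves on $X$, let $\pi\colon C^N \to C'_+ \inj X$ be the normalization and put $g = f'_+(f'_-)^{-1}$; then $(f'_+)_{|E} = (f'_-)_{|E}$ forces $g \in \Ker(\sO^{\times}_{C^N,\pi^*(D)}\to\sO^{\times}_{\pi^*(D)})$, so $\divf(f'_+)-\divf(f'_-) = \pi_*(\divf(g))$ lies in $\sR_0(X|D)$ by Definition~\ref{def:DefChowMod-Definition}. If $C'_+ \ne C'_-$, one runs the moving argument in the proof of Proposition~\ref{prop:map-diff-surfaces} \emph{mutatis mutandis}, with the single change that the whole auxiliary construction there --- the section at infinity $s_{\infty}$, the sections $s'_{0,\pm}$, and the resulting curves together with their normalizations --- must now be carried out inside $Y$ and $Y_{\pm}$ rather than inside $S_X$ and $X$, because the functions $f'_{\pm}$ live on the complete intersection surfaces $Y_{\pm}$ and not a priori on all of $X$. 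This is legitimate: $Y_{\pm}$ is integral and quasi-projective by Lemma~\ref{lem:good-Cartier-curves-dim>2}(2), and it is smooth at each of the finitely many points of $C'_{\pm}\cap D$, since the incidence conditions imposed in the proof of Lemma~\ref{lem:reduction-basic} keep these points off the curve $C$ along which $Y_{\pm}$ may fail to be smooth. Hence every Bertini input used in Proposition~\ref{prop:map-diff-surfaces} remains available: irreducibility of general members and of their traces on $Y_{\pm}$ from \cite[Chap.~I, \S~6]{Jou} and \cite[Theorem~12]{Seidenberg}, affineness of the complement of a hyperplane section from quasi-projectivity, and smoothness \emph{along $D$} of every curve produced, from \cite[Theorem~1]{KL} applied over the smooth locus of $Y_{\pm}$. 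Pushing the identical computation through expresses $\divf(f'_+) - \divf(f'_-)$ in the form $\nu_{1,*}(\divf(h_1)) - \nu_{2,*}(\divf(h_2)) - N\,\nu_{3,*}(\divf(h_3))$ with $h_1,h_2,h_3$ rational functions of modulus $D$ on the respective normalized curves, so that $\tau^*_X(\divf(f'))$ vanishes in $\CH_0(X|D)$.

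The main obstacle is precisely this last bookkeeping: one must confirm that no Bertini-type step in the proof of Proposition~\ref{prop:map-diff-surfaces} secretly used global smoothness of the ambient surface, only its integrality, its quasi-projectivity, and its smoothness in a Zariski neighbourhood of the relevant part of $D$ --- the latter being exactly what the construction in Lemma~\ref{lem:reduction-basic} guarantees for $Y_{\pm}$. Granting this, Proposition~\ref{prop:map-diff-higher} follows; combined with the curve and surface cases (the latter being Proposition~\ref{prop:map-diff-surfaces}) and with Lemma~\ref{lem:factor-PN-projection}, it produces the well-defined difference map $\tau^*_X\colon \CH_0(S_X)\to\CH_0(X|D)$ in every dimension.
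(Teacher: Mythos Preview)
Your argument is correct, and the overall architecture --- reduce via Lemmas~\ref{lem:good-Cartier-curves-dim>2} and~\ref{lem:reduction-basic}, dispose of the case $C'_+ = C'_-$ directly, and in the remaining case rerun the surface computation of Proposition~\ref{prop:map-diff-surfaces} --- is exactly the paper's. The one genuine difference is the choice of ambient surface for that rerun.

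You propose to carry out the moving argument inside the complete--intersection surface $Y_+ = Y_-\subset X$ already produced by Lemma~\ref{lem:good-Cartier-curves-dim>2}, carefully tracking that its singular locus (contained in the \emph{original} Cartier curve) misses $D$ where it matters. The paper instead discards $Y_\pm$ at this stage and uses Altman--Kleiman Bertini on the smooth scheme $X$ to manufacture a \emph{fresh} complete--intersection surface $T\subset X$ with $T\supset C_+\cup C_-$, $T\cap(t_\pm)$ integral (forcing $(t^T_\pm)=C_\pm$), and $T$ smooth away from $C_+\cup C_-$; the sections $s_\infty,\,s_{0,\pm}$ are then taken on $X$ and cut down to $T$. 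Your route is more economical --- no new surface, and the identity $Y_+\cap(t'_\pm)=C'_\pm$ is tautological --- but it obliges you to verify that each Bertini step survives the mild singularities of $Y_+$, which is the ``bookkeeping'' you flag. The paper's route trades that verification for one extra application of \cite[Theorem~1]{KL}: since $T$ is smooth off $C_+\cup C_-$ and the auxiliary sections are chosen to miss $(C_+\cup C_-)\cap D$, smoothness along $D$ of every curve produced is automatic. Either path yields the proposition.
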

\begin{proof}
Let $\nu\colon  C \inj S_X$ be a reduced Cartier curve relative to $D$ and let 
$f \in \sO^{\times}_{C, E}$, where $E = \nu^*(D)$.
By \lemref{lem:good-Cartier-curves-dim>2}, we can assume that 
there are inclusions $C \inj Y \inj S_X$ satisfying the conditions
(1) - (6) of \lemref{lem:good-Cartier-curves-dim>2}.
The only price we pay by doing so is that $C$ may no longer be reduced
(but still Cohen-Macaulay).
But we solve this by replacing $C$ with a reduced Cartier curve
(which we also denote by $C$) that is of the form given in
\lemref{lem:reduction-basic}. We shall now continue with the notations of
proof of \lemref{lem:reduction-basic}.

We write $C = (t) \cap Y$, where $t \in H^0(S_X, \sL)$ such that
$\sL$ is a very ample line bundle on $S_X$.
Let $t_{\pm} = \iota^*_{\pm}(t) \in H^0(X, \iota^*_{\pm}(\sL))$ and let
$C_{\pm} = (t_{\pm}) \cap Y = (t_{\pm}) \cap Y_{\pm}$.
Let $\nu_{\pm}\colon  C_\pm \inj X$ denote the inclusions.
It follows from our choice of the section that $(t_{\pm})$ are  
integral.
If $C_+ = C_-$, exactly the same argument as in the
case of surfaces in \propref{prop:map-diff-surfaces} applies to show that 
$\tau^*_X(\divf(f)) \in \sR_0(X|D)$.
So we assume $C_+ \neq C_-$.

Let $\Delta(C) = C_+ \cup C_-$ denote the scheme-theoretic image
in $X$ under the finite map $\Delta$.
Since $X$ is smooth and   connected, the Bertini Theorem of Altman and Kleiman \cite[Theorem 1]{KL} allows us to find once again a complete intersection
  integral surface $T \subset X$ satisfying the following.
\begin{enumerate}
\item
$T\supset \Delta(C)$.
\item 
$T\cap (t_{\pm})$ are integral curves.
\item
$T$ is smooth away from $\Delta(C)$.
\end{enumerate}

Set $t^T_\pm = (t_\pm)_{|T}$.
Since $C_\pm$ are integral and contained in $T \cap (t_{\pm})$, it follows that
\begin{equation}\label{eqn:map-diff>2-0}
(t^T_{\pm}) = C_\pm.
\end{equation}

Let $S$ be the finite set of points of $\Delta(C)$, where 
$f_+=\nu^*_+(f)$ or $f_-=\nu^*_-(f)$ have poles. It is clear that
$S \cap D = \emptyset$.
We now choose another very ample line bundle $\sM$ on $X$ and 
$s_\infty \in H^0(X, \sM)$ (see the proof of \lemref{lem:reduction-basic})
such that
\begin{romanlist}
\item $(s_\infty)$ is   integral.
\item The intersections $(s_\infty)\cap T$ and $(s_\infty) \cap (t_\pm)$ are proper and   integral.
\item $(s_\infty) \supset S$.
\item $(s_\infty) \cap \Delta(C)  \cap D = \emptyset$.
\item $X \setminus (s_\infty)$ is affine.
\item $(s_\infty)$ is smooth away from $S$.
\item
$(s_\infty) \cap T$ is smooth away from $\Delta(C)$.
\item
$(s_\infty)\cap T \not\subset \Delta(C)$.
\end{romanlist}

As shown in the proof of \lemref{lem:reduction-basic}, it
follows from (3), iv) and vii) 
above that $(s^T_\infty) := (s_\infty) \cap T$ is smooth along $D$.
Using v), we can lift $f_\pm \in k(C_\pm)^\times$ to regular 
functions $F_\pm$ on $U = X\setminus (s_\infty)$.
Using an argument identical to that given in the proof of 
\propref{prop:map-diff-surfaces}, we can extend the sections $s_{0,\pm} = s_\infty^N F_\pm$ 
(for some $N \gg 0$) 
to global sections $s_{0,\pm}$ of $\mathcal{M}^{N}$ on $X$ so that their zero loci satisfy:
\begin{listabc}
\item
$(s_{0,\pm})$ and $(s_{0,\pm}) \cap T$ are   integral.
\item
$(s_{0,\pm}) \cap T \cap \Delta(C) \cap D = \emptyset$. 
\item
$(s_{0,\pm}) \cap T \not\subset \Delta(C)$.
\item
$(s_{0,\pm}) \cap T$ are smooth away from $\Delta(C)$.
\end{listabc}

As we argued in the proof of \lemref{lem:reduction-basic}, it follows from
iv), vii), c) and d) that $(s^T_\infty)$ and $(s_{0,\pm}^T) := 
(s_{0,\pm}) \cap T$ are smooth along $D$.

Setting $H_\pm = {s_{0,\pm}}/{s^N_\infty}$ and using the argument of the 
proof of \propref{prop:map-diff-surfaces}, we get 
$H_\pm \in k(X)^{\times}$ and they restrict to rational functions on
$C_+$ as well as $C_-$ which are regular and invertible along $D$.
Moreover, we have
\begin{equation}\label{eqn:map-diff>2-1}
(H_+)_{|E} = (F_+)_{|E} = (f_+)_{|E} \ {=}^{\dagger} \ (f_-)_{|E} 
= (F_-)_{|E} = (H_-)_{|E},
\end{equation}
where ${=}^{\dagger}$ follows from our assumption that
$f \in \sO^{\times}_{C, E}$.
It follows that the restriction of the rational function
${s_{0,-}}/{s_{0,+}} = {H_-}/{H_+}$ to $C_-$ is an 
element of $\sO^{\times}_{C_-, C_- \cap D}$ such that ${({H_-}/{H_+})}_{|E} = 1$.

We now compute
\[
\begin{array}{lll}
\tau^*_X(\divf(f)) & = & \iota^*_+(\divf(f)) -
\iota^*_-(\divf(f)) \\
& = & 
\divf(f_+) - \divf(f_-) \\
& = & 
\left[(s^T_{0,+}) \cdot C_+ - N (s^T_{\infty}) \cdot C_+\right]
- \left[(s^T_{0,-}) \cdot C_- - N (s^T_{\infty}) \cdot C_-\right] \\
& = & \left[(s^T_{0,+}) \cdot C_+ - (s^T_{0,+}) \cdot C_-\right]
-\left[(s^T_{0,-}) \cdot C_- - (s^T_{0,+}) \cdot C_-\right] \\
& & - N \left[(s^T_{\infty}) \cdot C_+ - (s^T_{\infty}) \cdot C_-\right] \\
& = & \left[((s^T_0)_+) \cdot (C_+ - C_-)\right] -
\left[C_- \cdot ((s^T_{0,-}) - (s^T_{0,+}))\right] -
N \left[(s^T_{\infty}) \cdot (C_+ - C_-)\right] \\
& {=}^{\dagger} & \left[(s^T_{0,+}) \cdot ((t^T_+) - (t^T_-))\right] -
\left[C_- \cdot ((s^T_{0,-}) - (s^T_{0,+}))\right] -
N \left[(s^T_{\infty}) \cdot ((t^T_+) - (t^T_-))\right] \\
& = & (s^T_{0,+}) \cdot (\divf({t^T_+}/{t^T_-})) - C_- \cdot 
(\divf({s_{0,-}}/{s_{0,+}})) - N (s^T_{\infty}) \cdot (\divf({t^T_+}/{t^T_-})) \\
& = &  (s^T_{0,+}) \cdot (\divf({t^T_+}/{t^T_-})) - C_- \cdot 
(\divf({H_-}/{H_+})) - N (s^T_{\infty}) \cdot (\divf({t^T_+}/{t^T_-})),
\end{array}
\]
where ${=}^{\dagger}$ follows from ~\eqref{eqn:map-diff>2-0}.

It follows from iv) and b) that ${t^T_+}/{t^T_-}$ restricts to  regular and
invertible functions on $(s^T_{0,+})$ and $(s^T_{\infty})$ along $D$.
Since $t \in H^0(S_X, \sL)$ and $t_{\pm} = \iota^*_{\pm}(t) \in 
H^0(X, \iota^*_{\pm}(\sL))$, it follows that  
$(t_+)_{|D} = \iota^*_D(t) = (t_-)_{|D}$, where
$\iota_D = \iota_+ \circ \iota = \iota_- \circ \iota \colon  D \inj S_X$ denotes
the inclusion map. In particular, ${({t^T_+}/{t^T_-})}_{|E} = 1$.
We have seen before that ${(\frac{H_-}{H_+})}_{| C_-}$ is a
regular and invertible function on $C_-$ along $D$ and
${(\frac{H_-}{H_+})}_{| E} = 1$.

We set $h_1 = {(\frac{t^T_+}{t^T_-})}_{| (s^T_{0,+})}, \ 
h_2 = {(\frac{H_-}{H_+})}_{| C_-}$ and
$h_3 =  {(\frac{t^T_+}{t^T_-})}_{| s^T_{\infty}}$. We conclude now using exactly the same argument as in the proof of Proposition \propref{prop:map-diff-surfaces}.
Let $(s^T_{0,+})^N \to (s^T_{0,+})$, $(C_-)^N \to C_-$ and
$(s^T_\infty)^N \to (s^T_\infty)$ denote the normalization maps.
Let $\nu_1\colon  (s^T_{0,+})^N \to X$, $\nu_2\colon  (C_-)^N  \to X$ and
$\nu_3\colon  (s^T_\infty)^N \to X$ denote the composite maps.
The curves $(s^T_{0,+})$ and $(s^T_\infty)$ are all smooth along
$D$, and $C_-$ is smooth along $D$ by \lemref{lem:reduction-basic}.
It follows that $h_1 \in G((s^T_{0,+})^N, \nu^*_1(D))$,
$h_2 \in G((C_-)^N, \nu^*_2(D))$ and $h_3 \in G((s^T_\infty)^N , \nu^*_3(D))$.
We conclude from ~\eqref{eqn:DefChowMod-0} that $\tau^*_X(\divf(f))$
dies in $\CH_0(X|D)$. In particular, $\tau^*_X$ descends to a
map $\tau^*_X\colon  \CH^{LW}_0(S_X) \to \CH_0(X|D)$. This finishes the proof.
\end{proof}

\subsection{The maps $p_{\pm,*}$}\label{sec:PF-mod}
Let $X$ and $D$ and $k$ be as above. Now that we have constructed the map $\tau_X^*$, we build two maps in the opposite direction.
\[
p_{\pm,*} \colon   \sZ_0(X|D) 
\rightrightarrows \sZ_0(S_X,D)
\] 
by $p_{+,*}([x]) = \iota_{+, *}([x])$ 
(resp.~ by $p_{-,*}([x]) = \iota_{-, *}([x])$) for a closed point 
$x \in X \setminus D$. 
Concretely, the two maps $p_{+,*}$ and $p_{-,*}$ copy a cycle $\alpha$ in one 
of the two components of the double $S_X$ (the $X_+$ or the $X_-$ copy). 
Since $\alpha$ is supported outside $D$ (by definition of $\sR_0(X|D)$), 
the cycles $p_{+,*}(\alpha)$ and  
$p_{-,*}(\alpha)$ give classes in $\CH_0(S_X)$.

\begin{prop}\label{prop:PF-mod-double}
The maps $p_{\pm,*}$ descend to group homomorphisms 
$p_{\pm,*}\colon \CH_0(X|D) \to \CH_0(S_X)$.
\end{prop}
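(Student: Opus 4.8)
The plan is to show that $p_{\pm,*}$ kills the group of rational equivalences $\sR_0(X|D)$, so that it descends to the Chow group with modulus. Since the two cases are symmetric, I will treat $p_{+,*}$. Let $\varphi\colon C \to X$ be a finite morphism from an integral normal curve with $\varphi(C)\not\subset D$, and let $g \in G(C, \varphi^*(D))$, i.e.\ $g \in \Ker(\sO^\times_{C,\varphi^*(D)} \to \sO^\times_{\varphi^*(D)})$. I must show that $p_{+,*}(\divf(g)) = \iota_{+,*}(\varphi_*(\divf(g)))$ lies in $\sR_0(S_X, D)$, equivalently that it vanishes in $\CH_0(S_X) = \CH^{LW}_0(S_X)$ (using \thmref{thm:0-cycle-affine-proj-char0} in the cases where those two groups agree, but in general working directly with the good-curve definition of $\CH_0(S_X)$ from \defref{defn:0-cycle-S-1}).

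The key idea is to produce a \emph{good curve on $S_X$} that realizes this rational equivalence. Consider the composite $\varphi\colon C \to X \xrightarrow{\iota_+} S_X$; its image lies in $X_+$, which meets $X_-$ along $D$. The natural candidate is to take the double $S(C, \varphi^*(D))$ and map it to $S_X$, but since $C$ need not be l.c.i.\ along $\varphi^*(D)$ I instead use the following: form the curve $C' = C \amalg_{\varphi^*(D)} C'_-$, where $C'_-$ is a second copy of $C$ glued to the first along $\varphi^*(D)$, and map it to $S_X$ by sending the two copies to $X_+$ and $X_-$ respectively via $\varphi$. By \lemref{lem:Cycle-mod-1} applied to the affine morphism $C' \to S_X$ this is a well-defined reduced curve, finite over $S_X$, with $C'_{\rm sing} \subseteq \varphi^*(D)$ and $C'$ l.c.i.\ along $D$ (the gluing along a Cartier divisor in each smooth-along-$D$ branch is a regular immersion, exactly as in \lemref{lem:PB-Cartier}); hence $(C', \varphi^*(D))$ is a good curve relative to $(S_X, D)$. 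Now I take the rational function $F$ on $C'$ which equals $g$ on the $+$-copy and equals $1$ on the $-$-copy. Because $g \equiv 1$ on $\varphi^*(D)$, these two functions agree on the conductor $\varphi^*(D)$, so by the exact sequence $0 \to \sO_{C',\varphi^*(D)} \to \sO_{C,\varphi^*(D)}\times\sO_{C,\varphi^*(D)} \to \sO_{\varphi^*(D)} \to 0$ of \eqref{eqn:double-2} (with $B = \sO_{C,\varphi^*(D)}$), the pair $(g,1)$ lifts to an element $F \in \sO^\times_{C', \varphi^*(D)}$. Computing its divisor: on the reduced curve $C'$ the divisor is the sum of the divisors on the two components, so $\divf(F) = \divf_{C_+}(g) + \divf_{C_-}(1) = \divf_{C_+}(g)$, and pushing forward, $\nu_*(\divf(F)) = \iota_{+,*}(\varphi_*(\divf(g))) = p_{+,*}(\divf(g))$. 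This exhibits $p_{+,*}(\divf(g))$ as an element of $\sR_0(S_X, D)$, completing the proof.

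The main obstacle I anticipate is the verification that $\nu\colon C' \to S_X$ is genuinely a \emph{good} curve in the sense of \defref{defn:0-cycle-S-1} — in particular condition (3), that $\nu$ is l.c.i.\ at every point mapping into $D = (S_X)_{\rm sing}$. This requires knowing that $C$ is (or can be arranged to be) l.c.i.\ over $\varphi^*(D)$: a priori a finite morphism from a normal curve to smooth $X$ need not be l.c.i.\ in codimension exceeding $1$. Here I would invoke that $C$ is normal hence regular (being a curve), so $\varphi\colon C \to X$ is a morphism of regular schemes and therefore automatically l.c.i.; then by \propref{prop:double-prp-fine}(5) the base change $C' \to S_X$ (which over the smooth-along-$D$ branches is the double construction $S(C,\varphi^*(D)) \to S(X,D)$, by \propref{prop:double-prp-fine}(3),(4) since $C$ is Cohen--Macaulay and $\varphi^*(D)$ contains no component of $C$) is l.c.i.\ along $D$. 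A secondary point to check carefully is that $C'$ here is exactly $S(C, \varphi^*(D))$ — this follows from \propref{prop:double-prp-fine}(4) because $C$ is Cohen--Macaulay (indeed regular) and $\varphi^*(D)$ contains no irreducible component of $C$ (as $\varphi(C)\not\subset D$). Once these identifications are in place, the divisor computation and the push-forward compatibility \eqref{eqn:cycle-PF} are routine, and the symmetric argument handles $p_{-,*}$.
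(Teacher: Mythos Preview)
Your proposal is correct and follows essentially the same route as the paper: form the double $S_C = S(C,\varphi^*(D))$, observe via \propref{prop:double-prp} and \propref{prop:double-prp-fine} that $(S_C,\varphi^*(D))$ is a good curve relative to $(S_X,D)$ (using that $C$, being a normal curve, is regular so $\varphi$ is automatically l.c.i.), and take the rational function $(g,1)$ on $S_C$, whose regularity along $\varphi^*(D)$ is exactly the modulus condition on $g$. The paper's proof is the same argument in fewer words; your ``obstacle'' paragraph already fills in precisely the verifications the paper leaves implicit.
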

\begin{proof}
Let $\nu\colon {C} \to X$ be a finite map from a normal integral curve 
such that $\nu({C}) \not\subset D$ and let $E=\nu^*(D)$. 
Since both $X$ and ${C}$ are smooth, the map $\nu$ is automatically a local 
complete intersection. Let $f\in k({C})^\times$  be a rational function on 
${C}$ such that $f\in G({C}, E)$ 
(in the notations of \ref{def:DefChowMod1}). 

Since $C$ is smooth, it follows from Proposition \ref{prop:double-prp}
that $S_C := S(C,E)$ is reduced and is smooth away from $E$.
If follows from Proposition \ref{prop:double-prp-fine} that the double map 
$S_C \to S_X$ is l.c.i. along $D$.
In particular, the pair $(S_C, E)$ is a good curve relative to 
$(S_X, D)$. We now consider the rational function $h = (h_+, h_-) := 
(f,1)$ on $S_C$. 
The modulus condition satisfied by $f$ on $C$ guarantees that $h$ is regular 
and invertible along  $E\subset S_C$. It is also easy to see that the divisor 
of $h$ trivializes $p_{+,*}(\nu_*{\rm div}(f))$. 
The argument for $p_{-,*}$ is symmetric.
\end{proof}

To summarize, we have shown the following.
\begin{thm}\label{thm:Main-PB-PF-mod}
Let $X$ be a smooth   connected quasi-projective scheme of dimension $d \ge 1$ over
an infinite perfect field $k$ and let $D\subset X$ be an effective Cartier 
divisor. Then there are maps
\begin{equation}\label{eqn:Main-PB-PF-mod-0}
\tau_X^*\colon \CH_0(S_X) \to \CH_0(X|D) \ \ \ \mbox{and} \ \ \
p_{\pm,*}\colon \CH_0(X|D) \to \CH_0(S_X).
\end{equation}
such that $\tau_X^*(\alpha) = \iota_+^*(\alpha) - \iota_-^*(\alpha)$
and $p_{\pm,*}(\beta) = \iota_{\pm, *}(\beta)$
on cycles.
\begin{proof}This is a combination of Lemma \ref{lem:factor-PN-projection} and of Propositions \ref{prop:map-diff-surfaces},  \ref{prop:map-diff-higher} and \ref{prop:PF-mod-double}.
    \end{proof}
\end{thm}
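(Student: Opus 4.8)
The plan is to assemble Theorem~\ref{thm:Main-PB-PF-mod} from the constructions carried out in this section; no new geometric input is needed beyond what has already been proved here. There are two assertions to establish: that the cycle-level map $\tau^*_X = \iota^*_+ - \iota^*_-$ of \eqref{eqn:tau^*-0} descends to a homomorphism $\CH_0(S_X) \to \CH_0(X|D)$, and that $p_{\pm,*}$, defined on cycles by $\iota_{\pm,*}$ in \S\ref{sec:PF-mod}, descends to homomorphisms $\CH_0(X|D) \to \CH_0(S_X)$. Once this is done the compatibility with the stated formulas on cycles is automatic, since every map involved is induced by the indicated map on the free abelian groups of $0$-cycles.

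First I would handle $\tau^*_X$, in two steps. \textbf{Step 1.} Show that $\tau^*_X$ kills the Levine--Weibel rational equivalences, i.e.\ descends to a map $\CH_0^{LW}(S_X) \to \CH_0(X|D)$. I split this according to $d = \dim X$: for $d = 1$ it is the elementary computation recorded in \eqref{eqn:tau^*-1} (and there $\CH_0(S_X) = \CH_0^{LW}(S_X)$ already, since $S_X$ is reduced and purely $1$-dimensional, by \lemref{lem:0-cycle-com-1} together with \propref{prop:double-prp}); for $d = 2$ it is \propref{prop:map-diff-surfaces}; and for $d \ge 3$ it is \propref{prop:map-diff-higher}. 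The latter two rest on the Bertini-type normalization of the Cartier curves generating $\sR^{LW}_0(S_X,D)$ supplied by \lemref{lem:good-Cartier-curves-dim>2} and \lemref{lem:reduction-basic}. \textbf{Step 2.} Pass from $\CH_0^{LW}(S_X)$ to the modified group $\CH_0(S_X)$ of \defref{defn:0-cycle-S-1}. Since Step~1 holds for \emph{every} smooth connected quasi-projective $Y/k$ with effective Cartier divisor $E$, the hypothesis of \lemref{lem:factor-PN-projection} is satisfied, and that lemma produces the factorization through $\CH_0(S_X)$: one factors a good l.c.i.\ curve $\nu\colon C \to S_X$ as a regular embedding $C \hookrightarrow \P^N_{S_X} = S(\P^N_X, \P^N_D)$ — using the identification of \propref{prop:double-prp} — followed by the projection $\pi$, applies Step~1 on the double $S(\P^N_X, \P^N_D)$ to the embedded Cartier curve, and pushes forward along the well-defined map $\pi_*\colon \CH_0(\P^N_X|\P^N_D) \to \CH_0(X|D)$.

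For $p_{\pm,*}$ I would invoke \propref{prop:PF-mod-double} directly: a finite map $\nu\colon C \to X$ from a normal integral curve with $\nu(C) \not\subset D$ is automatically l.c.i.\ (both $C$ and $X$ being smooth), so $S(C,\nu^*D) \to S(X,D)$ is l.c.i.\ along $D$ by \propref{prop:double-prp-fine} and $(S(C,\nu^*D),\nu^*D)$ is a good curve relative to $(S_X,D)$; for $f \in G(C,\nu^*D)$ the rational function $(f,1)$ on $S(C,\nu^*D)$ is regular and invertible along the conductor, and its divisor trivializes $p_{+,*}(\nu_*\divf(f))$, with the symmetric statement for $p_{-,*}$.

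The genuine difficulty of the theorem has been front-loaded into the Bertini arguments of \lemref{lem:good-Cartier-curves-dim>2} and \lemref{lem:reduction-basic} and into \propref{prop:map-diff-surfaces} and \propref{prop:map-diff-higher}; in the assembly above the only point demanding care is the bookkeeping of Step~2 — making sure Step~1 has been secured in \emph{all} dimensions (so that there is no circularity when \lemref{lem:factor-PN-projection} is applied to $Y = \P^N_X$) and that the projective-bundle identification $\P^N_{S_X} \cong S(\P^N_X,\P^N_D)$ of \propref{prop:double-prp} is in force. I do not expect any further obstacle at this last stage.
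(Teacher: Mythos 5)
Your assembly is exactly the paper's proof: the descent of $\tau^*_X$ is obtained from Propositions \ref{prop:map-diff-surfaces} and \ref{prop:map-diff-higher} (with the curve case of \eqref{eqn:tau^*-1}) combined with Lemma \ref{lem:factor-PN-projection}, and $p_{\pm,*}$ comes from Proposition \ref{prop:PF-mod-double}. Your remark that the Levine--Weibel statement must be secured in all dimensions before invoking Lemma \ref{lem:factor-PN-projection} for $Y=\P^N_X$ is the right bookkeeping point, and nothing further is needed.
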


\section{Reduction to infinite base 
field}\label{sec:Fin-Infinite}
In the previous section, our results were based on the assumption that the
ground field $k$ is infinite. In order to prove our main theorem for
finite fields, we shall use the following descent tricks
for cycles on singular varieties and cycles with modulus.

\begin{prop}\label{prop:PF-fields} 
Let $k \inj k'$ be separable algebraic (possibly infinite)
extension of fields. Let
$X$ be a reduced quasi-projective scheme 
over $k$ and let $X' = X_{k'}:= X \otimes_k k'$.
Let ${\rm pr}_{{k'}/{k}}: X' \to X$ be the
projection map. Then the following hold.
\begin{enumerate}
\item
There exist pull-back maps
${\rm pr}^*_{{k'}/{k}}: \CH^{LW}_0(X) \to \CH^{LW}_0(X')$ and
${\rm pr}^*_{{k'}/{k}}: \CH_0(X) \to \CH_0(X')$ which commute with
the canonical map $\CH^{LW}_0(X) \to \CH_0(X)$.
\item
If there exists a sequence of separable  
field extensions $k = k_0 \subset k_1 \subset
\cdots \subset k'$ with $k' = \cup_i k_i$ such that $X_i := X_{k_i}$
for each $i \ge 1$, then
we have ${\underset{i}\varinjlim} \ \CH_0(X_i) \xrightarrow{\simeq} 
\CH_0(X')$. The same holds for $\CH^{LW}_0(-)$ as well.
\item
If $k \inj k'$ is finite, then there exists a push-forward
$({\rm pr}_{{k'}/{k}})_*: \CH_0(X') \to \CH_0(X)$ such that
$({\rm pr}_{{k'}/{k}})_* \circ {\rm pr}^*_{{k'}/{k}}$ is multiplication by
$[k': k]$.
\end{enumerate}
\end{prop}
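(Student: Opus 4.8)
The plan is to deduce all three parts from the compatibility of the $0$-cycle constructions of \S\ref{section:0-C-S} with flat and finite base change, together with a limit argument for (2). Throughout I take $Y=X_{\rm sing}$ (so that $Y'=X'_{\rm sing}$ and $Y_i=(X_i)_{\rm sing}$, since $k'/k$ is separable); the argument is insensitive to the admissible $Y$, and $X'$, $X_i$ are again reduced equidimensional quasi-projective because these properties persist under (separable algebraic) base change. For part (1), $\pr:=\pr_{k'/k}\colon X'\to X$ is flat, being a base change of $\Spec k'\to\Spec k$, and for a closed point $x\in X\setminus Y$ the algebra $\kappa(x)\otimes_k k'$ is finite-dimensional over $k'$ and reduced (as $k'/k$ is separable), hence a finite product of fields; thus $\pr^{-1}(x)=\{x'_1,\dots,x'_r\}$ is a reduced finite set of closed points of $X'\setminus Y'$ and the flat pull-back of cycles is $\pr^*([x])=\sum_\ell[x'_\ell]$, with no length corrections. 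Base-changing a good curve $\nu\colon(C,Z)\to(X,Y)$ and $f\in\sO^\times_{C,Z}$ produces a good curve $\nu'\colon(C',Z')\to(X',Y')$ with $C'=C\otimes_k k'$, $Z'=Z\otimes_k k'$: indeed $C'$ is reduced and equidimensional of dimension $1$, no component of $C'$ lies in $Z'$ (its image would be a component of $C$ inside $Z$), $\nu'^{-1}(Y')\cup C'_{\rm sing}=(\nu^{-1}(Y)\cup C_{\rm sing})\otimes_k k'\subseteq Z'$, and $\nu'$ is l.c.i.\ over $Y'$ by \lemref{lem:lci-5}(1). With $f'=\pr^*f\in\sO^\times_{C',Z'}$, the compatibility of $\divf(-)$ and of proper push-forward with flat pull-back (\cite[Prop.~1.7]{Fulton}) gives $\pr^*(\nu_*\divf(f))=\nu'_*\divf(f')\in\sR_0(X',Y')$, so $\pr^*$ descends to $\CH_0(X)\to\CH_0(X')$. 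The identical discussion for Cartier curves (regular sequences and the ``no component in $Y$'' condition survive flat base change) produces $\pr^*\colon\CH^{LW}_0(X)\to\CH^{LW}_0(X')$; both pull-backs being induced by the same map on $\sZ_0(X,Y)$, they commute with the canonical surjection of \lemref{lem:lci-LW}.

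For part (2), since filtered colimits are exact it suffices to prove that, along the maps $\pr^*$ of part~(1), one has $\sZ_0(X',Y')=\varinjlim_i\sZ_0(X_i,Y_i)$ and $\sR_0(X',Y')=\varinjlim_i\sR_0(X_i,Y_i)$ (inside $\sZ_0(X',Y')$), and likewise for $\CH^{LW}_0$. Each $\pr^*_{k_j/k_i}$ is injective on $\sZ_0$ because distinct closed points of $X_i$ have disjoint fibres in $X_j$. For surjectivity of $\varinjlim_i\sZ_0(X_i,Y_i)\to\sZ_0(X',Y')$: a closed point $x'$ of $X'\setminus Y'$ has residue field finite over each $k_i$, so it maps to a closed point $\xi_i\in X_i\setminus Y_i$; the finite reduced $k'$-algebra $\kappa(\xi_i)\otimes_{k_i}k'$ has only finitely many idempotents, all defined over some $k_j$, and then $\kappa(\xi_i)\otimes_{k_i}k_j$ already splits into the same number of field factors, one of which is a field $\kappa(\xi_j)$ with $\kappa(\xi_j)\otimes_{k_j}k'=\kappa(x')$; hence $[x']=\pr^*_{k'/k_j}([\xi_j])$. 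The inclusion $\varinjlim_i\sR_0(X_i,Y_i)\subseteq\sR_0(X',Y')$ is immediate from part~(1). Conversely, given a good curve $\nu'\colon(C',Z')\to(X',Y')$ and $f'\in\sO^\times_{C',Z'}$, a standard limit argument spreads the finite morphism $\nu'$, the subscheme $Z'$, the function $f'$ and all the defining conditions of a good curve down to a good curve $\nu_i\colon(C_i,Z_i)\to(X_i,Y_i)$ with $f_i\in\sO^\times_{C_i,Z_i}$ for $i$ large (reducedness of $C_i$ is automatic as $C_i\hookrightarrow C'$; the l.c.i.\ condition over $Y_i$ is recovered by fpqc descent, \lemref{lem:lci-5}(5)); part~(1) gives $\pr^*_{k'/k_i}(\nu_{i,*}\divf(f_i))=\nu'_*\divf(f')$, so $\nu'_*\divf(f')\in\varinjlim_i\sR_0(X_i,Y_i)$. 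The Cartier-curve case is identical. Thus $\CH_0(X')=\sZ_0(X',Y')/\sR_0(X',Y')=\varinjlim_i\CH_0(X_i)$, and similarly for $\CH^{LW}_0$.

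For part (3), write $k'=k[t]/(g)$ with $g$ monic separable (primitive element theorem); then locally $X'=\Spec(A[t]/(g))$ with $g$ a nonzerodivisor in $A[t]$, so $\pr$ factors as the regular closed immersion $\Spec(A[t]/(g))\hookrightarrow\A^1_A$ followed by the smooth morphism $\A^1_A\to\Spec A$, hence $\pr$ is finite and l.c.i.\ at every point. Define $\pr_*\colon\sZ_0(X',Y')\to\sZ_0(X,Y)$ by the usual proper push-forward $[x']\mapsto[\kappa(x'):\kappa(\pr(x'))]\cdot[\pr(x')]$ (well defined since $x'\notin Y'$ forces $\pr(x')\notin Y$). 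For a good curve $\nu'\colon(C',Z')\to(X',Y')$, the composite $\pr\circ\nu'\colon(C',Z')\to(X,Y)$ is again a good curve: it is finite, $C'$ is reduced, $(\pr\circ\nu')^{-1}(Y)\cup C'_{\rm sing}=\nu'^{-1}(Y')\cup C'_{\rm sing}\subseteq Z'$, and $\pr\circ\nu'$ is l.c.i.\ over $Y$ by \lemref{lem:lci-5}(4). Hence for $f'\in\sO^\times_{C',Z'}$ we get $\pr_*(\nu'_*\divf(f'))=(\pr\circ\nu')_*\divf(f')\in\sR_0(X,Y)$, so $\pr_*$ descends to $\CH_0(X')\to\CH_0(X)$. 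Finally, for a closed point $x\in X\setminus Y$, writing $\pr^*([x])=\sum_\ell[x'_\ell]$ as in part~(1),
\[
\pr_*\bigl(\pr^*([x])\bigr)=\Bigl(\sum_\ell[\kappa(x'_\ell):\kappa(x)]\Bigr)\cdot[x]=\dim_{\kappa(x)}\bigl(\kappa(x)\otimes_k k'\bigr)\cdot[x]=[k':k]\cdot[x],
\]
which is the asserted identity.

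The only genuinely technical step is the descent in part~(2): one must check that the full package defining a good (resp.\ Cartier) curve over $X'$, together with its rational function, spreads out to a finite subextension — i.e.\ that reducedness, the l.c.i.\ condition, the incidence conditions on $Z$, and invertibility of $f'$ along $Z'$ are all detected at finite level. This is exactly what the standard formalism of limits of schemes (descent of finitely presented data, plus fpqc descent of l.c.i.\ morphisms via \lemref{lem:lci-5}(5)) delivers; parts~(1) and~(3) are then routine applications of the base-change compatibilities already established in \S\ref{section:0-C-S}.
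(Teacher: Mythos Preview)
Your approach matches the paper's: flat base change of good (resp.\ Cartier) curves for (1), a spreading-out argument for (2), and the l.c.i.\ property of finite separable extensions together with the push-forward machinery of \propref{prop:0-cycle-PF} (which you unfold by hand) for (3). One minor slip in (2): the assertion that a closed point $x'\in X'$ ``has residue field finite over each $k_i$'' is false when $k'/k_i$ is infinite; what is true (and is what you need) is that its image $\xi_i\in X_i$ is closed because $\kappa(x')$ is algebraic over $k_i$, so that $\kappa(\xi_i)$ is finite over $k_i$, after which your idempotent argument goes through unchanged.
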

\begin{proof}
The proofs of (1) and (2) for $\CH_0(-)$ and $\CH^{LW}_0(-)$
are identical. So we consider only $\CH_0(-)$ in the proof below.

We first note that as $k \inj k'$ or $k \inj k_i$ is a
separable algebraic extension, the scheme $X'$ is reduced and
$X'_{\rm sing} = X_{\rm sing} \times_k k'$. The same holds for each $X_i$ as
well.

Let $x \in X \setminus X_{\rm sing}$ be a closed point. Since
${\rm pr}_{{k'}/{k}}$ is flat, it follows from our hypothesis that 
${\rm pr}^*_{{k'}/{k}}([x])$ is a well defined 0-cycle in
$\sZ_0(X', X'_{\rm sing})$. We thus have a pull-back map
${\rm pr}^*_{{k'}/{k}}: \sZ_0(X, X_{\rm sing}) \to \sZ_0(X', X'_{\rm sing})$.

We show that this map preserves rational equivalences.
So let $\nu: (C, Z) \to (X, X_{\rm sing})$ be a good curve and let
$f: C \dashrightarrow \P^1_k$ be a rational function which is regular
in an open neighborhood $Z \subsetneq U \subseteq C$. 
The base change via
$k \inj k'$ gives a diagram of Cartesian squares 
\begin{equation}\label{eqn:PF-fields-0}
\xymatrix@C1pc{
Z_{k'} \ar[d] \ar@{^{(}->}[r] & U_{k'} \ar@{^{(}->}[r] \ar[d] &
C_{k'} \ar[r]^{\nu_{k'}} \ar[d] & X' 
\ar[d]^{{\rm pr}_{{k'}/{k}}} \\
Z \ar@{^{(}->}[r] & U \ar@{^{(}->}[r] & C \ar[r]_{\nu} & X.}
\end{equation} 

Since $k \inj k'$ is a separable field extension, we see that
$C_{k'}$ is reduced and $C_{k'} \setminus Z_{k'} = (C \setminus Z)_{k'}$
is regular. We also have
$\nu^{-1}_{k'}(X'_{\rm sing}) = \nu^{-1}_{k'}(X_{\rm sing} \times_k k')
\subseteq Z_{k'}$. 
Moreover, the flatness of ${\rm pr}_{{k'}/{k}}$ ensures that
the map $\nu_{k'}: C_{k'} \to X'$ is l.c.i. over $X'_{\rm sing}$.
It follows that $\nu_{k'}: (C_{k'}, Z_{k'}) \to (X', X'_{\rm sing})$ is
a good curve. 
Since $f: C \dashrightarrow \P^1_k$ is regular and invertible along $U$,
it follows that $f':= f_{k'}: C_{k'} \dashrightarrow \P^1_{k'}$ is a rational
function which is regular and invertible along $U_{k'}$.

Let $D \subset C \times_k \P^1_k$ denote the closure of the graph of $f$
with the reduced structure. Note that this graph is already reduced over
the regular locus of $f$. Let $p: C \times_k \P^1_k \to \P^1_k$ 
and $q:  C \times_k \P^1_k \to C$ denote
the projection maps so that $p|_U = f|_U$. We thus have a commutative diagram
\begin{equation}\label{eqn:PF-fields-1}
\xymatrix@C1pc{
\P^1_{k'} \ar[d] & D_{k'}  \ar[r]^{q_{k'}} \ar[d] \ar[l]_{p_{k'}} &
C_{k'} \ar[r]^{\nu_{k'}} \ar[d] & X' \ar[d]^{{\rm pr}_{{k'}/{k}}} \\
\P^1_k & D \ar[r]_{q} \ar[l]^{p} & C \ar[r]_{\nu} & X.}
\end{equation} 

We now have
\[
\begin{array}{lll}
\nu_{k'  *}(\divf(f')) & = & \nu_{k' *} \circ 
q_{k'  *}([p^*_{k'}(0)] - [p^{*}_{k'}(\infty)]) \\
& = &   \nu_{k' *} \circ q_{k' *}([p^{*}_{k'} \circ 
{\rm pr}^*_{{k'}/{k}}(0) -
p^*_{k'} \circ {\rm pr}^*_{{k'}/{k}}(\infty)]) \\
& = &   \nu_{k' \ *} \circ q_{k' *}([{\rm pr}^*_{{k'}/{k}} \circ p^*(0) -
{\rm pr}^*_{{k'}/{k}} \circ p^*(\infty)]) \\
& {=}^{\dagger} & 
\nu_{k' *} \circ {\rm pr}^*_{{k'}/{k}} \circ q_*([p^*(0) - p^*(\infty)]) \\
& = & \nu_{k' \ *} \circ {\rm pr}^*_{{k'}/{k}}(\divf(f)) \\
& {=}^{\dagger \dagger} & {\rm pr}^*_{{k'}/{k}} \circ \nu_*((\divf(f)),
\end{array}
\]
where ${=}^{\dagger}$ and ${=}^{\dagger \dagger}$ follow from
\cite[Proposition~1.7]{Fulton} because all squares in ~\eqref{eqn:PF-fields-1}
are Cartesian, vertical maps are all flat and $q$ as well as $\nu$ is finite.
We conclude that ${\rm pr}^*_{{k'}/{k}} \circ \nu_*((\divf(f)) \in 
\sR_0(X', X'_{\rm sing})$ and this proves (1).

We now consider (2).
It is clear that the map ${\underset{i}\varinjlim} \ \CH_0(X_i) \to
\CH_0(X')$ is surjective. To show injectivity, suppose there is some $i \ge 0$
and $\alpha \in \sZ_0(X_i, (X_i)_{\rm sing})$ such that
${\rm pr}^*_{{k'}/{k_i}}(\alpha) \in \sR_0(X', X'_{\rm sing})$.
We can replace $k$ by $k_i$ and assume $i = 0$.

Let $\nu^j: (C^j, Z^j) \to (X', X'_{\rm sing})$ be good curves
and let $f^j: C^j \dashrightarrow \P^1_{k'}$ be a rational function
which is regular and invertible in a neighborhood $U^j$ of $Z^j$ for
$j =1, \cdots , r$ such that ${\rm pr}^*_{{k'}/{k}}(\alpha) = 
\stackrel{r}{\underset{j =1}\sum} \nu^j_*(\divf(f^j))$.

Since each $C^j$ has a factorization $C^j \inj \P^{N_j}_{X'} \to X'$,
we can find some $i \gg 0$ and curves $W^j$ over $k_i$, 
a closed subscheme $T^j \subsetneq W^j$, an open neighborhood
$V^j \subseteq W^j$ of $T^j$ and invertible regular function 
$g^j:V^j \to \P^1_{k_i}$ such that $(C^j, Z^j) \simeq (W^j, T^j)_{k'},
\ U^j = V^j_{k'}$ and $f^j = g^j_{k'}$. Furthermore, we can find a 
finite map $\delta^j:(W^j, T^j) \to (X_{k_i}, (X_{k_i})_{\rm sing})$
such that $(C^j, Z^j) \simeq (W^j, T^j)_{k'}$ and $\nu^j = \delta^j_{k'}$
for each $j = 1, \cdots , r$. Since $k_i \inj k'$ is separable,
it also follows that $W^j \setminus T^j$ is regular. 

Since the map $X' \to X_i$ is faithfully flat,
it follows from \lemref{lem:lci-5} and our hypothesis on $(X_i)_{\rm sing}$
that each $W^j \to
X_i$ is l.c.i. along $(X_i)_{\rm sing}$ and $(\delta^j)^{-1}((X_i)_{\rm sing})
\subseteq T^j$.
It follows that each $(W^j, T^j)$ is a good curve relative to 
$(X_{k_i}, (X_{k_i})_{\rm sing})$.
Moreover, we have shown in the proof of (1) (with $k$ replaced by $k_i$)
in this situation that 
$\nu^j_*(\divf(f^j)) = {\rm pr}^*_{{k'}/{k_i}}(\delta^j_*(\divf(g^j)))$. 

We now set $\alpha_i = {\rm pr}^*_{{k_i}/{k}}(\alpha)$
and let $\beta = \alpha_i - \stackrel{r}{\underset{j =1}\sum}
\delta^j_*(\divf(g^j)) \in \sZ_0(X_i, (X_i)_{\rm sing})$.
It then follows that 
${\rm pr}^*_{{k'}/{k_i}}(\beta) = {\rm pr}^*_{{k'}/{k_i}}(\alpha_i)
- \stackrel{r}{\underset{j =1}\sum} 
\nu^j_*(\divf(f^j)) 
= {\rm pr}^*_{{k'}/{k}}(\alpha) - \stackrel{r}{\underset{j =1}\sum} 
\nu^j_*(\divf(f^j)) = 0$ in
$\sZ_0(X', X'_{\rm sing})$.
Since the map ${\rm pr}^*_{{k'}/{k_i}}: \sZ_0(X_i, (X_i)_{\rm sing}) \to
\sZ_0(X', X'_{\rm sing})$ of free abelian groups is clearly injective,
we get $\beta = 0$, which means that $\alpha_i \in
\sR_0(X_i, (X_i)_{\rm sing})$. This proves (2).

If $k \inj k'$ is a finite extension, then $\Spec(k') \to \Spec(k)$ is
an l.c.i. morphism. 
In particular, it follows from \lemref{lem:lci-5} that
${\rm pr}_{{k'}/{k}}: X' \to X$ is finite, flat l.c.i. morphism.
The push-forward map $({\rm pr}_{{k'}/{k}})_*: \CH_0(X') \to \CH_0(X)$ now
follows from our hypothesis on the singular locus of $X'$ and
\propref{prop:0-cycle-PF}. The property
$({\rm pr}_{{k'}/{k}})_* \circ {\rm pr}^*_{{k'}/{k}} = [k': k]$ is obvious.
This finishes the proof.
\end{proof}

\begin{prop}\label{prop:PF-fields-mod} 
Let $k \inj k'$ be a separable algebraic (possibly infinite)
extension of fields. Let
$X$ be a smooth quasi-projective scheme 
over $k$ with an effective Cartier divisor $D$.
Let $X' = X_{k'}$ and $D' = D_{k'}$ denote the base change of $X$ and
$D$, respectively. Let ${\rm pr}_{{k'}/{k}}: X' \to X$ be the
projection map. Then the following hold.
\begin{enumerate}
\item
There exists a pull-back 
${\rm pr}^*_{{k'}/{k}}: \CH_0(X|D) \to \CH_0(X'|D')$.
\item
If there exists a sequence of separable 
field extensions $k = k_0 \subset k_1 \subset
\cdots \subset k'$ with $k' = \cup_i k_i$, then
we have ${\underset{i}\varinjlim} \ \CH_0(X_{k_i}|D_{k_i}) \xrightarrow{\simeq} 
\CH_0(X'|D')$. 
\item
If $k \inj k'$ is finite, then there exists a push-forward
${\rm pr}_{{k'}/{k} \ *}: \CH_0(X'|D') \to \CH_0(X|D)$ such that
$({\rm pr}_{{k'}/{k}})_* \circ {\rm pr}^*_{{k'}/{k}}$ is multiplication by
$[k': k]$.
\end{enumerate}
\end{prop}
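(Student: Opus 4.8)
The plan is to imitate the proof of \propref{prop:PF-fields} almost verbatim, replacing the Chow group of $0$-cycles on singular schemes by the Chow group with modulus of Definition~\ref{def:DefChowMod-Definition}. For (1), since ${\rm pr}_{{k'}/{k}}\colon X'\to X$ is flat with ${\rm pr}_{{k'}/{k}}^{-1}(D) = D'$, pulling back closed points defines a homomorphism ${\rm pr}^*_{{k'}/{k}}\colon \cZ_0(X\setminus D)\to \cZ_0(X'\setminus D')$, and I must check it kills modulus relations. Let $\varphi_C\colon C\to X$ be a finite morphism from an integral normal curve with $\varphi_C(C)\not\subset D$ and let $f\in G(C,\varphi_C^*(D))$. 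Since $k'/k$ is separable algebraic and $C$ is a regular curve, $C_{k'} := C\otimes_k k'$ is regular, hence a finite disjoint union of integral normal curves over $k'$; each maps finitely to $X'$, and since $\varphi_{C,k'}^*(D') = (\varphi_C^*(D))_{k'}$ is a proper closed subscheme of $C_{k'}$, no component of $C_{k'}$ is carried into $D'$. The point needing care is that $f' := f\otimes_k k'$ still satisfies the modulus condition: writing $E = \varphi_C^*(D)$ and $E' = E_{k'}$, the semilocal ring $\sO_{C_{k'},E'}$ and the ideal $\sI_{E'}$ of $E'$ are the flat base changes of $\sO_{C,E}$ and $\sI_E$, so $f'$ is a unit and $f'-1\in \sI_{E'}\sO_{C_{k'},E'}$, i.e.\ $f'\in G(C_{k'},E')$. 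Since flat pull-back commutes with proper push-forward of cycles (\cite[Proposition~1.7]{Fulton}), $\divf(f') = {\rm pr}^*_{{k'}/{k}}(\divf(f))$; decomposing $C_{k'}$ into its integral components exhibits ${\rm pr}^*_{{k'}/{k}}(\divf(f))$ as a sum of modulus relations on $X'$, proving (1).

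For (2), I would run the spreading-out argument of the proof of \propref{prop:PF-fields}(2). Surjectivity of $\varinjlim_i \CH_0(X_{k_i}|D_{k_i})\to \CH_0(X'|D')$ follows from a standard limit argument: every closed point of $X'\setminus D'$ is the reduced base change of a closed point of some $X_{k_i}\setminus D_{k_i}$, and a modulus relation on $X'$ --- given by finitely many finite morphisms $\varphi_{C^j}\colon C^j\to X'$ (factored as $C^j\inj \P^{N_j}_{X'}\to X'$) together with finitely many rational functions --- descends to a finite level $k_i$, where separability of $k_i/k$ keeps the descended curves integral and normal and the descended functions in the modulus groups. For injectivity, if $\alpha\in \cZ_0(X_{k_i}\setminus D_{k_i})$ satisfies ${\rm pr}^*_{{k'}/{k_i}}(\alpha)\in \sR_0(X'|D')$, spreading out the witnessing data to a finite level $k_j\supseteq k_i$ and using the identity $\divf(f^j) = {\rm pr}^*_{{k'}/{k_j}}(\divf(g^j))$ from step (1) produces $\beta\in \cZ_0(X_{k_j}\setminus D_{k_j})$ with ${\rm pr}^*_{{k'}/{k_j}}(\beta) = 0$; since distinct closed points of $X_{k_j}$ have disjoint fibres over $k'$, the map ${\rm pr}^*_{{k'}/{k_j}}$ is injective on free abelian groups of $0$-cycles, so $\beta = 0$ and hence ${\rm pr}^*_{{k_j}/{k_i}}(\alpha)\in \sR_0(X_{k_j}|D_{k_j})$.

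For (3), when $k\inj k'$ is finite separable, ${\rm pr}_{{k'}/{k}}\colon X'\to X$ is finite --- hence proper --- and flat, with ${\rm pr}_{{k'}/{k}}^*(D) = D'$. The proper push-forward for cycles with modulus recalled after Definition~\ref{def:DefChowMod-Definition} (\cite[Lemma~2.7]{BS}, \cite[Proposition~2.10]{KPv}), applied to $f = {\rm pr}_{{k'}/{k}}$ with $f^*(D) = D'\subseteq D'$, yields ${\rm pr}_{{k'}/{k}\,*}\colon \CH_0(X'|D')\to \CH_0(X|D)$, and evaluating ${\rm pr}_{{k'}/{k}\,*}\circ {\rm pr}^*_{{k'}/{k}}$ on a closed point $x\in X\setminus D$ gives (using that $x\times_k k'$ is reduced by separability) $\bigl(\sum_{y\mapsto x}[k(y):k(x)]\bigr)[x] = \dim_{k(x)}\bigl(k(x)\otimes_k k'\bigr)\,[x] = [k':k]\,[x]$. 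The only step carrying genuine content beyond bookkeeping is the stability of the modulus condition under base change in (1), which is exactly where separability of $k'/k$ is used (keeping the test curves integral and normal and identifying $\varphi_{C,k'}^*(D')$ with $(\varphi_C^*D)_{k'}$); everything else is the limit argument already carried out for \propref{prop:PF-fields}.
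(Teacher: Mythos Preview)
Your proof is correct and parallels the paper's closely, but it differs in one organizational choice: you work directly with the Kerz--Saito presentation of rational equivalence (finite maps $\varphi_C\colon C\to X$ from integral normal curves together with $f\in G(C,\varphi_C^*D)$), whereas the paper switches to the equivalent description of $\sR_0(X|D)$ via integral curves $C\hookrightarrow X\times_k\P^1_k$ satisfying a modulus condition on the normalization (as in \cite{BS} or \cite{KPv}). In that presentation, base change of such a curve by the smooth map ${\rm pr}_{k'/k}$ immediately yields a curve of the same type in $X'\times_{k'}\P^1_{k'}$, and descent of the modulus condition in part~(2) is dispatched by a one-line citation to \cite[Lemma~2.2]{KPv}.

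Your route trades that citation for two direct verifications: that $C_{k'}$ stays a disjoint union of integral normal curves (which follows from preservation of normality under separable base change) and that the congruence $f\equiv 1\pmod{\sI_E}$ survives tensoring by $k'$ and descends to a finite level (both of which hold because the semilocal ring and its ideal base-change flatly, and the condition can be tested after faithfully flat extension). For the spreading-out step, note that integrality and normality of the descended curve $W^j$ are not consequences of separability of $k_i/k$ per se; rather, $W^j_{k'}\to W^j$ is faithfully flat with geometrically regular fibers, so regularity of $W^j_{k'}$ descends to $W^j$, and connectedness plus regularity give integrality. Part~(3) is identical in both proofs. The two approaches are of the same difficulty; yours has the virtue of using only the definition already stated in the paper, while the paper's keeps the modulus verification entirely external.
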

\begin{proof}
Let $x \in X \setminus D$ be a closed point. Since
${\rm pr}_{{k'}/{k}}$ is smooth, it follows from our hypothesis that 
${\rm pr}^*_{{k'}/{k}}([x])$ is a well defined 0-cycle in
$\sZ_0(X'|D')$. We thus have a pull-back map
${\rm pr}^*_{{k'}/{k}}: \sZ_0(X|D) \to \sZ_0(X'|D')$.
To show that this map preserves rational equivalence, we shall use a 
presentation of $\sR_0(X|D)$ which is different from the one given
in \S~\ref{def:DefChowMod1} (see \cite{BS} or \cite{KPv}).  

Let $C \inj X \times_k \P^1_k$ be an integral curve
satisfying the following properties.
\begin{enumerate}
\item
$C \cap (D \times_k \P^1_k)$ is finite.
\item
$C \cap (D\times_k \{0, \infty\}) = \emptyset$.
\item
The Weil divisor $\nu^*(X \times \{1\}) - \nu^*(D \times \P^1_k)$ is effective,
where $\nu: C^N \to C \inj X \times_k \P^1_k$ is the composite finite
map.
\end{enumerate}

The group of rational equivalences $\sR_0(X|D)$ coincides with
the subgroup of $\sZ_0(X|D)$ generated by $[C_{0}] - [C_\infty]$,
where $C$ runs over all curves as above.

Let $C \in X \times_k \P^1_k$ be any such curve.
It follows again from the smoothness of ${\rm pr}_{{k'}/{k}}$
that $C' = {\rm pr}^*_{{k'}/{k}}(C) = C_{k'} \inj 
(X \times_k \P^1_k)_{k'} = X' \times_{k'} \P^1_{k'}$
satisfies conditions (1)-(3) above. In particular, 
$[C'_0] - [C'_\infty]$ dies in $\CH_0(X'|D')$.
However, the flatness of ${\rm pr}_{{k'}/{k}}$ again shows that
$[C'_\infty] = {\rm pr}^*_{{k'}/{k}}([C_\infty])$ and
$[C'_0] = {\rm pr}^*_{{k'}/{k}}([C_0])$ so that
${\rm pr}^*_{{k'}/{k}}([C_{0}] - [C_\infty])$ dies in
$\CH_0(X'|D')$. This proves (1).

It is clear that the map ${\underset{i}\varinjlim} \ \CH_0(X_i|D_i) \to
\CH_0(X'|D')$ is surjective. 
To show injectivity, suppose there is some $i \ge 0$
and $\alpha \in \sZ_0(X_i|D_i)$ such that
${\rm pr}^*_{{k'}/{k_i}}(\alpha) \in \sR_0(X'|D')$.
We can replace $k$ by $k_i$ and assume $i = 0$.

Let $C^j \inj X' \times_{k'} \P^1_{k'} = (X \times_k \P^1_k)_{k'}$ for
$j = 1, \cdots , r$ be a collection of curves as in the proof of (1)
so that ${\rm pr}^*_{{k'}/{k}}(\alpha) =
(\stackrel{r}{\underset{j =1}\sum} [C^j_ {0}]) -
(\stackrel{r}{\underset{j =1}\sum} [C^j_ {\infty}])$.
Let $\nu^j: C^{j, N} \to X' \times_{k'} \P^1_{k'}$ denote the maps from
the normalizations of the above curves.

We can then find some $i \gg 0$ and integral curves
$W^j \inj X_i \times_{k_i} \P^1_{k_i}$
such that $C^j = W^j \times_{k_i} k'$ for each $j =1, \cdots , r$.
In particular, we have $C^j_0 = 
{\rm pr}^*_{{k'}/{k_i}}(W^j_0)$ and
$C^j_{\infty} = {\rm pr}^*_{{k'}/{k_i}}(W^j_\infty)$ for $j = 1, \cdots , r$.
Since ${\rm pr}_{{k'}/{k_i}}$ is smooth, it follows that 
$C^{j,N} = W^{j,N}_{k'}$ for each $j$.
Moreover, it follows from \cite[Lemma 2.2]{KPv} that condition (3) above
holds on each $W^{j,N}$.
It follows that each $W^j$ defines a rational equivalence for 
0-cycles with modulus $D_i$ on $X_i$.

We now set $\alpha_i = {\rm pr}^*_{{k_i}/{k}}(\alpha)$
and let $\beta = \alpha_i - (\stackrel{r}{\underset{j =1}\sum} 
([W^j_0] - [W^j_\infty])) \in \sZ_0(X_i|D_i)$.
It then follows that 
${\rm pr}^*_{{k'}/{k_i}}(\beta) = {\rm pr}^*_{{k'}/{k_i}}(\alpha_i)
- \stackrel{r}{\underset{j =1}\sum} 
([C^j_0] - [C^j_\infty]) 
= {\rm pr}^*_{{k'}/{k}}(\alpha) - \stackrel{r}{\underset{j =1}\sum} 
([C^j_0] - [C^j_\infty]) = 0$ in
$\sZ_0(X'|D')$.
Since the map ${\rm pr}^*_{{k'}/{k_i}}: \sZ_0(X_i|D_i) \to
\sZ_0(X'|D')$ of free abelian groups is clearly injective,
we get $\beta = 0$, which means that $\alpha_i \in
\sR_0(X_i|D_i)$. This proves (2). The existence of push-forward is already 
known as remarked above and the formula $({\rm pr}_{{k'}/{k}})_* \circ 
{\rm pr}^*_{{k'}/{k}} = [k': k]$ is obvious from the definitions.
\end{proof}

\section{The main results on the Chow groups of 0-cycles}
\label{sec:Main-0-cycles}
In this section, we apply the technical results of the previous section
to prove our main theorem on the Chow groups of 0-cycles with modulus
and the Chow group of 0-cycles on singular varieties.
We shall also derive our first set of applications.
We shall derive 
a new presentation of the Chow group of 0-cycles with modulus and
prove our main comparison theorem for the two Chow groups of 0-cycles
for the double. 

\begin{thm}\label{thm:Main-PB-PF-gen}
Let $k$ be a field and let $X$ be a smooth quasi-projective scheme of 
dimension $d \ge 1$ over $k$ with an effective Cartier divisor
$D \subset X$.

Then, there are maps
\begin{equation}\label{eqn:Main-PB-PF-mod-0*}
\Delta^*\colon \CH_0(X) \to  \CH_0(S_X); \ \ \ \
\iota^*_{\pm}\colon \CH_0(S_X) \to \CH_0(X) \ \ \mbox{and}
\end{equation}
\[
p_{\pm,*}\colon \CH_0(X|D) \to \CH_0(S_X)\]
such that $\iota^*_{\pm} \circ \Delta^* = {\rm Id}$ on $\CH_0(X)$.

If $k$ is perfect, then there is also a map
\begin{equation}\label{eqn:Main-PB-PF-mod-1*}
\tau_X^*\colon \CH_0(S_X) \to \CH_0(X|D)
\end{equation}
such that $\tau^*_X \circ p_{\pm,*} = \pm \ {\rm Id}$ on $\CH_0(X|D)$.
Moreover, the sequences
\begin{equation}\label{eq:Main-exact-seuqence}
0 \to \CH_0(X|D) \xrightarrow{p_{+,*}} \CH_0(S_X) \xrightarrow{\iota_-^*} 
\CH_0(X) \to 0
\end{equation}
and
\begin{equation}\label{eq:Main-exact-seuqence-0}
0 \to \CH_0(X) \xrightarrow{\Delta^*} \CH_0(S_X) 
\xrightarrow{\tau^*_X} \CH_0(X|D) \to 0
\end{equation}
are split exact.
\end{thm}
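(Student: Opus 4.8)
The plan is to assemble the maps constructed in the previous sections, construct $\tau_X^*$ over a finite base field by a descent argument, and then deduce all the exactness statements by elementary homological algebra on cycle representatives.

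Over an arbitrary field $k$, the maps $\Delta^*\colon\CH_0(X)\to\CH_0(S_X)$, $\iota^*_{\pm}\colon\CH_0(S_X)\to\CH_0(X)$, and the relations $\iota^*_{\pm}\circ\Delta^*=\mathrm{Id}$ are furnished by \thmref{thm:PB-main} and \propref{prop:PB-pm}, while $p_{\pm,*}\colon\CH_0(X|D)\to\CH_0(S_X)$ come from \propref{prop:PF-mod-double}, whose proof uses neither infiniteness nor perfectness of $k$. When $k$ is infinite and perfect, $\tau_X^*$ is exactly \thmref{thm:Main-PB-PF-mod}. To treat a finite (hence perfect) field $k$, I would define $\tau_X^*$ at the level of cycles by $z\mapsto\iota^*_+(z)-\iota^*_-(z)\in\sZ_0(X\setminus D)$ and show it annihilates $\sR_0(S_X)$. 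Fix distinct primes $\ell_1\ne\ell_2$ and set $k^{(i)}=\bigcup_n k_{\ell_i^n}$, an infinite perfect algebraic extension of $k$. For a generator $z=\nu_*(\divf(f))$ of $\sR_0(S_X)$, \propref{prop:PF-fields}(1) gives $\mathrm{pr}^*_{k^{(i)}/k}(z)\in\sR_0(S_{X_{k^{(i)}}})$; since the double commutes with flat base change (\propref{prop:double-prp}(7)), the $\pm$-decomposition is compatible with $\mathrm{pr}^*$, so by \thmref{thm:Main-PB-PF-mod} over $k^{(i)}$ the cycle $\mathrm{pr}^*_{k^{(i)}/k}(\iota^*_+(z)-\iota^*_-(z))$ vanishes in $\CH_0(X_{k^{(i)}}|D_{k^{(i)}})$. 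By \propref{prop:PF-fields-mod}(2) this group is $\varinjlim_n\CH_0(X_{k_{\ell_i^n}}|D_{k_{\ell_i^n}})$, so $\mathrm{pr}^*_{k_{\ell_i^{n_i}}/k}(\tau_X^*(z))=0$ for some $n_i$; pushing forward via \propref{prop:PF-fields-mod}(3) and using $\mathrm{pr}_*\mathrm{pr}^*=[k_{\ell_i^{n_i}}:k]$ gives $\ell_i^{n_i}\cdot\tau_X^*(z)=0$ in $\CH_0(X|D)$. As $\ell_1^{n_1}$ and $\ell_2^{n_2}$ are coprime, $\tau_X^*(z)=0$, so $\tau_X^*$ descends to $\CH_0(S_X)\to\CH_0(X|D)$ over any perfect $k$.

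With every map in place, I would record the identities on cycle representatives, where $\sZ_0(S_X,D)=\sZ_0(X\setminus D)\oplus\sZ_0(X\setminus D)$, $\Delta^*=\iota_{+,*}+\iota_{-,*}$, $p_{\pm,*}=\iota_{\pm,*}$, the maps $\iota^*_{\pm}$ are the two projections, and $\tau_X^*=\iota^*_+-\iota^*_-$. From $\iota^*_+\iota_{+,*}=\iota^*_-\iota_{-,*}=\mathrm{Id}$ and $\iota^*_+\iota_{-,*}=\iota^*_-\iota_{+,*}=0$ one reads off $\tau_X^*\circ p_{\pm,*}=\pm\,\mathrm{Id}$, $\tau_X^*\circ\Delta^*=0$, $\iota^*_-\circ p_{+,*}=0$, and the crucial relation $p_{+,*}\circ\tau_X^*+\Delta^*\circ\iota^*_-=\iota_{+,*}\iota^*_++\iota_{-,*}\iota^*_-=\mathrm{Id}_{\CH_0(S_X)}$; all of these pass to $\CH_0$. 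The last relation shows $\Ker(\iota^*_-)\subseteq\im(p_{+,*})$ (if $\iota^*_-(\alpha)=0$ then $\alpha=p_{+,*}(\tau_X^*(\alpha))$) and $\Ker(\tau_X^*)\subseteq\im(\Delta^*)$ (if $\tau_X^*(\alpha)=0$ then $\alpha=\Delta^*(\iota^*_-(\alpha))$), while the reverse inclusions follow from $\iota^*_-\circ p_{+,*}=0$ and $\tau_X^*\circ\Delta^*=0$. Injectivity of $p_{+,*}$ and of $\Delta^*$ is immediate from their left inverses $\tau_X^*$ and $\iota^*_-$, and surjectivity of $\iota^*_-$ and of $\tau_X^*$ from their right inverses $\Delta^*$ and $p_{+,*}$. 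Hence \eqref{eq:Main-exact-seuqence} and \eqref{eq:Main-exact-seuqence-0} are split exact, with splittings $\tau_X^*$ and $\iota^*_-$ respectively.

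The homological bookkeeping in the last paragraph is entirely routine, and the genuinely substantive input — that $\tau_X^*$ kills rational equivalence on the double — has already been established over infinite perfect fields in \S\ref{section:SSM}. The one delicate point of the present argument is the finite-field reduction for $\tau_X^*$: one must verify that $\iota^*_{\pm}$ (hence $\tau_X^*$ at the cycle level) is compatible with the flat pullback $\mathrm{pr}^*$ along a field extension, so that \thmref{thm:Main-PB-PF-mod} over $k^{(1)}$ and $k^{(2)}$ can be fed into the two-prime trick; this compatibility follows from \propref{prop:double-prp}(7) together with the fact that the components $X_{\pm}$ of $S(X,D)$ base-change to the components of $S(X_{k'},D_{k'})$.
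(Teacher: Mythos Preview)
Your proposal is correct and follows essentially the same route as the paper: the finite-field case of $\tau_X^*$ is handled by the same two-prime descent via pro-$\ell$ extensions and Propositions~\ref{prop:PF-fields}--\ref{prop:PF-fields-mod}, and the exactness is the same elementary algebra on the cycle-level decomposition $\sZ_0(S_X,D)=\sZ_0(X\setminus D)\oplus\sZ_0(X\setminus D)$. The only cosmetic difference is that you package the middle-term exactness via the single identity $p_{+,*}\tau_X^*+\Delta^*\iota_-^*=\mathrm{Id}$, whereas the paper proves \eqref{eq:Main-exact-seuqence} first by writing $\gamma=p_{+,*}(\alpha-\beta)+\Delta^*(\beta)$ and then feeds that splitting into the proof of \eqref{eq:Main-exact-seuqence-0}; the two arguments are equivalent.
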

\begin{proof}
We can clearly assume that $X$ is connected. All the maps (except ~\eqref{eqn:Main-PB-PF-mod-1*}) involved 
in the theorem are well defined thanks to the results of 
the previous sections.
On the level of cycles, we clearly have 
$\iota^*_{\pm} \circ \Delta^* = {\rm Id}$ on $\sZ_0(X,D)$ and
$\tau^*_X \circ p_{\pm,*} = \pm \ {\rm Id}$ on $\sZ_0(X|D)$.
We are only thus left with proving ~\eqref{eqn:Main-PB-PF-mod-1*} when 
$k$ is finite and the exactness of the two sequences in general.

Note that the maps $\tau^*_X: \sZ_0(S_X, D) \to \sZ_0(X|D) \surj
\CH_0(X|D)$ are defined over any field and for any field extension
$k \inj k'$, the diagram
\begin{equation}\label{eqn:Com-sing-mod}
\xymatrix@C1pc{
\sZ_0(S_X, D) \ar[r]^{\iota^*_{\pm}} \ar[d]_{{\rm pr}^*_{{k'}/{k}}} &
\sZ_0(X|D) \ar[d]^{{\rm pr}^*_{{k'}/{k}}} \\
\sZ_0(S_{X'}, D') \ar[r]_{\iota^*_{\pm}} & \sZ_0(X'|D')}
\end{equation}
commutes, where $X' = X_{k'}$. In particular, we have $\tau^*_{X'} \circ {\rm pr}^*_{{k'}/{k}}
=  {\rm pr}^*_{{k'}/{k}} \circ \tau^*_X$.

We have to show that the composite map 
$\tau^*_X: \sZ_0(S_X, D) \to \CH_0(X|D)$ kills the
rational equivalences, assuming $k$ is finite.  
Let us therefore assume that $\nu: (C, Z) \to (S_X, D)$ is a good curve
and that $f: C \dashrightarrow \P^1_k$ is a rational function which is
regular and invertible on a neighborhood of $Z$.
Let $\alpha = \nu_*(\divf(f))$.
We need to show that $\tau^*_X(\alpha) = 0$ in $\CH_0(X|D)$.

We choose two distinct primes $\ell_1$ and $\ell_2$ different from 
${\rm char}(k)$ and let $k_i$ denote the pro-$\ell_i$ extension of $k$ for 
$i = 1,2$.
Since each $k_i$ is a limit of finite separable extensions of the perfect
field $k$, the hypotheses of Propositions~\ref{prop:PF-fields} 
and ~\ref{prop:PF-fields-mod} are satisfied.

It follows from Proposition~\ref{prop:PF-fields} that
${\rm pr}^*_{{k_i}/k}(\alpha) \in \sR_0(S_{X_{k_i}}, D_{k_i})$ for $i = 1,2$.
It follows from the case of infinite perfect fields 
and ~\eqref{eqn:Com-sing-mod}
that $\tau^*_{X_{k_i}}(\alpha_{k_i}) = 0$ for $i =1,2$.
Equivalently, ${\rm pr}^*_{{k_i}/k} \circ \tau^*_X(\alpha) = 0$
for $i =1,2$.
Using \propref{prop:PF-fields-mod},
we can find two finite extensions $k_1'$ and $k_2'$ of $k$ of relatively prime 
degrees such that 
${\rm pr}^*_{{k_i'}/k} \circ \tau^*_X(\alpha) = 0$ for $i =1,2$. 
We conclude by applying \propref{prop:PF-fields-mod} once again
$\tau^*_X(\alpha) = 0$ in $\CH_0(X|D)$. 
This proves ~\eqref{eqn:Main-PB-PF-mod-1*}.

Now we prove the split exactness of the two sequences in the theorem.
Since a cycle $\sZ_0(X|D)$ does not meet $D$, it is clear that
$\iota_-^* \circ p_{+,*} = 0$. Similarly, $\tau^*_X \circ \Delta^* = 0$
by definitions of these maps and \lemref{lem:Sing-ML}.
Using the first part of the theorem, we only have to show that both
sequences are exact at their middle terms.

Let $\gamma\in  \CH_0(S_X)$. We can write $\gamma = \alpha_+ + \beta_-$, 
where $\alpha_+$ is a cycle supported on the component $\iota_+(X)$ and 
$\beta_-$ is a cycle supported on the component $\iota_-(X)$. 
We see then that $\gamma = p_{+,*}(\alpha - \beta) + \Delta^*(\beta)$,
where $\alpha$ and $\beta$ are the cycles $\alpha_+$ and $\beta_-$ seen in 
$X \setminus D$ (identifying the two copies of $X$), so that every element in 
the kernel of $\iota_-^*$ is clearly in the image of $p_{+,*}$.
We have therefore shown that the sequence \eqref{eq:Main-exact-seuqence} is 
split exact.

Next, suppose $\alpha \in \CH_0(S_X)$ is such that $\tau^*_X(\alpha) = 0$.
Since ~\eqref{eq:Main-exact-seuqence} is split exact, as we just showed,
we can write $\alpha =  p_{+,*}(\alpha_1) + \Delta^*(\alpha_2)$.
We then have 
\[
\begin{array}{lll}
\tau^*_X (\alpha) & =  & 0 \\
& \Leftrightarrow & \tau^*_X \circ p_{+,*}(\alpha_1) + \tau^*_X \circ
\Delta^*(\alpha_2) = 0 \\
& \Leftrightarrow & \alpha_1 + 0 = 0 \\
& \Leftrightarrow & \alpha = \Delta^*(\alpha_2).
\end{array}
\]
We have therefore shown that the sequence \eqref{eq:Main-exact-seuqence-0} is 
split exact. 
\end{proof}

\subsection{A refinement of the definition of 0-cycles with modulus}
As a consequence of \thmref{thm:Main-PB-PF-gen}, we now give the following
simplified presentation of the Chow group of 0-cycles with modulus
when the ground field is infinite and perfect.

Let $X$ be a smooth quasi-projective scheme of dimension $d \ge 1$ over
an infinite perfect field $k$ and let $D\subset X$ be an effective Cartier 
divisor. Let $\sR^{\rm mod}_0(X|D) \subset \sZ_0(X|D)$ be the subgroup generated
by ${\divf}_C(f)$, where $C \subset X$ is an integral curve not contained in $D$
and is smooth along $D$ and $f \in {\rm Ker}(\sO^{\times}_{C,D} \to 
\sO^{\times}_{C \cap D})$. Here, $\sO_{C,D}$ denotes the semi-local ring
of $C$ at $(C\cap D) \cup \{\eta\}$ with $\eta$ being the generic point of $C$.
Set $\CH^{\rm mod}_0(X|D) = 
\frac{\sZ_0(X|D)}{\sR^{\rm mod}_0(X|D)}$.
There is an evident surjection $\CH^{\rm mod}_0(X|D) \surj \CH_0(X|D)$.
\begin{cor}\label{cor:Chow-mod-modified}
Let $X$ be as above. Then the map $\CH^{\rm mod}_0(X|D) \surj \CH_0(X|D)$ is an 
isomorphism.
\end{cor}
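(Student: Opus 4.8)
The plan is to show that the surjection $\CH^{\rm mod}_0(X|D) \surj \CH_0(X|D)$ is injective by exhibiting an inverse, using the decomposition \thmref{thm:Main-PB-PF-gen} and a careful inspection of the shape of the Cartier curves produced in \S\ref{section:SSM}. First I would observe that the group $\sR_0^{\rm mod}(X|D)$ is, by construction, contained in $\sR_0(X|D)$, since an integral curve $C$ smooth along $D$ with $f \in \Ker(\sO^\times_{C,D}\to\sO^\times_{C\cap D})$ is a special case of the curves allowed in \defref{def:DefChowMod-Definition} (after passing to the normalization, which changes nothing here because $C$ is already smooth near $D$). So the map is a well-defined surjection; the content is to build a map $\CH_0(X|D) \to \CH^{\rm mod}_0(X|D)$ splitting it.

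The key idea is to run the composite $\CH_0(X|D) \xrightarrow{p_{+,*}} \CH_0(S_X) \xrightarrow{\tau^*_X} \CH_0(X|D)$ from \thmref{thm:Main-PB-PF-gen} but to track that when one feeds in a generator of $\sR^{\rm mod}_0(X|D)$ one lands back in $\sR^{\rm mod}_0(X|D)$, i.e.\ that the maps $p_{+,*}$, $\tau^*_X$ can be upgraded to maps on the modified groups. For $p_{+,*}$ this is immediate: given $C\inj X$ integral, smooth along $D$, and $f\in\Ker(\sO^\times_{C,D}\to\sO^\times_{C\cap D})$, the curve $S(C,\nu^*D)$ is a good (indeed Cartier) curve on $S_X$ by \lemref{lem:PB-Cartier}, and the rational function $(f,1)$ on it is regular and invertible along $E$, exactly as in the proof of \propref{prop:PF-mod-double}; thus $p_{+,*}(\sR^{\rm mod}_0(X|D)) \subseteq \sR_0^{LW}(S_X)$. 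For the reverse map I would re-examine Lemmas~\ref{lem:good-Cartier-curves-dim>2}, \ref{lem:reduction-basic} and Propositions~\ref{prop:map-diff-surfaces}, \ref{prop:map-diff-higher}: the whole point of those Bertini arguments is that $\sR^{LW}_0(S_X)$ is generated by divisors of functions on Cartier curves $C' = C'_+\amalg_E C'_-$ whose components $C'_\pm \inj X$ are \emph{integral and smooth along $D$} (see \remref{remk:Surface-Bertini}), and the explicit computation at the end of those proofs expresses $\tau^*_X(\divf(f'))$ as a sum of $\divf$'s of rational functions $h_1,h_2,h_3$ each lying in some $G(\widetilde C,\nu^*D) = \Ker(\sO^\times_{\widetilde C,D}\to\sO^\times_{\widetilde C\cap D})$ on an integral curve $\widetilde C$ smooth along $D$. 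That is precisely a sum of generators of $\sR^{\rm mod}_0(X|D)$. Hence $\tau^*_X$ descends to $\overline\tau^*_X\colon \CH^{LW}_0(S_X)\to\CH^{\rm mod}_0(X|D)$, and through \lemref{lem:factor-PN-projection} (whose proof only uses functoriality of pushforward along $\P^N_{S_X}\to S_X$, valid verbatim for the modified group since $\P^N$ of a smooth curve smooth along $D$ is again such) to $\CH_0(S_X)$.

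Granting these two refinements, the composite $\overline\tau^*_X\circ p_{+,*}\colon \CH^{\rm mod}_0(X|D)\to\CH^{\rm mod}_0(X|D)$ is the identity on cycles, hence is the identity; and by naturality with the quotient maps to the unmodified groups it follows that $\CH^{\rm mod}_0(X|D)\surj\CH_0(X|D)$ is split injective, hence an isomorphism. I expect the main obstacle to be purely bookkeeping: verifying that \emph{every} auxiliary rational function appearing in the proofs of \propref{prop:map-diff-surfaces} and \propref{prop:map-diff-higher} (the functions called $h_1, h_2, h_3$, built from restrictions of the sections $t_\pm$, $s_\infty$, $s'_{0,\pm}$) does genuinely lie in the kernel $\Ker(\sO^\times_{C,D}\to\sO^\times_{C\cap D})$ and not merely in $\sO^\times_{C,D}$ — this is exactly the ``$=1$ along $E$'' assertions recorded there (e.g.\ \eqref{eqn:map-diff-surfaces-0*}, \eqref{eqn:map-diff-surfaces-1}, \eqref{eqn:map-diff>2-1}), so the needed facts are already in place, but one must check the curves carrying $h_1$ and $h_3$ (the zero loci $(s'^Y_0)$ and $(s^Y_\infty)$, restricted to $X_+$ and $X_-$) are smooth along $D$, which is guaranteed by items (h) and (f') of \lemref{lem:reduction-basic}. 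Once these are confirmed, no further moving is required and the corollary follows.
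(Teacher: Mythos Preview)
Your proposal is correct and follows essentially the same route as the paper's proof: both exploit that the explicit computations in Propositions~\ref{prop:map-diff-surfaces} and~\ref{prop:map-diff-higher} produce relations supported on integral curves smooth along $D$ with functions congruent to $1$ along $E$, so that $\tau^*_X$ already lands in $\CH^{\rm mod}_0(X|D)$; composing with $p_{+,*}$ then gives a one-sided inverse to the canonical surjection. The paper's proof is more terse---it simply writes the chain $\CH^{\rm mod}_0(X|D) \to \CH_0(X|D) \xrightarrow{p_{+,*}} \CH_0(S_X) \xrightarrow{\tau^*_X} \CH^{\rm mod}_0(X|D)$ and asserts the composite is the identity---whereas you spell out the verification for $h_1,h_2,h_3$ that the paper leaves implicit. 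One small point: your parenthetical justification for why \lemref{lem:factor-PN-projection} transports to the modified target (``$\P^N$ of a smooth curve smooth along $D$ is again such'') is phrased in the wrong direction; what is actually needed is that the proper pushforward $\pi_*\colon \CH_0(\P^N_X|\P^N_D)\to \CH_0(X|D)$ used in that lemma is compatible with the modified relations, and this is the only place where your write-up is less precise than it should be. The paper glosses over the same step.
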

\begin{proof}
Under the given assumption, we get maps
\[
\CH^{\rm mod}_0(X|D) \to \CH_0(X|D)
\xrightarrow{p_{+, *}} \CH_0(S_X) \xrightarrow{\tau^*_X}
\CH_0(X|D).
\]
The proofs of Propositions~\ref{prop:map-diff-surfaces} and 
~\ref{prop:map-diff-higher} show that $\tau^*_X$ actually factors
through the map $\CH_0(S_X) \to \CH^{\rm mod}_0(X|D)$.
We thus get maps
\[
\CH^{\rm mod}_0(X|D) \to \CH_0(X|D)
\xrightarrow{p_{+, *}} \CH_0(S_X)
\xrightarrow{\tau^*_X} \CH^{\rm mod}_0(X|D),
\]
whose composite is clearly the identity. 
In particular, the map $\CH^{\rm mod}_0(X|D) \to \CH_0(X|D)$ is injective.
The corollary now follows.
\end{proof}

\subsection{The comparison theorem}
Using the modified presentation of $\CH_0(X|D)$ from 
\corref{cor:Chow-mod-modified}, we prove the following comparison
theorem for the two Chow groups of the double.

\begin{thm}\label{thm:Main-Comparison-Chow}
Let $k$ be an infinite perfect field and let $X$ be a smooth quasi-projective
scheme of dimension $d \ge 1$ over $k$ with an effective Cartier divisor $D$.
Then the canonical map $\CH^{LW}_0(S_X) \to \CH_0(S_X)$ is an isomorphism.
\end{thm}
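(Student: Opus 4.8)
The plan is to exploit the fact that both Chow groups of the double sit in compatible exact sequences, and that the ``$\CH_0^{LW}$ versus $\CH_0$'' comparison has already been proven unconditionally on the two outer terms. More precisely, I would first recall from Lemma \ref{lem:lci-LW} that there is always a canonical surjection $\CH_0^{LW}(S_X) \twoheadrightarrow \CH_0(S_X)$, so the content of the theorem is injectivity of this map. The key structural input is that all the maps $\Delta^*$, $\iota_\pm^*$, $p_{\pm,*}$, $\tau_X^*$ were in fact defined first at the level of the Levine--Weibel Chow group: indeed $\Delta^*$ and $\iota_\pm^*$ factor through $\CH_0^{LW}$ by Theorem \ref{thm:PB-main} and Proposition \ref{prop:PB-pm}, the maps $p_{\pm,*}$ land in $\CH_0$ but lift trivially since $\CH_0^{LW}(S_X)$ surjects onto it, and by Propositions \ref{prop:map-diff-surfaces} and \ref{prop:map-diff-higher} the difference map descends as $\tau_X^*\colon \CH_0^{LW}(S_X) \to \CH_0(X|D)$. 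So the argument of Theorem \ref{thm:Main-PB-PF-gen} applies verbatim with $\CH_0(S_X)$ replaced by $\CH_0^{LW}(S_X)$.

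Concretely, I would run the same decomposition argument: one gets $\iota_-^* \circ p_{+,*} = 0$ and $\tau_X^* \circ \Delta^* = 0$ on $\CH_0^{LW}(S_X)$ (the latter using Lemma \ref{lem:Sing-ML} exactly as before), together with $\iota_\pm^* \circ \Delta^* = \mathrm{Id}$ on $\CH_0(X)$ and $\tau_X^* \circ p_{\pm,*} = \pm\,\mathrm{Id}$ on $\CH_0(X|D)$. The splitting computation ``$\gamma = p_{+,*}(\alpha - \beta) + \Delta^*(\beta)$'' for $\gamma \in \CH_0^{LW}(S_X)$ goes through unchanged, since a class in the Levine--Weibel group is still represented by a $0$-cycle supported away from $D = (S_X)_{\mathrm{sing}}$, hence splits as a sum of a cycle on $X_+$ and a cycle on $X_-$. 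This yields split short exact sequences
\[
0 \to \CH_0(X|D) \xrightarrow{p_{+,*}} \CH_0^{LW}(S_X) \xrightarrow{\iota_-^*} \CH_0(X) \to 0
\]
and
\[
0 \to \CH_0(X) \xrightarrow{\Delta^*} \CH_0^{LW}(S_X) \xrightarrow{\tau_X^*} \CH_0(X|D) \to 0.
\]
Now consider the map of short exact sequences induced by the canonical surjection $\mathrm{can}\colon \CH_0^{LW}(S_X) \twoheadrightarrow \CH_0(S_X)$: on the $\tau_X^*$-sequence it is the identity on both outer terms (the maps $\Delta^*$, $\tau_X^*$ on $\CH_0(S_X)$ are by construction the ones induced from $\CH_0^{LW}(S_X)$, and $\mathrm{can}$ intertwines them). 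By the five lemma, $\mathrm{can}$ is an isomorphism.

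The only point requiring care — and the step I would expect to be the ``main obstacle'', though it is really just bookkeeping — is verifying that the squares relating the two exact sequences genuinely commute, i.e. that $\Delta^*_{\CH_0}\colon \CH_0(X) \to \CH_0(S_X)$ equals $\mathrm{can} \circ \Delta^*_{\CH_0^{LW}}$ and similarly $\tau_X^* = \tau_X^{*,LW}$ composed with a section, so that the hypotheses of the five lemma are actually met. This is immediate from how these maps were defined in \S\ref{sec:PB} and \S\ref{section:SSM} (all of them originate as maps on free cycle groups $\sZ_0(S_X,D)$ and descend compatibly to both quotients), so no new geometry is needed. An alternative, perhaps cleaner, phrasing avoiding the five lemma: the identity $\tau_X^* \circ p_{+,*} = \mathrm{Id}$ on $\CH_0(X|D)$ together with $\tau_X^* \circ \Delta^* = 0$ shows that the composite $\CH_0^{LW}(S_X) \xrightarrow{\mathrm{can}} \CH_0(S_X) \xrightarrow{(\iota_-^*,\ \tau_X^*)} \CH_0(X) \oplus \CH_0(X|D)$ is already injective on $\CH_0^{LW}(S_X)$ — because its kernel is trapped between the two split sequences — hence $\mathrm{can}$ is injective, and being surjective by Lemma \ref{lem:lci-LW}, it is an isomorphism.
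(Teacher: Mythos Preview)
Your overall architecture is right and matches the paper: build the split exact sequence for $\CH_0^{LW}(S_X)$ and compare it to the one for $\CH_0(S_X)$. But there is a genuine gap in the step you flag as ``just bookkeeping''.

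The problem is the lifts of $p_{\pm,*}$ and $\Delta^*$ to $\CH_0^{LW}(S_X)$. Your sentence ``$p_{\pm,*}$ land in $\CH_0$ but lift trivially since $\CH_0^{LW}(S_X)$ surjects onto it'' is not valid reasoning: a surjection $A \twoheadrightarrow B$ does not let you lift a homomorphism $C \to B$ to $C \to A$. What must be checked is that the cycle-level map $\sZ_0(X|D) \to \sZ_0(S_X,D)$ sends $\sR_0(X|D)$ into $\sR_0^{LW}(S_X,D)$, and this is exactly where the difference between Cartier curves and good curves bites. In the proof of \propref{prop:PF-mod-double}, the relation on $\CH_0(X|D)$ comes from a finite map $\nu\colon C \to X$ from a normal curve; the induced $S_C \to S_X$ is finite l.c.i.\ along $D$ but need not be a closed immersion, so $S_C$ is only a \emph{good} curve, not a Cartier curve. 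The same issue occurs for $\Delta^*$: \thmref{thm:PB-main} passes to the normalization $C^N \to X$, and its proof explicitly concludes only that the image lies in $\sR_0(S_X,D)$, not in $\sR_0^{LW}(S_X,D)$. So neither reference gives what you claim.

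The paper closes this gap with real geometric input. For $p_{\pm,*}$, it first proves \corref{cor:Chow-mod-modified} (itself a consequence of \thmref{thm:Main-PB-PF-gen}), which shows that $\sR_0(X|D)$ is already generated by divisors of functions on \emph{embedded} integral curves $C \hookrightarrow X$ smooth along $D$; for such $C$ the double $S_C \hookrightarrow S_X$ is a genuine Cartier curve by \lemref{lem:PB-Cartier}, and one gets $p^{LW}_{\pm,*}\colon \CH_0(X|D) \to \CH_0^{LW}(S_X)$. For $\Delta^*$, an easier Bertini argument (in the spirit of \lemref{lem:reduction-basic}) shows the relations in $\CH_0(X)$ may be taken on curves smooth along $D$, and then the same mechanism applies. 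Once these lifts are in hand, your five-lemma (or direct decomposition) argument goes through exactly as you wrote. Your ``alternative cleaner phrasing'' at the end does not sidestep the issue: showing that $(\iota_-^{*,LW}, \tau_X^{*,LW})$ is injective on $\CH_0^{LW}(S_X)$ still requires knowing that $\Delta^*$ and $p_{+,*}$ kill the respective relation groups inside $\CH_0^{LW}(S_X)$.
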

\begin{proof}
In view of \lemref{lem:0-cycle-com-1}, we can assume $d \ge 2$.
Recall from \S~\ref{sec:PF-mod} that there are two maps
$p_{\pm,*} \colon   \sZ_0(X|D) \rightrightarrows \sZ_0(S_X,D)$.
As the first step in the proof of the theorem, we strengthen
\propref{prop:PF-mod-double} by showing that these maps descend to
group homomorphisms $p_{\pm,*} \colon \CH_0(X|D) \to \CH^{LW}_0(S_X)$.
To show this, we can use \corref{cor:Chow-mod-modified} and replace 
$\CH_0(X|D)$ by $\CH^{\rm mod}_0(X|D)$.

So let $\nu: C \inj X$ be an integral curve not contained in $D$
which is  smooth along $D$ and let $f \in {\rm Ker}(\sO^{\times}_{C,D} \to 
\sO^{\times}_{C \cap D})$. Since $C$ is smooth along $D$, the inclusion
$\nu$ is l.c.i. along $D$. 
Since $C$ is reduced, it follows from Proposition 
\ref{prop:double-prp} that $S_C := S(C,E)$ is reduced, where we let
$E = \nu^*(D)$. It follows from Proposition \ref{prop:double-prp-fine} that 
the double map $\nu': S_C \inj S_X$ is l.c.i. along $D$. In other words,
$S_C \inj S_X$ is a Cartier curve. 

We now consider the rational function $h = (h_+, h_-) := (f,1)$ on $S_C$. 
The modulus condition satisfied by $f$ on $C$ guarantees that $h$ is regular 
and invertible along  $E\subset S_C$. It is clear as in 
\propref{prop:PF-mod-double} that the divisor of $h$ trivializes 
$p_{+,*}(\nu_*{\rm div}(f))$. The argument for $p_{-,*}$ is symmetric.
We denote the maps $\CH_0(X|D) \to \CH^{LW}_0(S_X)$ obtained as above
by $p^{LW}_{\pm,*}$. 
It is clear that the composite
$\CH_0(X|D) \xrightarrow{p^{LW}_{\pm,*}} \CH^{LW}_0(S_X) \surj \CH_0(S_X)
\xrightarrow{\tau^*_X} \CH_0(X|D)$ are the identity maps (up to a sign).

Recall from \lemref{lem:Sing-ML} that $\CH_0(X)$ is the quotient of
free abelian group on the closed points of $X \setminus D$ by the
subgroup generated by $\divf(f)$, where $f$ is a rational function on
an integral curve $C$ not contained in $D$ and $f$ is regular invertible
along $D$.   
Using an easier version of \lemref{lem:reduction-basic}, one can now see that
the rational equivalences for $\CH_0(X)$ can be defined by further restricting
integral curves on $X$ which are smooth along $D$. 
In particular, they are l.c.i. on $X$ along $D$. Using such curves,
one can check from the proof of \thmref{thm:PB-main} that the map
$\Delta^*: \CH_0(X) \to \CH_0(S_X)$ can actually be lifted to the pull-back
map $\Delta^{LW, *}: \CH_0(X) \to \CH_0^{LW}(S_X)$.

We next consider the composite maps 
$\CH^{LW}_0(S_X) \surj \CH_0(S_X) \xrightarrow{\iota^*_\pm} \CH_0(X)$, which
we denote by $\iota^{LW, *}_\pm$.
It is then clear that $\iota^{LW, *}_{\pm} \circ \Delta^{LW, *} = 
{\rm Id}$ on $\CH_0(X)$. Now, the proof of ~\eqref{eq:Main-exact-seuqence}
works in verbatim to give a split exact sequence
\[
0 \to \CH_0(X|D) \xrightarrow{p^{LW}_{+,*}} \CH^{LW}_0(S_X) 
\xrightarrow{\iota_-^{LW, *}} \CH_0(X) \to 0.
\]

We thus have a commutative diagram of split exact sequences

\[
\xymatrix@C.8pc{
0 \ar[r] & \CH_0(X|D) \ar@{=}[d] \ar[r]^{p^{LW}_{+,*}} &
\CH^{LW}_0(S_X) \ar[r]^-{\iota_-^{LW, *}} \ar[d]_{can} &
\CH_0(X) \ar@{=}[d] \ar[r] & 0 \\
0 \ar[r] & \CH_0(X|D) \ar[r]_{p_{+,*}} &
\CH_0(S_X) \ar[r]_{\iota_-^{*}} &
\CH_0(X) \ar[r] & 0.}
\]

We conclude from this that the map $\CH^{LW}_0(S_X) \to \CH_0(S_X)$ is
an isomorphism. 
\end{proof}

\section{Albanese with modulus over $\C$}\label{sec:Alb-C}
It is classically known that a smooth projective variety $X$ over an 
algebraically closed field has an abelian variety, called the Albanese 
variety ${\rm Alb}(X)$ of $X$, associated to it, which is Cartier dual to 
the Picard variety $\Pic^0(X)$.
The Albanese comes equipped with a (surjective) Abel-Jacobi map from the Chow group $\CH_0(X)_{\deg 0}$ of zero cycles of degree zero on $X$, that is universal among regular maps to abelian varieties. 
When $X$ is a smooth projective curve with an
effective Cartier divisor $D$ on it, a universal regular quotient for what we now call the Chow group of zero cycles with modulus, $\CH_0(X|D)_{\deg 0}$, was already
constructed by Serre in \cite{Serre} under the name of the generalized
Jacobian variety. Serre showed that this generalized Jacobian
is a commutative algebraic group which is an extension of the
Jacobian variety of the curve by a linear algebraic group.

If $X$ is now a smooth projective variety of arbitrary dimension over the field of complex numbers $\C$
and $D \subset X$ is an effective Cartier divisor such that
$D_{\rm red}$ is a strict normal crossing divisor, a universal regular
quotient of $\CH_0(X|D)$ was constructed in \cite{BS} 
as a relative intermediate Jacobian $J^{\dim(X)}_{X|D}$.
However, not many properties of this universal regular quotient are known
and the techniques used in the construction are not known to generalize
to cover the general case. 

In this paper, we use our doubling trick to 
give a direct and explicit construction of the 
relative Albanese ${\rm Alb}(X|D)$ and show that it is the 
universal regular quotient of the Chow group of 0-cycles with modulus.
As a result of our construction, we are able to prove the Roitman
torsion theorem for the Chow group of 0-cycles with modulus.
In this section, we use the modified Deligne cohomology of Levine
\cite{Levine-4} to
construct the universal regular quotient when the base field is $\C$.
By Theorem \ref{thm:0-cycle-affine-proj-char0}, we can identify the Levine-Weibel Chow group of zero cycles with our modified definition \ref{ssec:Rat-eq-sing-var}.

\subsection{Relative Deligne cohomology}\label{sec:RDCoh}
Let $X$ be a smooth projective connected scheme over $\C$. Let $D$ be any 
effective Cartier divisor on it. As before, we write $S_X$ for the double 
construction applied to the pair $(X,D)$. We shall frequently refer to the 
following square:
\begin{equation}\label{eq:fundamental-square}
    \xymatrix{ D \ar@{^{(}->}[r]^{j} \ar@{^{(}->}[d]_{j} & 
X \ar@{^{(}->}[d]^{i_-} \\
    X\ar@{^{(}->}[r]_{i_+} & S_X.
    }
\end{equation}

Let $r\geq1$ be an integer. 
Following Levine \cite{Levine-4}, we denote by $\ZDeSXr$ (resp. $\ZDeDr$)  the 
modified Deligne-Beilinson complex on $S_X$ (resp. on $D$). Since $S_X$ is 
projective, both are given by the ``naive'' Deligne complexes
\[\ZDeSXr = \Z(r)_{S_X} \to \cO_{S_X}\xrightarrow{d} 
\Omega^1_{S_X}\to\ldots \to \Omega^{r-1}_{S_X}\]
\[\ZDeDr = \Z(r)_D \to \cO_{D}\xrightarrow{d} \Omega^1_{D}\to\ldots \to 
\Omega^{r-1}_{D}\]
on $(S_X)_{\rm an}$ (resp. on $D_{\rm an}$), the analytic space associated to $S_X$ (resp. to $D$).

We can consider the complex $\ZDeXr$ on $X$ as well. Since $X$ is smooth and 
projective, we have by definition $\ZDe{X}{r} = \ZDeXrun$, where
$\ZDeXrun$ is the classical Deligne complex. Let 
$i_\pm\colon X\hookrightarrow S_X$ be the inclusions of the two components in 
the double $S_X$ and let $\Delta\colon S_X\to X$ be the natural projection. We 
denote by 
$\pi\colon X\amalg X\to S_X$ the  map $(i_+, i_-)$ from the normalization. 
Finally, we let $j\colon D\hookrightarrow X$ denote the inclusion of $D$ in 
$X$. Recall from \propref{prop:double-prp}(8) that $D$ is a conducting 
subscheme for $\pi$.

We define the following objects in the bounded derived category of complexes 
of sheaves of abelian groups on $(S_X)_{\rm an}$ (resp. on $X_{\rm an}$):
\[ \ZDe{X|D}{r} = Cone(\ZDe{S_X}{r}\xrightarrow{i_-^*} \R 
i_{-,*}\ZDeun{X}{r})[-1] \]
\[  \ZDe{(X,D)}{r} = Cone(\ZDeun{X}{r}\xrightarrow{j^*} \R 
j_{*}\ZDe{D}{r})[-1]. 
\]
We define the complexes $\Z(r)_{X|D}$ and $\Z(r)_{(X,D)}$ in a similar
fashion.

Note that the maps are all finite, so that the derived functors $\R i_{-,*}$ 
and $\R j_*$ are superfluous. Since all the maps over which we are taking the 
cones are actually surjective as maps of complexes, the cones can be computed 
directly as kernels in the category of complexes of sheaves on $S_X$. We have 
the following commutative diagram relating them
\begin{equation}\label{eq:comparison-complexes-Deligne}
    \xymatrix{ 0\to \ZDe{X|D}{r} \ar[r] \ar[d]_{\Phi_{X,D}} & \ZDe{S_X}{r} 
\ar[r]^{i_-^*}\ar[d]^{i_+^*}& \ZDeun{X}{r}\ar[d]^{j*} \to0\\
0\to  \ZDe{(X,D)}{r} \ar[r] &\ZDeun{X}{r}\ar[r]^{j^*} &\ZDe{D}{r} \to 0.
}\end{equation}


\subsection{Construction of the Albanese varieties}
\label{ssec:Construction-Albanese-C} 
Let $r = d = \dim(X)$. In order to simplify the notation, we denote by 
$\ol{S_X}\xrightarrow{\pi} S_X$ the normalization of $S_X$, i.e., 
the disjoint union of two copies of $X$. This gives an induced map 
$\pi_D\colon \ol{D} = D\amalg D \to D$. We shall use the relative complexes 
$\ZDe{X|D}{d}$ and $\ZDe{(X,D)}{d}$ to define in the usual way two natural 
receptors for the group of algebraic cycles with modulus that are 
homologically trivial. Coherently with the excision 
Theorem \ref{thm:Main-PB-PF-gen} in the projective case over the complex 
numbers, we will show that the two, a priori different constructions, give 
rise to the same invariant naturally attached to the pair $(X,D)$, that we 
call {\sl the Albanese variety with modulus}.

\subsubsection{The Albanese with modulus}\label{sec:Alb-mod}
By construction, the map $\Delta\colon S_X\to X$ satisfies 
$\Delta\circ i_\pm = {\rm Id_X}$, and therefore induces a splitting of the 
restriction maps $i_\pm^*$ from the cohomology of $S_X$ to the cohomology of 
$X$. This gives the commutative diagram of the analytic cohomology groups
\[
\xymatrix{ 
0\to \H^{2d}(S_X, \ZDe{X|D}{d})\ar[r] \ar[d]_{\epsilon_{X|D}} & 
\H^{2d}(S_X, \ZDe{S_X}{d} )\ar[d]^{\epsilon_{S_X}} \ar@<.5ex>[r]^{i^*_-}  &  
\H^{2d}(X, \ZDeun{X}{d})\ar[d]^{\epsilon_X} \ar@<.5ex>[l]^{\Delta^*} \to 0 \\
0\to H^{2d}(S_X, \Z({d})_{X|D})\ar[r] & H^{2d}(S_X, \Z(d)_{S_X} ) 
\ar@<.5ex>[r]^{i^*_-}  &  H^{2d}(X, \Z(d)_{X}) \ar@<.5ex>[l]^{\Delta^*} \to 0,
}
\]
where the vertical maps are induced by the natural projections 
$\Z(d)^{\sD*}\to \Z(d)$.

Since $X$ is smooth projective and connected, we have 
$H^{2d}(X, \Z({d})_{X}) = \Z$ and $H^{2d}(S_X, \Z{(d)}_{S_X} )$ \\
$= \Z\oplus \Z$, one copy for each component of 
$S_X$. In particular, the restriction map $i^*_-$ sends $(0,1)$ to $1$ and we 
can identify  $ H^{2d}(S_X, \Z{(d)}_{X|D})$ with the other copy of $ \Z$.
Under this identification, we denote by $A^d({X|D})$ the kernel of the natural 
map
\[ 
0\to A^d({X|D}) \to  \H^{2d}(S_X, \ZDe{X|D}{d}) \xrightarrow{\epsilon_{X|D}}
\Z 
\]
and call it the \textit{Albanese variety of $X$ with modulus $D$}. 

By construction, we have an isomorphism \[A^d({X|D}) = 
\H^{2d-1}(S_X, \Omega^{<d}_{(S_X, X_-)}) / H^{2d-1}(S_X, \Z(d)_{X|D}),\]
where $\Omega^{<d}_{(S_X, X_-)}$ denotes
$Cone(\Omega^{<d}_{S_X}\xrightarrow{i_-^*} \Omega^{<d}_X)[-1]$.
Following 
\cite{ESV}, we define the generalized Albanese variety of $S_X$, denoted 
$A^d(S_X)$, as the kernel of the map \[\H^{2d}(S_X, \ZDe{S_X}{d} ) \surj 
H^{2d}(S_X, \Z(d)_{S_X} ).\] This gives a split short exact sequence
\begin{equation}\label{eq:Alb-modulus-Alb-Sx}
\xymatrix{
0\to A^d(X|D) \xrightarrow{p_{+,*}} 
A^d(S_X)  \ar@<.5ex>[r]^-{i^*_-} &  A^d(X)\ar@<.5ex>[l]^-{\Delta^*} \to 0, } 
\end{equation}
where $A^d(X) = {\rm Alb}(X)$ is  the usual Albanese variety of $X$, namely, 
$\H^{2d-1}(X, \Omega^{<d}_{X}) / H^{2d-1}(X, \Z(d)_{X})$. Write $J^d(S_X)$ for the 
quotient
\[J^d(S_X) = \frac{H^{2d-1}(S_X, \C(d))}{F^0 H^{2d-1}(S_X, \C(d)) + 
{\rm image}H^{2d-1}(S_X, \Z(d))}.\]
It is a semi-abelian variety by a result of Deligne. 
By \cite[Lemma 3.1]{ESV}, there is a natural surjection
\[
\psi\colon A^d(S_X)\to J^d(S_X)
\]
whose kernel is a $\C$-vector space. Moreover, there is a unique structure of 
algebraic group on $A^d(S_X)$ making $\psi$ a morphism of algebraic groups, 
with unipotent kernel.

Notice that there is a short exact sequence
\begin{equation}\label{eq:Alb-Sx-Alb-Sxbar} 
0\to G \to A^d(S_X) \xrightarrow{\pi^*} A^d(X)\times A^d(X) \to 0 
\end{equation}
where $G$ is simply defined as the kernel of the map $\pi^*$, 
that is surjective since $H^{2d}(S_X, \Z(d)) \simeq H^{2d}(\ol{S_X}, \Z(d))$ 
and $\H^{2d-1}(S_X, \Omega^{<d}_{S_X}) \twoheadrightarrow 
\H^{2d-1}(\ol{S_X}, \Omega^{<d}_{\ol{S_X}})$ because of GAGA and the fact that 
$\pi$ is finite and birational. A combination of \eqref{eq:Alb-modulus-Alb-Sx} 
and \eqref{eq:Alb-Sx-Alb-Sxbar} gives then the short exact sequence
\begin{equation}\label{eqn:Alb-Sx-Alb-Sxbar-0}
0\to G\to A^d(X|D)\xrightarrow{\phi} A^d(X)\to 0,
\end{equation}
where the (surjective) forgetful map $\phi$ is same as the map
$i^*_+ \circ p_{_+,*}\colon A^d(X|D)\to A^d(X)$.

\subsubsection{The relative Albanese}\label{sec:Alb-rel}
Taking cohomology of the bottom exact sequence in 
\eqref{eq:comparison-complexes-Deligne}, we get exact sequence
\[ 
\H^{2d-1}(D,\ZDe{D}{d}) \to \H^{2d}(X, \ZDe{(X,D)}{d}) \to 
\H^{2d}(X, \ZDeun{X}{d}) \to 0, 
\]
where the term $\H^{2d}(D, \ZDe{D}{d}) =0$ for dimension reasons.

We denote by $A^d({X,D})$ the kernel of the natural map  
\[
\H^{2d}(X, \ZDe{(X,D)}{d}) \to \H^{2d}(X, \Z(d)_{(X,D)}) \simeq
H^{2d}(X, \Z(d)_X) = \Z,\]
and call it the \textit{relative Albanese variety of the pair $(X,D)$}.
It fits in a short exact sequence
\[ 0\to \frac{\H^{2d-1}( D, \ZDe{D}{d})}{\H^{2d-1}( X, \ZDeun{X}{d})} \to 
A^d({X,D})\to A^d(X)\to 0\]
The vertical maps in \eqref{eq:comparison-complexes-Deligne} 
induce then the following diagram
\begin{equation}\label{eq:diag-Alb-modulus-and-relative-Alb}
\xymatrix{ 
0\ar[r]& A^d(X|D) \ar[r]^{p_{+,*}} \ar[d]_{\phi_{X|D}^d} 
\ar@{->>}[rd]^{\varphi} & A^d(S_X) \ar[d]^{i_+^*}\ar[r]^{i_-^*} & 
A^d(X)\ar[d] \to 0 \\
0\to \frac{\H^{2d-1}( D, \ZDe{D}{d})}{\H^{2d-1}( X, \ZDeun{X}{d})} \ar[r] & 
A^d({X,D}) \ar[r] & A^d (X) \ar[r]& 0.
}
\end{equation}

\begin{prop}\label{prop:surj-alb-any-dimension}
Let $X$ be a smooth projective $\C$-scheme of dimension $d \ge 1$. 
Then the natural map $\phi_{X|D}^d\colon A^d(X|D)\to A^d(X,D)$ of 
\eqref{eq:diag-Alb-modulus-and-relative-Alb} is an isomorphism.
\end{prop}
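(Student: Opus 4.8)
The statement asserts that the comparison map $\varphi\colon A^d(X|D)\to A^d(X,D)$ appearing in \eqref{eq:diag-Alb-modulus-and-relative-Alb} — equivalently, the map induced on $\H^{2d}$ by the morphism of complexes $\Phi_{X,D}\colon \ZDe{X|D}{d}\to \ZDe{(X,D)}{d}$ of \eqref{eq:comparison-complexes-Deligne}, restricted to the kernels of the two degree maps — is an isomorphism. The plan is to prove the stronger statement that $\Phi_{X,D}$ is a quasi-isomorphism. Granting this, both degree maps $\H^{2d}(S_X,\ZDe{X|D}{d})\to\Z$ and $\H^{2d}(X,\ZDe{(X,D)}{d})\to\Z$ are obtained by applying $\H^{2d}$ to the evident maps of complexes to the weight-zero quotients $\Z(d)$, and these identify under $\Phi_{X,D}$; hence $\Phi_{X,D}$ carries $A^d(X|D)$ isomorphically onto $A^d(X,D)$.

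To prove that $\Phi_{X,D}$ is a quasi-isomorphism, first unwind the definitions: $\ZDe{X|D}{d}=\operatorname{fib}(i_-^*\colon \ZDe{S_X}{d}\to\ZDeun{X}{d})$ and $\ZDe{(X,D)}{d}=\operatorname{fib}(j^*\colon \ZDeun{X}{d}\to\ZDe{D}{d})$, and $\Phi_{X,D}$ is the map on fibres induced by $i_+^*$ (it lands in $\operatorname{fib}(j^*)$ because $j^*i_+^*=j^*i_-^*=i_D^*$). Since the square \eqref{eq:fundamental-square} is co-Cartesian, one expects a Mayer--Vietoris (homotopy-descent) isomorphism $R\Gamma(S_X,\ZDe{S_X}{d})\xrightarrow{\sim} R\Gamma(X,\ZDeun{X}{d})\times^h_{R\Gamma(D,\ZDe{D}{d})}R\Gamma(X,\ZDeun{X}{d})$, the two legs being $i_+^*$ and $i_-^*$. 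Granting this, a purely formal manipulation of homotopy-cartesian squares identifies $\operatorname{fib}(i_-^*)$ with the fibre of the remaining leg $j^*$, i.e. with $\ZDe{(X,D)}{d}$, and one checks that the identification is precisely $\Phi_{X,D}$; concretely this reflects that $i_+$ restricts to an isomorphism $X\setminus D\xrightarrow{\sim}S_X\setminus X_-$, so that both complexes are the Deligne complex of $X$ ``relative to the divisor $D$''. An equivalent packaging, closer to the rest of the paper, is to run the five lemma on \eqref{eq:diag-Alb-modulus-and-relative-Alb}: the right-hand vertical is the identity of $A^d(X)$, so it suffices to see that $\varphi$ carries the linear part $G=\ker(A^d(X|D)\to A^d(X))$ of \eqref{eqn:Alb-Sx-Alb-Sxbar-0} isomorphically onto $\H^{2d-1}(D,\ZDe{D}{d})/\H^{2d-1}(X,\ZDeun{X}{d})$ — again a shadow of the same descent statement, now over the open complement of $X_-$, and one uses here the vanishing $\H^{2d}(D,\ZDe{D}{d})=0$ to break the relevant long exact sequences into short ones.

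The Mayer--Vietoris isomorphism is the step I expect to be the main obstacle. On the weight-zero constituent it is the usual Mayer--Vietoris sequence $0\to\Z(d)_{S_X}\to\Z(d)_{X}\oplus\Z(d)_{X}\to\Z(d)_{D}\to 0$ for the closed cover $S_X=X_+\cup X_-$ with intersection $D$; on the structure-sheaf constituent it is exactly the exact sequence \eqref{eqn:double-2} of the double construction. The delicate constituents are the sheaves of differentials: unlike $\sO_{S_X}$, the sheaf $\Omega^q_{S_X}$ need not be torsion-free along the singular locus $D$ (already for a node, $\Omega^1$ has torsion at the node), so the naive sequence $0\to\Omega^q_{S_X}\to\Omega^q_{X}\oplus\Omega^q_{X}\to\Omega^q_{D}\to 0$ is in general not left exact. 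One therefore has to use the precise shape of Levine's modified Deligne--Beilinson complex — for which, over a projective base, the ``naive'' and the resolution-theoretic descriptions agree — together with the explicit local model $R=A[\epsilon]/(\epsilon^2-t\epsilon)$ of the double along a principal Cartier divisor $(t)$ and the Cohen--Macaulayness of $S_X$ from \propref{prop:double-prp-fine}, to see that in the range of cohomological degrees that actually feeds $\H^{2d}$ and $\H^{2d-1}$ the descent statement nonetheless holds. Once this is in place, the remaining steps — the formal fibre manipulation and the compatibility with the degree maps — are routine.
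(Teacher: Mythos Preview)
Your overall plan --- reduce to a Mayer--Vietoris/descent statement for the cover $S_X = X_+ \cup X_-$ and deduce that $\Phi_{X,D}$ induces an isomorphism on the relevant hypercohomology --- is the same strategy the paper follows. However, you have correctly located the one genuine difficulty (failure of Mayer--Vietoris for the sheaves $\Omega^q_{S_X}$) and then not resolved it. The sentence ``One therefore has to use the precise shape of Levine's modified Deligne--Beilinson complex \dots\ to see that in the range of cohomological degrees that actually feeds $\H^{2d}$ and $\H^{2d-1}$ the descent statement nonetheless holds'' is exactly the content of the proposition, and nothing in your outline (Cohen--Macaulayness of $S_X$, the local model $R=A[\epsilon]/(\epsilon^2-t\epsilon)$) explains why the error terms vanish in the required range. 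In particular, your stated ``stronger statement'' that $\Phi_{X,D}$ is a quasi-isomorphism of complexes of sheaves is not what the paper proves and is likely false when $|D|$ is singular.

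What the paper actually does is the following. The $\Z(d)$ and $\sO$ constituents satisfy Mayer--Vietoris on the nose (the latter by \eqref{eqn:double-2}), so one is reduced to showing that
\[
i_+^*\colon \H^{2d-2}\bigl(S_X,\Omega^{[1,d-1]}_{(S_X,X_-)}\bigr)\longrightarrow \H^{2d-2}\bigl(X,\Omega^{[1,d-1]}_{(X,D)}\bigr)
\]
is an isomorphism. The kernel and cokernel complexes $\sF_1,\sF_2$ of $i_+^*$ on $\Omega^{[1,d-1]}$ are supported on $D$, so dimension counting kills $\H^{2d-2+j}(\sF_i)$ for $j\ge 0$ and gives surjectivity, together with an exact sequence identifying $\ker(i_+^*)$ with a quotient of $H^{d-1}\bigl(\Omega^{d-1}_{(X,D)}/\Omega^{d-1}_{(S_X,X_-)}\bigr)$. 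The crux --- and the step entirely missing from your proposal --- is to show this last group vanishes. The paper does this by an explicit local calculation: at a smooth point of $|D|$ with local equation $x^n$ and coordinates $(x,x_1,\dots,x_{d-1})$, one writes down generators of $\Omega^{d-1}_{(A,A')}$ and lifts them by hand to $\Omega^{d-1}_{(R,A_-)}$, proving that $i_+^*$ is surjective there. Hence the cokernel sheaf is supported on $D_{\rm sing}$, which has dimension $\le d-2$, and its $H^{d-1}$ vanishes. This calculation is the heart of the argument, and your proposal contains no substitute for it.
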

\begin{proof}
If $d =1$, it follows from \cite[Proposition~1.4]{LW} and
\propref{prop:Main-Sequence-Pic} that $A^1(X|D) \simeq \Pic(X,D)$.
On the other hand, we have
\[A^1(X,D) = \H^2(X, \ZDe{(X,D)}{2}) \simeq H^1(X, (1 + \sI_D)^{\times}),\]
where $\sI_D$ is the ideal sheaf of $D \subset X$.
It follows from \cite[Lemma~2.1]{SV} that $H^1(X, (1 + \sI_D)^{\times}) 
\simeq \Pic(X,D)$
and the proposition follows. We can thus assume that $d \ge 2$.

We can assume that $X$ is connected.
Since the singular cohomology satisfies the Mayer-Vietoris property with 
respect to the square \eqref{eq:fundamental-square}, we are left to show that 
\begin{equation}\label{eq:eq1-proof-Albanese-iso}  
\H^{2d-1}(S_X, \Omega^{<d}_{(S_X, X_-)}) \xrightarrow{i_+^*} 
\H^{2d-1}(X, \Omega^{<d}_{(X, D)})
\end{equation}
is bijective. Here, $\Omega^{<d}_{(X, D)} = 
Cone ( \Omega^{<d}_{X}\to \Omega^{<d}_{D})[-1]$ and the morphism between the 
cohomology groups is induced by the restriction along 
$i_+\colon X\hookrightarrow S_X$. 

Using GAGA, it is equivalent to prove the bijectivity for the
cohomology of the associated Zariski sheaves of differential forms.
It is easy to check from ~\eqref{eqn:double-2} that $i^*_*$
induces an isomorphism $\sI_{X_-} \xrightarrow{\simeq} \sI_D$,
where $\sI_D$ is the ideal sheaf for $D \subset X_{+}$.
Furthermore, $(S_X \setminus D) = (X_{-} \setminus D) \amalg 
(X_{+} \setminus D)$. This information can be used to get a commutative
diagram of exact sequences

\begin{equation}\label{eq:eq1-proof-Albanese-iso-0}
\xymatrix@C.6pc{
0 \ar[r] & \Omega^{[1,d-1]}_{(S_X, X_-)}[-1] \ar[r] \ar[d]_{i^*_+} &
\Omega^{<d}_{(S_X, X_-)} \ar[r] \ar[d]^{i^*_+} & \sI_{X_-} \ar[r] 
\ar[d]^{\simeq} & 0 \\
0 \ar[r] & \Omega^{[1,d-1]}_{(X, D)}[-1] \ar[r]  &
\Omega^{<d}_{(X, D)} \ar[r] & \sI_{D} \ar[r] & 0.}
\end{equation}

We therefore have to prove that the map of Zariski cohomology groups
\begin{equation}\label{eq:eq1-proof-Albanese-iso-1}  
\H^{2d-2}(S_X, \Omega^{[1,d-1]}_{(S_X, X_-)}) \xrightarrow{i_+^*} 
\H^{2d-2}(X, \Omega^{[1,d-1]}_{(X, D)})
\end{equation}
is bijective. 

We start by proving surjectivity.
Letting $\sF_1[-1]$ and $\sF_2[-1]$ denote the kernel and the cokernel of the
left vertical arrow in ~\eqref{eq:eq1-proof-Albanese-iso-0}, respectively,
we have an exact sequence 
\begin{equation}\label{eq:eq1-proof-Albanese-iso-2} 
0 \to \sF_1 \to \Omega^{[1,d-1]}_{(S_X, X_-)} \xrightarrow{i^*_+}
\Omega^{[1,d-1]}_{(X, D)} \to \sF_2 \to 0.
\end{equation}
Setting $\sG = {\Omega^{[1,d-1]}_{(S_X, X_-)}}/{\sF_1}$, we obtain a diagram
of exact sequences
{\small     \[ \xymatrix{ \H^{2d-2}(\cF_1)\ar[r]&  
\H^{2d-2}(S_X, \Omega^{[1, d-1]}_{(S_X, X_-)}) \ar[r] & 
\H^{2d-2}(\cG) \ar[r]\ar@{=}[d] & \H^{2d-1}(\cF_1) &\\
        & \H^{2d-3}(\cF_2) \ar[r] & \H^{2d-2}(\cG) \ar[r]& 
\H^{2d-2}(X, \Omega^{[1, d-1]}_{(X, D)}) \ar[r]& \H^{2d-2}(\cF_2).    
        }
        \]}

Since $d \ge 2$ and $\sF_i$ are complexes of coherent sheaves supported on $D$,
a standard spectral sequence argument shows that
$\H^{2d-2+j}(\cF_i) = 0$ for $i = 1,2$ and $j \ge 0$.
Analyzing the spectral sequence computing the cohomology group 
$H^{2d-3}(\cF_2)$, we see that thanks to the same dimension argument used 
above, the only surviving term is given by 
$H^{d-1}(\Omega^{d-1}_{(X,D)} / \Omega^{d-1}_{(S_X, X_-)})$. 
We thus get  an exact sequence

\begin{equation}\label{eq:eq2-proof-Albanese-iso} 
0 \to \frac{H^{d-1}(\Omega^{d-1}_{(X,D)} / \Omega^{d-1}_{(S_X, X_-)})}
{\H^{2d-3}(X, \Omega^{[1,d-1]}_{(X, D)})}  
\to \H^{2d-2}(S_X, \Omega^{[1, d-1]}_{(S_X, X_-)}) 
\xrightarrow{i^*_+} \H^{2d-2}(X, \Omega^{[1, d-1]}_{(X, D)}) \to 0
\end{equation}
and this proves the surjectivity of $i^*_+$.

We now prove injectivity. Write $D_{\rm sing}$ for $|D|_{\rm sing}$.
Let $U = X\setminus D_{\rm sing} \hookrightarrow X$ and write $D_U$ for the 
restriction of $D$ to $U$. We note that $(D_U)_{\rm red}$ is a smooth 
(possibly non connected) divisor on $U$. We claim that the restriction to $U$ 
of the sheaf $\cF_3 = \Omega^{d-1}_{(X,D)} / \Omega^{d-1}_{(S_X, X_-)}$ is zero. 
Note that since $\cF_3$ is anyway supported on $D$, our claim actually implies 
that $\cF_3$ is supported on $Y  = D_{\rm sing}$. 
Since $\dim(Y) \leq d-2$,
 this will give the desired vanishing of the cohomology group 
$H^{d-1}(\Omega^{d-1}_{(X,D)} / \Omega^{d-1}_{(S_X, X_-)})$. Since the first term of 
the sequence \eqref{eq:eq2-proof-Albanese-iso} is a quotient of  
$H^{d-1}(\Omega^{d-1}_{(X,D)} / \Omega^{d-1}_{(S_X, X_-)})$, this will complete the 
proof  of the proposition. 

We now prove the claim. 
Since all the components of $(D_U)_{\rm red}$ are regular and disjoint, 
we can assume that at a point 
$y$ of $U$, a local equation for $D_U$ is given by $x^n$, where 
$(x, x_1, \ldots, x_{d-1})$ is a regular system of parameters in 
$A=\cO_{X, y}$. Write $A'= A/(x^n)$, $B= A/(x)$ and let $R$ be the double 
construction applied to the pair $(A, (x^n))$. Write $\Omega^{d-1}_{(R, A_-)}$ 
for the kernel of the second projection 
$i_-^*\colon \Omega^{d-1}_{R/\C} \surj \Omega^{d-1}_{A/\C}$ and write 
$\Omega^{d-1}_{(A, A')}$ for the kernel of the restriction map 
$j^*\colon \Omega^{d-1}_{A/\C}\surj \Omega^{d-1}_{A'/\C}$. 
The claim is equivalent to showing that the induced map
\begin{equation}\label{eq:eq3-proof-Albanese-iso} 
i_+^*\colon \Omega^{d-1}_{(R, A_-)} \to \Omega^{d-1}_{(A, A')}
\end{equation}
is surjective.

Since $A$ is a regular local ring of dimension $d$, $B$ is a regular local 
ring of dimension $d-1$ and the infinitesimal lifting property 
(see \cite[Proposition 4.4]{HartDef}) gives a splitting of the projection 
$A'\to B$. Since the relative embedding dimension of $A'$ in $A$ is 1, this 
gives an isomorphism $A' \simeq B[x]/(x^n)$. We can then compute the following 
modules
\[
\Omega^1_{A'/\C} = (\Omega^1_{B/\C}\tensor_B A')\oplus 
(\bigoplus_{j=0}^{n-2} B x^j \dx) \quad \text{ as $B$-module};
\]
\[
\begin{array}{lll}
\Omega^{d-1}_{A'/\C} & = & (\Omega^{d-2}_{B/\C}\tensor_B A/(x^{n-1})\dx)  \oplus
(\Omega^{d-1}_{B/\C}\tensor_B A')  \\
& = & \stackrel{d-1}{\underset{i = 1}\bigoplus} A/(x^{n-1})\dx \wedge {\rm d}x_1 
\wedge\ldots \wedge \overset{\vee}{{\rm d}x_i}\wedge \ldots \wedge 
{\rm d}x_{d-1} \oplus A'  {\rm d}x_1  \wedge \ldots \wedge  {\rm d}x_{d-1};  \\
\end{array}
\]
\[ 
\Omega^{d-1}_{A/\C} = \bigoplus_{i=1}^{d-1}(A \dx\wedge {\rm d}x_1 
\wedge\ldots \wedge \overset{\vee}{{\rm d}x_i}\wedge \ldots \wedge 
{\rm d}x_{d-1}) \oplus A  {\rm d}x_1  \wedge \ldots \wedge  {\rm d}x_{d-1}.
\]

One can easily check from the above expressions that 
$\Omega^{d-1}_{(A, A')}$ is generated by the forms
$x^n {\rm d}x_1  \wedge \ldots \wedge  {\rm d}x_{d-1}$ and 
$( x^{n-1}\dx\wedge {\rm d}x_1 \wedge\ldots \wedge 
\overset{\vee}{{\rm d}x_i}\wedge \ldots \wedge {\rm d}x_{d-1} )_{i=1}^{d-1}$ 
as $A$-module.

We now consider the diagram 
\[
\xymatrix{ 
0\to \Omega^{d-1}_{(R, A_-)}\ar[d]_{i_+^*}\ar[r] & 
\Omega^{d-1}_{R/\C} \ar[d]^{i_+^*}\ar[r]^{i_-^*} & 
\Omega^{d-1}_{A/\C}\ar[d] \to 0 \\
0 \to \Omega^{d-1}_{(A,A')} \ar[r]   & 
\Omega^{d-1}_{A/\C} \ar[r]& \Omega^{d-1}_{A'/\C}\to 0.}
\]

We can lift (up to multiplication by elements in $\C^{\times}$)  the generators  
$x^n {\rm d}x_1  \wedge \ldots \wedge  {\rm d}x_{d-1}$ and 
$( x^{n-1}\dx\wedge {\rm d}x_1 \wedge\ldots 
\wedge \overset{\vee}{{\rm d}x_i}\wedge \ldots 
\wedge {\rm d}x_{d-1} )_{i=1}^{d-1}$ of $\Omega^{d-1}_{(A,A')}$  via the 
projection $i_+^*$ to elements
\begin{align*} & (x^n,0) {\rm d}(x_1,x_1)  \wedge \ldots \wedge  
{\rm d}(x_{d-1}, x_{d-1}), \\ 
&( {\rm d}(x^n, 0)\wedge {\rm d}(x_1,x_1) 
\wedge\ldots \wedge \overset{\vee}{{\rm d}(x_i,x_i)}\wedge \ldots \wedge 
{\rm d}(x_{d-1}, x_{d-1}) )_{i=1}^{d-1}
\quad\text{ in } \Omega^{d-1}_{R/\C}
\end{align*}

and we immediately see that they go to zero via the second projection 
$i_-^*$, so that they lift to elements in $\Omega^{d-1}_{(R, A_-)}$, proving 
that \eqref{eq:eq3-proof-Albanese-iso} is surjective and therefore completing 
the proof of the proposition.
\end{proof}

As a consequence of \propref{prop:surj-alb-any-dimension},
~\eqref{eq:Alb-Sx-Alb-Sxbar} and ~\eqref{eqn:Alb-Sx-Alb-Sxbar-0},
it follows that $A^d(X|D)$ is an extension of the abelian variety $A^d(X)$
by the linear group 
$G = \frac{\H^{2d-1}( D, \ZDe{D}{d})}{\H^{2d-1}( X, \ZDeun{X}{d})}$.

\begin{remk}\label{remk:Mod-rel-surface}
If $X$ is a surface, then one checks that $\ZDe{(X,D)}{2}$ is
quasi-isomorphic to the complex $j_{!}(\Z) \to \sI_D \to \Omega^1_{(X,D)}$
(see \cite[\S~4]{Levine-4}), where $j\colon  X \setminus D \inj X$ is the
open inclusion.
When $D_{\rm red}$ is a strict normal crossing divisor,
the complex $j_{!}(\Z) \to \sI_D \to \Omega^1_{(X,D)}$ is used
in \cite{BS} to construct a universal regular quotient of 
$\CH_0(X|D)_{\deg  0}$. One consequence of
\propref{prop:surj-alb-any-dimension} is that it provides a cohomological proof
that the Albanese variety
with modulus of \S~\ref{sec:Alb-mod} coincides with the one constructed in
\cite{BS} when $X$ is a surface and $D_{\rm red}$ is strict normal crossing. 
The universality Theorem \ref{thm:univ-Alb-overC} will tell us, more generally, that the two constructions agree in higher dimension as well, whenever 
$D_{\rm red}$ is strict normal crossing divisor.
\end{remk}

\section{An interlude on regular homomorphisms}
Let $k$ be an algebraically closed field and let $Y$ be a projective reduced scheme over $k$. Let $\CH_0^{LW}(Y) = \CH_0^{LW}(Y, Y_{\rm sing})$ and $\CH_0(Y) = \CH_0(Y, Y_{\rm sing})$ be the groups of zero-cycles associated to $Y$. Let $U$ be a dense open subscheme of $Y_{\rm reg}$ and choose a base point $x_i$ in every irreducible component $U_i$ of $U$. The following Definition is taken from \cite{ESV}.
\begin{defn}\label{def:definition-regular-homomorphism} Let $G$ be smooth commutative algebraic group over $k$. We say that a group homomorphism $\rho'\colon \CH_0^{LW}(Y)_{\deg 0}\to G$ of abstract groups is a regular homomorphism if the map $\pi\colon U\to G$ with $\pi_{|U_i}(x)  = \rho'([x] - [x_i])$ is a morphism of schemes (i.e., there exists a morphism of schemes whose restriction to closed points coincides with $\pi$).
\end{defn}
The same definition allow us to talk about regular homomorphisms from the Chow group $\CH_0(Y)_{\deg 0}$ instead.
\begin{remk}\label{remk:Reg-ESV-0}
In \cite[Definition~1.14]{ESV}, there are other equivalent definitions of 
regular map from the Levine-Weibel
Chow group of 0-cycles on a singular projective variety to a smooth 
commutative algebraic group. We will not need this explicitly, but we recall one of them  for the reader who wishes to remove a reference to the base points.

Let $U$ be an open dense in $Y_{\rm reg}$. Let $U_1, \ldots, U_s$ be the irreducible components of $U$. Consider the map
\[ \gamma^{(-)}\colon \Pi_U = \bigcup_{i=1}^s U_i \times U_i \to \CH_0^{LW}(Y)_{\deg 0}\]
defined by $\gamma^{(-)}(u, u') = [u]- [u']$.   We have then:
\end{remk}

\begin{prop}[Corollary 1.13, \cite{ESV}] \label{prop:equiv-conditions-regular-morphisms}Let $G$ be a smooth commutative algebraic group. Let $\rho'\colon \CH_0^{LW}(Y)_{\deg  0}\to G$ be a group homomorphism. Then the following conditions are equivalent.
	\begin{romanlist}
		\item The composition $\rho'\circ \gamma^{(-)}\colon \Pi_{Y_{\rm reg}}\to G$ is a morphism of scheme.
		\item The morphism $\rho'$ is regular in the sense of Definition \ref{def:definition-regular-homomorphism}.
		\end{romanlist}
	\end{prop}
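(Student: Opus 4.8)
The plan is to prove the two implications separately: the implication \mbox{(i)$\Rightarrow$(ii)} is essentially formal, and \mbox{(ii)$\Rightarrow$(i)} carries the geometric content. For \mbox{(i)$\Rightarrow$(ii)}, fix a dense open $U\subseteq Y_{\rm reg}$ with irreducible components $U_1,\dots,U_s$ and base points $x_i\in U_i$ as in Definition~\ref{def:definition-regular-homomorphism}. Then $\Pi_U=\bigcup_i U_i\times U_i$ is a dense open of $\Pi_{Y_{\rm reg}}$, and for each $i$ the map $U_i\to U_i\times U_i$, $u\mapsto (u,x_i)$, is a closed immersion. Composing the morphism $\rho'\circ\gamma^{(-)}$ with it yields the map $U_i\to G$, $u\mapsto\rho'([u]-[x_i])=\pi_{|U_i}(u)$, which is therefore a morphism; since this holds for every $i$, the map $\pi\colon U\to G$ is a morphism, i.e.\ $\rho'$ is regular.

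For \mbox{(ii)$\Rightarrow$(i)}, let $U$ and $x_i$ be as above and assume $\pi\colon U\to G$ is a morphism. Since $\rho'$ is a group homomorphism and the addition and inversion maps of $G$ are morphisms, on each $U_i\times U_i$ one has $(\rho'\circ\gamma^{(-)})(u,u')=\rho'([u]-[u'])=\rho'([u]-[x_i])-\rho'([u']-[x_i])=\pi(u)-\pi(u')$, so $\rho'\circ\gamma^{(-)}$ is already a morphism on the dense open $\Pi_U\subseteq\Pi_{Y_{\rm reg}}$. The remaining point is to show that this partially defined map $h\colon\Pi_{Y_{\rm reg}}\dashrightarrow G$ is defined everywhere. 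Here one uses that $Y_{\rm reg}$ is smooth, so each piece of $\Pi_{Y_{\rm reg}}$ is a product $(Y_{\rm reg})_i\times(Y_{\rm reg})_i$ of smooth varieties; by Weil's extension theorem, a rational map from a smooth variety to the group variety $G$ has indeterminacy locus which is either empty or of pure codimension one. One eliminates this codimension-one locus using the cocycle identity $h(u,u'')=h(u,u')+h(u',u'')$ — valid on the open where all three terms are defined — together with the homogeneity of $G$: near a general point of an indeterminacy divisor one picks an auxiliary point $u'$ in sufficiently general position so that $(u,u')$ and $(u',u'')$ avoid the indeterminacy locus, and rewrites $h(u,u'')=h(u,u')+h(u',u'')$, thereby extending $h$ across the divisor. (Alternatively, one restricts to smooth curves in $(Y_{\rm reg})_i$ meeting $U_i$ and uses that $\rho'$ annihilates divisors of rational functions on complete curves, which forces the relevant one-variable rational maps into $G$ to extend.) Concluding, $h$ is a morphism on all of $\Pi_{Y_{\rm reg}}$, which is condition (i). The full details of this argument are those of \cite[\S~1]{ESV}.

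The main obstacle is precisely the last step: extending $h$ across its codimension-one indeterminacy locus. This is the only place where one genuinely needs input beyond unwinding definitions — namely Weil's theorem on rational maps into group varieties and, through the cocycle identity and the homogeneity of $G$, the hypothesis that $\rho'$ is a group homomorphism. Everything else (the easy implication and the reduction to $\Pi_U$) is formal.
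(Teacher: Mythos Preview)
The paper does not give its own proof of this proposition; it is quoted verbatim as \cite[Corollary~1.13]{ESV} and used as a black box. So there is no ``paper's proof'' to compare against, only the original ESV argument.

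Your sketch is essentially correct but loose at the one nontrivial step. The implication (i)$\Rightarrow$(ii) is fine. For (ii)$\Rightarrow$(i), your reduction to extending $h$ from $\Pi_U$ to $\Pi_{Y_{\rm reg}}$ is right, and the fact you need --- that a rational map from a smooth variety to a commutative algebraic group has indeterminacy locus empty or of pure codimension one --- is true (it follows from Chevalley's structure theorem together with the fact that poles of rational functions on a normal scheme are divisorial), though it is not what is usually called ``Weil's extension theorem'' (that name normally refers to the abelian-variety case, where the conclusion is that the map extends everywhere). The cocycle step is the right mechanism, but your phrase ``pick an auxiliary $u'$ in sufficiently general position'' hides a real check: one must verify that for every $(u,u'')$ the set of admissible $u'$ is nonempty and that the resulting value is independent of the choice; this is the standard Weil argument for extending rational group laws, and it does go through, but it is not automatic from what you wrote.

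Your parenthetical alternative --- reducing to curves through a given point of $Y_{\rm reg}$ that also meet $U$, and using the curve case (generalized Jacobians) --- is in fact closer to the route taken in \cite[\S~1]{ESV}, where the equivalence is established through a chain of intermediate regularity conditions and the curve case does the work. Since you already defer to \cite{ESV} for details, the sketch is acceptable, but be aware that the argument you foreground (Weil $+$ cocycle) is not the one ESV actually runs.
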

As above, the expression ``the map $\phi$ is a morphism of schemes'', stands  for ``there exists a morphisms of schemes whose restriction to closed points coincides with $\phi$''.
\subsection{The case of the double}\label{ssec:SettingDoubleRegular}Let $X$ be now a smooth connected projective $k$-variety, equipped with an effective Cartier divisor $D$ and let $X_+$ and $X_-$ denote as above the irreducible components of the double $S_X$ of $X$ along $D$. 
Given any dense open subset $V\subset (S_X)_{\rm reg}$, we denote by $V_+$ and 
$V_-$ the intersection of $V$ with $X_+^o$ and $X_-^o$ respectively. 
We adapt the definition of regular homomorphism recalled above to the 
particular geometry of the double $S_X$.

\begin{defn}\label{defn:Reg-ESV}
Let $G$ be a smooth commutative algebraic group over $k$. A homomorphism 
\[
\rho'\colon \CH_0(S_X)_{\deg 0} \to G
\]
is called a regular homomorphism if given base points $x_{0, \pm}$ on each 
irreducible components $V_\pm$ of some open dense subscheme $V$ of 
$(S_X)_{\rm reg}$, the composition of $\rho'$ with the map
\[ 
\pi^V_{x_{0,\pm}}\colon V\to \CH_0(S_X)_{\deg 0}, \quad x\mapsto [x] - 
[x_{0, \theta(x)}], 
\]
where $\theta(x) =\pm$ according to $x\in V_\pm$, is a morphism of schemes. In a similar fashion, one can define the notion of regular homomorphism using the Levine-Weibel Chow group of $0$-cycles on the double $S_X$.
\end{defn}


It follows from \cite[Lemma~1.4]{ESV} that the image of a regular
homomorphism is a connected algebraic closed subgroup of $G$.
The initial object (whose underlying map is necessarily surjective) in the
category of regular maps $\CH_0(S_X)_{\deg 0} \to G$
is called the universal regular quotient of $\CH_0(S_X)_{\deg 0}$.
It was shown in \cite{ESV} that the universal regular quotient 
of $\CH^{LW}(Y)_{\deg 0}$ always exists
for any projective variety $Y$ over $k$. We state this theorem below
for $S_X$.

\begin{thm}$($\cite[Theorem 1]{ESV}$)$\label{thm:universalityESV} There exists a smooth connected algebraic group ${\rm Alb}(S_X)$, together with a regular surjective homomorphism $\rho_{S_X}\colon \CH_0^{LW}(S_X)_{\deg 0} \to {\rm Alb}(S_X)$ such that $\rho_{S_X}$ is universal among regular homomorphisms from $\CH_0^{LW}(S_X)_{\deg 0}$ to smooth commutative algebraic groups. When $k=\C$, then ${\rm Alb}(S_X)$ coincides with the Albanese variety $A^d(S_X)$ introduced in \eqref{eq:Alb-modulus-Alb-Sx}.
\end{thm}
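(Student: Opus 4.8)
The plan is to obtain this statement as a direct application of the main existence theorem of Esnault--Srinivas--Viehweg, since the double $S_X$ is an admissible input for \cite{ESV}. First I would verify the hypotheses: by \propref{prop:double-prp}(2) the scheme $S_X$ is reduced (as $X$, being smooth, is reduced), and by \propref{prop:double-prp}(1) it is projective (as $X$ is projective); hence $S_X$ is a reduced projective scheme over the algebraically closed field $k$. Thus \cite[Theorem 1]{ESV}, applied with $Y = S_X$, produces a smooth connected commutative algebraic group $\mathrm{Alb}(S_X)$ together with a surjective regular homomorphism $\rho_{S_X}\colon \CH_0^{LW}(S_X)_{\deg 0}\to \mathrm{Alb}(S_X)$ which is universal among regular homomorphisms from $\CH_0^{LW}(S_X)_{\deg 0}$ to smooth commutative algebraic groups.

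The one point needing a remark is that the notion of regular homomorphism in Definition \ref{defn:Reg-ESV}, tailored to the two-component geometry of the double, is literally the one used in \cite{ESV} and recalled in Definition \ref{def:definition-regular-homomorphism}. Indeed, by \propref{prop:double-prp}(6) and (1) the regular locus $(S_X)_{\rm reg}$ equals $(X\setminus D)\amalg(X\setminus D)$, with exactly the two components $X_+^o$ and $X_-^o$, both connected (an open subscheme of the irreducible scheme $X$ is irreducible). Choosing base points $x_{0,\pm}$ on dense opens $V_\pm\subset X_\pm^o$ is therefore precisely choosing one base point in each irreducible component of the dense open $V = V_+\amalg V_-$ of $(S_X)_{\rm reg}$, and the map $\pi^V_{x_{0,\pm}}$ of Definition \ref{defn:Reg-ESV} is exactly the map $\pi$ of Definition \ref{def:definition-regular-homomorphism}. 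By Proposition \ref{prop:equiv-conditions-regular-morphisms}, regularity in either sense may be tested on $\Pi_{(S_X)_{\rm reg}}$, so the two notions coincide, and the universal regular quotient furnished by \cite{ESV} is the desired $\mathrm{Alb}(S_X)$.

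Finally, for the identification over $k = \C$: the group $A^d(S_X)$ of \S\ref{ssec:Construction-Albanese-C} was defined by following the recipe of \cite{ESV} verbatim, namely as the kernel of the surjection $\H^{2d}(S_X,\Z(d)^{\mathcal{D}^*}_{S_X})\surj H^{2d}(S_X,\Z(d)_{S_X})$, endowed with the algebraic group structure supplied by the surjection $\psi\colon A^d(S_X)\to J^d(S_X)$ onto Deligne's semi-abelian variety with unipotent kernel (\cite[Lemma 3.1]{ESV}); moreover the Chern class $c^d_{\sD^*, S_X}$ composed with $cyc^{LW}_{S_X}$ restricts to a regular homomorphism $\CH^{LW}_0(S_X)_{\deg 0}\to A^d(S_X)$. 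The assertion that this is the universal regular quotient over $\C$ is exactly the complex-analytic content of \cite[Theorem 1]{ESV}, which I would simply cite. I do not anticipate a genuine obstacle in any of this: the only care required is the bookkeeping of the various formulations of ``regular'', together with the (already noted) fact that ESV's results are stated for arbitrary reduced projective $k$-schemes, so the highly singular scheme $S_X$ is covered without additional argument.
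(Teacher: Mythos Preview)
Your proposal is correct and matches the paper's approach: the paper does not give a proof of this theorem at all, simply stating it as a direct citation of \cite[Theorem~1]{ESV}. Your write-up actually goes further than the paper by spelling out why the hypotheses of \cite{ESV} are met (reducedness and projectivity of $S_X$ via \propref{prop:double-prp}) and why the two formulations of ``regular homomorphism'' agree, which is implicit in the paper but never made explicit.
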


\subsection{The universal semi-abelian quotient of 
$\CH_0(Y)_{\deg 0} $}\label{sec:SAQ}
Let $Y$ be a reduced projective scheme of dimension
$d \ge 1$ over an algebraically closed field $k$. 
Let us assume the characteristic of $k$ to be positive in this subsection.
In this case, do not know if the canonical map $\CH^{LW}_0(Y) \to \CH_0(Y)$
is an isomorphism. A weaker question is if the Albanese map
$\rho_Y: \CH_0^{LW}(Y)_{\deg 0} \to {\rm Alb}(Y)$ factors through 
$\CH_0(Y)_{\deg 0}$. We expect this to be true, but we do not yet know how
to verify this either. 
The reason for this is that, one does not know any description of
the Albanese variety in positive characteristic except its existence.
We however show in this section that the semi-abelian Albanese variety of
$Y$ indeed has this property. We shall use this to prove a comparison
result for the two Chow groups in positive characteristic.

The following description of the semi-abelian Albanese variety of
$Y$ is recalled from \cite[\S~2]{Mallick}. 
Let $\pi: Y^N \to Y$ denote the normalization map. Let ${\rm Cl}(Y^N)$
and $\Pic_W(Y^N)$ denote the class group and the Weil Picard group of
$Y^N$. Recall (see e.g.~\cite{Weil}) that $\Pic_W(Y^N)$ is the subgroup of
${\rm Cl}(Y^N)$ consisting of Weil divisors which are algebraically
equivalent to zero in the sense of \cite[Chap.~19]{Fulton}.
Let ${\rm Div}(Y)$ denote the free abelian group of Weil divisors on $Y$.
Let $\Lambda_{1}(Y)$ denote the subgroup of ${\rm Div}(Y^N)$ generated
by the Weil divisors which are supported on $\pi^{-1}(Y_{\rm sing})$.
This gives us a map $\iota_Y: \Lambda_{1}(Y) \to 
\frac{{\rm Cl}(Y^N)}{\Pic_W(Y^N)}$.

Let $\Lambda(Y)$ denote the kernel of the canonical map
\begin{equation}\label{eqn:Weil-Pic}
\Lambda_{1}(Y) \xrightarrow{(\iota_Y, \pi_*)} \frac{{\rm Cl}(Y^N)}{\Pic_W(Y^N)}
\oplus {\rm Div}(Y).
\end{equation}

The semi-abelian Albanese variety of $Y$ is the Cartier dual of
the 1-motive \[[\Lambda(Y) \to \Pic_W(Y^N)]\] and is denoted by $J^d(Y)$.
It follows from \cite[\S~4]{Mallick} that $J^d(Y)$ is the universal
semi-abelian quotient of the Esnault-Srinivas-Viehweg Albanese variety
${\rm Alb}(Y)$. Let $\rho^{\rm semi}_Y: \CH_0^{LW}(Y)_{\deg 0} \surj J^d(Y)$ denote
the universal regular homomorphism.

\begin{lem}\label{lem:semi-ab-functor}
Let $f: Z \to Y$ be a smooth projective morphism of relative dimension
$r$. Then there is a push-forward map $f^{\rm semi}_*: J^{d+r}(Z) \to J^d(Y)$
and a commutative diagram
\begin{equation}\label{eqn:semi-ab-functor-0}
\xymatrix@C1pc{
\sZ_0(Z, Z_{\rm sing}) \ar[d]_{\rho^{\rm semi}_Z} \ar[r]^{f_*} &
\sZ_0(Y, Y_{\rm sing}) \ar[d]^{\rho^{\rm semi}_Y} \\
J^{d+r}(Z) \ar[r]_{f_*} & J^d(Y).}
\end{equation}
\end{lem}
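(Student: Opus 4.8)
The statement asks for a push-forward $f^{\rm semi}_*\colon J^{d+r}(Z)\to J^d(Y)$ compatible with the cycle-level push-forward and the universal regular homomorphisms. The natural approach is to reduce to the explicit description of $J^d(-)$ as the Cartier dual of the $1$-motive $[\Lambda(-)\to \Pic_W((-)^N)]$ recalled just above, and to produce a morphism of $1$-motives in the opposite direction, namely
\[
[\Lambda(Y)\to \Pic_W(Y^N)] \longrightarrow [\Lambda(Z)\to \Pic_W(Z^N)],
\]
whose Cartier dual will be $f^{\rm semi}_*$. The key geometric input is that $f$ is smooth projective, so $f^{-1}(Y_{\rm sing}) = Z_{\rm sing}$ (the fibres being smooth), $Z^N = Z\times_Y Y^N$ and the induced map $f^N\colon Z^N\to Y^N$ is again smooth projective of relative dimension $r$. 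First I would record these facts and observe that flat pull-back of Weil divisors along $f^N$ sends $\Lambda_1(Y)$ into $\Lambda_1(Z)$ (since $\pi_Z^{-1}(Z_{\rm sing}) = (f^N)^{-1}(\pi_Y^{-1}(Y_{\rm sing}))$), and similarly $(f^N)^*$ on $\Pic_W$ respects algebraic equivalence. One then checks that $(f^N)^*$ is compatible with the two defining maps $(\iota_{(-)},\pi_{(-),*})$ of $\Lambda(-)$ — this is where the commutativity of flat pull-back with proper push-forward of cycles (\cite[Proposition~1.7]{Fulton}) is used — so that $(f^N)^*$ restricts to a map $\Lambda(Y)\to\Lambda(Z)$. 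Together with $(f^N)^*\colon \Pic_W(Y^N)\to\Pic_W(Z^N)$ this gives the desired morphism of $1$-motives, and Cartier duality produces $f^{\rm semi}_*$.

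Next I would verify the compatibility square \eqref{eqn:semi-ab-functor-0}. Both $\rho^{\rm semi}_Z$ and $\rho^{\rm semi}_Y$ are the universal regular homomorphisms, so it suffices to check the identity on the level of the explicit pairing between zero-cycles and line bundles that computes $\rho^{\rm semi}$: for a smooth closed point $z\in Z$ with $f(z) = y$, the class $\rho^{\rm semi}_Z([z]-[z_0])$ is represented by evaluation of Weil-Picard classes at $z$ minus at $z_0$, and its image under $f^{\rm semi}_*$ is, by construction, the functional $L\mapsto \langle (f^N)^*L, [z]-[z_0]\rangle = \langle L, f^N_*([z]-[z_0])\rangle$ on $\Pic_W(Y^N)$, which is exactly $\rho^{\rm semi}_Y(f_*([z]-[z_0]))$ (using $[k(z):k(y)] = 1$ as $k$ is algebraically closed, or carrying the degree factor through otherwise). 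Since $\sZ_0(Z,Z_{\rm sing})_{\deg 0}$ is generated by such differences on each component and the maps are additive, the square commutes on degree-zero cycles; the statement as written can be read with the implicit understanding that $\rho^{\rm semi}$ is applied after projecting to the degree-zero part, or one extends both vertical maps by a choice of base points componentwise.

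The main obstacle I anticipate is the bookkeeping in showing that $(f^N)^*$ genuinely lands in $\Lambda(Z)$ rather than merely in $\Lambda_1(Z)$: one must confront the possibility that a Weil divisor on $Y^N$ supported over $Y_{\rm sing}$ which is principal (hence zero in $\tfrac{{\rm Cl}(Y^N)}{\Pic_W(Y^N)}$ and has trivial push-forward to ${\rm Div}(Y)$) pulls back to something whose class in $\tfrac{{\rm Cl}(Z^N)}{\Pic_W(Z^N)}$ or whose push-forward to ${\rm Div}(Z)$ is non-trivial. Smoothness of $f^N$ rescues this: flat pull-back of a principal divisor is principal, and the projection formula $f^N_*(f^N)^* = (\deg\text{ of fibre})\cdot(-)$ together with the compatibility of $\pi_{Z,*}$ and $\pi_{Y,*}$ via $f$ and $f^N$ controls the second component. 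I would isolate this verification as the technical heart of the argument; the remaining points (functoriality of Cartier duality, generation of the Chow group by the distinguished cycles) are formal. A secondary, purely expository point is to make precise that in positive characteristic we only claim this for the \emph{semi-abelian} quotient $J^d$, not for the full Albanese, consistently with the preamble of \S~\ref{sec:SAQ}.
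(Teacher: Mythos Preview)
Your construction of $f^{\rm semi}_*$ via a morphism of $1$-motives $[\Lambda(Y)\to\Pic_W(Y^N)]\to[\Lambda(Z)\to\Pic_W(Z^N)]$ induced by $(f^N)^*$ is exactly the paper's approach, including the use of \cite[Proposition~1.7]{Fulton} to check compatibility with $\pi_*$ on Weil divisors. Your worry about $(f^N)^*$ landing in $\Lambda(Z)$ rather than merely $\Lambda_1(Z)$ is well-placed, and your resolution (principal divisors pull back to principal divisors, plus the compatibility of push-forwards) is correct and matches the paper.

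Where you diverge from the paper is in verifying the commutative square~\eqref{eqn:semi-ab-functor-0}. You attempt a direct computation via an ``explicit pairing between zero-cycles and line bundles that computes $\rho^{\rm semi}$'', but no such pairing has been set up in the paper, and making this precise would require either importing the machinery of Deligne's biextension pairing or reproving the content of \cite{Mallick}. The paper instead takes a shorter and more robust route: it observes that $J^d(Y)$ is a quotient of the Cartier dual $J^d_{\rm Serre}(Y)$ of the larger $1$-motive $[\Lambda_1(Y)\to\Pic_W(Y^N)]$, and that $J^d_{\rm Serre}(Y)$ is Serre's universal semi-abelian variety for morphisms from $Y_{\rm reg}$ (see \cite{Serre-1}). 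Since $Z_{\rm reg}=f^{-1}(Y_{\rm reg})$, this universality immediately yields the commutative square at the level of $J^d_{\rm Serre}$, and one then passes to the quotient via the evident commutative square $J^{d+r}_{\rm Serre}(Z)\to J^d_{\rm Serre}(Y)$ over $J^{d+r}(Z)\to J^d(Y)$. This avoids any explicit description of $\rho^{\rm semi}$ on cycles and reduces the compatibility to a universal property you can cite.
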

\begin{proof}
Since $f$ is smooth and projective, 
it follows that $f^N: Z^N \to Y^N$ is also smooth and
projective. It is clear that the flat pull-back $f^{N,*}$ takes
integral Weil divisors to integral Weil divisors. It is also clear
that this map preserves Weil divisors which are algebraically equivalent to
zero. Since $Z_{\rm sing} = f^{-1}(Y_{\rm sing})$, we see that
$f^{N, *}(\Lambda_{1}(Y)) \subset \Lambda_{1}(Z)$. 
Furthermore, it follows from \cite[Proposition~1.7]{Fulton} that
$f^{N, *}({\rm Ker}(\Lambda_{1}(Y) \to {\rm Div(Y)}) \subset
{\rm Ker}(\Lambda_{1}(Z) \to {\rm Div(Z)})$.
We conclude that $f^*$ induces a morphism of 1-motives
$f^*: [\Lambda(Y) \to \Pic_W(Y^N)] \to [\Lambda(Z) \to \Pic_W(Z^N)]$
and hence a map $f_*:J^{d+r}(Z) \to J^d(Y)$. 

To show the commutative diagram~\eqref{eqn:semi-ab-functor-0}, we need to 
observe that $J^d(Y)$ is a quotient of the Cartier dual $J^d_{\rm Serre}(Y)$
of the 1-motive
$[\Lambda_1(Y) \to \Pic_W(Y^N)]$ and this dual semi-abelian variety
is the universal object in the category of morphisms from $Y_{\rm reg}$ to
semi-abelian varieties (see \cite{Serre-1}).
Since $Z_{\rm reg} = f^{-1}(Y_{\rm reg})$, it follows from this universality
of $J^d_{\rm Serre}(Y)$ that there is a commutative diagram
as in ~\eqref{eqn:semi-ab-functor-0} if we replace $J^d(Y)$ by
$J^d_{\rm Serre}(Y)$. The commutative diagram
\[
\xymatrix@C1pc{
J^{d+r}_{\rm Serre}(Z) \ar[r]^{f_*} \ar@{->>}[d] & J^d_{\rm Serre}(Y) 
\ar@{->>}[d] \\
J^{d+r}(Z) \ar[r]_{f_*} & J^d(Y)}
\]
now finishes the proof. 
\end{proof}

\begin{prop}\label{prop:univ-LW-lci-group} 
The semi-abelian variety $J^d(Y)$ is the universal 
regular semi-abelian variety quotient of $\CH_0(Y)_{\deg 0}$.
\end{prop}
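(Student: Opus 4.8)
The plan is to show that the universal regular semi-abelian homomorphism $\rho^{\rm semi}_Y\colon \CH^{LW}_0(Y)_{\deg 0} \surj J^d(Y)$ recalled above factors through the canonical surjection $\CH^{LW}_0(Y)_{\deg 0} \surj \CH_0(Y)_{\deg 0}$ of \lemref{lem:lci-LW}, and that the induced homomorphism $\bar{\rho}^{\rm semi}_Y\colon \CH_0(Y)_{\deg 0} \to J^d(Y)$ inherits the universal property. For the factorization I would check that $\rho^{\rm semi}_Y$ annihilates $\nu_*(\divf(f))$ for every good curve $\nu\colon (C,Z) \to (Y, Y_{\rm sing})$ and every $f \in \sO^{\times}_{C,Z}$; by \lemref{lem:lci-curves} it is enough to treat the case when $\nu$ is a finite l.c.i.\ morphism. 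Exactly as in the proofs of \propref{prop:PB-pm} and \lemref{lem:factor-PN-projection}, I would factor $\nu$ as $C \xrightarrow{\mu} \P^N_Y \xrightarrow{\pi} Y$ with $\pi$ the projection and $\mu$ a regular closed immersion. Since $\pi$ is smooth, $(\P^N_Y)_{\rm sing} = \pi^{-1}(Y_{\rm sing})$, so $C \cap (\P^N_Y)_{\rm sing} = \nu^{-1}(Y_{\rm sing}) \subseteq Z$ contains no component of $C$; hence $C$ is a reduced Cartier curve on $\P^N_Y$ relative to $(\P^N_Y)_{\rm sing}$ and $f$ is regular and invertible along $C \cap (\P^N_Y)_{\rm sing}$. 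As $\nu$ is finite and $Y$ is projective, $C$ is proper, so $\mu_*(\divf(f))$ has degree zero and is a Levine--Weibel rational equivalence on $\P^N_Y$, whence $\rho^{\rm semi}_{\P^N_Y}(\mu_*(\divf(f))) = 0$. Then I would apply \lemref{lem:semi-ab-functor} to the smooth projective morphism $\pi\colon \P^N_Y \to Y$ of relative dimension $N$, which provides $\pi^{\rm semi}_*\colon J^{d+N}(\P^N_Y) \to J^d(Y)$ with $\rho^{\rm semi}_Y \circ \pi_* = \pi^{\rm semi}_* \circ \rho^{\rm semi}_{\P^N_Y}$; combining, $\rho^{\rm semi}_Y(\nu_*(\divf(f))) = \pi^{\rm semi}_*\big(\rho^{\rm semi}_{\P^N_Y}(\mu_*(\divf(f)))\big) = 0$. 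This produces $\bar{\rho}^{\rm semi}_Y$, which is surjective because $\rho^{\rm semi}_Y$ is.

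For universality I would first note that $\bar{\rho}^{\rm semi}_Y$ is itself regular: for any choice of base points, the map $x \mapsto \bar{\rho}^{\rm semi}_Y([x]-[x_i])$ on a dense open of $Y_{\rm reg}$ coincides with the corresponding morphism for $\rho^{\rm semi}_Y$, since $[x]-[x_i]$ is represented by the same $0$-cycle in both Chow groups. Conversely, given any regular homomorphism $\psi\colon \CH_0(Y)_{\deg 0} \to G$ to a semi-abelian variety $G$, precomposition with $\CH^{LW}_0(Y)_{\deg 0}\surj \CH_0(Y)_{\deg 0}$ yields a regular homomorphism $\psi'\colon \CH^{LW}_0(Y)_{\deg 0}\to G$ (the defining morphism on $Y_{\rm reg}$ is unchanged); by the universal property of $\rho^{\rm semi}_Y$ recorded in \secref{sec:SAQ}, one has $\psi' = h \circ \rho^{\rm semi}_Y$ for a unique homomorphism of algebraic groups $h\colon J^d(Y) \to G$. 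Since $\rho^{\rm semi}_Y$ is $\bar{\rho}^{\rm semi}_Y$ composed with the surjection onto $\CH_0(Y)_{\deg 0}$, this gives $h \circ \bar{\rho}^{\rm semi}_Y = \psi$, and $h$ is unique because $\bar{\rho}^{\rm semi}_Y$ is surjective. Hence $\bar{\rho}^{\rm semi}_Y\colon \CH_0(Y)_{\deg 0} \to J^d(Y)$ is the universal regular semi-abelian quotient.

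The main obstacle is the first step, namely that $\rho^{\rm semi}_Y$ kills divisors of rational functions on good l.c.i.\ curves that are not embedded Cartier curves on $Y$. Everything there rests on straightening such a curve to a regular closed immersion inside a projective bundle over $Y$ and on the functoriality of the semi-abelian Albanese under the smooth projective projection $\P^N_Y\to Y$ supplied by \lemref{lem:semi-ab-functor}; once that compatibility is granted, the remainder of the argument is formal.
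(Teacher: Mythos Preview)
Your proof is correct and follows essentially the same approach as the paper: reduce to finite l.c.i.\ good curves via \lemref{lem:lci-curves}, factor through a regular embedding into $\P^N_Y$, and use the push-forward compatibility of \lemref{lem:semi-ab-functor} to transport the vanishing of $\rho^{\rm semi}_{\P^N_Y}(\mu_*(\divf(f)))$ down to $J^d(Y)$. The paper compresses the universality verification into the phrase ``it is enough to show that $\rho^{\rm semi}_Y$ factors through $\CH_0(Y)_{\deg 0}$,'' whereas you spell this out explicitly; both are fine, and your added detail is accurate.
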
 
\begin{proof} 
Let $S$ denote the singular locus of $Y$.
It is enough to show that 
$\rho^{\rm semi}_{Y}: \CH_0^{LW}(Y)_{\deg 0}\surj J^d(Y)$ factors through the 
canonical map $\CH_0^{LW}(Y)_{\deg 0} \surj \CH_0(Y)_{\deg 0}$. 
Let then $\nu\colon C\to Y$ be a finite l.c.i map from a good curve $C$ 
relative to $S\subset Y$ and let 
$f\in \cO^{\times}_{C,E}$ for $E = C_{\rm sing}\cup \nu^{-1}(S)$. 

As in Lemma \ref{lem:factor-PN-projection}, we factor $\nu$ as composition 
$\nu = \pi\circ \mu$, where $\mu\colon C\hookrightarrow \P^N_{Y}$ is a regular 
embedding and $\pi\colon \P^N_{Y}\to Y$ is the projection. 
By \lemref{lem:semi-ab-functor}, we have the commutative 
\nolinebreak
diagram
\[
\xymatrix{ 
\sZ_0(\P^N_{Y}, \P^N_S)_{\deg 0}  \ar[r]^{\pi_*} 
\ar[d]_{\tilde{\rho}^{\rm semi}_{\P^N_Y}} & \
\sZ_0(Y, S)_{\deg 0}  \ar[d]^{\tilde{\rho}^{\rm semi}_{Y}}\\
J^{d+N}(\P^N_{Y}) \ar[r]_{\pi_*} & J^d(Y)}
\]
        where $\tilde{\rho}^{\rm semi}$ denotes on both vertical sides of the 
diagram the composition of the universal map $\rho^{\rm semi}$ with the 
canonical map $\sZ_0(-)_{\deg 0} \to \CH_0^{LW}(-)_{\deg 0}$. Since 
$\tilde{\rho}^{\rm semi}_{\P^N_Y}(\mu_*((\rm div)_C(f))) =0$ in 
$J^{d+N}(\P^N_{Y})$, the result follows.
\end{proof}

Using \propref{prop:univ-LW-lci-group} and 
\thmref{thm:Mallick-Rojtman}, we obtain the following comparison 
between the torsion parts of the Levine-Weibel Chow group and our modified 
definition. 

\begin{thm}\label{thm:LW-lci-iso-pprimary-torsion}
Let $Y$ be an equidimensional reduced projective scheme of dimension
$d \ge 1$ over an algebraically closed field $k$ of exponential
characteristic $p$. Then
the canonical 
map $\CH_0^{LW}(Y)\{l\} \xrightarrow{\simeq} \CH_0(Y)\{l\}$ between the 
$l$-primary torsion subgroups, is an isomorphism for every prime $l \neq p$.
\end{thm}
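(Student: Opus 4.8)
The strategy is to compare the two universal regular (semi-abelian) quotients and then invoke a Roitman-type theorem on the double/singular side. By Lemma~\ref{lem:lci-LW} there is a canonical surjection $\CH_0^{LW}(Y,Y_{\rm sing}) \surj \CH_0(Y,Y_{\rm sing})$, hence a surjection on $l$-primary torsion subgroups $\CH_0^{LW}(Y)\{l\} \surj \CH_0(Y)\{l\}$; the only thing to prove is injectivity. First I would recall from \propref{prop:univ-LW-lci-group} that the semi-abelian variety $J^d(Y)$ is simultaneously the universal regular semi-abelian quotient of $\CH_0^{LW}(Y)_{\deg 0}$ \emph{and} of $\CH_0(Y)_{\deg 0}$, so the Albanese map $\rho^{\rm semi}_Y$ factors as
\[
\CH_0^{LW}(Y)_{\deg 0} \surj \CH_0(Y)_{\deg 0} \xrightarrow{\bar\rho^{\rm semi}_Y} J^d(Y).
\]
This is the key point that lets me play the two groups off against each other without knowing whether $\CH_0^{LW}(Y)\to\CH_0(Y)$ is an isomorphism integrally.

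Next I would bring in the Roitman torsion theorem for the Levine-Weibel Chow group in its semi-abelian form. The relevant input (cited in the excerpt as \thmref{thm:Mallick-Rojtman}, due to Mallick) asserts that for a reduced projective scheme $Y$ over an algebraically closed field of exponential characteristic $p$, the map $\rho^{\rm semi}_Y$ induces an isomorphism $\CH_0^{LW}(Y)\{l\} \xrightarrow{\simeq} J^d(Y)\{l\}$ on $l$-primary torsion for every prime $l\neq p$. Granting this, consider the commutative triangle obtained by applying $(-)\{l\}$:
\[
\xymatrix@C1.2pc{
\CH_0^{LW}(Y)\{l\} \ar@{->>}[rr]^{can} \ar[rd]_{\simeq}^{\rho^{\rm semi}_Y} & & \CH_0(Y)\{l\} \ar[ld]^{\bar\rho^{\rm semi}_Y} \\
& J^d(Y)\{l\} &
}
\]
Here I use that the torsion of a degree-$0$ statement and the full torsion agree: a torsion $0$-cycle class is automatically of degree $0$ since $H^{2d}(Y,\Z(d))$ (equivalently $\Z^{(\#\text{components})}$ via the degree map on each component) is torsion-free, so $\CH_0^{LW}(Y)\{l\} = \CH_0^{LW}(Y)_{\deg 0}\{l\}$ and likewise for $\CH_0(Y)$. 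The left slanted arrow is an isomorphism by Mallick's theorem; the top arrow is surjective. Since the composite $\bar\rho^{\rm semi}_Y \circ can$ equals the isomorphism $\rho^{\rm semi}_Y$, the surjection $can$ must also be injective, hence an isomorphism. This proves the theorem.

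\textbf{Main obstacle.} The genuinely delicate point is not the diagram chase but ensuring that the cited Roitman/semi-abelian theorem (\thmref{thm:Mallick-Rojtman}) is available in exactly the generality needed: a reduced, equidimensional, possibly non-normal projective scheme over an algebraically closed field of arbitrary characteristic, with the torsion statement holding away from the characteristic. One must check that Mallick's construction of $J^d(Y)$ and the proof that $\rho^{\rm semi}_Y$ is an isomorphism on prime-to-$p$ torsion do not secretly require normality or characteristic zero; the description of $J^d(Y)$ via the $1$-motive $[\Lambda(Y)\to \Pic_W(Y^N)]$ recalled in \S\ref{sec:SAQ} is set up precisely so that it does not. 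A secondary subtlety is the compatibility of the factorization through $\CH_0(Y)_{\deg 0}$ from \propref{prop:univ-LW-lci-group} with the torsion subgroups, which is immediate once one observes $\CH_0(Y)\{l\}\subset\CH_0(Y)_{\deg 0}$ as noted above. Everything else is formal.
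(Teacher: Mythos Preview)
Your triangle argument correctly establishes \emph{injectivity} of $can:\CH_0^{LW}(Y)\{l\}\to\CH_0(Y)\{l\}$: since $\rho^{\rm semi}_Y=\bar\rho^{\rm semi}_Y\circ can$ is an isomorphism on $l$-primary torsion by Mallick's theorem, $can$ is injective. The gap is in surjectivity. Your claim that the surjection $\CH_0^{LW}(Y)\surj\CH_0(Y)$ induces a surjection on $l$-primary torsion subgroups is false in general: for a short exact sequence $0\to L\to A\to B\to 0$ of abelian groups one only has
\[
0\to L\{l\}\to A\{l\}\to B\{l\}\to L\otimes_{\Z}\Q_l/\Z_l,
\]
so surjectivity on $l$-primary torsion requires $L\otimes_{\Z}\Q_l/\Z_l=0$. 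Your triangle does not supply this either: it shows $\bar\rho^{\rm semi}_Y$ is surjective on $\{l\}$, but you would need it to be \emph{injective} on $\CH_0(Y)\{l\}$ to force $can$ to be onto, and that is precisely a Roitman statement for the modified Chow group which is not available a priori.

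The paper closes this gap with an extra ingredient you did not invoke: Levine's Chern class construction gives $(d-1)!\cdot\Ker(cyc^{LW}_Y)=0$, and since $L\subset\Ker(cyc^{LW}_Y)$ by \lemref{lem:cycle-class-0-cycles}, one gets $(d-1)!\cdot L=0$. In particular $L$ is torsion; combined with the vanishing of prime-to-$p$ torsion in $L$ (which your Mallick argument does yield), $L$ is $p$-primary of bounded exponent. Since $\Q_l/\Z_l$ is $p$-divisible for $l\neq p$, this forces $L\otimes_{\Z}\Q_l/\Z_l=0$, and then the exact sequence above gives surjectivity. So your strategy is the same as the paper's for half of the statement, but the surjectivity half genuinely needs the $(d-1)!$ bound on $L$ coming from the cycle class map to $K_0$.
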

\begin{proof}
Let $L$ denote the kernel of the canonical surjective map 
$\CH_0^{LW}(Y)\surj \CH_0(Y)$.
We first show that $L$ is a $p$-primary group of bounded exponent. 
It follows from \propref{prop:univ-LW-lci-group} that
the semi-Albanese map $\rho^{\rm semi}_{Y}$ from $\CH_0^{LW}(Y)_{\deg 0}$ to 
$J^d(Y)$ factors through $\CH_0(Y)_{\deg 0}$. 
Since $\rho^{\rm semi}_{Y}$ is an isomorphism on $n$-torsion subgroups for $n$ 
prime to $p$ by \thmref{thm:Mallick-Rojtman}, 
we immediately deduce that $L$ (which is same as 
${\rm Ker}(\CH_0^{LW}(Y)_{\deg 0}\to \CH_0(Y)_{\deg 0})$ is a 
$p$-torsion group. 
We now show that $L$ has a bounded exponent.

We first note from \lemref{lem:cycle-class-0-cycles}
that there is a factorization \[\CH^{LW}_0(Y) \xrightarrow{can} \CH_0(Y)
\xrightarrow{cyc_Y} K_0(Y)\] of the cycle class map $cyc^{LW}_Y$ from the
Levine-Weibel Chow group to $K_0(Y)$. On the other hand, it follows from
\cite[Corollary~5.4]{Levine-5} (see also \cite[Corollary~2.7]{Levine-2})
that the kernel of the map $cyc^{LW}_Y$ is a group of exponent 
$N_d:= (d-1)!$. We conclude that $N_d \cdot L = 0$.

We write $N_d = p^aq$, where $a \ge 0$ and $p \nmid q$.
We fix a cycle $\alpha \in L$. Since $L$ is a $p$-primary group,
we can write $p^n\alpha = 0$ for some $n \gg a$.
We then have an identity $xq + yp^{n-a} = 1$ for some $x, y \in \Z$.
This yields 
\[
p^a \alpha = (xq + yp^{n-a})p^a \alpha = (xqp^a + yp^n)\alpha
= xN_d \alpha + yp^n \alpha = 0.
\]
Since $a \in \Z$ depends only on $N_d$ and not on $\alpha$,
we get $p^a \cdot L = 0$. 
  
It is easy to see using 5-lemma that for every prime $l\neq p$, there is an
exact sequence
\[
0 \to L\{l\} \to \CH_0^{LW}(Y)\{l\} \to \CH_0(Y)\{l\} \to
L \otimes_{\Z} {\Q_l}/{\Z_l}.
\]
Since $L$ is a $p$-primary group of bounded exponent and
${\Q_l}/{\Z_l}$ is $p$-divisible, we must have $L\{l\} = 0 =   
L \otimes_{\Z} {\Q_l}/{\Z_l}$. In particular,
$\CH_0^{LW}(Y)\{l\} \xrightarrow{\simeq} \CH_0(Y)\{l\}$
\end{proof}

\begin{cor}\label{cor:Com-proj-p}
Let $k$ be an algebraically closed field of exponential
characteristic $p$. 
Let $Y$ be an equidimensional reduced projective scheme of dimension
$d \le p$. Then the canonical map $\CH^{LW}_0(Y) \to \CH_0(Y)$ is
an isomorphism.
\end{cor}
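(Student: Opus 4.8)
The plan is to deduce Corollary~\ref{cor:Com-proj-p} directly from the torsion comparison in Theorem~\ref{thm:LW-lci-iso-pprimary-torsion}, together with the bounded $p$-torsion analysis carried out in its proof. First I would recall that the canonical map $can\colon \CH^{LW}_0(Y) \surj \CH_0(Y)$ is always surjective by Lemma~\ref{lem:lci-LW}, so it suffices to show that its kernel $L$ vanishes. From the proof of Theorem~\ref{thm:LW-lci-iso-pprimary-torsion}, we know two things about $L$: it is $p$-primary torsion (using Proposition~\ref{prop:univ-LW-lci-group} together with the Roitman-type statement that $\rho^{\rm semi}_Y$ is an isomorphism on prime-to-$p$ torsion), and it is annihilated by $N_d := (d-1)!$, since $L$ is contained in the kernel of $cyc^{LW}_Y\colon \CH^{LW}_0(Y) \to K_0(Y)$, which has exponent $N_d$ by \cite[Corollary~5.4]{Levine-5}.

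Now I would combine these two facts under the hypothesis $d \le p$. When $d \le p$, the integer $N_d = (d-1)! = 1 \cdot 2 \cdots (d-1)$ is a product of integers all strictly less than $p$, hence $N_d$ is coprime to $p$. So $L$ is simultaneously a $p$-primary group and annihilated by an integer prime to $p$. For any element $\alpha \in L$ we then have $p^n \alpha = 0$ for some $n \ge 1$ and $N_d \alpha = 0$ with $\gcd(p^n, N_d) = 1$; writing $1 = a p^n + b N_d$ for suitable $a, b \in \Z$ gives $\alpha = a p^n \alpha + b N_d \alpha = 0$. Hence $L = 0$ and $can$ is an isomorphism.

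I do not expect any serious obstacle here: the corollary is a purely formal consequence of the two structural properties of $L$ established in the proof of Theorem~\ref{thm:LW-lci-iso-pprimary-torsion}, the only arithmetic input being the elementary observation that $(d-1)!$ is prime to $p$ precisely because every factor is $< p$ when $d \le p$. The one point worth stating carefully is that the bound $N_d \cdot L = 0$ and the $p$-primality of $L$ are genuinely available for $Y$ equidimensional reduced projective over an algebraically closed field, which is exactly the hypothesis of the corollary; both were derived in that generality in the preceding proof. So the write-up is essentially: recall $L = \Ker(can)$, cite $p$-primality and $N_d$-annihilation of $L$ from the previous proof, note $\gcd(N_d, p) = 1$ when $d \le p$, and run the two-line Bézout argument to conclude $L = 0$.
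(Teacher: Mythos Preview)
Your proposal is correct and is exactly the intended argument: the corollary is an immediate consequence of the two facts established in the proof of Theorem~\ref{thm:LW-lci-iso-pprimary-torsion}, namely that $L$ is $p$-primary and that $N_d \cdot L = 0$, combined with the observation that $(d-1)!$ is prime to $p$ when $d \le p$. Equivalently, in the notation of that proof one has $a = 0$ in the factorization $N_d = p^a q$, so the conclusion $p^a \cdot L = 0$ already gives $L = 0$.
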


\subsection{Regular homomorphism from Chow group with modulus}
\label{sec:Reg-mod}
We now turn to the definition of a regular morphism for the Chow group of $0$-cycles with modulus. Let $(X,D)$ be as in  \ref{ssec:SettingDoubleRegular}.
We write $X^o$ for the open complement of $D$ in $X$.
\begin{defn}\label{def:pi-map-albanese}
We fix a closed point $x_0\in X^o$. 
For $U\subset X^o$ open with $x_0\in U$, we define the map of sets 
\[
\pi^U_{x_0}\colon U\to \CH_0(X|D)_{\deg 0}, \quad x\mapsto [x]-[x_0],
\]
where $[x]$ and $[x_0]$ denote the classes of $x$ and $x_0$ respectively in 
$\CH_0(X|D)$. For a commutative algebraic group $G$ over $k$, we say that a 
homomorphism of abelian groups \[\rho\colon \CH_0(X|D)_{\deg 0} \to G\] is 
regular if there exists an open subset $U$ of $X^o$ and a closed 
point $x_0\in U$ such that $\rho\circ \pi^U_{x_0}\colon U\to G$ is a 
morphism of algebraic varieties. 
\end{defn}

\section{The Abel-Jacobi map with modulus and its universality}
\label{sec:AJ-map}
Let $X$ be a smooth projective scheme of dimension $d \ge 1$ over $\C$
and let $D \subset X$ be an effective Cartier divisor.
In this section, we show that $A^d(X|D)$
constructed in \S~\ref{sec:Alb-mod} has a natural structure of a
connected commutative algebraic group and it is the universal regular
quotient of $\CH_0(X|D)_{\rm deg \ 0}$ via an Abel-Jacobi map.
Since we are working over $\C$, 
we keep identifying $\CH^{LW}_0(Z)$ with $\CH_0(Z)$ in this section
(using \thmref{thm:0-cycle-affine-proj-char0}) for any projective 
scheme $Z$.

\subsection{The Abel-Jacobi map}\label{sec:CC-map}
Let $(X,D)$ be as above.
We write again $X^o$ for the open complement of $D$ in $X$ and $S_X$ for the double 
construction applied to the pair $(X,D)$. Recall from 
\cite{EV-DBcohomology} (see also \cite[Lemma 2.1]{ESV}) that given 
$x \in (S_X)_{\rm reg} = X^o\amalg X^o$, there is a unique element 
$[x]\in \H^{2d}_{\{x\}}(S_X, \ZDe{S_X}{d})$ mapping to the topological cycle 
class of $x$ in $H^{2d}_{\{x\}}(S_X, \Z(d))$ as well as to the de Rham cycle 
class of $x$ in $\H^{2d}_{\{x\}}(S_X, \Omega^{\geq d}_{S_X})$. Using the canonical 
forget support map $\H^{2d}_{\{x\}}(S_X, \ZDe{S_X}{d}) \to 
\H^{2d}(S_X, \ZDe{S_X}{d})$ and extending linearly, this gives rise to a well 
defined map 
\[
cyc_{S_X}^{\cD}\colon \sZ_{0}(S_X, D) \to \H^{2d}(S_X, \ZDe{S_X}{d})
\]
that composed with $\H^{2d}(S_X, \ZDe{S_X}{d}) \to H^{2d}(S_X,\Z(d)_{S_X}) = 
\Z\oplus \Z$ coincides with the degree map. The same construction on $X$ 
gives rise to the diagram
\begin{equation}\label{eqn:AJ-sequence-0}
\xymatrix{  \sZ_{0}(S_X, D) \ar[r]^/-.5cm/{i_-^*}\ar[d]_{cyc_{S_X}^{\cD}} & 
\sZ_0(X, D) \ar[d]^{cyc_{X}^{\cD}} \subset \sZ_0(X) \\ 
\H^{2d}(S_X, \ZDe{S_X}{d}) \ar[r]^{i_-^*} & \H^{2d}(X, \ZDeun{X}{d})
}
\end{equation}
whose commutativity is easily checked. We also note that $cyc^{\cD}$  
commutes with the map \[\Delta^*\colon \sZ_0(X, D) \to \sZ_{0}(S_X, D).\] 
By \cite[Lemma 2.6]{ESV}, the cycle class map $\cyc^{\cD}_{S_X}$ factors 
through the Chow group $\CH_0(S_X)$ and therefore determines a 
homomorphism 
\[
\rho_{S_X}\colon \CH_0(S_X)_{\deg 0}\to A^{d}(S_X),
\]
where $ \CH_0(S_X)_{\deg 0}$ denotes the kernel of the degree map 
$\deg\colon \CH_0(S_X) \surj \Z\oplus \Z$, that is the generalized 
Abel-Jacobi map of \cite{ESV}. Since the cycle class map to Deligne 
cohomology anyway factors through the usual Chow group of $0$-cycles for 
smooth projective varieties (see \cite{EV-DBcohomology}), we have then the 
following commutative diagram, with split exact rows
\begin{equation}\label{eqn:AJ-sequence-*}
\xymatrix{ 0\to \CH_0(X|D)_{\deg 0} \ar[r]^-{p_{+,*}} \ar[d]_{\rho_{X|D}} & 
\CH_0(S_X)_{\deg 0}\ar[d]^{\rho_{S_X}} \ar@<.5ex>[r]^{i^*_-}  &  
\CH_0(X)_{\deg 0} \ar[d]^{\rho_X} \ar@<.5ex>[l]^{\Delta^*} \to 0\\
0\to A^d(X|D) \ar[r]^-{p_{+,*}}  & A^d(S_X)  \ar@<.5ex>[r]^-{i^*_-} &  
A^d(X)\ar@<.5ex>[l]^-{\Delta^*} \to 0,
}
\end{equation}
where $\rho_X\colon \CH_0(X)_{\deg 0}\to A^d(X)$ is the usual Abel-Jacobi map 
and \[\rho_{X|D}\colon \CH_0(X|D)_{\deg 0}  \to A^d(X|D)  \] is the induced map on 
the kernels. 
Note that thanks to the existence of the splitting $\Delta^*$ of $i_-^*$ and 
its compatibility with the degree maps, the exactness of the first row follows 
immediately from our main Theorem \ref{thm:Main-PB-PF-gen}.
We shall call $\rho_{X|D}$ the {\sl Abel-Jacobi map with modulus}.

\subsection{Regularity of $\rho_{X|D}$}\label{sec:alg-mod-*}
We now fix base points $x_{0, \pm} = x_0$ on each 
irreducible components of $(S_X)_{\rm reg}$ and consider
the diagram
\begin{equation}\label{eqn:AJ-sequence-*-0}
\xymatrix@C2pc{
X^o_+ \amalg X^o_- \ar[r]^{\pi^{(S_X)_{\rm reg}}_{x_{0, \pm}}} \ar[d]_{\psi} & 
\CH_0(S_X)_{\deg 0} \ar[d]^{i^*_-} \ar@{->>}[r] & A^d(S_X) 
\ar@{-->}[d]^{\wt{i^*_-}}  \\
X^o  \ar[r]^/-.5cm/{\pi^{X^o}_{x_0}} & \CH_0(X)_{\deg 0} \ar@{->>}[r] & A^d(X),}
\end{equation}
where $\psi|_{X_-} = {\rm Id}_X$ and $\psi|_{X_+} = x_0$
so that the left square commutes.

$\psi$ is a morphism of schemes and the composite map on the bottom
is a regular morphism of schemes. It follows that 
the composite map \[X^o_+ \amalg X^o_- \to  \CH_0(S_X)_{\deg 0} \to
\CH_0(X)_{\deg 0} \to A^d(X)\] is a morphism of schemes.
It follows from the universal property of $A^d(S_X)$ that there is
a unique regular homomorphism of algebraic groups
$\wt{i^*_-}\colon  A^d(S_X) \to A^d(X)$ such that the right square  
commutes.

On the other hand, the right square in ~\eqref{eqn:AJ-sequence-*}
also commutes. Since $\CH_0(S_X)_{\deg 0} \to A^d(S_X)$ is surjective,
it follows that $\wt{i^*_-} =   i^*_-$.
We conclude that the map $i^*_-$ on the bottom row of 
~\eqref{eqn:AJ-sequence-*} is a regular homomorphism of connected
commutative algebraic groups.
Since $A^d(X|D)$ is the inverse image of the identity element under
this homomorphism, it follows that $A^d(X|D)$ is a
commutative algebraic group.
We have thus shown that $A^d(S_X)$ is an extension of
the abelian variety $A^d(X)$ by the connected commutative 
algebraic group $A^d(X|D)$.

\begin{lem}\label{lem:reg-Alb-C} 
The group homomorphism $\rho_{X|D}\colon \CH_0(X|D)_{\deg 0}\to A^d(X|D)$ is 
regular and surjective, making $A^d(X|D)$ a regular quotient of 
$\CH_0(X|D)_{\deg 0}$.
\end{lem}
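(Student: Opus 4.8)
The plan is to deduce both regularity and surjectivity of $\rho_{X|D}$ from the corresponding properties of the Abel--Jacobi map $\rho_{S_X}\colon \CH_0(S_X)_{\deg 0}\to A^d(S_X)$ on the double, using the split exact rows of \eqref{eqn:AJ-sequence-*}. For regularity, I would invoke the computation in \S\ref{sec:alg-mod-*} that identifies $A^d(X|D)$ with the fibre over the identity of the regular homomorphism $i^*_-\colon A^d(S_X)\to A^d(X)$, so $A^d(X|D)$ sits inside $A^d(S_X)$ as a connected commutative algebraic group via $p_{+,*}$. Now choose an open dense $U\subset X^o$ containing a base point $x_0$. Via the closed immersion $\iota_+\colon X_+\hookrightarrow S_X$, the set $U$ is identified with an open dense subset $U_+$ of the component $X^o_+$ of $(S_X)_{\rm reg}$, and I would pick base points $x_{0,\pm}$ with $x_{0,+}$ corresponding to $x_0$. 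The key compatibility is that, on cycles, $p_{+,*}\circ\pi^U_{x_0} = \pi^{(S_X)_{\rm reg}}_{x_{0,\pm}}|_{U_+}$ (both send $x\mapsto[\iota_+(x)]-[\iota_+(x_0)]$), which together with the left square of \eqref{eqn:AJ-sequence-*} and the commuting diagram in the lemma statement gives
\[
p_{+,*}\circ\rho_{X|D}\circ\pi^U_{x_0} = \rho_{S_X}\circ\pi^{(S_X)_{\rm reg}}_{x_{0,\pm}}\big|_{U_+}.
\]
The right-hand side is a morphism of schemes because $\rho_{S_X}$ is regular (Theorem \ref{thm:universalityESV}) and $U_+\hookrightarrow (S_X)_{\rm reg}$ is an open immersion; hence $p_{+,*}\circ\rho_{X|D}\circ\pi^U_{x_0}$ is a morphism, and since $p_{+,*}\colon A^d(X|D)\to A^d(S_X)$ is a closed immersion of algebraic groups, $\rho_{X|D}\circ\pi^U_{x_0}$ itself is a morphism of varieties. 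This is exactly the regularity condition of Definition \ref{def:pi-map-albanese}.

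For surjectivity, I would argue directly from the split exactness of both rows of \eqref{eqn:AJ-sequence-*}. The middle vertical map $\rho_{S_X}$ is surjective by Theorem \ref{thm:universalityESV}, and the right vertical map $\rho_X$ is the classical Abel--Jacobi map, which is surjective. A diagram chase on the split short exact sequences then forces $\rho_{X|D}$ to be surjective: given $a\in A^d(X|D)\subset A^d(S_X)$ write $a=p_{+,*}(a)$ and lift $a$ (as an element of $A^d(S_X)$) along $\rho_{S_X}$ to some $\gamma\in\CH_0(S_X)_{\deg 0}$; since $i^*_-(p_{+,*}(a))=0$ and $\rho_X\circ\iota^*_-(\gamma)=i^*_-(\rho_{S_X}(\gamma))=i^*_-(a)=0$, one can correct $\gamma$ by an element of the image of $p_{+,*}$ (using that $\CH_0(X|D)_{\deg 0}\xrightarrow{p_{+,*}}\ker(\iota^*_-)$ is the inclusion from \eqref{eq:Main-exact-seuqence}) to land in $\ker(\iota^*_-)$, i.e.\ in $p_{+,*}(\CH_0(X|D)_{\deg 0})$; its preimage in $\CH_0(X|D)_{\deg 0}$ then maps to $a$ under $\rho_{X|D}$. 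Because all sequences here are split compatibly with the maps $\Delta^*$ and $p_{+,*}$, this chase is clean.

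The main obstacle — and really the only subtle point — is ensuring the base-point and open-set bookkeeping lines up between the definition of regularity for $\CH_0(X|D)_{\deg 0}$ (Definition \ref{def:pi-map-albanese}, where $U\subset X^o$) and the definition for $\CH_0(S_X)_{\deg 0}$ (Definition \ref{defn:Reg-ESV}, where one works with a dense open $V\subset (S_X)_{\rm reg}$ having components $V_\pm$). The resolution is that $(S_X)_{\rm reg}=X^o_+\amalg X^o_-$ by \propref{prop:double-prp}(6), so an open $U\subset X^o$ with base point $x_0$ produces, via $\iota_+$, the datum $(U_+, x_{0,+})$ needed on the double side, and $\pi^U_{x_0}$ is intertwined with $\pi^V_{x_{0,\pm}}$ restricted to $U_+$ precisely by $p_{+,*}$. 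Once this identification is recorded, everything reduces to the already-established regularity of $\rho_{S_X}$ and the fact that $p_{+,*}$ is a morphism of algebraic groups with closed image, established in \S\ref{sec:alg-mod-*}.
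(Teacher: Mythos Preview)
Your argument is correct and follows essentially the same route as the paper: regularity is pulled back from $\rho_{S_X}$ via the closed immersion $p_{+,*}\colon A^d(X|D)\hookrightarrow A^d(S_X)$ and the identity $p_{+,*}\circ\pi^U_{x_0}=\pi^{V}_{x_{0,\pm}}\circ i_{U,+}$, while surjectivity comes from the compatibly split rows of \eqref{eqn:AJ-sequence-*} together with the surjectivity of $\rho_{S_X}$ and $\rho_X$. One small slip in your diagram chase: to move $\gamma$ into $\ker(\iota^*_-)$ you must subtract $\Delta^*(\iota^*_-(\gamma))$ (an element in the image of $\Delta^*$, which lies in $\ker(\rho_{S_X})$ since $\rho_X(\iota^*_-(\gamma))=0$), not an element in the image of $p_{+,*}$, since anything in $\operatorname{im}(p_{+,*})=\ker(\iota^*_-)$ leaves $\iota^*_-(\gamma)$ unchanged; this is exactly how the paper phrases it, as the splitting $\Delta^*$ making $\ker(\rho_{S_X})\to\ker(\rho_X)$ surjective.
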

\begin{proof}The surjectivity of the generalized Abel-Jacobi map 
$\rho_{X|D}$ is a consequence of the definition. 
Indeed, $\rho_{S_X}$ is surjective 
by \thmref{thm:universalityESV} while $\rho_X$ is classically known to be 
surjective. The surjectivity of  $\rho_{X|D}$ follows then from the existence 
of the splitting \[\Delta^*\colon \CH_0(X)_{\deg 0}\to \CH_0(S_X)_{\deg 0}\] that 
makes the induced map ${\rm Ker}(\rho_{S_X})\to {\rm Ker}(\rho_X)$ surjective
(see ~\eqref{eqn:AJ-sequence-*}).  

For the regularity, let $V$ be an open dense subset of $(S_X)_{\rm reg}$ such 
that $\rho_{S_X}\circ\pi^V_{x_0}$ is regular. By  \thmref{thm:universalityESV}, 
such $V$ exists. Up to shrinking $V$ further, we can assume that $V$ is of the 
form $U\amalg U$, for $U\subset X^o$ open (dense) subset of $X$ disjoint from 
$D$. Let $i_{U,+}$ denote the inclusion $U\to U\amalg U$ of the first 
component. Then we clearly have \[ \rho_{S_X} \circ p_{+,*} \circ \pi^U_{x_0} = 
\rho_{S_X} \circ \pi^{U\amalg U}_{x_0, \pm} \circ i_{U,+}\] so that the composition 
$U\to \CH_0(X|D)_{\deg 0 } \to A^d(X|D) \inj A^d(S_X)$ 
is a morphism of schemes. Since $A^d(X|D) \inj A^d(S_X)$ is a  
closed immersion, we get the claim.
\end{proof}

\subsection{Universality of $\rho_{X|D}$}\label{sec:Univ}
Our next goal is to prove that the Abel-Jacobi map with modulus
$\rho_{X|D}\colon \CH_0(X|D)_{\deg 0}\to A^d(X|D)$ makes $A^d(X|D)$ the
universal regular quotient of $\CH_0(X|D)_{\deg 0}$.

\begin{lem}\label{lem:Delta*-mor}
The homomorphism $\Delta^*\colon  A^d(X) \to A^d(S_X)$ of ~\eqref{eqn:AJ-sequence-*}
is a morphism of schemes.
\end{lem}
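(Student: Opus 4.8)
The statement to prove is that the homomorphism $\Delta^*\colon A^d(X)\to A^d(S_X)$ appearing in the split exact sequence \eqref{eqn:AJ-sequence-*} is in fact a morphism of algebraic groups (equivalently, a morphism of schemes), not merely an abstract group homomorphism. The key point is that we already know, from \thmref{thm:universalityESV}, that $A^d(S_X)$ is the universal regular quotient of $\CH_0^{LW}(S_X)_{\deg 0}=\CH_0(S_X)_{\deg 0}$, and from the classical theory that $A^d(X)=\mathrm{Alb}(X)$ together with $\rho_X$ is the universal regular quotient of $\CH_0(X)_{\deg 0}$. So I would exploit universality in one direction: if I can exhibit a regular homomorphism $\CH_0(X)_{\deg 0}\to A^d(S_X)$ whose underlying abstract map is $\Delta^*\circ\rho_X$, then the universal property of $(A^d(X),\rho_X)$ forces the induced map $A^d(X)\to A^d(S_X)$ to be a morphism of algebraic groups, and by construction that induced map has the same underlying abstract homomorphism as $\Delta^*$, hence equals $\Delta^*$.

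Concretely, first I would fix a base point $x_0\in X^o$ and an open dense $U\subseteq X^o$ such that $\rho_X\circ\pi^U_{x_0}\colon U\to A^d(X)$ is a morphism of schemes. Then I would consider the composite $U\xrightarrow{\Delta^*}\CH_0(S_X)_{\deg 0}\xrightarrow{\rho_{S_X}}A^d(S_X)$; here $\Delta^*$ sends a closed point $x\in U$ to $[x_+]+[x_-]-[x_{0,+}]-[x_{0,-}]$ in $\CH_0(S_X)_{\deg 0}$, using the pull-back formula from \S\ref{sec:Sing-PB} (recall $\Delta^*([x])=[x_+]+[x_-]$). This is exactly the sum of two maps $U\to (S_X)_{\mathrm{reg}}$, namely $x\mapsto x_+$ and $x\mapsto x_-$, each followed by $\pi^{(S_X)_{\mathrm{reg}}}_{x_{0,\pm}}$ and then by $\rho_{S_X}$. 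By \thmref{thm:universalityESV} the latter composite $(S_X)_{\mathrm{reg}}\to A^d(S_X)$ is a morphism of schemes, and the two closed immersions $U\hookrightarrow X_\pm\hookrightarrow (S_X)_{\mathrm{reg}}$ are morphisms of schemes; since $A^d(S_X)$ is a commutative algebraic group, the sum of the two resulting morphisms $U\to A^d(S_X)$ is again a morphism of schemes. Hence $\rho_{S_X}\circ\Delta^*\colon\CH_0(X)_{\deg 0}\to A^d(S_X)$ is a regular homomorphism in the sense of \defref{def:definition-regular-homomorphism}.

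Now I would invoke the universality of $(A^d(X),\rho_X)$: there is a unique morphism of algebraic groups $g\colon A^d(X)\to A^d(S_X)$ with $g\circ\rho_X=\rho_{S_X}\circ\Delta^*$. On the other hand, \eqref{eqn:AJ-sequence-*} already provides an abstract group homomorphism, call it the ``old'' $\Delta^*\colon A^d(X)\to A^d(S_X)$, satisfying the same identity $\Delta^*\circ\rho_X=\rho_{S_X}\circ\Delta^*$ (this is precisely the commutativity of the relevant square in \eqref{eqn:AJ-sequence-*}, together with the fact that $\Delta^*$ on cycles is compatible with $\mathrm{cyc}^{\mathcal D}$, as noted after \eqref{eqn:AJ-sequence-0}). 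Since $\rho_X$ is surjective, $g$ and the old $\Delta^*$ agree as abstract maps, so $\Delta^*=g$ is a morphism of schemes, which is the assertion. The only mild subtlety — and the step I would treat most carefully — is checking that $\rho_{S_X}\circ\Delta^*$ on $\CH_0(X)_{\deg 0}$ is genuinely a \emph{regular} homomorphism, i.e. that it is well-defined on $\CH_0(X)_{\deg 0}$ (which follows from \thmref{thm:PB-main} and the factorization of $\mathrm{cyc}^{\mathcal D}_{S_X}$ through $\CH_0(S_X)$) and that the base-point normalization is handled consistently between $X$ and $S_X$; once the degree-zero/base-point bookkeeping is in place, the argument is immediate.
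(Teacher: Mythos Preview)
Your proposal is correct and follows essentially the same approach as the paper: you show that $\rho_{S_X}\circ\Delta^*$ is a regular homomorphism by writing, for $x\in U$, the cycle $\Delta^*([x]-[x_0])=([x_+]-[x_{0,+}])+([x_-]-[x_{0,-}])$ as the sum of two morphisms $U\to A^d(S_X)$ coming from the regularity of $\rho_{S_X}$, and then conclude via the universality of $A^d(X)$ and the surjectivity of $\rho_X$ that the induced map coincides with the given $\Delta^*$. This is exactly what the paper does (it phrases the sum via the map $\theta_U$ in \eqref{eqn:addition}); the only cosmetic difference is that the paper first fixes the open $U$ using the regularity of $\rho_{S_X}$ (from the proof of \lemref{lem:reg-Alb-C}) rather than of $\rho_X$, which is slightly cleaner since that is the regularity actually needed.
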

\begin{proof}
We have seen in the proof of \lemref{lem:reg-Alb-C} that there is a
dense open subset $U \subset X^o$ and a closed point $x_0 = x_{0, \pm} \in U$
such that the composition $U \amalg U \xrightarrow{\pi^{U\amalg U}_{x_0, \pm}} 
\CH_0(S_X)_{\deg 0 } \to A^d(S_X)$ is a morphism of schemes.
Let $i_{U,\pm}\colon  U \inj U \amalg U$ denote the inclusions into the
first and the second component, respectively.
Again from the proof of  \lemref{lem:reg-Alb-C}, we have that the maps
$\rho_{S_X} \circ \pi^{U\amalg U}_{x_0, \pm} \circ i_{U,\pm}$ are both
morphisms of schemes.
In particular, the composition
\begin{equation}\label{eqn:addition}
\theta_U\colon  U \xrightarrow{\psi}  A^d(S_X) \times  A^d(S_X) \xrightarrow{+}  
A^d(S_X)
\end{equation}
is also a morphism of schemes, where 
$\psi = (\rho_{S_X} \circ \pi^{U\amalg U}_{x_0, \pm} \circ i_{U,+},
\rho_{S_X} \circ \pi^{U\amalg U}_{x_0, \pm} \circ i_{U, -})$
and the second arrow in ~\eqref{eqn:addition} is the addition.

We now consider a diagram
\begin{equation}\label{eqn:Delta*-mor-0}
\xymatrix@C1pc{
U \ar[r]^/-.4cm/{\pi^U_{x_0}} \ar@{=}[d] & \CH_0(X)_{\deg 0} \ar[r]^/.2cm/{\rho_X} 
\ar[d]^{\Delta^*} & A^d(X) \ar@{-->}[d] \\ 
U \ar[r] & \CH_0(S_X)_{\deg 0} \ar[r]_-{\rho_{S_X}} & A^d(S_X),}
\end{equation}
where the first arrow on the bottom is 
$(\pi^{U\amalg U}_{x_0, \pm} \circ i_{U,+}) + 
(\pi^{U\amalg U}_{x_0, \pm} \circ i_{U,-})$.
It is clear from the definition of $\Delta^*$ in the middle that the
left square of ~\eqref{eqn:Delta*-mor-0} commutes. The composite map on the 
bottom is same as $\theta_U$, which we just showed above to be a
morphism of schemes. We conclude that the map
$\rho_{S_X} \circ \Delta^*$ is a regular homomorphism.
It follows from the universality of $A^d(X)$ that there is a unique
morphism of algebraic groups $\wt{\Delta^*}\colon  A^d(X) \to A^d(S_X)$ such that
the right square of ~\eqref{eqn:Delta*-mor-0} commutes.

On the other hand, we have seen in ~\eqref{eqn:AJ-sequence-*} that
right square also commutes if we replace $\wt{\Delta^*}$ by
$\Delta^*$. Since $\rho_X$ is surjective, we must have 
$\wt{\Delta^*} = \Delta^*$. In particular, $\Delta^*$ is 
morphism of schemes. 
\end{proof}

\subsubsection{}To prove the universality of $A^d(X|D)$, we begin with the 
following construction. Consider the homomorphism
\[ 
\tau^*\colon A^{d}(S_X)\to A^d(S_X), \quad 
\tau^* = {\rm Id}_{S_X} - \Delta^*\circ i_-^*.
\]
Note that $\Delta^*$ is a morphism of schemes by \lemref{lem:Delta*-mor}
and we have shown in \S~\ref{sec:alg-mod-*} that $i^*_-$ is also 
a morphism of schemes. We conclude that $\tau^*$ is morphism of 
algebraic groups.

Note that $\tau^*$ uniquely factors through $A^d(X|D)$, since 
$i_-^* \circ \tau^* =0$ and we have already identified $A^d(X|D)$ with the 
fiber  over the identity element of $A^d(X)$ via $i_-^*$. The map $\tau^*$ 
gives then an explicit isomorphism of algebraic groups 
\begin{equation}\label{eqn:Split-decom}
(\tau^*, i_-^*)\colon A^d(S_X) \xrightarrow{\simeq} A^d(X|D) \times A^d(X).
\end{equation}
Moreover, since $i^*_* \circ \Delta^* = {\rm Id}_{A^d(X)}$, we  get an 
extension of commutative algebraic groups
\begin{equation}\label{eqn:Split-decom-0}
0 \to A^d(X) \xrightarrow{\Delta^*} A^d(S_X) \xrightarrow{\tau^*}
A^d(X|D) \to 0.
\end{equation}

We now claim that the diagram
\begin{equation}\label{eqn:Split-decom-1}
\xymatrix@C1pc{
\CH_0(S_X)_{\deg 0} \ar[r]^{\tau^*_X} \ar[d]_{\rho_{S_X}} & \CH_0(X|D)_{\deg 0}
\ar[d]^{\rho_{X|D}} \\
A^d(S_X) \ar[r]^{\tau^*} & A^d(X|D)}
\end{equation}
commutes.
To see this, we can can write, using \thmref{thm:Main-PB-PF-gen},
any element $\alpha \in \CH_0(S_X)_{\deg 0}$ as $\alpha = \
p_{+,*}(\alpha_1) + \Delta^*(\alpha_2)$.
Since $\tau^*_X \circ \Delta^* = 0$, by definitions, we get
$\rho_{X|D}\circ \tau^*_X(\alpha) = \rho_{X|D}(\alpha_1)$
by ~\eqref{eqn:AJ-sequence-*}.

On the other hand, $\tau^* \circ \Delta^* = 0$ and $\tau^* \circ 
p_{+,*} = {\rm Id}_{A^d(X|D)}$ so that
\[
\begin{array}{lll}
\tau^* \circ \rho_{S_X}(\alpha) & = & 
\tau^* \circ \rho_{S_X} \circ p_{+,*}(\alpha_1)  +
\tau^* \circ \rho_{S_X} \circ \Delta^*(\alpha_2) \\
& {=}^{\dagger} & \tau^* \circ p_{+,*} \circ \rho_{X|D} (\alpha_1) +
\tau^* \circ \Delta^* \circ \rho_X(\alpha_2) \\
& = & \rho_{X|D} (\alpha_1),
\end{array}
\]
where ${=}^{\dagger}$ follows from ~\eqref{eqn:AJ-sequence-*}.
This proves the commutativity of ~\eqref{eqn:Split-decom-1}.

\begin{thm}\label{thm:univ-Alb-overC} 
The Abel-Jacobi map $\rho_{X|D} \colon \CH_0(X|D)_{\deg 0} \to A^d(X|D)$
makes $A^d(X|D)$ the universal regular quotient of $\CH_0(X|D)_{\deg 0}$. 
\end{thm}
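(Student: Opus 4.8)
The plan is to deduce the universality of $\rho_{X|D}$ from the already-established universality of $A^d(S_X)$ (Theorem~\ref{thm:universalityESV}), using the splitting of the double decomposition that we have built on both the cycle and the group side. Concretely, suppose $G$ is a smooth commutative algebraic group over $\C$ and $\rho'\colon \CH_0(X|D)_{\deg 0}\to G$ is a regular homomorphism (in the sense of Definition~\ref{def:pi-map-albanese}). We must produce a unique morphism of algebraic groups $h\colon A^d(X|D)\to G$ with $h\circ\rho_{X|D}=\rho'$. First I would form the composite
\[
\rho'':=\rho'\circ\tau^*_X\colon \CH_0(S_X)_{\deg 0}\to G,
\]
where $\tau^*_X$ is the difference map of Theorem~\ref{thm:Main-PB-PF-gen}. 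The key point is that $\rho''$ is again a \emph{regular} homomorphism from $\CH_0(S_X)_{\deg 0}$: restricting to an open dense $U\amalg U\subset (S_X)_{\rm reg}$ with a base point $x_0$, the composite $U\amalg U\xrightarrow{\pi^{U\amalg U}_{x_0,\pm}}\CH_0(S_X)_{\deg 0}\xrightarrow{\tau^*_X}\CH_0(X|D)_{\deg 0}\xrightarrow{\rho'}G$ factors, on each component, through a difference of two copies of the regular map $\rho'\circ\pi^U_{x_0}$ (using $\tau^*_X\circ p_{+,*}=\mathrm{Id}$ and $\tau^*_X\circ\Delta^*=0$ on cycles, and the definition $\tau^*_X=\iota^*_+-\iota^*_-$), hence is a morphism of schemes. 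By Theorem~\ref{thm:universalityESV} there is then a unique morphism of algebraic groups $g\colon A^d(S_X)\to G$ with $g\circ\rho_{S_X}=\rho''$.

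Next I would define $h:=g\circ p_{+,*}\colon A^d(X|D)\to A^d(S_X)\to G$, which is a morphism of algebraic groups since $p_{+,*}$ is a closed immersion of algebraic groups (as identified in \S\ref{sec:alg-mod-*} via $(\tau^*,i_-^*)$). To check $h\circ\rho_{X|D}=\rho'$, take $\beta\in\CH_0(X|D)_{\deg 0}$ and use Theorem~\ref{thm:Main-PB-PF-gen}: $\tau^*_X\circ p_{+,*}=\mathrm{Id}$, so
\[
h\circ\rho_{X|D}(\beta)=g\circ p_{+,*}\circ\rho_{X|D}(\beta)
=g\circ\rho_{S_X}\circ p_{+,*}(\beta)
=\rho''\circ p_{+,*}(\beta)
=\rho'\circ\tau^*_X\circ p_{+,*}(\beta)=\rho'(\beta),
\]
where the second equality is the commutativity of the left square in \eqref{eqn:AJ-sequence-*}. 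For uniqueness, suppose $h'\colon A^d(X|D)\to G$ is another morphism of algebraic groups with $h'\circ\rho_{X|D}=\rho'$. Since $\rho_{X|D}$ is surjective by Lemma~\ref{lem:reg-Alb-C}, $h'$ is determined on the image of $\rho_{X|D}$, which is all of $A^d(X|D)$; hence $h'=h$. This also shows $\rho_{X|D}$ itself is the \emph{initial} regular quotient: applying the above to $G=A^d(X|D)$ and $\rho'=\rho_{X|D}$ gives $h=\mathrm{Id}$, and any regular quotient receives a unique map from it.

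The step I expect to require the most care is verifying that $\rho''=\rho'\circ\tau^*_X$ is genuinely a regular homomorphism on $\CH_0(S_X)_{\deg 0}$, i.e.\ matching the base-point conventions of Definition~\ref{defn:Reg-ESV} (two base points $x_{0,\pm}$, one on each component $V_\pm$) against the single base point in Definition~\ref{def:pi-map-albanese}; one must choose $V=U\amalg U$ with the \emph{same} base point $x_0$ used on both components, and then observe that $\tau^*_X\circ\pi^{U\amalg U}_{x_0,\pm}$ restricted to $V_+$ is $x\mapsto[x]-[x_0]=\pi^U_{x_0}(x)$ while restricted to $V_-$ it is $x\mapsto -([x]-[x_0])=-\pi^U_{x_0}(x)$, so $\rho''$ is regular because $G$ has morphism inversion and $\rho'\circ\pi^U_{x_0}$ is a morphism. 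A minor companion point is that one should also check $g\circ\Delta^*=0$ (equivalently that $g$ factors through $\tau^*\colon A^d(S_X)\to A^d(X|D)$ of \eqref{eqn:Split-decom-0}), which follows from $\tau^*_X\circ\Delta^*=0$ on cycles together with surjectivity of $\rho_X$ and of $\rho_{S_X}\circ\Delta^*$ onto its image; this reconciles the two a priori different constructions $h=g\circ p_{+,*}$ and ``$g$ descended through $\tau^*$'' and makes the whole diagram \eqref{eqn:Split-decom-1} coherent. Everything else is formal manipulation with the split exact sequences \eqref{eqn:AJ-sequence-*}, \eqref{eqn:Split-decom} and \eqref{eqn:Split-decom-1}.
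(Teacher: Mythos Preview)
Your proposal is correct and follows essentially the same route as the paper: both form $\rho'\circ\tau^*_X$, verify its regularity on $\CH_0(S_X)_{\deg 0}$ by the same component-by-component check (restriction to $V_+$ gives $\rho'\circ\pi^U_{x_0}$, restriction to $V_-$ gives its negative), invoke the universality of $A^d(S_X)$ to get $g$, and then descend to $A^d(X|D)$. The only cosmetic difference is that you define $h=g\circ p_{+,*}$ directly while the paper first checks $g\circ\Delta^*=0$ and then factors $g$ through $\tau^*\colon A^d(S_X)\to A^d(X|D)$; as you yourself note, the splitting $\tau^*\circ p_{+,*}=\mathrm{Id}$ makes these two descriptions coincide.
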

\begin{proof}
We only need to prove the universality.
Let $G$ be a commutative 
algebraic group and let $\psi\colon \CH_0(X|D)_{\deg 0} \to G$ be a 
regular homomorphism. Let $U\subset X^o$ be an open dense subset so that 
the 
composite $\psi \circ \pi^U_{x_0}$ is a morphism of schemes, for $x_0\in U$ a 
base point. Let $V = U\amalg U$. We claim that 
$\delta = \psi\circ \tau_X^*\circ \pi^{V}_{x_0, \pm} $ is a morphism of 
schemes, where 
$\tau_X^*\colon \CH_0(S_X)_{\deg 0} \to \CH_0(X|D)_{\deg 0}$ is splitting  the 
map $p_{+,*}$ of Theorem \ref{thm:Main-PB-PF-gen}. 
Indeed, it is actually enough to 
show that the restriction of $\delta$ to the two components $U_\pm$ of $V$ is 
a morphism of schemes. But we have, for $x\in U_+$:
\[
\psi\circ \tau_X^*\circ \pi_{x_0, \pm}^{V} (x)= \psi \circ 
\tau_X^* ( [x]_+ - [x_{0, +}])  = \psi ( [x]-[x_0]) = \psi \circ \pi^U_{x_0} (x),
\]
where $[x]_+$ denotes the class in $\CH_0(S_X)$ of the closed point $x$ in the 
component $X_+$ of $S_X$. Since $\psi \circ \pi^U_{x_0}$ is by assumption a 
morphism of schemes, this proves the claim for the restriction to $U_+$. 
Similarly for $x\in U_-$, we have
\[
\psi\circ \tau_X^*\circ \pi_{x_0, \pm}^{V} (x)= \psi \circ 
\tau_X^* ( [x]_- - [x_{0, -}])  = \psi ( - [x] + [x_0]) =  
- \psi \circ \pi^U_{x_0} (x)
\]
that is also a morphism of schemes, since $G$ is an algebraic group. 

By the claim and \thmref{thm:universalityESV}, there is then a unique morphism 
of algebraic groups $\tilde{\psi} \colon A^d(S_X) \to G$ such that there is
a commutative square
\[\xymatrix{ \CH_0(S_X)_{\deg 0} \ar@{->>}[r]^{\tau_X^*} \ar[d]_{\rho_{S_X}} & 
\CH_0(X|D)_{\deg 0} \ar[d]^{\psi} \\ 
A^d(S_X) \ar[r]_{\tilde{\psi}} & G.}
\]

We now claim that the composition
\[ 
A^d(X) \xrightarrow{\Delta^*} A^d(S_X) \xrightarrow{\tilde{\psi}} G\]
is equal to the constant map $A^d(X)\to 0$.  
Indeed, since the left square of the diagram
\[
\xymatrix{ \CH_0(X)_{\deg 0 } \ar[r]^{\Delta^*} \ar@{->>}[d]_{\rho_X} &
\CH_0(S_X)_{\deg 0} \ar[r]^{\tau_X^*} \ar[d]_{\rho_{S_X}} & 
\CH_0(X|D)_{\deg 0} \ar[d]^{\psi} \\ 
 A^d(X) \ar[r]_{\Delta^*} & A^d(S_X) \ar[r]_{\tilde{\psi}} & G}
\]
also commutes by ~\eqref{eqn:AJ-sequence-*} and since $\rho_X$ is surjective, 
it is enough to show that 
$\tilde{\psi} \circ \Delta^* \circ \rho_X =0$.

But $\tilde{\psi} \circ \Delta^* \circ \rho_X = 
\tilde{\psi} \circ \rho_{S_X} \circ \Delta^* = 
\psi \circ( \tau_X^* \circ \Delta_X^* )= 0$ since 
$\tau^*_X\circ \Delta_X^* = 0$. This proves the claim.
Using this face, the exact sequence ~\eqref{eqn:Split-decom-0}
and the commutative diagram ~\eqref{eqn:Split-decom-1},
it follows immediately that there exists a unique morphism of algebraic groups
$\psi_G\colon  A^d(X|D) \to G$ such that $\psi_G \circ \rho_{X|D} = \psi$.
This finishes the proof.
\end{proof}

\subsection{Roitman's  theorem for 0-cycles with modulus}\label{sec:RT-0}
The first application of our approach to study  algebraic cycles with 
modulus was already given in \thmref{thm:Intro-3-Pf}.
As second application, we now prove the following Roitman torsion theorem
for 0-cycles with modulus on smooth projective schemes over $\C$.
This will be generalized to positive characteristic in the
next section.

\begin{thm}\label{thm:Roitman-char-0}
Let $X$ be a smooth projective variety of dimension $d \ge 1$ over
$\C$ and let $D\subset X$ be an effective Cartier divisor. 
Then the Abel-Jacobi map $\rho_{X|D}\colon \CH_0(X|D)_{\deg 0} \to A^d(X|D)$ 
induces an isomorphism on the torsion subgroups.
\end{thm}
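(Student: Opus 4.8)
The plan is to deduce the Roitman torsion theorem with modulus directly from the decomposition theorem (Theorem~\ref{thm:Main-PB-PF-gen}), the classical Roitman theorem for the smooth projective variety $X$, and the generalized Roitman theorem for the (singular) projective scheme $S_X$ due to Biswas--Srinivas / Krishna (as recorded in the cited results on $\CH_0^{LW}$). The key observation is that the split short exact sequence
\[
0 \to \CH_0(X|D)_{\deg 0} \xrightarrow{p_{+,*}} \CH_0(S_X)_{\deg 0} \xrightarrow{\tau^*_X} \CH_0(X)_{\deg 0} \to 0
\]
of Theorem~\ref{thm:Main-PB-PF-gen} (combined with \eqref{eqn:AJ-sequence-*}, noting that the splitting $\Delta^*$ is compatible with degrees) induces, via the Abel--Jacobi maps constructed in Section~\ref{sec:AJ-map}, a commutative ladder of split short exact sequences of abelian groups whose vertical maps are $\rho_{X|D}$, $\rho_{S_X}$, $\rho_X$. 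Applying the snake lemma (or just the fact that a split exact sequence stays split exact after applying $-\otimes \mathbb{Z}/n$ and $\mathrm{Tor}(-,\mathbb{Z}/n)$) to this ladder, one sees that $\rho_{X|D}$ is an isomorphism on torsion as soon as $\rho_{S_X}$ and $\rho_X$ are.

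\textbf{Key steps, in order.} First I would recall that $\rho_X\colon \CH_0(X)_{\deg 0}\to A^d(X)=\mathrm{Alb}(X)$ is an isomorphism on torsion by the classical theorem of Roitman \cite{Roitman}. Second, I would invoke the generalized Roitman theorem for the singular projective scheme $S_X$: the Abel--Jacobi map $\rho_{S_X}\colon \CH_0^{LW}(S_X)_{\deg 0}\to A^d(S_X)$ of Esnault--Srinivas--Viehweg \cite{ESV} is an isomorphism on torsion subgroups over $\mathbb{C}$ (this is where one uses that we are over $\mathbb{C}$; in positive characteristic the analogue is what gets proved in the next section). Since we work over $\mathbb{C}$, Theorem~\ref{thm:0-cycle-affine-proj-char0} identifies $\CH_0^{LW}(S_X)$ with $\CH_0(S_X)$, so $\rho_{S_X}$ is an isomorphism on torsion for our $\CH_0(S_X)$ as well. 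Third, I would assemble the commutative diagram \eqref{eqn:AJ-sequence-*} together with its counterpart coming from \eqref{eqn:Split-decom-0}--\eqref{eqn:Split-decom-1}; because both rows are split (with compatible splittings $p_{+,*}$, $\Delta^*$, $\tau^*_X$, $\tau^*$), the sequence of torsion subgroups
\[
0 \to {}_n\CH_0(X|D)_{\deg 0} \to {}_n\CH_0(S_X)_{\deg 0} \to {}_n\CH_0(X)_{\deg 0} \to 0
\]
is again split exact for every $n$, and similarly for the groups $A^d(-)$. A diagram chase then forces $\rho_{X|D}$ to be injective and surjective on $n$-torsion, hence on all torsion. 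Finally, I would remark that surjectivity of $\rho_{X|D}$ (already recorded in Lemma~\ref{lem:reg-Alb-C}) makes the target torsion subgroup ${}_nA^d(X|D)$ a quotient of ${}_n\CH_0(X|D)_{\deg 0}$, and the injectivity statement upgrades this to an isomorphism.

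\textbf{Main obstacle.} The routine part is the homological algebra; the genuine input is the generalized Roitman theorem for $S_X$, which is a highly singular projective scheme (not even $R_1$), so one must be careful that the version of the torsion theorem being cited actually applies to $S_X$ with the Levine--Weibel Chow group $\CH_0^{LW}(S_X,(S_X)_{\mathrm{sing}})=\CH_0^{LW}(S_X,D)$. The point is that $\rho_{S_X}$ in \eqref{eqn:AJ-sequence-*} is exactly the Esnault--Srinivas--Viehweg Abel--Jacobi map for the projective variety $S_X$, for which the torsion statement is available over $\mathbb{C}$ by the work on Roitman's theorem for singular projective varieties; I would make sure to cite this precisely and to check that the identification $\CH_0^{LW}(S_X)\cong\CH_0(S_X)$ of Theorem~\ref{thm:0-cycle-affine-proj-char0}(3) is being used to transport the statement to our modified Chow group. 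Everything else follows formally from the split exactness provided by the decomposition theorem.
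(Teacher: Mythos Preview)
Your approach is correct and essentially identical to the paper's proof: both use the commutative ladder~\eqref{eqn:AJ-sequence-*} of split short exact sequences, invoke classical Roitman for $\rho_X$ and Biswas--Srinivas \cite{BS-1} for $\rho_{S_X}$, and conclude by the elementary fact that split exactness is preserved on torsion subgroups. One small slip: in your displayed sequence you label the second map $\tau^*_X$, but $\tau^*_X$ lands in $\CH_0(X|D)$, not $\CH_0(X)$; the map you want there is $\iota_-^*$ (as in~\eqref{eqn:AJ-sequence-*}), with $\Delta^*$ as the splitting---which is in fact what you use in the rest of your argument.
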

\begin{proof}
We have the following commutative diagram with split exact rows
\begin{equation}\label{eqn:Roitman-char-0*}
\xymatrix{ 
0\to \CH_0(X|D)_{\deg 0 } \ar[r]^-{p_{+,*}} \ar[d]_{\rho_{X|D}} &
\CH_0(S_X)_{\deg 0} \ar[r]^-{i_-^*} \ar[d]_{\rho_{S_X}} & \CH_0(X)_{\deg 0} 
\ar[d]^{\rho_X}  \to 0\\ 
0\to A^d(X|D) \ar[r]_-{p_{+,*}} & A^d(S_X) \ar[r]_-{i_-^*}  & A^d(X)\to 0.}
\end{equation}

Since the maps $i_-^*$ on the top and the bottom rows
are split by $\Delta^*$, the two 
sequences remain exact even after passing to the torsion subgroups. The 
statement then follows from the theorem of Roitman \cite{Roitman}
for $\rho_X$ and the theorem of Biswas-Srinivas \cite{BS-1} for
$\rho_{S_X}$.
\end{proof}

\subsection{Bloch's conjecture for 0-cycles with modulus}
\label{sec:B-conj}
Let $X$ be a reduced projective surface over $\C$. 
Recall that the Chow group of 0-cycles $\CH_0(X)$ is said to be
{\sl finite dimensional} if the Abel-Jacobi map
$\rho_X\colon  \CH_0(X)_{\deg 0} \to A^2(X)$ is an isomorphism.
Recall that the famous Bloch conjecture about 0-cycles on surfaces
says the following.

\begin{conj}$($Bloch$)$\label{conj:Bloch}
Let $X$ be a smooth projective surface over $\C$ such that $H^2(X, \sO_X) = 0$.
Then $\CH_0(X)$ is finite dimensional. 
\end{conj}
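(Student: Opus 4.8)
The statement is the classical Bloch conjecture, which remains open in full generality; accordingly what follows is a strategy together with an honest indication of where the genuine obstruction lies, rather than a complete argument. The hypothesis $H^2(X,\sO_X)=0$ says $p_g(X)=0$, and by Hodge symmetry on a smooth projective surface this forces $H^0(X,\Omega^2_X)=0$ as well, so $H^2(X_{\an},\C)$ is of pure type $(1,1)$. Thus the usual source of infinite-dimensionality of $\CH_0(X)$ detected by Mumford's theorem, namely a nonzero holomorphic $2$-form, is absent, and the conjecture predicts that the Abel--Jacobi map $\rho_X\colon\CH_0(X)_{\deg 0}\to A^2(X)=\Alb(X)$ is an isomorphism.

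First I would split the problem according to the Kodaira dimension $\kappa(X)$. For $\kappa(X)\le 1$ the conjecture is the theorem of Bloch--Kas--Lieberman: via the Enriques--Kodaira classification one reduces the $p_g=0$ surfaces of non-general type (rational, Enriques, and the relevant (bi)elliptic, ruled and elliptic-fibered cases) to surfaces admitting a fibration over a curve or dominated by a product of curves, where finite-dimensionality is established directly by analyzing the fibration and applying Bloch's spread-and-specialize arguments. This part I expect to be routine modulo bookkeeping of the classification, and it already covers the case needed for this paper's applications, namely $X$ not of general type.

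The conceptual core is best phrased motivically. Using Murre's Chow--K\"unneth decomposition $h(X)=\bigoplus_{i=0}^{4}h^i(X)$ together with the finer splitting $h^2(X)=h^2_{\mathrm{alg}}(X)\oplus t_2(X)$ into an algebraic summand and the \emph{transcendental motive} $t_2(X)$, one has the Guletskii--Pedrini equivalence: $\CH_0(X)$ is finite-dimensional if and only if $t_2(X)=0$. The Betti realization of $t_2(X)$ is the transcendental cohomology $H^2_{\mathrm{tr}}(X)$, the orthogonal complement of $\NS(X)_\Q$ in $H^2$; since $p_g=0$ makes $H^2$ pure of type $(1,1)$, the Lefschetz $(1,1)$-theorem gives $H^2_{\mathrm{tr}}(X)=0$. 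Hence under our hypothesis $t_2(X)$ is a Chow motive with trivial cohomological realization, and the plan is to deduce $t_2(X)=0$ from this.

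The step I expect to be the real obstacle — and where the conjecture is in fact open — is precisely the vanishing of a motive with trivial realization. This would follow from \emph{finite-dimensionality} of $t_2(X)$ in the sense of Kimura--O'Sullivan, since a finite-dimensional motive that is numerically (here, homologically) trivial must be zero, and $t_2(X)$ is numerically trivial once $H^2_{\mathrm{tr}}(X)=0$. Thus the proposal reduces Bloch's conjecture for $p_g=0$ surfaces to Kimura finite-dimensionality of $t_2(X)$. For surfaces dominated by a product of curves, or whose motive is otherwise built from $\mathbf 1$, curves and abelian varieties, this is known and the argument closes; for a general surface of general type with $p_g=0$ it is not, and this is exactly the outstanding case that keeps the statement a conjecture. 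I would therefore present the non-general-type case as an unconditional theorem and the general-type case as contingent on finite-dimensionality, flagging clearly that no unconditional proof of the latter is presently available.
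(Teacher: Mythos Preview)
The statement you were asked to prove is \emph{Bloch's conjecture}, stated in the paper as Conjecture~\ref{conj:Bloch}; the paper does not prove it, and indeed remarks immediately afterward that it ``is known to be true for surfaces of Kodaira dimension less than two \cite{BKL} and is open in general.'' There is therefore no proof in the paper to compare your proposal against.

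Your write-up is accurate and honest: you correctly identify that the conjecture is open, that the case $\kappa(X)\le 1$ is the Bloch--Kas--Lieberman theorem, and that the general-type case can be reformulated via the Guletski\u{\i}--Pedrini equivalence as the vanishing of the transcendental motive $t_2(X)$, which would follow from Kimura--O'Sullivan finite-dimensionality of $t_2(X)$. All of this is correct and well summarized. But it is, as you yourself say, a strategy and not a proof: the step ``$t_2(X)$ has trivial realization $\Rightarrow$ $t_2(X)=0$'' is exactly the open problem, and invoking Kimura finite-dimensionality for an arbitrary surface of general type with $p_g=0$ is not presently available. So your proposal should be read as a status report on the conjecture rather than a proof attempt; in that capacity it is fine, and it matches what the paper assumes as background when it invokes the conjecture in Theorem~\ref{thm:Bloch-mod-thm} and Corollary~\ref{cor:Bloch-conj-mod-NGT}.
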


This conjecture is known to be true for surfaces of Kodaira dimension
less than two \cite{BKL} and is open in general. It has been shown by 
Voisin \cite{Voisin} that a generalized version of this conjecture in 
higher dimensions is very closely related to the Hodge conjecture.

Let $X$ be a smooth projective surface over $\C$ and let
$D \subset X$ be an effective Cartier divisor. Let $\sI_D$ denote the
subsheaf of ideals in $\sO_X$ defining $D$.
We shall say that $\CH_0(X|D)$ is finite dimensional if the
map $\rho_{X|D}\colon  \CH_0(X|D)_{\deg 0} \to A^d(X|D)$ is an isomorphism.
We can now state the following analogue of the Bloch conjecture 
for 0-cycles with modulus.

\begin{conj}\label{conj:Bloch-mod}
Let $X$ be a smooth projective surface over $\C$.
Let $D \subset X$ be an effective Cartier divisor such that
$H^2(X, \sI_D) = 0$. Then $\CH_0(X|D)$ is finite dimensional.
\end{conj}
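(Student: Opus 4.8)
The plan is to reduce Conjecture~\ref{conj:Bloch-mod} to the classical Bloch conjecture for an auxiliary smooth projective surface via the decomposition Theorem~\ref{thm:Main-PB-PF-gen} together with the universality of the Albanese with modulus (Theorem~\ref{thm:univ-Alb-overC}). Recall that by \eqref{eq:Main-exact-seuqence-0} and \eqref{eqn:AJ-sequence-*} we have a commutative diagram of split short exact sequences
\[
\xymatrix@C1pc{
0 \ar[r] & \CH_0(X)_{\deg 0} \ar[r]^{\Delta^*} \ar[d]_{\rho_X} & \CH_0(S_X)_{\deg 0} \ar[r]^{\tau^*_X} \ar[d]_{\rho_{S_X}} & \CH_0(X|D)_{\deg 0} \ar[r] \ar[d]^{\rho_{X|D}} & 0 \\
0 \ar[r] & A^d(X) \ar[r]_{\Delta^*} & A^d(S_X) \ar[r]_{\tau^*} & A^d(X|D) \ar[r] & 0,
}
\]
so $\rho_{X|D}$ is an isomorphism if and only if $\rho_{S_X}$ is an isomorphism, since $\rho_X$ is an isomorphism for any surface $X$ with $H^2(X,\sO_X)=0$ by the assumed Bloch conjecture (this applies because $H^2(X,\sI_D)=0$ together with $H^2(X,\sO_X) \twoheadleftarrow H^2(X,\sI_D)$... more precisely one needs $H^2(X,\sO_X)=0$, which I will deduce below). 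Thus the heart of the matter is to show that $\CH_0(S_X)_{\deg 0} \to A^d(S_X)$ is an isomorphism, i.e.\ that the singular surface $S_X$ has ``finite dimensional'' Chow group of zero cycles in the sense of Esnault--Srinivas--Viehweg.

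\textbf{Key steps.} First, I would observe that the hypothesis $H^2(X,\sI_D)=0$ forces $H^2(X,\sO_X)=0$: from the short exact sequence $0 \to \sI_D \to \sO_X \to \sO_D \to 0$ and $H^2(X,\sO_D)=0$ (as $\dim D \le 1$), surjectivity of $H^2(X,\sI_D) \to H^2(X,\sO_X)$ gives the vanishing. Hence Bloch's conjecture applies to $X$ and $\rho_X$ is an isomorphism. Second, and this is the technical core, I would invoke the main result of \cite{Krishna-1} on the Bloch conjecture for singular projective surfaces: one needs to check that $S_X$ satisfies the cohomological hypothesis there, which should be exactly a vanishing statement of the form $H^2(S_X, \sO_{S_X}) = 0$ or $H^2$ of a suitable sheaf on $S_X$. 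Using the Mayer--Vietoris/conductor sequence $0 \to \sO_{S_X} \to \sO_X \oplus \sO_X \to \sO_D \to 0$ attached to the square \eqref{eq:fundamental-square}, and the identification (from \eqref{eqn:double-2}) of $\ker(\sO_{S_X}\to \sO_{X_-})$ with $\sI_D \subset \sO_{X_+}$, one gets a short exact sequence $0 \to \sI_D \to \sO_{S_X} \to \sO_X \to 0$; taking cohomology and using $H^2(X,\sI_D)=0=H^2(X,\sO_X)$ yields $H^2(S_X,\sO_{S_X})=0$. Third, with this vanishing in hand, the Bloch conjecture for $S_X$ (granted the conjecture for its smooth ``pieces'', which here are two copies of $X$, for which Bloch is assumed) gives that $\rho_{S_X}$ is an isomorphism on the degree-zero part; one has to be slightly careful that the relevant notion of degree on $S_X$ is the one with values in $\Z \oplus \Z$, and that the ESV Albanese $A^d(S_X)$ is the correct universal regular quotient, which is exactly Theorem~\ref{thm:universalityESV}. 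Finally, a diagram chase in the display above, using that both rows are split exact and the outer vertical maps are isomorphisms, gives that $\rho_{X|D}$ is an isomorphism, proving Conjecture~\ref{conj:Bloch-mod}.

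\textbf{Main obstacle.} The delicate point is the second step: establishing the precise cohomological criterion under which the Bloch conjecture for the \emph{singular} surface $S_X$ follows from the Bloch conjecture for $X$, and matching the hypothesis $H^2(X,\sI_D)=0$ to whatever vanishing is actually needed for $S_X$ in \cite{Krishna-1}. In particular one must ensure that passing from $\CH_0^{LW}(S_X)$ (on which \cite{Krishna-1} and \cite{BS-1} operate, and with which our modified $\CH_0(S_X)$ agrees by Theorem~\ref{thm:0-cycle-affine-proj-char0} since $\Char(k)=0$ and $S_X$ is projective) to the split decomposition is compatible with the Abel--Jacobi maps --- this compatibility is already recorded in \eqref{eqn:AJ-sequence-*} and \eqref{eqn:Split-decom-1}, so the remaining work is essentially the sheaf-cohomology bookkeeping on the double together with a careful citation of the singular Bloch conjecture. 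Everything else is a formal consequence of Theorems~\ref{thm:Main-PB-PF-gen}, \ref{thm:univ-Alb-overC}, \ref{thm:Roitman-char-0} and the split-exactness of the two sequences.
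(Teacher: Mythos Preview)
The statement in question is a \emph{conjecture}, and the paper does not prove it unconditionally. What the paper actually establishes is the conditional Theorem~\ref{thm:Bloch-mod-thm}, which adds the hypothesis that the classical Bloch conjecture (Conjecture~\ref{conj:Bloch}) holds for $X$. Your proposal does exactly the same: you repeatedly invoke ``the assumed Bloch conjecture'' for $X$, so what you are really sketching is a proof of Theorem~\ref{thm:Bloch-mod-thm}, not of Conjecture~\ref{conj:Bloch-mod} as stated. Without that extra assumption your argument does not go through, and indeed taking $D=\emptyset$ shows that Conjecture~\ref{conj:Bloch-mod} contains the classical Bloch conjecture as a special case, so an unconditional proof is not expected.

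Granting the extra hypothesis, your approach is essentially identical to the paper's proof of Theorem~\ref{thm:Bloch-mod-thm}: first deduce $H^2(X,\sO_X)=0$ from the surjection $H^2(X,\sI_D)\twoheadrightarrow H^2(X,\sO_X)$; then use the identification $\sI_{X_-}\simeq\sI_D$ (equivalently the conductor sequence for $S_X$) to get $H^2(S_X,\sO_{S_X})=0$; then apply \cite[Theorem~1.3]{Krishna-1} to the singular surface $S_X$ (whose normalization $X\amalg X$ has finite-dimensional $\CH_0$ by the assumed Bloch conjecture for $X$) to conclude that $\rho_{S_X}$ is an isomorphism; and finally read off the isomorphism for $\rho_{X|D}$ from the split exact diagram \eqref{eqn:Roitman-char-0*}. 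The paper also flags the point you call the ``main obstacle'': the statement of \cite[Theorem~1.3]{Krishna-1} assumes Bloch for all smooth surfaces, but its proof only uses finite-dimensionality of the normalization, which here is $X\amalg X$.
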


\begin{remk}As explained in \cite[2.1.2]{BS}, the Chow groups with modulus can be used to define a notion of Chow groups with compact support for the complement $X^o = X\setminus |D|$. In this perspective, we can view Conjecture \ref{conj:Bloch-mod} as an analogue of Bloch's conjecture for the open surface $X^o$.
\end{remk}
As an application of \thmref{thm:Main-PB-PF-gen}, we can show the following.

\begin{thm}\label{thm:Bloch-mod-thm}
Let $X$ be a smooth projective surface over $\C$.
Let $D \subset X$ be an effective Cartier divisor such that
$H^2(X, \sI_D) = 0$. Assume that Conjecture~\ref{conj:Bloch} holds for $X$.
Then $\CH_0(X|D)$ is finite dimensional.
\end{thm}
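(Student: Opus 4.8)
The plan is to exploit the split exact sequence \eqref{eq:Main-exact-seuqence-0} of Theorem \ref{thm:Main-PB-PF-gen} together with the Abel--Jacobi side diagram \eqref{eqn:AJ-sequence-*}, reducing the finite-dimensionality of $\CH_0(X|D)$ to the finite-dimensionality of $\CH_0(S_X)$. Concretely, both \eqref{eq:Main-exact-seuqence-0} and the bottom row of \eqref{eqn:AJ-sequence-*} are split short exact sequences fitting in a commutative ladder
\[
\xymatrix@C1pc{
0 \ar[r] & \CH_0(X)_{\deg 0} \ar[r]^{\Delta^*} \ar[d]_{\rho_X} & \CH_0(S_X)_{\deg 0} \ar[r]^{\tau^*_X} \ar[d]_{\rho_{S_X}} & \CH_0(X|D)_{\deg 0} \ar[r] \ar[d]^{\rho_{X|D}} & 0 \\
0 \ar[r] & A^d(X) \ar[r]_{\Delta^*} & A^d(S_X) \ar[r]_{\tau^*} & A^d(X|D) \ar[r] & 0,
}
\]
whose commutativity on the left is \eqref{eqn:AJ-sequence-*} and on the right is \eqref{eqn:Split-decom-1}. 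By the five lemma (or just a diagram chase using the splittings), $\rho_{X|D}$ is an isomorphism as soon as $\rho_X$ and $\rho_{S_X}$ are isomorphisms. Since $X$ is a smooth projective surface with $H^2(X,\sI_D)=0$, the exact sequence $0 \to \sI_D \to \sO_X \to \sO_D \to 0$ and $H^2(X,\sO_D)=0$ (as $D$ has dimension $\le 1$) force $H^2(X,\sO_X)=0$, so Conjecture \ref{conj:Bloch} applied to $X$ gives that $\rho_X$ is an isomorphism. It therefore remains to prove that $\rho_{S_X}\colon \CH_0(S_X)_{\deg 0} \to A^d(S_X)$ is an isomorphism, i.e. that the singular projective surface $S_X$ has finite-dimensional Chow group of $0$-cycles (recall $\CH_0(S_X) \simeq \CH^{LW}_0(S_X)$ over $\C$ by Theorem \ref{thm:0-cycle-affine-proj-char0}).

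The heart of the argument is thus the singular Bloch-type statement for $S_X$, and this is exactly where the hypothesis $H^2(X,\sI_D)=0$ is consumed a second time. First I would compute $H^2(S_X,\sO_{S_X})$: from Lemma \ref{lem:Cycle-mod-1} (or \eqref{eqn:double-2}) there is a short exact sequence $0 \to \sO_{S_X} \to \sO_{X_+}\oplus \sO_{X_-} \to \sO_D \to 0$, whence a long exact cohomology sequence
\[
H^1(X,\sO_X)^{\oplus 2} \to H^1(D,\sO_D) \to H^2(S_X,\sO_{S_X}) \to H^2(X,\sO_X)^{\oplus 2} \to 0.
\]
Since $H^2(X,\sO_X)=0$, the group $H^2(S_X,\sO_{S_X})$ is a quotient of $H^1(D,\sO_D)$; and using the other short exact sequence $0 \to \sI_D \to \sO_X \to \sO_D \to 0$ with $H^2(X,\sI_D)=0$ one gets $H^1(D,\sO_D)$ as a quotient of $H^1(X,\sO_D)$, hence everything is controlled by $H^2(X,\sI_D)$ together with $H^2(X,\sO_X)$. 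I expect the upshot (after tracking the maps carefully) to be $H^2(S_X,\sO_{S_X})=0$. Then I would invoke the main result of \cite{Krishna-1} — which is precisely the reference cited for this purpose in \S\ref{sec:B-conj} — giving a Bloch conjecture for (possibly singular) projective surfaces $Y$ with $H^2(Y,\sO_Y)=0$: such a surface has finite-dimensional Chow group, provided the Bloch conjecture holds for a resolution, which here is furnished by $X\amalg X$ via the finite birational normalization $\pi\colon X\amalg X \to S_X$ (Proposition \ref{prop:double-prp}(8)) and our hypothesis that Conjecture \ref{conj:Bloch} holds for $X$. This yields that $\rho_{S_X}$ is an isomorphism.

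Assembling the pieces: $\rho_X$ is an isomorphism by Conjecture \ref{conj:Bloch} for $X$, $\rho_{S_X}$ is an isomorphism by the vanishing $H^2(S_X,\sO_{S_X})=0$ and \cite{Krishna-1}, and then the ladder above together with the splittings $\Delta^*$ forces $\rho_{X|D}$ to be an isomorphism, i.e. $\CH_0(X|D)$ is finite dimensional. The main obstacle I anticipate is the bookkeeping in the cohomological step: one must verify that the natural maps in the two long exact sequences associated to $\sO_{S_X} \hookrightarrow \sO_{X_+}\oplus \sO_{X_-}$ and $\sI_D \hookrightarrow \sO_X$ are compatible in the way needed to conclude $H^2(S_X,\sO_{S_X})=0$ from $H^2(X,\sI_D)=0$ alone (rather than needing a stronger vanishing), and that the hypotheses of the singular Bloch statement in \cite{Krishna-1} are met with $X\amalg X \to S_X$ as the chosen desingularization. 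Everything else is formal diagram chasing with the already-established split exact sequences.
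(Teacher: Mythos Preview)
Your proposal is correct and follows essentially the same route as the paper: reduce to finite-dimensionality of $\CH_0(S_X)$ via the split ladder of Abel--Jacobi maps, then apply \cite[Theorem~1.3]{Krishna-1} after checking $H^2(S_X,\sO_{S_X})=0$ (the paper notes that this result only requires Bloch's conjecture for the normalization $X\amalg X$, exactly as you surmise). For the cohomological step you flag as the main obstacle, the paper shortcuts your two-sequence chase by using the isomorphism $\sI_{X_-}\xrightarrow{\simeq}\sI_D$ (see \eqref{eq:eq1-proof-Albanese-iso-0}) to get directly the exact sequence $H^2(X,\sI_D)\to H^2(S_X,\sO_{S_X})\to H^2(X,\sO_X)\to 0$, from which $H^2(S_X,\sO_{S_X})=0$ is immediate.
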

\begin{proof}
We first observe that $\pi\colon  \ov{S_X}:= X \amalg X \to S_X$ is the 
normalization map and hence the Bloch conjecture for $X$ is 
same as that for $\ov{S_X}$. 
Since $H^2(X, \sI_D) \surj H^2(X, \sO_X)$, it follows that $H^2(X, \sO_X) = 0$.
In particular, $\CH_0(X)$ and $\CH_0(\ov{S_X})$ are finite dimensional.

Since the map $\sI_{X_-} \to \sI_D$ is an isomorphism
(see ~\eqref{eq:eq1-proof-Albanese-iso-0}), there is  an
exact sequence
\[
H^2(X, \sI_D) \to H^2(S_X, \sO_{S_X}) \to H^2(X, \sO_X) \to 0.
\]
We conclude that $H^2(S_X, \sO_{S_X}) = 0$.
We now apply \cite[Theorem~1.3]{Krishna-1} to conclude that
$\CH_0(S_X)$ is finite dimensional. We remark here that the statement
of this cited result assumes validity of Conjecture~\ref{conj:Bloch} for all
smooth surfaces, but its proof (see \cite[\S~7]{Krishna-1}) only uses
the fact that the normalization of the surface (which is already smooth
in our case) is finite-dimensional. We now use ~\eqref{eqn:Roitman-char-0*}
to conclude that $\CH_0(X|D)$ is finite dimensional.
\end{proof}

A combination of \thmref{thm:Bloch-mod-thm} and \cite{BKL} yields the
following.

\begin{cor}\label{cor:Bloch-conj-mod-NGT}
Let $X$ be a smooth projective surface over $\C$ of Kodaira dimension
less than two. Let $D \subset X$ be an effective Cartier divisor such that
$H^2(X, \sI_D) = 0$. Then $\CH_0(X|D)$ is finite dimensional.
\end{cor}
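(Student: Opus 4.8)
\textbf{Proof proposal for Corollary~\ref{cor:Bloch-conj-mod-NGT}.}
The plan is to reduce this statement directly to Theorem~\ref{thm:Bloch-mod-thm}, whose only genuine input (besides the decomposition Theorem~\ref{thm:Main-PB-PF-gen}) is the validity of Bloch's conjecture (Conjecture~\ref{conj:Bloch}) for the surface $X$. So the entire content of the corollary is the observation that Bloch's conjecture is \emph{known} for smooth projective surfaces of Kodaira dimension less than two, which is precisely the theorem of Bloch--Kas--Lieberman \cite{BKL}. Thus the first step is simply to invoke \cite{BKL}: if $X$ is a smooth projective complex surface with $\mathrm{kod}(X) < 2$, then $\CH_0(X)$ is finite dimensional, i.e.\ $\rho_X\colon \CH_0(X)_{\deg 0} \to A^2(X)$ is an isomorphism; equivalently, Conjecture~\ref{conj:Bloch} holds for $X$.

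Next I would check that the hypotheses of Theorem~\ref{thm:Bloch-mod-thm} are met. We are given a smooth projective surface $X$ over $\C$ of Kodaira dimension less than two and an effective Cartier divisor $D \subset X$ with $H^2(X, \sI_D) = 0$. By the previous paragraph, Conjecture~\ref{conj:Bloch} holds for $X$. Theorem~\ref{thm:Bloch-mod-thm} then applies verbatim and yields that $\CH_0(X|D)$ is finite dimensional, i.e.\ $\rho_{X|D}\colon \CH_0(X|D)_{\deg 0} \to A^d(X|D)$ (with $d = 2$) is an isomorphism. This is exactly the assertion of the corollary.

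There is essentially no obstacle here: the corollary is a formal consequence of combining the cited classification-theoretic result \cite{BKL} with the main theorem of the section. If one wanted to be slightly more self-contained, one could recall \emph{why} $H^2(X,\sI_D) = 0$ forces $H^2(X,\sO_X) = 0$ (from the surjection $H^2(X,\sI_D) \surj H^2(X,\sO_X)$ coming from the ideal sheaf sequence $0 \to \sI_D \to \sO_X \to \sO_D \to 0$ together with the vanishing of $H^3$ on a surface), which is the bridge between the two cohomological vanishing hypotheses; but this is already carried out inside the proof of Theorem~\ref{thm:Bloch-mod-thm}, so it need not be repeated. Hence the proof is a one-line deduction.

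\begin{proof}
By the theorem of Bloch--Kas--Lieberman \cite{BKL}, Conjecture~\ref{conj:Bloch} holds for every smooth projective complex surface of Kodaira dimension less than two; in particular it holds for $X$. Since $D \subset X$ is an effective Cartier divisor with $H^2(X,\sI_D) = 0$, all the hypotheses of \thmref{thm:Bloch-mod-thm} are satisfied, and we conclude that $\CH_0(X|D)$ is finite dimensional.
\end{proof}
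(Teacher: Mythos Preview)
Your proposal is correct and follows exactly the paper's approach: the corollary is stated immediately after \thmref{thm:Bloch-mod-thm} as ``A combination of \thmref{thm:Bloch-mod-thm} and \cite{BKL}'', which is precisely the deduction you wrote out.
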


{\sl Infinite-dimensionality of $\CH_0(X|D)$:}
The following result provides examples of smooth projective surfaces
$X$ with an effective Cartier divisor $D \subset X$ such that
$\CH_0(X)$ is finite-dimensional but $\CH_0(X|D)$ is not.
In particular, it provides a partial converse to
\thmref{thm:Bloch-mod-thm}.

\begin{thm}\label{thm:Bloc-converse}
Let $X$ be a smooth projective surface over $\C$.
Let $D \subset X$ be an effective Cartier divisor such that
$H^2(X, \sI_D) \neq 0$. Assume that $X$ is regular with $p_g(X) = 0$
and that the Bloch conjecture is true for $X$. Then
$\CH_0(X|D)$ is not finite-dimensional.
\end{thm}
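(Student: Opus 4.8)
The plan is to leverage the decomposition machinery that has already been built, exactly as in the proof of Theorem~\ref{thm:Bloch-mod-thm}, but run in the opposite direction. Recall that $\pi\colon \ov{S_X} = X\amalg X \to S_X$ is the normalization, so Bloch's conjecture for $X$ is the same as for $\ov{S_X}$, and since $p_g(X)=0$ and $X$ is regular, $\CH_0(X)$ is finite-dimensional. From the isomorphism $\sI_{X_-}\xrightarrow{\simeq}\sI_D$ of ~\eqref{eq:eq1-proof-Albanese-iso-0} we again get the exact sequence
\[
H^2(X,\sI_D) \to H^2(S_X,\sO_{S_X}) \to H^2(X,\sO_X) \to 0,
\]
and since $H^2(X,\sO_X) = p_g(X) = 0$ while $H^2(X,\sI_D)\neq 0$, this forces $H^2(S_X,\sO_{S_X})\neq 0$. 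The first step, then, is to show that this non-vanishing obstructs finite-dimensionality of $\CH_0(S_X)$: one expects that for a reduced projective surface whose normalization is finite-dimensional, $\CH_0(S_X)$ is finite-dimensional if and only if $H^2(S_X,\sO_{S_X})=0$. The ``only if'' direction is the relevant one here, and it should follow by combining the Mumford-type infinite-dimensionality argument with the structure of the Abel-Jacobi map $\rho_{S_X}\colon \CH_0(S_X)_{\deg 0}\to A^d(S_X)$ of \cite{ESV}: the kernel of $\rho_{S_X}$ carries a ``$H^2(\sO)$''-worth of non-representable cycles, detected by pulling back holomorphic two-forms (or, dually, by the failure of the tangent space computation for $A^2(S_X)$ to match the expected dimension). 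In our case $\ov{S_X}$ is actually smooth, so the only contribution to $H^2(S_X,\sO_{S_X})$ beyond that of $\ov S_X$ comes from the conductor term, i.e. from $H^2(X,\sI_D)$ itself, which is exactly the non-vanishing we have.

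Granting that $\CH_0(S_X)$ is not finite-dimensional, the second step is purely formal: by Theorem~\ref{thm:Main-PB-PF-gen} (over $\C$, using Theorem~\ref{thm:0-cycle-affine-proj-char0} to identify the two Chow groups), the diagram ~\eqref{eqn:Roitman-char-0*} gives a split exact sequence
\[
0 \to \CH_0(X|D)_{\deg 0} \xrightarrow{p_{+,*}} \CH_0(S_X)_{\deg 0} \xrightarrow{i_-^*} \CH_0(X)_{\deg 0} \to 0
\]
compatible with the analogous split exact sequence of Albanese varieties via the Abel-Jacobi maps $\rho_{X|D}$, $\rho_{S_X}$, $\rho_X$. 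Since $\rho_X$ is an isomorphism (Bloch's conjecture for $X$), the snake lemma applied to this map of short exact sequences shows that $\rho_{S_X}$ is an isomorphism precisely when $\rho_{X|D}$ is. Therefore $\rho_{S_X}$ not being an isomorphism forces $\rho_{X|D}$ not to be an isomorphism, i.e. $\CH_0(X|D)$ is not finite-dimensional. This is the entire argument once the first step is in place.

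The main obstacle is the first step: producing a clean statement and proof that $H^2(S_X,\sO_{S_X})\neq 0$ implies $\CH_0(S_X)$ is infinite-dimensional. One has to be careful because $S_X$ is singular (it is not regular in codimension $1$), so the classical Mumford argument on a smooth surface does not apply verbatim; one must work with the Levine--Weibel Chow group and the modified Deligne cohomology $\H^{2d}_{\sD^*}(S_X,\Z(d))$ of \cite{Levine-4}, together with the semi-abelian quotient and the analysis of the transcendental part. The cleanest route is probably to cite or adapt the relevant result from \cite{Krishna-1} — which, as noted in the proof of Theorem~\ref{thm:Bloch-mod-thm}, reduces finite-dimensionality of $\CH_0$ of a singular surface to that of its normalization plus a vanishing condition on $H^2$ of the structure sheaf — reading its converse direction. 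Concretely I would verify that the proof in \cite[\S~7]{Krishna-1} actually establishes the equivalence ``$\CH_0(S_X)$ finite-dimensional $\iff$ $\CH_0(\ov{S_X})$ finite-dimensional and $H^2(S_X,\sO_{S_X})=0$'', not merely the implication used earlier; if only the one implication is available in the literature, the infinite-dimensionality must be extracted directly by exhibiting a nonzero holomorphic $2$-form class on $S_X$ pulled back from the conductor and showing, via the cycle class map to $\H^4_{\sD^*}(S_X,\Z(2))$, that it separates cycles — this is where the real work lies, though it is parallel to the smooth case.
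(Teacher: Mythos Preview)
Your overall strategy is exactly the paper's: show $H^2(S_X,\sO_{S_X})\neq 0$, deduce that $\CH_0(S_X)$ is not finite-dimensional, and then use the split exact sequence of Abel--Jacobi maps to transfer this to $\CH_0(X|D)$. Two points, one minor and one substantive.

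First, your cohomological step has a small gap. From the exact sequence you wrote, $H^2(X,\sO_X)=0$ only tells you that $H^2(X,\sI_D)\to H^2(S_X,\sO_{S_X})$ is \emph{surjective}; to conclude $H^2(S_X,\sO_{S_X})\neq 0$ you need this map to be injective as well, and that is precisely where the hypothesis that $X$ is regular (i.e.\ $q(X)=H^1(X,\sO_X)=0$) enters, via the preceding term $H^1(X,\sO_X)\to H^2(X,\sI_D)$ in the long exact sequence. The paper writes out this extra term explicitly.

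Second, for the key step --- that $H^2(S_X,\sO_{S_X})\neq 0$ forces $\rho_{S_X}$ to be non-injective --- the paper does not attempt to extract a converse from \cite{Krishna-1} or to run a direct Mumford-style pull-back of two-forms on the singular surface. Instead it argues by contradiction using two precise references: if $\rho_{S_X}$ were an isomorphism, then by \cite[Theorem~7.2]{ESV} (see its proof) there would exist finitely many reduced Cartier curves $C_1,\dots,C_r\subset S_X$ with $\bigoplus_i \CH_0(C_i)_{\deg 0}\twoheadrightarrow \CH_0(S_X)_{\deg 0}$; but \cite[Theorem~5.2]{Srinivas-2} (Srinivas's Mumford-type theorem for singular projective surfaces) says that such finite generation is impossible when $H^2(S_X,\sO_{S_X})\neq 0$. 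This two-line route replaces the ``real work'' you anticipated, and is worth remembering as the standard device for infinite-dimensionality on singular surfaces.
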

\begin{proof}
The exact sequence
\[
H^1(X,\sO_X) \to H^2(X, \sI_D) \to H^2(S_X, \sO_{S_X}) \to
H^2(X,\sO_X) \to 0
\]
and our assumption together imply that $H^2(S_X, \sO_{S_X}) \neq 0$.
We claim that the map \[\rho_{S_X}\colon \CH_0(S_X)_{\rm deg \ 0}
\to A^2(S_X)\]
is not injective. Suppose on the contrary, that $\rho_{S_X}$ is injective.
But then it must be an isomorphism. It follows then from
\cite[Theorem~7.2]{ESV} (see its proof on pg. 657) that there are
finitely many reduced Cartier curves $\{C_1, \cdots , C_r\}$ on $X$ such that
the map $\stackrel{r}{\underset{i = 1}\oplus} \CH_0(C_i)_{\rm deg \ 0} \to 
\CH_0(S_X)_{\rm deg \ 0}$ is surjective.
However, as $H^2(S_X, \sO_{S_X}) \neq 0$, \cite[Theorem~5.2]{Srinivas-2}
tells us that this is not possible. This proves the claim.
Our assumption says that the map $\rho_X$ in ~\eqref{eqn:Roitman-char-0*}
is an isomorphism. It follows that ${\rm Ker}(\rho_{X|D}) \neq 0$.
\end{proof}

If we let $D \subset \P^2_{\C}$ be a smooth hypersurface of degree 3, we have
\[H^2(\P^2_{\C}, \sI_D) \simeq H^2(\P^2_{\C}, \sO_{\P^2_{\C}}(-D))
\simeq H^2(\P^2_{\C}, \sO_{\P^2_{\C}}(-3)) \simeq \C.\]
All conditions of \thmref{thm:Bloc-converse} are clearly satisfied
and we get an example of a (smooth) pair $(X,D)$ such that 
$\CH_0(X)_{\rm deg \ 0} = 0$ but $\CH_0(X|D)$ is infinite-dimensional.

\section{Albanese with modulus in arbitrary characteristic}
\label{sec:Alb+ve}
In this section, we generalize the results of \S~\ref{sec:AJ-map}
for 0-cycles with modulus on smooth projective varieties
over an arbitrary algebraically closed field.
Most of the arguments are straightforward copies of those
in \S~\ref{sec:AJ-map}. So we keep the discussion brief.
We fix an algebraically closed field $k$ of exponential characteristic
$p \ge 1$.  

Let again $Y$ be a projective reduced $k$-variety of dimension $d$ and write 
${\rm Alb}(Y)$ for the Esnault-Srinivas-Viehweg Albanese variety.
While there is an explicit description of ${\rm Alb}(Y) = A^{d}(Y)$ over 
$\C$ (as recalled in \ref{ssec:Construction-Albanese-C}) using Hodge theory, 
in positive characteristic, \cite{ESV} gives only an existence statement and 
little is known on the properties of ${\rm Alb}(Y)$ 
(some pathological properties of ${\rm Alb}(Y)$ for specific singular 
varieties are studied in \cite[\S~3]{ESV}).

In \cite{Mallick}, V. Mallick proves the following Roitman-style theorem.
        
\begin{thm}$($\cite[Theorem 16]{Mallick}$)$\label{thm:Mallick-Rojtman}  
For any reduced projective variety $Y$ of dimension $d$ over $k$
and for $n$ coprime with the characteristic of $k$, the map $\rho_Y$ 
induces an isomorphism on $n$-torsion subgroups 
\[
\rho_Y\colon  {}_{n} \CH^{LW}_0(Y)_{\deg 0} \xrightarrow{\simeq} 
{}_n{\rm Alb}(Y) = {}_nJ^d(Y).
\]
\end{thm}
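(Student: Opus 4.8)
This is Theorem~16 of \cite{Mallick}, so here I only outline the strategy one would follow; it is the positive-characteristic counterpart of the Biswas--Srinivas theorem \cite{BS-1} used over $\C$ in \thmref{thm:Roitman-char-0}. The assertion has two parts: that $\rho_Y$ restricts to an isomorphism on $n$-torsion onto ${}_n{\rm Alb}(Y)$, and that ${}_n{\rm Alb}(Y) = {}_nJ^d(Y)$. Writing ${}_n(-) = \bigoplus_{\ell \mid n}{}_{\ell^{v_\ell(n)}}(-)$, it suffices to treat $n = \ell^m$ for a fixed prime $\ell \neq p$. Since the kernel of the canonical surjection ${\rm Alb}(Y) \twoheadrightarrow J^d(Y)$ is the unipotent radical of the linear part of ${\rm Alb}(Y)$, hence a smooth connected unipotent group over the algebraically closed field $k$, and any such group is $\ell$-divisible with no $\ell$-torsion, we get ${\rm Alb}(Y)[\ell^m] \xrightarrow{\simeq} J^d(Y)[\ell^m]$; this settles the second part. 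It then remains to prove that the universal regular homomorphism $\rho^{\rm semi}_Y\colon \CH_0^{LW}(Y)_{\deg 0} \twoheadrightarrow J^d(Y)$ of \S~\ref{sec:SAQ} is both injective and surjective on $\ell^m$-torsion.

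Surjectivity of $\rho^{\rm semi}_Y$ is built into its construction; surjectivity on $\ell^m$-torsion I would deduce by reduction to curves. Given a torsion point of $J^d(Y)$, a Bertini-type argument produces a reduced Cartier curve $\nu\colon C \hookrightarrow Y$ (equivalently a good curve relative to $Y_{\rm sing}$) such that the generalized Jacobian $\Pic^0(C)$ surjects onto the part of the $1$-motive $M = [\Lambda(Y) \to \Pic_W(Y^N)]$ carrying the chosen torsion point; one then lifts that point along $\CH_0^{LW}(C)_{\deg 0} = \Pic^0(C) \to {}_{\ell^m}\CH_0^{LW}(Y)_{\deg 0}$, using that surjectivity on torsion of the divisor class map is classical for (possibly singular) curves by Serre's theory of generalized Jacobians.

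For injectivity the plan is to construct a Bloch-type cycle class map
\[
\lambda_{\ell^m}\colon {}_{\ell^m}\CH_0^{LW}(Y)_{\deg 0} \longrightarrow H_{\ell^m}(Y)
\]
into a finite $\ell$-torsion group $H_{\ell^m}(Y)$ built from the $\ell$-adic realization of $M$ (for $Y$ smooth one recovers $H_{\ell^m}(Y) = H^{2d-1}_{\et}(Y, \mathbb{Z}/\ell^m(d))$ and the usual Bloch map), together with an injection ${}_{\ell^m}J^d(Y) = M[\ell^m]^\vee \hookrightarrow H_{\ell^m}(Y)$ coming from Kummer theory for the $1$-motive $M$, in such a way that $\lambda_{\ell^m}$ factors as this injection composed with $\rho^{\rm semi}_Y$. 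Granting such a factorisation, injectivity of $\rho^{\rm semi}_Y$ on $\ell^m$-torsion becomes equivalent to injectivity of $\lambda_{\ell^m}$.

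Establishing that $\lambda_{\ell^m}$ is injective is the heart of the argument and the step I expect to be the main obstacle. I would proceed by d\'evissage along the normalization $\pi\colon Y^N \to Y$ and its conductor subscheme: using the Levine--Weibel and Pedrini--Weibel description of $\CH_0^{LW}(Y)$ in terms of $\CH_0^{LW}(Y^N)$ and the relative class groups of the conductor, checking the compatibility of $\lambda_{\ell^m}$ with the resulting exact sequences, and invoking the classical Roitman theorem for the \emph{smooth} projective variety $Y^N$ (Roitman \cite{Roitman} in characteristic zero, Milne \cite{Milne} for $\ell \neq p$ in positive characteristic) for the normalization piece. A Lefschetz-pencil/Bertini argument reduces the most delicate part of the comparison to $\dim Y = 2$, where Bloch's torsion theorem for singular surfaces and the analysis of $\CH_0$ of singular surfaces via $SK_0$ and $SK_1$ take over. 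Everything rests on the $1$-motive description of $J^d(Y)$ from \S~\ref{sec:SAQ} together with careful bookkeeping of the conductor data; once $\lambda_{\ell^m}$ and its compatibilities are available, the remainder is a fairly routine adaptation of the smooth Roitman theorem.
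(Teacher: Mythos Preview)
The paper does not give its own proof of this statement: it is simply quoted from \cite[Theorem~16]{Mallick} and used as a black box. Your proposal correctly recognizes this and offers a reasonable outline of the strategy in Mallick's paper (reduction to the semi-abelian quotient via the unipotent kernel, a Bloch-type $\ell$-adic cycle map, and d\'evissage to the smooth Roitman theorem on the normalization). Since there is no in-paper argument to compare against, nothing further is required here.
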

For the rest of this section, the Albanese variety ${\rm Alb}(Y)$ will
be denoted by $A^d(Y)$ to keep consistency with the notations
of the previous sections.

\subsection{Albanese with modulus and its universality 
in any characteristic}\label{ssec:Construction-Albanese-anyk} 
Let $X$ be a smooth projective scheme of dimension $d \ge 1$ over $k$ and 
let $D\subset X$ be an effective Cartier divisor. Write as usual $S_X$ for the 
double construction applied to the pair $(X,D)$.  
Using \thmref{thm:Main-Comparison-Chow}, we shall again
identify the two Chow groups $\CH^{LW}_0(S_X)$ and $\CH_0(S_X)$ throughout
this section.

Let $U$ be an open dense subset contained in $X^o = X\setminus D$, 
$x_0\in U$ a closed point and 
$V = U\amalg U \subset (S_X)_{\rm reg}$ such that the compositions 
\[ 
\pi^V_{x_0, \pm} \colon V\to \CH_0(S_X)_{\deg 0}\xrightarrow{\rho_{S_X}} 
A^d(S_X), \quad \  \pi^U_{x_0}\colon U\to \CH_0(X)_{\deg 0}\xrightarrow{\rho_X} 
A^d(X) \]
are morphisms of schemes
(see Definition \ref{def:pi-map-albanese} for the notation 
$\pi^V_{x_0, \pm}$ and $\pi^U_{x_0}$).

Theorem \ref{thm:Main-PB-PF-gen} gives the familiar split 
short exact sequence on the group of zero cycles
\begin{equation}\label{eq:Main-exact-sequence-sec-Alb} 
0\to \CH_0(X|D)_{\deg 0} \xrightarrow{p_{+,*}} \CH_0(S_X)_{\deg 0} 
\xrightarrow{\iota_-^*} \CH_0(X)_{\deg 0} \to 0
\end{equation}
and there is a homomorphism $\Delta^*\colon  \CH_0(X)_{\deg 0}  \to 
\CH_0(S_X)_{\deg 0}$ such that $i^*_- \circ \Delta^* = {\rm Id}$.  

It follows from the discussion in \S~\ref{sec:alg-mod-*} and
the proof of \lemref{lem:Delta*-mor} that the homomorphisms
$\rho_X \circ i^*_-\colon  \CH_0(S_X)_{\deg 0} \to A^d(X)$ and
$\rho_{S_X} \circ \Delta^*\colon  \CH_0(X)_{\deg 0} \to A^d(S_X)$ are both 
regular. It follows from the universality of $A^d(X)$ and $A^d(S_X)$
that there are unique homomorphisms of algebraic groups
$i^*_-\colon  A^d(S_X) \to A^d(X)$ and $ \Delta^*\colon  A^d(X) \to A^d(S_X)$
such that the diagram

\begin{equation}\label{eqn:AJ-sequence-**}
\xymatrix{ 0\to \CH_0(X|D)_{\deg 0} \ar[r]^-{p_{+,*}}  & 
\CH_0(S_X)_{\deg 0}\ar@{->>}[d]^{\rho_{S_X}} \ar@<.5ex>[r]^{i^*_-}  &  
\CH_0(X)_{\deg 0} \ar@{->>}[d]^{\rho_X} \ar@<.5ex>[l]^{\Delta^*} \to 0\\
& A^d(S_X)  \ar@<.5ex>[r]^-{i^*_-} &  
A^d(X)\ar@<.5ex>[l]^-{\Delta^*} \to 0
}
\end{equation}
is commutative.

It also follows from this commutative diagram that
$i^*_- \circ \Delta^*\colon  A^d(X) \to A^d(X)$ is the identity.
Indeed, any $\alpha \in A^d(X)$ can be written as
$\alpha = \rho_X(\beta)$ and then
\begin{equation}\label{eqn:Split-Alb+}
i^*_- \circ \Delta^* \circ \rho_X(\beta) =
i^*_- \circ \rho_{S_X} \circ \Delta^*(\beta)  = 
\rho_X \circ i^*_- \circ \Delta^*(\beta) 
= \rho_X(\beta).
\end{equation}

\begin{defn}\label{defn:Rel-Alb+}
We define $A^d(X|D)$ to be the closed algebraic subgroup of $A^d(S_X)$ given 
by the inverse image of the identity element of $A^d(X)$ via $\iota_-^*$. 
As before, we denote the inclusion $A^d(X|D) \inj A^d(S_X)$ by $p_{+,*}$.
It follows from ~\eqref{eqn:AJ-sequence-**} that there is a unique 
surjective homomorphism $\rho_{X|D}\colon  \CH_0(X|D)_{\deg 0} 
\surj A^d(X|D)$
such that $\rho_{S_X} \circ p_{+,*} = p_{+,*} \circ \rho_{X|D}$.
\end{defn}

The reader can now check easily that using ~\eqref{eqn:AJ-sequence-**}
and ~\eqref{eqn:Split-Alb+}, the proofs of
\lemref{lem:reg-Alb-C} and \thmref{thm:univ-Alb-overC}
go through mutatis mutandis to give the following generalization of
the latter.

\begin{thm}\label{thm:univ-Alb-overk}
The group homomorphism $\rho_{X|D}\colon\CH_0(X|D)_{\deg 0}\to A^d(X|D)$ is 
regular and surjective and makes $A^d(X|D)$ the universal regular quotient of 
$\CH_0(X|D)_{\deg 0}$. 
\end{thm}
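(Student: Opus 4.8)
The plan is to follow the blueprint already laid out for the characteristic-zero case in \S~\ref{sec:AJ-map} and verify that every ingredient used there remains available over an arbitrary algebraically closed field $k$. The key observation is that \thmref{thm:univ-Alb-overk} only asks for two things about $\rho_{X|D}$: regularity (which requires exhibiting, on a dense open $U \subset X^o$ with base point $x_0$, that $\rho_{X|D} \circ \pi^U_{x_0}$ is a morphism of schemes) and universality (which asks that any regular homomorphism $\psi\colon \CH_0(X|D)_{\deg 0} \to G$ to a commutative algebraic group factors uniquely through $\rho_{X|D}$). Both of these were proved over $\C$ in \lemref{lem:reg-Alb-C} and \thmref{thm:univ-Alb-overC} using only the existence of the Esnault--Srinivas--Viehweg universal regular quotient $\rho_{S_X}\colon \CH_0^{LW}(S_X)_{\deg 0} \to A^d(S_X)$ (which is \thmref{thm:universalityESV}, valid over any algebraically closed field), the classical Albanese $\rho_X\colon \CH_0(X)_{\deg 0} \to A^d(X)$ with its universal property, and the split short exact sequence of \thmref{thm:Main-PB-PF-gen}. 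Since \thmref{thm:Main-Comparison-Chow} lets us identify $\CH_0^{LW}(S_X)$ with $\CH_0(S_X)$, the proof is essentially a matter of transcription.

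Concretely, the first step is to set up the diagram~\eqref{eqn:AJ-sequence-**}: using the universality of $A^d(X)$ and $A^d(S_X)$ together with the fact that $\rho_X \circ \iota_-^*$ and $\rho_{S_X} \circ \Delta^*$ are regular homomorphisms on the level of $0$-cycles (this is the content of the discussion in \S~\ref{sec:alg-mod-*} and \lemref{lem:Delta*-mor}, whose arguments did not use characteristic zero), one obtains the induced maps $\iota_-^*\colon A^d(S_X) \to A^d(X)$ and $\Delta^*\colon A^d(X) \to A^d(S_X)$ of algebraic groups, and~\eqref{eqn:Split-Alb+} shows $\iota_-^* \circ \Delta^* = \mathrm{Id}$. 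Then one defines $A^d(X|D)$ as the fiber of $\iota_-^*$ over the identity, which is automatically a commutative algebraic group because $\iota_-^*$ is a morphism with a section, and $\rho_{X|D}$ is induced on the kernels of the rows by the surjectivity of $\rho_{S_X}$. The second step is regularity: repeat the proof of \lemref{lem:reg-Alb-C} verbatim — shrink the dense open $V \subset (S_X)_{\rm reg}$ on which $\rho_{S_X} \circ \pi^V_{x_{0,\pm}}$ is a morphism to one of the form $U \amalg U$ with $U \subset X^o$, use the compatibility $\rho_{S_X} \circ p_{+,*} \circ \pi^U_{x_0} = \rho_{S_X} \circ \pi^{U \amalg U}_{x_0,\pm} \circ i_{U,+}$, and conclude that $U \to \CH_0(X|D)_{\deg 0} \to A^d(X|D) \hookrightarrow A^d(S_X)$ is a morphism, hence so is $\rho_{X|D} \circ \pi^U_{x_0}$ since $A^d(X|D) \hookrightarrow A^d(S_X)$ is a closed immersion. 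The third step is universality: given a regular $\psi\colon \CH_0(X|D)_{\deg 0} \to G$, form the composite $\psi \circ \tau_X^* \circ \pi^{U \amalg U}_{x_0,\pm}$ where $\tau_X^*$ is the splitting from \thmref{thm:Main-PB-PF-gen}, check (as in the proof of \thmref{thm:univ-Alb-overC}) that its restriction to each component $U_\pm$ is $\pm(\psi \circ \pi^U_{x_0})$ and hence a morphism, apply \thmref{thm:universalityESV} to get $\widetilde\psi\colon A^d(S_X) \to G$, verify $\widetilde\psi \circ \Delta^* \circ \rho_X = \psi \circ \tau_X^* \circ \Delta^* \circ \rho_X = 0$ using $\tau_X^* \circ \Delta^* = 0$, and then use the analogue of the split sequence~\eqref{eqn:Split-decom-0} and the commuting square~\eqref{eqn:Split-decom-1} to factor $\psi$ uniquely through $\rho_{X|D}$.

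The main subtlety — the only place where characteristic zero was genuinely used in \S~\ref{sec:AJ-map} — is the identification $\CH_0^{LW}(S_X) \cong \CH_0(S_X)$, which in characteristic zero came from \thmref{thm:0-cycle-affine-proj-char0}(3) but which is now supplied for arbitrary $k$ by \thmref{thm:Main-Comparison-Chow}; this is precisely why the section is positioned after that theorem. I therefore expect no real obstacle: the argument is a straightforward adaptation, and the proof will simply invoke \thmref{thm:universalityESV}, \thmref{thm:Main-PB-PF-gen}, \thmref{thm:Main-Comparison-Chow}, and the diagram~\eqref{eqn:AJ-sequence-**} together with~\eqref{eqn:Split-Alb+}, remarking that the proofs of \lemref{lem:reg-Alb-C} and \thmref{thm:univ-Alb-overC} go through mutatis mutandis. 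The one point worth stating explicitly for the reader is that we never need an explicit Hodge-theoretic description of $A^d(S_X)$ or $A^d(X|D)$ in this argument — only the existence and universal property of the ESV quotient — so the absence of such a description in positive characteristic (noted in the section's preamble) is immaterial.
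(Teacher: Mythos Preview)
Your proposal is correct and follows precisely the paper's own approach: the paper's proof of \thmref{thm:univ-Alb-overk} is literally the one-line remark that, using~\eqref{eqn:AJ-sequence-**} and~\eqref{eqn:Split-Alb+}, the proofs of \lemref{lem:reg-Alb-C} and \thmref{thm:univ-Alb-overC} go through mutatis mutandis. You have simply unpacked that remark in detail, including the correct observation that \thmref{thm:Main-Comparison-Chow} replaces the characteristic-zero comparison \thmref{thm:0-cycle-affine-proj-char0}(3).
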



\begin{remk}\label{remk:Unipotent-part} 
Since $X$ is smooth and projective, the Albanese variety $A^d(X)$ is an 
abelian variety over $k$. By \cite{ESV}, the generalized Albanese 
$A^d(S_X)$ is a smooth commutative algebraic group of general type, i.e., 
an extension of an abelian variety by a linear algebraic group. 
An immediate consequence of the definition and of 
\thmref{thm:univ-Alb-overk} is that the linear part of $A^d(S_X)$ 
coincides with the linear part of the Albanese with modulus $A^d(X|D)$. 
\end{remk}

\subsection{Roitman theorem in arbitrary characteristic}
\label{sec:Roit+ve}
Using \thmref{thm:Mallick-Rojtman}, we can now 
generalize \thmref{thm:Roitman-char-0}
over any algebraically closed field as follows.
We keep the notations and the assumptions of 
\ref{ssec:Construction-Albanese-anyk}.

\begin{thm}\label{thm:Roitman-Main+ve}
Let $X$ be a smooth projective variety of dimension $d \ge 1$ over
an algebraically closed field $k$ and let $D\subset X$ be an effective 
Cartier divisor. 
Let $n$ be an integer prime to the exponential characteristic of $k$. 
Then the Albanese map $\rho_{X|D}\colon \CH_0(X|D)_{\deg 0}\to A^d(X|D)$ 
induces an isomorphism on $n$-torsion subgroups
\[
\rho_{X|D}\colon  
{}_n\CH_0(X|D)_{\deg 0} \xrightarrow{\simeq} {}_n A^d(X|D). 
\]
\end{thm}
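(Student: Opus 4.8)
\textbf{Proof plan for Theorem~\ref{thm:Roitman-Main+ve}.}
The strategy mirrors exactly the proof of \thmref{thm:Roitman-char-0} in the complex case, with the theorem of Biswas--Srinivas replaced by Mallick's Roitman-type theorem \thmref{thm:Mallick-Rojtman}. The plan is to exploit the split short exact sequence of \thmref{thm:Main-PB-PF-gen} together with the commutative diagram \eqref{eqn:AJ-sequence-**} and the identity \eqref{eqn:Split-Alb+}. First I would record the commutative diagram with split exact rows
\[
\xymatrix{
0\to \CH_0(X|D)_{\deg 0} \ar[r]^-{p_{+,*}} \ar[d]_{\rho_{X|D}} &
\CH_0(S_X)_{\deg 0} \ar[r]^-{\iota_-^*} \ar[d]_{\rho_{S_X}} &
\CH_0(X)_{\deg 0} \ar[d]^{\rho_X} \to 0 \\
0\to A^d(X|D) \ar[r]_-{p_{+,*}} & A^d(S_X) \ar[r]_-{\iota_-^*} & A^d(X) \to 0,
}
\]
where the bottom row is split exact by Definition~\ref{defn:Rel-Alb+} and \eqref{eqn:Split-Alb+}, and the top row is split exact by \thmref{thm:Main-PB-PF-gen}. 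The vertical maps are the Abel-Jacobi/Albanese maps, all of which are surjective (for $\rho_{S_X}$ this is \thmref{thm:universalityESV} after identifying $\CH_0^{LW}(S_X)$ with $\CH_0(S_X)$ via \thmref{thm:Main-Comparison-Chow}; for $\rho_X$ it is classical; for $\rho_{X|D}$ it is \thmref{thm:univ-Alb-overk}).

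Next I would apply the functor ${}_n(-)$ of $n$-torsion subgroups. Because the splittings $\Delta^*$ of $\iota_-^*$ exist on both rows and are compatible with the vertical maps, each row remains short exact after passing to $n$-torsion: a split short exact sequence of abelian groups $0\to A\to B\to C\to 0$ yields a split short exact sequence $0\to {}_nA\to {}_nB\to {}_nC\to 0$ for every $n$. This gives a commutative diagram
\[
\xymatrix{
0\to {}_n\CH_0(X|D)_{\deg 0} \ar[r] \ar[d]_{\rho_{X|D}} &
{}_n\CH_0(S_X)_{\deg 0} \ar[r] \ar[d]_{\rho_{S_X}} &
{}_n\CH_0(X)_{\deg 0} \ar[d]^{\rho_X} \to 0 \\
0\to {}_nA^d(X|D) \ar[r] & {}_nA^d(S_X) \ar[r] & {}_nA^d(X) \to 0
}
\]
with exact rows. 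Now $\rho_X$ restricted to $n$-torsion is an isomorphism by Roitman's theorem (Roitman, and Milne in positive characteristic; here $n$ is prime to the exponential characteristic of $k$), and $\rho_{S_X}$ restricted to $n$-torsion is an isomorphism by \thmref{thm:Mallick-Rojtman} applied to the reduced projective variety $S_X$ of dimension $d$ (using again the identification of the two Chow groups of the double). The five lemma then forces $\rho_{X|D}\colon {}_n\CH_0(X|D)_{\deg 0}\xrightarrow{\simeq}{}_nA^d(X|D)$ to be an isomorphism, which is the assertion of the theorem.

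The only genuine point that needs care --- and the place where I expect the main subtlety to lie --- is the verification that \thmref{thm:Mallick-Rojtman} actually applies to $S_X$ in the form needed here, namely that the universal regular quotient ${\rm Alb}(S_X)$ of \thmref{thm:universalityESV} coincides with the group $A^d(S_X)$ appearing in \eqref{eqn:AJ-sequence-**} and that Mallick's isomorphism is exactly $\rho_{S_X}$ on $n$-torsion. This is essentially bookkeeping: \thmref{thm:Mallick-Rojtman} is stated for $\CH_0^{LW}(Y)_{\deg 0}$, and one invokes \thmref{thm:Main-Comparison-Chow} to replace $\CH_0^{LW}(S_X)$ by $\CH_0(S_X)$ throughout, exactly as was done in \S\ref{ssec:Construction-Albanese-anyk}. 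A secondary point worth a sentence is that $S_X$ is reduced (\propref{prop:double-prp}(2)) and equidimensional of dimension $d$, so that it is an admissible input for Mallick's theorem. Once these identifications are in place, the proof is a formal diagram chase, and I would keep the write-up correspondingly short, simply citing \thmref{thm:Roitman-char-0} for the structure of the argument.
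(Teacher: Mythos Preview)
Your proposal is correct and follows essentially the same route as the paper: set up the commutative diagram of compatibly split short exact sequences, pass to $n$-torsion using the splittings $\Delta^*$, apply Roitman's theorem for $\rho_X$ and Mallick's theorem (via \thmref{thm:Main-Comparison-Chow}) for $\rho_{S_X}$, and conclude. Your care in checking that $S_X$ is an admissible input for \thmref{thm:Mallick-Rojtman} and that the identification of Chow groups goes through is well placed; the paper simply cites \thmref{thm:Main-Comparison-Chow} and \thmref{thm:Mallick-Rojtman} at this step.
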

\begin{proof}
We consider the commutative diagram
\[
\xymatrix{ 0\to {}_{n}\CH_0(X|D)_{\deg 0 } \ar[r]^-{p_{+,*}} 
\ar[d]_{\rho_{X|D}} &{}_{n}\CH_0(S_X)_{\deg 0} \ar[r]^{i_-^*} 
\ar[d]_{\rho_{S_X}}^{\simeq} & {}_{n}\CH_0(X)_{\deg 0} \ar[d]^{\rho_X}  \to 0\\ 
0\to {}_{n}A^d(X|D) \ar[r]^-{p_{+,*}}  & {}_{n}A^d(S_X) \ar[r]^{i_-^*} 
& {}_{n}A^d(X) \to 0}
\]

It follows from ~\eqref{eqn:AJ-sequence-**} that the top  and the 
bottom rows are compatibly split exact. 
The original Roitman's theorem \cite{Roitman} says  
that the vertical arrow on the left is an isomorphism.
It follows from Theorems~~\ref{thm:Main-Comparison-Chow}
and \ref{thm:Mallick-Rojtman} 
that the middle vertical arrow is an isomorphism. The theorem follows.
\end{proof}

\begin{remk} We note that the linear part of $A^d(X|D)$ 
(that coincides with the linear part of $A^d(S_X)$) depends heavily on the 
geometry of $D$ and will have, in general, both a unipotent and a 
torus part. For example, if $D$ is a smooth divisor inside a smooth surface, 
the presence of a $\G_m$ part in $A^d(X|D)$ depends on the class of $D$ 
in the Neron-Severi 
group of $X$.
\end{remk}

\section{Cycle class map to relative $K$-theory}\label{sec:Rel-K}
As we mentioned in \S~\ref{sec:Intro}, one of the motivations for studying
cycles with modulus is to find a cohomology theory which can describe 
relative $K$-theory of divisors in a scheme in terms of algebraic cycles.
If the higher Chow groups with modulus are indeed the right objects which
serve this purpose, there must be a cycle class map from
the Chow groups with modulus to relative $K$-groups. Moreover, this map must
describe the Chow group as a part of relative $K$-groups in most of the cases.
Our goal in this section is to use our double construction to answer
these questions for the 0-cycles with modulus. 
We first consider the case of line bundles in this setting.

\subsection{Vector  bundles on the double and relative Picard groups}
\label{sec:Pic}
Let $k$ be any field and let $X$ be a 
smooth quasi-projective scheme over $k$ with an effective Cartier 
divisor $D$. Let $X^o$ be the open complement $X\setminus D$. 
We denote as above by $S_X$ the double $S(X,D)$.

\subsubsection{Vector bundles on the double}\label{sec:Vec-D}
Let $\mathcal{P}_S = \mathcal{P}_{S(X,D)}$ denote the category of locally free 
sheaves of finite rank on $S_X$. Since $X$ is quasi-projective, 
$S_X$ is quasi-projective as well and therefore it admits an ample family of 
line bundles. Thus we can replace  Thomason spectrum $K(S_X)$ built out of 
perfect complexes on $S_X$ with $\Omega BQ \mathcal{P}_{S}$, at least for 
computing the groups $K_p(S_X)$ for $p\geq 0$, and similarly for $X$ and $D$. 
By construction, the category $\mathcal{P}_S$ is equivalent to the category of 
triples $(E, E^\prime, \phi)$, where $E$ and $E^\prime$ are locally free sheaves 
on $X$ and $\phi\colon \iota_D^* E\to \iota_D^* E^\prime$ is a fixed 
isomorphism on the restriction to $D$ (see \cite{Levine-HigherChow}, 
but also \cite[Theorem 2.1]{Milnor}).
This description gives us the following.
\begin{lem}\label{lem:composition-KSX-toKD-trivial} 
The composite map of spectra
\begin{equation}\label{eq:rel-K-theory}
K(S_X)\xrightarrow{\iota_1^*, \iota_2^*} K(X)\amalg K(X) \xrightarrow{f} K(D)
\end{equation}
is homotopy trivial, where 
\[
f = \iota_D^* \oplus - \iota_D^* \colon 
K(X)\amalg K(X) 
\xrightarrow{(id, -id)} K(X)\amalg K(X) \xrightarrow{\iota_D^* + \iota_D^*} 
K(D).
\]
\end{lem}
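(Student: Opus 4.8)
The statement to prove is that the composite $K(S_X) \to K(X) \amalg K(X) \to K(D)$ is homotopy trivial, where the second map is $\iota_D^* - \iota_D^*$ applied to the two factors. The natural approach is to use the description of $\mathcal{P}_{S_X}$ as the category of triples $(E, E', \phi)$ with $\phi \colon \iota_D^* E \xrightarrow{\simeq} \iota_D^* E'$, which was recalled just before the lemma. Under this equivalence, the restriction map $\iota_{+}^*$ sends $(E, E', \phi)$ to $E$ and $\iota_{-}^*$ sends it to $E'$; the isomorphism $\iota_D^* \iota_{+}^*(E,E',\phi) = \iota_D^* E \cong \iota_D^* E'=\iota_D^* \iota_{-}^*(E,E',\phi)$ is precisely the data $\phi$ attached to the triple. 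So the point is that although $\iota_D^*\iota_{+}^*$ and $\iota_D^*\iota_{-}^*$ are a priori different functors $\mathcal{P}_{S_X} \to \mathcal{P}_D$, the triple structure provides a natural isomorphism between them.

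\textbf{Key steps.} First I would make precise the equivalence of categories $\mathcal{P}_{S_X} \simeq \mathcal{T}$, where $\mathcal{T}$ is the category of triples, citing \cite{Levine-HigherChow} or \cite[Theorem 2.1]{Milnor}; this is exact, so it induces a homotopy equivalence of $K$-theory spectra $K(S_X) \simeq \Omega BQ\mathcal{T}$. Second, I would exhibit the two composite exact functors $\mathcal{T} \rightrightarrows \mathcal{P}_D$, namely $(E,E',\phi) \mapsto \iota_D^*E$ and $(E,E',\phi)\mapsto \iota_D^*E'$, and observe that the assignment $(E,E',\phi)\mapsto \phi$ defines a natural isomorphism between them (naturality is immediate from the definition of morphisms of triples as pairs of morphisms commuting with the $\phi$'s). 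Third, I would invoke the standard fact that a natural isomorphism of exact functors induces a homotopy between the induced maps on $K$-theory spectra (equivalently, the maps agree on $\Omega BQ$ by the natural transformation giving a homotopy). Therefore $\iota_D^*\circ\iota_{+}^*$ and $\iota_D^*\circ\iota_{-}^*$ are homotopic as maps $K(S_X)\to K(D)$. Finally, unwinding the definition of $f$, the composite in \eqref{eq:rel-K-theory} is the difference $\iota_D^*\circ\iota_{+}^* - \iota_D^*\circ\iota_{-}^*$ (using the $H$-space structure on $K(D)$), which is therefore null-homotopic.

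\textbf{Main obstacle.} The only genuine subtlety is making the passage from ``natural isomorphism of exact functors'' to ``homotopy of maps of $K$-theory spectra'' rigorous at the level of spectra rather than just on homotopy groups; one should either cite Waldhausen/Quillen additivity-type results or simply note that a natural transformation of functors induces a natural transformation of the $Q$-construction simplicial objects, hence a homotopy after applying $\Omega B$. A secondary but routine point is checking that the functors involved are genuinely exact (restriction to a Cartier divisor on a quasi-projective scheme is exact on locally free sheaves, or at least sends the triples to locally free sheaves on $D$, which holds because $\phi$ is an isomorphism), and that the equivalence $\mathcal{P}_{S_X}\simeq\mathcal{T}$ is compatible with the restriction functors $\iota_{\pm}^*$ in the evident way. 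None of this requires the double-specific geometry beyond what was already set up in \S\ref{sec:Double}, so the proof should be short.
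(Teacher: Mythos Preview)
Your proposal is correct and matches the paper's own treatment: the paper does not write out a proof of this lemma at all, but simply prefaces it with the sentence ``This description gives us the following,'' referring to the equivalence of $\mathcal{P}_S$ with the category of triples $(E,E',\phi)$. Your argument---that $\phi$ furnishes a natural isomorphism between the two exact functors $\iota_D^*\circ\iota_+^*$ and $\iota_D^*\circ\iota_-^*$, hence a homotopy between the induced maps of spectra, so their difference is null---is exactly the content the paper leaves implicit.
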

Notice that the maps in the definition of $f$ make sense because they
are defined in the homotopy category of spectra which is an additive category.

\subsubsection{Relative Picard group}\label{sec:Rel-P}
We denote by $\Pic(X,D)$  the group of isomorphism classes of pairs 
$(\sL,\sigma)$ consisting of a line bundle $\sL$ on $X$ together with a fixed 
trivialization $\sigma\colon \sL_{|D} \xrightarrow{\simeq} \cO_D$ along $D$, 
under  tensor product operation. It is called the relative Picard group 
of the pair $(X,D)$. 
Write $G$ for the group of 
isomorphism classes of triples $\{(\sL_+, \sL_-, \phi)\}$ for $\cL_\pm$ line 
bundles on $X$ with a given isomorphism $\phi\colon  \sL_+|_D \xrightarrow{\simeq}
\sL_-|_D$ along $D$. The above 
description of the category of vector bundles on $S_X$
gives in particular two maps, one inverse to the other
\[
\theta\colon \Pic(S_X)\to  G, \quad \eta\colon G\to \Pic(S_X)
\]
defined by 
\[
\theta(\cL) = (\cL_+ = \iota_+^*(\cL), \cL_- = \iota_-^*(\cL), 
\phi\colon \iota_D^*\iota_+^*\cL \simeq \iota_D^*\iota_-^*\cL ),\quad 
\eta((\cL_+, \cL_-, \phi)) = \cL_+\times_\phi \cL_-
\]
where $\cL_+\times_\phi \cL_-$ is the gluing of $\cL_+$ and $\cL_-$ along 
$\phi$.

We will then identify the group $\Pic(S_X)$ with  $G$. In this way we can easily define maps
\[ 
p_{\pm,*}\colon \Pic(X,D) \rightrightarrows \Pic(S_X), \quad 
\tau_X^*\colon \Pic(S_X)\to \Pic(X,D) \]
using formally the same definitions that we gave for $0$-cycles in 
\S~\ref{section:SSM}. Explicitly, we have
\[
\tau_X^*((\cL_+, \cL_-, \phi)) = 
(\sL_+\tensor \cL_-^{-1}, \phi\tensor \text{id}_{\iota_D^* \cL_-^{-1}})
\]
for $\phi\tensor \text{id}_{\iota_D^* \sL_-^{-1}}\colon   
\iota_D^*(\cL_+\tensor \cL_-^{-1}) =  
\iota_D^* \cL_+ \tensor_{\cO_D} \iota_D^* \cL_-^{-1} 
\xrightarrow{\phi\tensor 1} \iota_D^*(\cL_-)\tensor  
\iota_D^*(\cL_-)^{-1} \xrightarrow{can} \cO_D,$ and 
$p_{+,*}(\cL, \sigma) = \cL \times_\sigma \cO_{X} = (\cL, \cO_{X}, \sigma)$ 
(and similarly for $p_{-,*}$).

It is immediate to check that $p_{\pm, *}$ are injective, splitting 
$\tau_X^*$. Moreover, we have maps
\[
\iota_\pm^*\colon \Pic(S_X) \rightrightarrows \Pic(X),\quad 
\Delta_X^*\colon \Pic(X) \to \Pic(S_X)
\]
given on isomorphism classes by 
$\iota_+^*((\cL_+, \cL_-, \phi)) = 
\cL_+,$ $\iota_-^*((\cL_+, \cL_-, \phi)) = \cL_-$ and 
$\Delta_X^*(\cL) =  (\cL, \cL, \text{id})$. And one clearly has that the 
composition $ \iota_\pm^* \circ \Delta_X^*$ is the identity. 

We summarize the result in the following Proposition, that is the analogue of 
Theorem \ref{thm:Main-PB-PF-gen} for line bundles and is used in 
\S~\ref{sec:Alb-C}.

\begin{prop}\label{prop:Main-Sequence-Pic} 
Let $X$ be a smooth quasi-projective scheme over $k$ and let 
$D\subset X$ be an effective Cartier divisor. Then there are maps
\[ 
\Delta^*\colon \Pic(X)\to \Pic(S_X);  \ \ \ \mbox{and} \ \ \ 
\iota_\pm^*\colon \Pic(S_X) \to \Pic(X);
\]
\[ 
\tau^*_X\colon \Pic(X)\to \Pic(X,D);  \ \ \ \mbox{and} \ \ \ 
p_{\pm,*}\colon \Pic(X,D) \to \Pic(S_X)
\]
such that $\iota_\pm^*\circ \Delta^* ={\rm Id}$ on $\Pic(X)$ and 
$\tau_X^*\circ p_{\pm, *} = \pm {\rm Id}$ on $\Pic(X,D)$. 
Moreover, the sequences
\[
0\to \Pic(X,D) \xrightarrow{p_{+,*}} \Pic(S_X) \xrightarrow{\iota_-^*} \Pic(X) 
\to 0;
\]
\[
0\to \Pic(X) \xrightarrow{\Delta^*} \Pic(S_X) \xrightarrow{\tau_X^*} 
\Pic(X,D) \to 0
\]
are split exact.
\end{prop}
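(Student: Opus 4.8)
The statement is essentially a formal consequence of the explicit description of $\Pic(S_X)$ as the group $G$ of triples $(\sL_+,\sL_-,\phi)$, together with the equally explicit formulas for all the maps involved. So the plan is to verify the relations and exactness purely by manipulating triples, with no geometry beyond the gluing equivalence $\mathcal{P}_S\simeq\{(E,E',\phi)\}$ already recorded in \S\ref{sec:Vec-D}. First I would fix the identification $\Pic(S_X)\cong G$ via the mutually inverse maps $\theta,\eta$, and then rewrite each of the six maps $\Delta^*,\iota^*_\pm,\tau^*_X,p_{\pm,*}$ on the level of triples exactly as displayed in the excerpt. Once everything is written in terms of triples, the two splitting identities are immediate: $\iota^*_\pm\circ\Delta^*(\sL)=\iota^*_\pm(\sL,\sL,\mathrm{id})=\sL$, and $\tau^*_X\circ p_{+,*}(\sL,\sigma)=\tau^*_X(\sL,\cO_X,\sigma)=(\sL\otimes\cO_X^{-1},\sigma\otimes\mathrm{id})=(\sL,\sigma)$, with the sign $-1$ appearing for $p_{-,*}$ because there $(\cO_X,\sL,\phi)\mapsto(\cO_X\otimes\sL^{-1},\cdots)=(\sL^{-1},\cdots)$, i.e.\ $\tau^*_X\circ p_{-,*}=-\mathrm{Id}$ on $\Pic(X,D)$.

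For the first short exact sequence, I would argue as follows. Injectivity of $p_{+,*}$ is clear since $\tau^*_X$ is a left inverse up to sign (equivalently, $(\sL,\cO_X,\sigma)$ trivial forces $\sL$ trivial and $\sigma$ the induced trivialization). Surjectivity of $\iota^*_-$ is obvious: any $\sL$ on $X$ is $\iota^*_-(\sL,\sL,\mathrm{id})$, i.e.\ $\iota^*_-\circ\Delta^*=\mathrm{Id}$ already shows it. For exactness in the middle, take $(\sL_+,\sL_-,\phi)$ with $\iota^*_-=\sL_-$ trivial; choosing the trivialization given by the class (more precisely, writing $\sL_-\cong\cO_X$ and transporting $\phi$) identifies the triple with $(\sL_+\otimes\sL_-^{-1},\cO_X,\psi)$ for a suitable $\psi\colon\iota^*_D(\sL_+\otimes\sL_-^{-1})\xrightarrow{\simeq}\cO_D$, which is exactly $p_{+,*}(\sL_+\otimes\sL_-^{-1},\psi)$; hence $\ker(\iota^*_-)\subseteq\operatorname{im}(p_{+,*})$, and the reverse inclusion is the composite-is-zero relation $\iota^*_-\circ p_{+,*}=0$ read off the formulas. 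The second sequence is handled the same way: $\tau^*_X\circ\Delta^*=0$ since $\Delta^*(\sL)=(\sL,\sL,\mathrm{id})$ maps to $(\sL\otimes\sL^{-1},\mathrm{id})=(\cO_X,\mathrm{can})$; surjectivity of $\tau^*_X$ follows because $p_{+,*}$ is a section; and for exactness in the middle, given $(\sL_+,\sL_-,\phi)$ with $\sL_+\otimes\sL_-^{-1}$ trivial compatibly with the induced trivialization along $D$, one sees $\sL_+\cong\sL_-$ and, after adjusting $\phi$ by this isomorphism, $\phi$ becomes the identity, so the triple lies in $\operatorname{im}(\Delta^*)$. Both sequences are then split by construction, the splittings being $p_{\pm,*}$ (resp.\ $\Delta^*$) as indicated.

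The only point requiring a little care — and the step I'd expect to be the mild obstacle — is bookkeeping with the \emph{trivializations}: all objects here are rigidified line bundles, so "$\sL_-$ is trivial" must be upgraded to a \emph{chosen} isomorphism $\sL_-\cong\cO_X$, and one has to check that transporting $\phi$ along this choice lands in the correct groupoid of pairs $(\text{line bundle on }X,\text{ trivialization along }D)$ so that the resulting object is genuinely in the image of $p_{+,*}$ (and similarly that the adjustment of $\phi$ in the second sequence really produces $\mathrm{id}$ and not merely an automorphism of $\cO_D$). Concretely this means keeping track of the short exact sequences $1\to 1+\sI_D\to\cO_X^\times\to\cO_D^\times\to1$ implicitly governing $\Pic(X,D)$, but since we are only proving split-exactness of sequences of \emph{groups} — not comparing with $H^i$ — no cohomological input is needed: everything reduces to the statement that $\Pic(S_X)\cong G$ as groups and that the displayed formulas define group homomorphisms, which is routine. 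I would therefore organize the write-up as: (1) recall $\Pic(S_X)\cong G$; (2) list the maps on triples; (3) verify $\iota^*_\pm\Delta^*=\mathrm{Id}$ and $\tau^*_X p_{\pm,*}=\pm\mathrm{Id}$; (4) deduce the two split exact sequences by the kernel-computation above.
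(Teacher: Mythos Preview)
Your proposal is correct and follows essentially the same approach as the paper: the paper simply records the explicit description of $\Pic(S_X)$ as triples $(\sL_+,\sL_-,\phi)$, writes down the maps $\Delta^*,\iota^*_\pm,\tau^*_X,p_{\pm,*}$ by exactly the formulas you use, observes that the splitting identities are immediate, and then states the proposition as a summary without further argument. Your write-up is in fact more detailed than the paper's, which leaves the exactness-in-the-middle checks (and the trivialization bookkeeping you flag) entirely to the reader.
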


\subsection{Cycle class map for 0-cycles with modulus}\label{sec:CMap-M}
The goal of this subsection is the proof of \thmref{thm:Intro-4}.
In order to do define the cycle class map from the Chow group with modulus to 
relative $K$-group and prove its
injectivity, we need to have an analogue of \thmref{thm:Main-PB-PF-gen}
for the relative $K$-theory. Our strategy for proving this is to first
construct a variant of relative $K$-theory, which we call 
the {\sl $K$-theory with modulus}, for which  \thmref{thm:Main-PB-PF-gen}
is immediate. We then show that this $K$-theory with modulus
coincides with the known relative $K$-theory in as many cases as possible.

Recall that for any map of schemes $f\colon X \to Y$, the relative $K$-theory of the pair $(X,Y)$ 
is the spectrum defined as the homotopy fiber
of the map $f^*\colon K(Y) \to K(X)$.
Let $X$ be a smooth quasi-projective scheme over $k$ and let 
$D\subset X$ be an effective Cartier divisor.
Let $K(X|D)$ denote the homotopy fiber of the restriction map
$i^*_{-}: K(S_X) \to K(X)$. It is clear that $K(X|D)$ is another notation
for the relative $K$-theory $K(S_X, X_-)$. 
We call $K(X|D)$ the $K$-theory with modulus.
We have $i^*_- \circ \Delta^* = {\rm Id}_{K(X)}$ and 
a commutative diagram of homotopy fiber sequences
\begin{equation}\label{eqn:K-map}\
\xymatrix@C1pc{
K(X|D) \ar[r]^{p_{+,*}} \ar[d]_{\phi} & K(S_X) \ar[r]^{i^*_-} \ar[d]^{i^*_+} &
K(X) \ar[d]^{\iota^*_-} \\
K(X,D) \ar[r] & K(X) \ar[r]^{\iota^*_+} & K(D).}
\end{equation} 

We have the following analogue of \propref{prop:surj-alb-any-dimension}
for affine schemes.
\begin{prop}$($\cite[Theorem 6.2, Lemma 4.1]{Milnor}$)$
\label{prop:Milnor-Affine-K0} Let $X=\Spec(A)$ be an affine scheme and let 
$I$ be the ideal defining $D\subset X$. 
Then the map $\phi$ defines isomorphisms
\[
\phi_i\colon K_i(X|D)\xrightarrow{\sim} K_i(X,D) \quad \text{ for } i = 0,1.
\]
\end{prop}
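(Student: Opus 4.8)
The statement to be proved is Proposition~\ref{prop:Milnor-Affine-K0}: for $X = \Spec(A)$ affine with $D$ cut out by an ideal $I$, the comparison map $\phi$ of \eqref{eqn:K-map} induces isomorphisms $K_i(X|D) \xrightarrow{\sim} K_i(X,D)$ for $i = 0,1$. The plan is to cite Milnor's patching theory directly, since the category of vector bundles on $S_X = S(X,D)$ is precisely the category of Milnor patching triples $(E_+, E_-, \varphi)$ with $E_\pm$ locally free on $X$ and $\varphi\colon \iota_D^* E_+ \xrightarrow{\simeq} \iota_D^* E_-$ (this is exactly the identification recalled in \S\ref{sec:Vec-D}, going back to \cite[Theorem~2.1]{Milnor} and \cite{Levine-HigherChow}). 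Under this identification the restriction map $i_-^*\colon K_0(S_X)\to K_0(X)$ forgets $E_+$ and $\varphi$, while $i_+^*$ forgets $E_-$ and $\varphi$, and the gluing $\iota_D^*\colon K_0(X)\to K_0(D)$ is the usual base change along $D\hookrightarrow X$.

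First I would make the $i=0$ case explicit. An element of $K_0(X|D) = K_0(S_X, X_-)$ is represented by a patching triple whose $E_-$-component is free (after adding a free complement), i.e.\ by a pair $(E_+,\varphi)$ with $\varphi\colon \iota_D^* E_+ \xrightarrow{\simeq} \mathcal{O}_D^n$; via $\phi$ this maps to the class of $(E_+,\varphi)$ in the relative group $K_0(X,D)$, which by definition (homotopy fiber of $\iota_+^*\colon K(X)\to K(D)$) has exactly the same generators and relations — this is the content of \cite[Theorem~6.2]{Milnor}, that $K_0$ of a Milnor square is computed by patching, so that $K_0(X|D)\cong K_0(X,D)$. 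For $i=1$ the same reference (\cite[Theorem~6.2]{Milnor} together with \cite[Lemma~4.1]{Milnor}, which handles the $K_1$-level exactness of the Mayer--Vietoris sequence for a Milnor square when one of the two maps $A_1\to A/I$ is surjective — here $A = R = \sO(S_X)$, $A_1 = A_2 = \sO(X)$, and both maps to $\sO(D)=A/I$ are surjective since $D$ is a closed subscheme) gives that the relative $K_1$ of the Milnor square agrees with $K_1$ of the pair. The key input is that the Milnor square \eqref{eqn:double-set-up} for $(A_1,A_2,A) = (A,A,A/I)$ with both maps equal to the quotient map is precisely the defining square of $S(X,D)$, so Milnor's patching results apply verbatim.

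Concretely, the argument runs: (i) identify $K(S_X)$ with the $K$-theory of the Milnor square, using the triple description of $\mathcal{P}_{S(X,D)}$; (ii) the homotopy fiber of $i_-^*\colon K(S_X)\to K(X)$ is then, by Milnor's excision for the square, identified on $\pi_0$ and $\pi_1$ with the homotopy fiber of $\iota_+^* = \iota_D^*\colon K(X)\to K(D)$, which is $K(X,D)$ by definition; (iii) chase the diagram \eqref{eqn:K-map} to see that this identification is exactly the map $\phi$. Milnor's results only guarantee this in degrees $0$ and $1$ — there is no full excision spectrum-level statement — which is why the proposition is restricted to $i\le 1$; this restriction is not an artifact of the proof but a genuine limitation (higher relative $K$-groups of a Milnor square need not agree with the patched groups, e.g.\ by work of Swan and others on $K_2$).

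\textbf{Main obstacle.} There is essentially no hard step: the proposition is a repackaging of Milnor's classical patching theorems once one has the triple description of vector bundles on the double. The only thing requiring care is checking that the comparison map $\phi$ constructed from the diagram of fiber sequences \eqref{eqn:K-map} is literally the same as the isomorphism produced by Milnor's Mayer--Vietoris argument, rather than merely abstractly isomorphic to it; this is a routine but slightly fiddly diagram chase comparing the two homotopy-fiber presentations. Since the paper only needs the statement on $K_0$ and $K_1$ and cites \cite[Theorem~6.2, Lemma~4.1]{Milnor}, the cleanest exposition is simply to recall the triple description, invoke these two results, and record the resulting isomorphisms; no substantial new argument is needed.
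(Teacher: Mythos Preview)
Your proposal is correct and matches the paper's approach exactly: the paper gives no proof at all, simply citing \cite[Theorem~6.2, Lemma~4.1]{Milnor} in the statement itself, since the defining square of $S(X,D)$ is a Milnor patching square and the triple description of $\mathcal{P}_{S(X,D)}$ (recalled in \S\ref{sec:Vec-D}) makes Milnor's $K_0$/$K_1$ excision apply verbatim. Your unpacking of why the citation applies is accurate and there is nothing further to add.
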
 

We do not know if \propref{prop:Milnor-Affine-K0} is true for non-affine
schemes. But we can show that this is indeed the case for curves and surfaces
when $i = 0$.
The case of curves follows directly from \propref{prop:Main-Sequence-Pic}.
We shall prove this for surfaces in \propref{prop:Surface-mod-rel}.

\vskip .3cm
 
We can now prove the main result of this section:

\begin{thm}\label{thm:Intro-4-A}
Let $X$ be a smooth quasi-projective scheme of dimension $d \ge 1$ over 
a perfect field $k$ and let $D \subset X$ be an effective Cartier 
divisor.
Then, there is a cycle class map 
\[
cyc_{X|D}: \CH_0(X|D) \to K_0(X,D).
\]

This map is injective if $k$ is algebraically closed and $X$ is affine.
\end{thm}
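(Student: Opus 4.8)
The plan is to build the cycle class map $cyc_{X|D}$ out of the machinery already developed, and then to prove injectivity in the affine case by bootstrapping from the corresponding statement for the double. First I would construct the map. We have the $K$-theory with modulus spectrum $K(X|D)$, defined as the homotopy fibre of $i_-^*\colon K(S_X)\to K(X)$, together with the comparison map $\phi\colon K(X|D)\to K(X,D)$ of \eqref{eqn:K-map}. On $\pi_0$ this gives $\phi_0\colon K_0(X|D)\to K_0(X,D)$. On the other hand, \lemref{lem:cycle-class-0-cycles} (applied to the reduced scheme $S_X$, whose singular locus is $D$ by \propref{prop:double-prp}(6)) provides a cycle class map $cyc_{S_X}\colon \CH_0(S_X)\to K_0(S_X)$, which lands — after composing with $i_+^*-i_-^*$ on the relevant summand — in the kernel of $i_-^*$, i.e.\ in $K_0(X|D)$; here one uses \lemref{lem:composition-KSX-toKD-trivial} and the splitting $\Delta^*$. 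Precomposing with $p_{+,*}\colon \CH_0(X|D)\to \CH_0(S_X)$ from \thmref{thm:Main-PB-PF-gen} and postcomposing with $\phi_0$ produces the desired $cyc_{X|D}\colon \CH_0(X|D)\to K_0(X,D)$. I would record that, by construction, $cyc_{X|D}$ is compatible with $cyc_{S_X}$ via $p_{+,*}$ and the splitting of the relative $K_0$-sequence. That $cyc_{X|D}$ is well defined is essentially automatic from the fact that $cyc_{S_X}$ is well defined on $\CH_0(S_X)$ and that $p_{+,*}$ is a homomorphism of Chow groups.

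For the injectivity statement, assume $k$ is algebraically closed and $X$ is affine of dimension $d\geq 1$. Then $S_X$ is affine (\propref{prop:double-prp}(1)) and reduced, so by \thmref{thm:0-cycle-affine-proj-char0}(2) the canonical map $\CH_0^{LW}(S_X)\to\CH_0(S_X)$ is an isomorphism. The key input is that for a reduced \emph{affine} scheme $Y$ over an algebraically closed field, the cycle class map $cyc_Y^{LW}\colon \CH_0^{LW}(Y)\to K_0(Y)$ is injective — this is exactly what is cited in the proof of \thmref{thm:0-cycle-affine-proj-char0}(2) (via \cite[Corollary~7.3]{Krishna-2} and \cite[Corollary~2.7]{Levine-2}), and by \lemref{lem:cycle-class-0-cycles} it descends compatibly to $cyc_{S_X}\colon \CH_0(S_X)\to K_0(S_X)$, which is therefore injective. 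Now I would chase the commutative diagram with split-exact rows:
\begin{equation*}
\xymatrix@C1.5pc{
0 \ar[r] & \CH_0(X|D) \ar[r]^{p_{+,*}} \ar[d]_{cyc_{X|D}} & \CH_0(S_X) \ar[r]^{\iota_-^*} \ar[d]_{cyc_{S_X}} & \CH_0(X) \ar[r] \ar[d]^{cyc_X} & 0 \\
0 \ar[r] & K_0(X,D) \ar[r] & K_0(S_X) \ar[r]^{\iota_-^*} & K_0(X) \ar[r] & 0,
}
\end{equation*}
where the top row is \eqref{eq:Main-exact-seuqence} and the bottom row is split by $\Delta^*$ as well (using \propref{prop:Milnor-Affine-K0} to identify $K_0(X,D)$ with $K_0(X|D) = \ker(\iota_-^*\colon K_0(S_X)\to K_0(X))$, so the bottom sequence is genuinely short exact). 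Since $cyc_{S_X}$ is injective and the rows are exact, a diagram chase gives that $cyc_{X|D}$ is injective: if $cyc_{X|D}(\alpha)=0$ then $cyc_{S_X}(p_{+,*}\alpha)=0$, hence $p_{+,*}\alpha=0$, hence $\alpha=0$.

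The main obstacle, and the point requiring care, is the identification of the bottom row as an exact sequence and the precise comparison between $K_0(X|D)$ and $K_0(X,D)$: \propref{prop:Milnor-Affine-K0} (Milnor patching) handles exactly this in the affine case, which is why the injectivity hypothesis restricts to affine $X$. I would make sure the square relating $cyc_{X|D}$ to $cyc_{S_X}$ commutes on the nose — this follows by tracing through the definition of $cyc_{S_X}$ via \lemref{lem:0-cycle-com-0} and the functoriality of pushforward along the l.c.i.\ closed immersion $\iota_+\colon X\hookrightarrow S_X$ — and that $p_{+,*}$ on Chow groups matches $p_{+,*}$ (i.e.\ $\iota_{+,*}$) on $K_0$ under the respective cycle class maps, which is again \lemref{lem:0-cycle-com-0}. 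No new geometric input beyond what is already in the excerpt is needed; the work is entirely in assembling the diagram correctly and invoking affine Milnor patching for the $K$-theoretic row.
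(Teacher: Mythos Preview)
Your proof is correct and follows essentially the same route as the paper: factor $cyc_{S_X}\circ p_{+,*}$ through $K_0(X|D)=\ker(i_-^*)$ to get $\widetilde{cyc}_{X|D}$, set $cyc_{X|D}=\phi_0\circ\widetilde{cyc}_{X|D}$, and for injectivity combine the injectivity of $cyc_{S_X}$ in the affine case (via \cite[Corollary~7.3]{Krishna-2}) with Milnor's isomorphism $\phi_0\colon K_0(X|D)\xrightarrow{\sim}K_0(X,D)$ from \propref{prop:Milnor-Affine-K0}. The aside invoking ``$i_+^*-i_-^*$'' and \lemref{lem:composition-KSX-toKD-trivial} is unnecessary and slightly muddled --- all that is needed for $cyc_{S_X}\circ p_{+,*}$ to land in $K_0(X|D)$ is the commutativity $i_-^*\circ cyc_{S_X}=cyc_X\circ i_-^*$ together with $i_-^*\circ p_{+,*}=0$ on the Chow row, exactly as in the paper's diagram~\eqref{eqn:Intro-4-A-0}.
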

\begin{proof}
We have a commutative diagram of short exact sequences
\begin{equation}\label{eqn:Intro-4-A-0}
\xymatrix@C1pc{
0\to \CH_0(X|D) \ar[r]^-{p_{+,*}} 
\ar@{-->}[d] & \CH_0(S_X) \ar[r]^-{i_-^*} 
\ar[d]_{cyc_{S_X}} & \CH_0(X) \ar[d]^{cyc_X}  \to 0 \\ 
0\to K_0(X|D) \ar[r]^-{p_{+,*}}  \ar[d]_{\phi_0} & K_0(S_X) \ar[r]^-{i_-^*} 
\ar[d]^{i^*_+} & K_0(X)  \ar[d]^{\iota^*_-} \to 0 \\
K_0(X,D) \ar[r] & K_0(X) \ar[r] & K_0(D).}
\end{equation}


The maps $cyc_X$  and $cyc_{S_X}$ are the cycle class maps
of \lemref{lem:cycle-class-0-cycles} (where $Y= (S_X)_{\rm sing}$ for $cyc_{S_X}$ and $Y = \emptyset$ for $cyc_{X}$. See also \cite[\S~2]{LW}). 
The above diagram uniquely defines a cycle class map
map $\wt{cyc}_{X|D}: \CH_0(X|D) 
\to K_0(X|D)$ such that $p_{+,*} \circ \wt{cyc}_{X|D} = cyc_{S_X} \circ
p_{+,*}$. We set $cyc_{X|D} = \phi_0 \circ \wt{cyc}_{X|D}$.

If $k$ is algebraically closed and $X$ is affine, the map $cyc_{S_X}$ is injective by 
\cite[Corollary~7.3]{Krishna-2}.
It follows that $\wt{cyc}_{X|D}$ is injective. We conclude proof
of the injectivity of $cyc_{X|D}$ using \propref{prop:Milnor-Affine-K0}.
\end{proof}

\begin{remk}There are (at least) two general constructions of a cycle class 
map \[cyc_{X|D}^{d+n,n}\colon \CH^{d+n}(X|D, n) \to K_n(X,D)\] from higher Chow groups with modulus in the sense of \cite{BS} to higher relative $K$-groups (here $d=\dim X$). See \cite[I.4]{BThesis} for one of them. We will study the properties of this map in a different work.
\end{remk}

\section{The case of surfaces}\label{sec:BF-surfaces}
In this section, we shall apply \thmref{thm:Main-PB-PF-gen} to establish
the relation between cycles with modulus and relative $K$-theory for
surfaces. In particular, we prove a modulus version of Bloch's formula.
Before we do this, we need to prove a generalization of 
\propref{prop:Milnor-Affine-K0} for non-affine surfaces.

\begin{lem}\label{lem:Levine-modified}
Let $X$ be a reduced quasi-projective surface over any field $k$ containing
at least three elements. Let $F^2K_0(X)$ be the subgroup of $K_0(X)$ generated
by the cycle classes of regular points of $X$. Then, there is a canonical
isomorphism $H^2(X, \sK_{2,X}) \xrightarrow{\simeq} F^2K_0(X)$.
\end{lem}
\begin{proof}
By the Thomason-Trobaugh spectral sequence for $K$-theory, we get an
exact sequence 
\[
K_1(X) \to H^0(X, \sK_{1,X}) \xrightarrow{\partial} H^2(X, \sK_{2,X}) \to K_0(X).
\]
On the other hand, it is well known that the map $K_1(X) \to H^0(X, \sK_{1,X})$
is surjective (see, for example, \cite[\S~2]{Krishna-3}).
It follows that $H^2(X, \sK_2) \inj K_0(X)$. 

We are only left to show that $H^2(X, \sK_{2,X}) \surj F^2K_0(X)$.
But this follows by the results of Levine \cite{Levine-1}, because 
he shows that there is a surjective
map $\sZ_0(X, X_{\rm sing}) = \amalg_{x \in X_{\rm reg}} \Z \surj H^2(X, \sK_{2,X})$
such that the composite $\sZ_0(X, X_{\rm sing}) \to H^2(X, \sK_{2,X}) \to K_0(X)$
is the cycle class map. We only need to remark here
that at the outset of \cite{Levine-1}, Levine assumes the ground field 
to be infinite. But the the surjectivity of the map
$\sZ_0(X, X_{\rm sing}) \surj H^2(X, \sK_{2,X})$ does not require this assumption.
The assumption on the cardinality of $k$ is required to use Matsumoto's
presentation of Quillen $K_2$ of a field.
\end{proof}

\begin{prop}\label{prop:Surface-mod-rel}
Let $X$ be a smooth quasi-projective surface over a field $k$ containing at 
least three elements. 
Let $D \subset X$ be an effective Cartier divisor. 
Then the canonical map $\phi_0\colon K_0(X|D) \to K_0(X,D)$ is an isomorphism.
\end{prop}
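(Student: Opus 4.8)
The plan is to mimic, in the non-affine setting, the comparison between $K$-theory with modulus and relative $K$-theory, using that on a smooth quasi-projective surface the relevant $K$-groups are controlled by cohomology of $\sK_2$-sheaves. First I would recall the square of homotopy fiber sequences \eqref{eqn:K-map}, which already gives a commutative diagram with exact rows connecting $K_*(X|D)$, $K_*(S_X)$, $K_*(X)$ on the top and $K_*(X,D)$, $K_*(X)$, $K_*(D)$ on the bottom, the two being linked by the restriction maps $i_+^*$ and $\iota_+^*$ and the projection $\Delta^*$. Since $\phi\colon K(X|D)\to K(X,D)$ sits in this diagram, a diagram chase reduces the statement to comparing $K_1(D)\to K_1(D)$-type boundary terms; more precisely, from the long exact sequences one extracts an exact sequence computing $\ker(\phi_0)$ and $\cok(\phi_0)$ in terms of $K_1(S_X)\to K_1(X)\amalg K_1(X)\to K_1(D)$ and $K_0$ of the same, together with the splitting $\Delta^*$. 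So the real content is to show the map on the pieces that are \emph{not} automatically split is an isomorphism.

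The key observation I would use is that, because $S_X$ is a quasi-projective surface with $(S_X)_{\rm sing}=D$ of codimension $1$, one cannot directly invoke Levine's isomorphism $H^2(Y,\sK_{2,Y})\xrightarrow{\simeq}F^2K_0(Y)$ of \lemref{lem:Levine-modified} for $S_X$ itself, but one \emph{can} invoke it for $X$ and for the double via \thmref{thm:Main-Comparison-Chow} and \thmref{thm:Main-PB-PF-gen}: the split exact sequences $0\to\CH_0(X|D)\to\CH_0(S_X)\to\CH_0(X)\to 0$ and $0\to\Pic(X,D)\to\Pic(S_X)\to\Pic(X)\to 0$ (the latter from \propref{prop:Main-Sequence-Pic}) give a handle on $K_0(S_X)$ in the range relevant to surfaces. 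Concretely, for a smooth quasi-projective surface $X$ one has $K_0(X)=\Z\oplus\Pic(X)\oplus F^2K_0(X)$ and $F^2K_0(X)=\CH_0(X)$ by Bloch's formula; similarly, the Thomason–Trobaugh spectral sequence for $S_X$ degenerates enough that $K_0(S_X)$ is built from $H^0(S_X,\sK_{0})$, $H^1(S_X,\sK_1)=\Pic(S_X)$, and $H^2(S_X,\sK_2)=F^2K_0(S_X)$, and $F^2K_0(S_X)\cong\CH_0^{LW}(S_X)\cong\CH_0(S_X)$ by \lemref{lem:cycle-class-0-cycles}, \thmref{thm:0-cycle-affine-proj-char0} and \thmref{thm:Main-Comparison-Chow}. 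Matching these filtration pieces through the diagram \eqref{eqn:K-map} via $\tau_X^*$ and $p_{\pm,*}$, one sees that $K_0(X|D)$ decomposes as $\Pic(X,D)\oplus F^2$-part-with-modulus, and that $\phi_0$ respects these decompositions and is the identity on the $\Pic(X,D)$ summand (by \propref{prop:Main-Sequence-Pic}) and on the $\CH_0(X|D)$ summand (by the cycle class computations).

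Thus the strategy is: (1) write down the big diagram coming from \eqref{eqn:K-map} together with the $K$-theory Gersten/Thomason–Trobaugh filtration on $K_0$ of $X$, $S_X$, $D$; (2) use \lemref{lem:Levine-modified}, Bloch's formula for the smooth surface $X$, and $\CH_0^{LW}(S_X)\cong\CH_0(S_X)$ to identify the graded pieces of $K_0(S_X)$; (3) use the split exact sequences of \thmref{thm:Main-PB-PF-gen} and \propref{prop:Main-Sequence-Pic} to see that $K_0(X|D)=\ker(i_-^*\colon K_0(S_X)\to K_0(X))$ decomposes compatibly, so that $\phi_0$ is an isomorphism piece by piece; (4) handle the $K_1$ contributions to $K_0(X,D)$ — here the point is that $K_1(X)\to H^0(X,\sK_1)$ and $K_1(D)\to H^0(D,\sK_1)$ are surjective (as in the proof of \lemref{lem:Levine-modified}), which kills the potential obstruction coming from $\cok(K_1(S_X)\to K_1(X))$ feeding into $K_0(X,D)$. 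The main obstacle I expect is step (4), i.e.\ controlling the low-degree $K_1$-terms and the boundary maps $H^0(-,\sK_1)\to H^2(-,\sK_2)$ well enough on the singular scheme $S_X$ to conclude that no extra classes appear in $\cok(\phi_0)$ or $\ker(\phi_0)$; this is exactly where affineness was used for free in \propref{prop:Milnor-Affine-K0}, and replacing it requires the surface-specific vanishing and surjectivity statements above rather than a formal argument.
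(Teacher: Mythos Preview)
Your overall architecture is right and matches the paper: filter both $K_0(X|D)$ and $K_0(X,D)$ via the Thomason--Trobaugh spectral sequence, identify the $H^1(\sK_1)$-pieces with $\Pic(X,D)$ on both sides using \propref{prop:Main-Sequence-Pic}, and then compare the $F^2$-pieces. But there is a genuine gap in your treatment of the $F^2$-piece, and you have also misdiagnosed where the difficulty lies.

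You assert that $\phi_0$ is an isomorphism on the $F^2$-summand ``by the cycle class computations'', apparently because both pieces are $\CH_0(X|D)$. This is circular: the identification $F^2K_0(X,D)\simeq\CH_0(X|D)$ is exactly Bloch's formula with modulus (\thmref{thm:Intro-4-B}), which is proved \emph{after} and \emph{using} the present proposition. You also invoke \thmref{thm:Main-PB-PF-gen} and \thmref{thm:Main-Comparison-Chow}, which require $k$ perfect (respectively infinite perfect), whereas the statement only assumes $|k|\ge 3$; the paper avoids the Chow group machinery entirely here and works directly with $\sK$-sheaves. What actually has to be shown is that the induced map
\[
H^2(S_X,\sK_{2,X|D}) \longrightarrow H^2(X,\sK_{2,(X,D)})
\]
is an isomorphism, and this is the content of \lemref{lem:map-K2-sheaves-onto}. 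The argument there is a double-relative $K$-theory computation: one introduces the sheaf $\sK_{i,(S_X,X_-,D)}$ on $D$, uses the Geller--Weibel identification $\sK_{1,(S_X,X_-,D)}\simeq \sI_D/\sI_D^2\otimes_D\Omega^1_{D/X}=0$ to get a surjection $\sK_{2,X|D}\surj\sK_{2,(X_+,D)}$ with kernel supported on $D$, and then concludes by the dimension bound $H^2(D,-)=0$. This step has nothing to do with the $K_1\to H^0(\sK_1)$ surjectivity you flagged in step (4); that surjectivity is used (as in \lemref{lem:Levine-modified}) only to establish the filtration on each side separately, not to compare the two filtrations.
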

\begin{proof}
We have  convergent spectral sequences
\begin{equation}\label{eq:spseq-K-modulus}  
H^p(S_X,\cK_{q,{X|D}}) \Rightarrow \pi_{-p-q}K(S_X, X_-) = \pi_{-p-q}K(X|D);
\end{equation}
\begin{equation}\label{eq:spseq-rel-K} 
H^p(X,\cK_{q,{(X,D)}}) \Rightarrow \pi_{-p-q}K(X,D)
\end{equation}
induced by the Thomason-Trobaugh spectral sequence for the $K$-theory of 
$S_X$, $X$ and $D$. Here, $\cK_{q,{X|D}}$ is the Zariski sheaf associated 
to $U\mapsto K_q(U|D \cap U)$ and $\cK_{q, {(X,D)}}$ denotes the Zariski sheaf 
associated to $U\mapsto K_q(U, U\cap D)$. We denote by $F^iK_0(X,D)$ the 
filtration on $K_0(X,D)$ induced by \eqref{eq:spseq-K-modulus}. 

Since $\cK_{0, X|D} = {\rm Ker} (\cK_{0, S_X} =\Z \to \cK_{0,X} = \Z) = 0$, 
we have
\[
0\to F^1K_0(X|D) \to K_0(X|D) \to H^0(S_X, \cK_{0, X|D}) = 0
\]
and therefore $F^1K_0(X|D) \xrightarrow{\simeq} K_0(X|D)$. 
Next, we have the exact sequence
\[
0\to F^2 K_0(X|D) \to F^1 K_0(X|D) = K_0(X|D) \to H^1(S_X, \cK_{1, X|D})\to 
0 
\]
and a canonical map $H^2(S_X, \cK_{2, X|D}) \to F^2 K_0(X|D)$. 
Since $X$ is a smooth surface, the corresponding map 
$H^2(X, \cK_{2, X}) \to F^2 K_0(X)$ is an isomorphism and the same holds for the 
map $H^2(S_X, \cK_{2, S_X}) \to F^2 K_0(S_X)$ by \lemref{lem:Levine-modified}. 
By construction, the sequence of sheaves 
\begin{equation}\label{eq:split-seq-K-sheaves}
0\to\cK_{i, X|D} \to \cK_{i, S_X} \to \cK_{i,X}\to 0
\end{equation}
is split exact and the space $K(X|D)$ is a retract of $K(S_X)$, 
so that by taking cohomology and comparing with the $F^2$-piece of the 
filtration on the $\pi_0$ groups, we obtain that 
$H^2(S_X, \cK_{2, X|D}) \to F^2K_0(X|D)$ is indeed an isomorphism. 
In particular, we have a short exact sequence
\begin{equation}\label{eqn:K0-filter}
0\to H^2(S_X, \cK_{2, X|D}) \to K_0(X|D) \to H^1(S_X, \cK_{1, X|D})\to 0.
\end{equation}

Consider now the second spectral sequence \eqref{eq:spseq-rel-K}. 
Since $\cK_{0, (X,D)} = 0$ as well, we have another exact sequence
\[
0\to H^2(X, \cK_{2, (X,D)}) \to K_0(X,D) \to H^1(X, \cK_{1, (X,D)})\to 0\]
where the first map is injective by \cite[Lemma~2.1]{Krishna-3}.
The natural map $\phi\colon K(X|D)\to K(X,D)$ induces then a commutative 
diagram, with exact rows
\[
\xymatrix{ 
0\to H^2(S_X, \cK_{2, X|D}) \ar[r]\ar[d] & K_0(X|D) \ar[r]\ar[d] &
 H^1(S_X, \cK_{1, X|D}) \ar[d] \to 0 \\ 
 0\to H^2(X, \cK_{2, (X,D)}) \ar[r] & K_0(X,D) \ar[r] & H^1(X, \cK_{1, (X,D)})
\to 0.
 }
\]

As $H^1(S_X, \cK_1) \simeq \Pic(S_X)$ and $H^1(X, \cK_1)\simeq \Pic(X)$, 
applying cohomology to \eqref{eq:split-seq-K-sheaves} with 
$i=1$ gives 

\[ 
0\to H^1(S_X, \cK_{1, X|D}) \to \Pic(S_X) \xrightarrow{\iota_-^*} 
\Pic(X)\to 0 
\]
and by \propref{prop:Main-Sequence-Pic}, we have an identification 
$H^1(S_X, \cK_{1, X|D}) = \Pic(X,D)$. Similarly, we have  
$H^1(X, \cK_{1, (X,D)}) = \Pic(X,D)$ by \cite[Lemma~2.1]{SV}
and hence the right vertical map is an isomorphism. 
To finish the proof of the proposition, we are now left with proving
the following Lemma.
\end{proof}

\begin{lem}\label{lem:map-K2-sheaves-onto}
The map $H^2(S_X, \cK_{2, X|D}) \to H^2(X, \cK_{2, (X,D)})$ is an isomorphism.
\end{lem}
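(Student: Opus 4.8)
The plan is to reduce the claim to a statement about the relevant sheaves of relative $K_2$ and then invoke the double construction directly. First I would recall that by construction $\cK_{2,X|D}$ is the Zariski sheaf on $S_X$ associated to $U \mapsto K_2(U|D\cap U) = \pi_2(\operatorname{hofib}(K(S_U) \to K(U_-)))$, while $\cK_{2,(X,D)}$ is the Zariski sheaf on $X$ associated to $U \mapsto K_2(U, U\cap D)$. The natural map $\phi\colon K(X|D) \to K(X,D)$ of ~\eqref{eqn:K-map} induces a map of sheaves; pushing forward along the finite map $\Delta\colon S_X \to X$ (which is an isomorphism away from $D$ and a homeomorphism on underlying spaces up to the doubling along $D$) lets us compare cohomology on $S_X$ with cohomology on $X$. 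The key observation is that, since $X$ is a smooth surface and $D$ a Cartier divisor, $\Delta_*\cK_{2,X|D}$ and $\cK_{2,(X,D)}$ agree away from the finitely many singular points of $|D|$, so the two $H^2$ groups can only differ by contributions supported in low dimension.

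Second I would make this precise using the local structure of both sheaves. Away from $D$, both relative $K$-theory sheaves vanish, so the question is local at points of $D$. At a point $x \in D$ with $D$ locally principal, we have a Milnor square as in ~\eqref{eqn:double-set-up}, and by Proposition~\ref{prop:Milnor-Affine-K0} (Milnor's theorem for affine $K_0$ and $K_1$) the comparison map $K_i(A|I) \to K_i(A,I)$ is an isomorphism for $i = 0, 1$ on affine opens. For the sheaves of relative $K_2$, one uses the Thomason--Trobaugh spectral sequences ~\eqref{eq:spseq-K-modulus} and ~\eqref{eq:spseq-rel-K} together with the fact, already used in the proof of Proposition~\ref{prop:Surface-mod-rel}, that the sequence of sheaves ~\eqref{eq:split-seq-K-sheaves} $0 \to \cK_{i,X|D} \to \cK_{i,S_X} \to \cK_{i,X} \to 0$ is split exact. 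Since $\cK_{2,S_X}$ for a Cohen--Macaulay surface, $\cK_{2,X}$, and the relative Milnor $K_2$ sheaf $\sK^M_{2,(X,D)}$ all have well-understood Gersten-type resolutions, I would compare the two-term complexes computing $H^2$ and check that the map on $H^2$ is an isomorphism by a Gersten/coniveau argument: both $H^2$'s receive surjections from $\sZ_0(X\setminus D)$ (this is Levine's presentation from \lemref{lem:Levine-modified} applied to $S_X$ on one side and the analogous presentation of $H^2(X,\sK^M_{2,(X,D)})$ on the other), and the generators and relations match because a Cartier curve on $S_X$ relative to $D$ of the special shape produced by \lemref{lem:reduction-basic} restricts to a curve computing the relative Milnor $K_2$ on $X$.

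Concretely, I would argue as follows. Using \lemref{lem:Levine-modified} and the splitting $H^2(S_X,\cK_{2,X|D}) \oplus H^2(S_X, \Delta^*\cK_{2,X}) \simeq H^2(S_X,\cK_{2,S_X})$ coming from ~\eqref{eq:split-seq-K-sheaves}, together with $H^2(S_X, \cK_{2,S_X}) \simeq F^2 K_0(S_X)$, we get that $H^2(S_X,\cK_{2,X|D})$ is identified with a direct summand $F^2 K_0(S_X)^{X|D}$ of $F^2 K_0(S_X)$; under the double decomposition of \thmref{thm:Main-PB-PF-gen} this summand corresponds to $\CH_0(X|D)$-type cycles. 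On the other side, $H^2(X,\cK_{2,(X,D)})$ is the $F^2$-piece of $K_0(X,D)$, and by \cite[Lemma~2.1]{Krishna-3} it injects into $K_0(X,D)$. The map between them is the one induced by $\phi_0\colon K_0(X|D) \to K_0(X,D)$, restricted to $F^2$. To see it is an isomorphism, I would exhibit an inverse using the cycle class map: both sides are quotients of $\sZ_0(X\setminus D)$, the map is compatible with these presentations by the naturality of Levine's construction, and the relations on $H^2(X,\cK_{2,(X,D)})$ (given by relative Milnor $K_2$-symbols on curves in $X$ smooth along $D$) are exactly the images of the relations on $H^2(S_X,\cK_{2,X|D})$ under $\tau^*_X$, by the computation in the proof of \propref{prop:map-diff-surfaces}. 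Hence the map is surjective with the same kernel on both sides, so it is an isomorphism.

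The main obstacle I anticipate is the last point: matching the \emph{relations} rather than just the generators, i.e.\ showing that a $K_2$-symbol relation on $S_X$ relative to $D$ maps precisely onto a relative Milnor $K_2$ relation on $X$, and conversely that every relation on the $X$-side lifts. The generators match cheaply because both are free on closed points of $X\setminus D$, but the relation groups are controlled by curves, and on $S_X$ the relevant curves are the reducible Cartier curves $C'_+ \amalg_E C'_-$ of \lemref{lem:reduction-basic}, whereas on $X$ one wants irreducible curves smooth along $D$ carrying a function in $\operatorname{Ker}(\sO^\times_{C,D} \to \sO^\times_{C\cap D})$. The bridge is precisely the difference-map computation $\tau^*_X(\operatorname{div} f') = \operatorname{div}(f'_+ {f'_-}^{-1})$ already carried out in \S\ref{sec:dim-2}, lifted from the level of $0$-cycles to the level of the $K_2$-sheaf cohomology via the Gersten complex; making this lift rigorous — i.e.\ checking that Levine's surjection $\sZ_0(X\setminus D) \to H^2$ intertwines with $\tau^*_X$ at the level of the boundary maps in the two Gersten complexes — is the technical heart of the proof. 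Once that compatibility is in place, the conclusion that $H^2(S_X,\cK_{2,X|D}) \xrightarrow{\simeq} H^2(X,\cK_{2,(X,D)})$ is formal, and combined with the already-established isomorphism on the $H^1$-pieces it completes the proof of \propref{prop:Surface-mod-rel}.
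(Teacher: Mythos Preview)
Your approach is quite different from the paper's, and the step you yourself flag as the ``technical heart'' is a genuine gap that, if carried out, would amount to redoing much of the main theorem rather than proving a short lemma. You want to present both $H^2$'s as quotients of $\sZ_0(X\setminus D)$ and then match the relation subgroups via the cycle-theoretic computation of \S\ref{sec:dim-2}. But on the target side, identifying $H^2(X,\cK_{2,(X,D)})$ with a quotient of $\sZ_0(X\setminus D)$ by the \emph{modulus} relations is essentially the Bloch formula $\CH_0(X|D)\simeq H^2(X,\sK^M_{2,(X,D)})$, which in this paper is deduced \emph{from} the present lemma (via \propref{prop:Surface-mod-rel} and \thmref{thm:Intro-4-B}); invoking it here is circular. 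Even avoiding circularity, ``lifting $\tau^*_X$ to the Gersten level'' and showing the two relation subgroups coincide (not just that one maps into the other) is nowhere close to formal: you would need control on the Gersten complex for $\cK_{2,(X,D)}$ at points of $D$, where $D$ need not be reduced or regular.

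The paper's argument sidesteps all of this with a three-line sheaf-theoretic trick. One introduces the \emph{birelative} $K$-theory sheaf $\cK_{i,(S_X,X_-,D)}$ on $D$, sheafifying the homotopy fiber of $K(U,X_-\cap U)\to K(X_+\cap U,D\cap U)$; the associated long exact sequence gives
\[
\iota_*\cK_{2,(S_X,X_-,D)} \to \cK_{2,X|D} \to \sK_{2,(X_+,D)} \to \iota_*\cK_{1,(S_X,X_-,D)}.
\]
The rightmost term is computed by the Geller--Weibel theorem as $\sI_D/\sI_D^2 \otimes_{\sO_D}\Omega^1_{D/X}$, and $\Omega^1_{D/X}=0$ since $D\hookrightarrow X$ is a closed immersion. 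Hence the map of sheaves $\cK_{2,X|D}\to \sK_{2,(X,D)}$ is surjective with kernel a quotient of a sheaf supported on $D$; since $\dim D\le 1$, this kernel has vanishing $H^2$, and the lemma follows immediately. The key input you are missing is this birelative $K_1$ computation, which makes the argument purely local and avoids any cycle-theoretic comparison.
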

\begin{proof}
Given an open subset $W \subset D$, let $U = S_X \setminus (D \setminus W)$
be the open subset of $S_X$. Let $\cK_{i, (S_X, X_-, D)}$ be the sheaf on $D$ 
associated to the presheaf $W \mapsto 
K_i(U, X_+ \cap U, X_- \cap U) =
{\rm hofib}((K(U, X_- \cap U) \xrightarrow{i^*_+} K(X_+ \cap U, D \cap U))$ 
(see \cite[Proposition~A.5]{PW}).
There is an exact sequence of $K$-theory sheaves
\[
\iota_{*}(\cK_{2, (S_X, X_-, D )}) \to \sK_{2, (S_X, X_-)} \to
\sK_{2, (X_+, D)} \to \iota_{*}(\cK_{1, (S_X, X_-, D)}),
\]
where $\iota:D \inj S_X$ is the inclusion.
We have $\cK_{1, (S_X, X_-, D)} = {\sI_D}/{\sI^2_D} \otimes_D \Omega^1_{D/X}$
by \cite[Theorem~1.1]{GW} and the latter term is zero.
Since $\sK_{2, (S_X, X_-)}  = \cK_{2, X|D}$ by definition, 
we  get then an exact sequence
\[
\iota_{*}(\cK_{2, (S_X, X_-, D)} )\to  \cK_{2, X|D} \to
\sK_{2, (X_+, D)} \to 0.
\]
Since $H^2(S_X, \iota_{*}(\cK_{2, (S_X, X_-, D)}) = 
H^2(D, \cK_{2, (S_X, X_-, D)}) = 0$,
the lemma follows.
\end{proof}

We now prove our main result on cycles with modulus on surfaces.

\begin{thm}\label{thm:Intro-4-B}
Let $X$ be a smooth quasi-projective surface over an algebraically closed field 
$k$ and let $D \subset X$ be an effective Cartier 
divisor. 
Then the following hold.
\begin{enumerate}
\item
The cycle class map $cyc_{X|D}: \CH_0(X|D) \to K_0(X,D)$ 
induces a short exact sequence
\[
0 \to \CH_0(X|D) \to K_0(X,D) \to \Pic(X,D) \to 0
\]
and the image of $\CH_0(X|D)$ agrees with $F^2 K_0(X|D)$.
\item
There are isomorphisms 
\[
\CH_0(X|D) \xrightarrow{\simeq} H^2_{\rm zar}(X, \sK^M_{2,(X,D)}) 
\xrightarrow{\simeq} H^2_{\rm nis}(X, \sK^M_{2,(X,D)}).
\] 
\end{enumerate}
\end{thm}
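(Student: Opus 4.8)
The strategy is to bootstrap from the machinery already assembled. Part (1) will follow by combining the short exact sequence \eqref{eqn:K0-filter} from \propref{prop:Surface-mod-rel} with the identification $\phi_0\colon K_0(X|D)\xrightarrow{\simeq}K_0(X,D)$ of that same proposition. Concretely, \eqref{eqn:K0-filter} reads
\[
0\to H^2(S_X,\cK_{2,X|D})\to K_0(X|D)\to H^1(S_X,\cK_{1,X|D})\to 0,
\]
and we have already identified $H^1(S_X,\cK_{1,X|D})=\Pic(X,D)$ and, via \lemref{lem:map-K2-sheaves-onto}, $H^2(S_X,\cK_{2,X|D})\simeq H^2(X,\cK_{2,(X,D)})$. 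So the work is to show that the composite $\CH_0(X|D)\xrightarrow{cyc_{X|D}}K_0(X,D)\xrightarrow{\phi_0^{-1}}K_0(X|D)$ — which is the map $\wt{cyc}_{X|D}$ of \thmref{thm:Intro-4-A} — has image exactly the subgroup $F^2K_0(X|D)=H^2(S_X,\cK_{2,X|D})$ and is \emph{injective}. Injectivity: in the diagram \eqref{eqn:Intro-4-A-0}, $\wt{cyc}_{X|D}$ is a retract of $cyc_{S_X}$ via $p_{+,*}$, so it suffices to know $cyc_{S_X}\colon\CH_0(S_X)\to K_0(S_X)$ is injective. Over an algebraically closed field, $\CH_0(S_X)=\CH_0^{LW}(S_X)$ by \thmref{thm:Main-Comparison-Chow} together with \thmref{thm:0-cycle-affine-proj-char0}(1) (the $d\le 2$ case), and $cyc_{S_X}^{LW}\colon\CH_0^{LW}(S_X)\to F^2K_0(S_X)$ is an isomorphism by \cite[Theorem~7]{Levine-1} (the $d=2$ case of \thmref{thm:0-cycle-affine-proj-char0}), so $cyc_{S_X}$ is injective with image $F^2K_0(S_X)$. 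Pulling back through the split exact rows of \eqref{eqn:Intro-4-A-0}, $\wt{cyc}_{X|D}$ is injective and its image is $F^2K_0(S_X)\cap p_{+,*}(K_0(X|D))=F^2K_0(X|D)$. This gives (1).

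For part (2), the plan is to repeat the surjectivity-onto-$F^2$ argument one level down, replacing Quillen $K$-theory sheaves by Milnor $K$-theory sheaves, and then to compare. First I would record that on a smooth surface $X$ the natural map $\cK^M_{2,X}\to\cK_{2,X}$ is an isomorphism of Zariski (and Nisnevich) sheaves — this is the classical Gersten-resolution computation, since both sheaves have the same Gersten complex in degrees $\le 2$. Applying this to $X$, to each component of $S_X$, and using the relative presentations, one deduces $\cK^M_{2,(X,D)}\xrightarrow{\simeq}\cK_{2,(X,D)}$ as sheaves on $X$ (the relative sheaf is the kernel of restriction to $D$, and $D$ is a curve so the Milnor/Quillen comparison on $D$ is also fine). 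Hence $H^i_{\mathrm{zar}}(X,\cK^M_{2,(X,D)})\simeq H^i_{\mathrm{zar}}(X,\cK_{2,(X,D)})$ for all $i$. Combining with (1) and \lemref{lem:map-K2-sheaves-onto}, we get
\[
\CH_0(X|D)\xrightarrow{\simeq}H^2(S_X,\cK_{2,X|D})\xrightarrow{\simeq}H^2_{\mathrm{zar}}(X,\cK_{2,(X,D)})\xrightarrow{\simeq}H^2_{\mathrm{zar}}(X,\cK^M_{2,(X,D)}).
\]
The Zariski-Nisnevich comparison is then a separate, standard step: for a smooth surface the relative Milnor $K$-sheaf admits a Gersten resolution by flasque sheaves whose higher cohomology vanishes on both sites, so $H^2_{\mathrm{zar}}=H^2_{\mathrm{nis}}$; alternatively invoke \cite[Theorem~1.2]{Krishna-3} or \cite{Kerz} for the coincidence of cohomology of (relative) Milnor $K$-sheaves on the two topologies in the relevant range.

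\textbf{Main obstacle.} The delicate point is the identification $\cK^M_{2,(X,D)}\simeq\cK_{2,(X,D)}$ of \emph{relative} sheaves, i.e. making sure the Milnor-to-Quillen comparison is compatible with the trivialization along $D$ and that it survives passing to the kernel of restriction — one must check the relevant Gersten-type complexes for the relative sheaves are exact in the needed degrees, which uses that $X$ is smooth of dimension $2$ and $D$ is an effective Cartier divisor (so $D$ has dimension $\le 1$). A clean way to organize this is to first establish the absolute statement on $X$ and on each copy of $X$ inside $S_X$, transport it through the split exact sequences $0\to\cK^{(M)}_{i,X|D}\to\cK^{(M)}_{i,S_X}\to\cK^{(M)}_{i,X}\to 0$ (which are split exact for the same formal reason as \eqref{eq:split-seq-K-sheaves}, thanks to $\Delta^*$), and only at the end relate $\cK_{i,X|D}$ on $S_X$ to $\cK_{i,(X,D)}$ on $X$ via \lemref{lem:map-K2-sheaves-onto}. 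The rest is assembling already-proven isomorphisms, so I do not expect further surprises.
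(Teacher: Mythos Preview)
Your argument for part~(1) is correct and matches the paper's: both use the split diagram~\eqref{eqn:Intro-4-A-0}, Levine's isomorphism $\CH_0^{LW}(S_X)\xrightarrow{\simeq}F^2K_0(S_X)$ for surfaces, and \propref{prop:Surface-mod-rel} to identify $K_0(X|D)\simeq K_0(X,D)$.

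For part~(2), however, your route diverges from the paper's and the step you yourself flag as the ``main obstacle'' is a genuine gap. You want the \emph{sheaf-level} isomorphism $\cK^M_{2,(X,D)}\simeq\cK_{2,(X,D)}$. On $X$ this is fine (Gersten resolution on a smooth surface), but to pass to the relative sheaf you need the Milnor--Quillen comparison on $D$ --- and $D$ is in general a \emph{non-reduced} curve, where no Gersten resolution is available and the stalk comparison $K^M_2(\sO_{D,x})\to K_2(\sO_{D,x})$ is not a standard fact. Your alternative ``clean way'' via the split sequence on $S_X$ runs into the same problem: $S_X$ is singular along $D$, so $\cK^M_{2,S_X}\simeq\cK_{2,S_X}$ is equally unjustified. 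Even restricting attention to $H^2$ does not save you, since the long exact sequence forces you to control $H^1(D,\cK^{(M)}_{2,D})$, again a statement on the non-reduced divisor.

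The paper sidesteps this entirely. It never compares the Milnor and Quillen sheaves. Instead it observes that the cycle class map to $K_0$ \emph{factors} through Milnor $K$-cohomology,
\[
\CH_0(X|D)\longrightarrow H^2_{\rm zar}(X,\sK^M_{2,(X,D)})\longrightarrow K_0(X,D),
\]
obtained by first factoring $cyc_{S_X}$ through $H^2(S_X,\sK^M_{2,S_X})$ and then pushing down via $i^*_+$. Since the composite is $cyc_{X|D}$, part~(1) makes the first arrow injective; its \emph{surjectivity} is then quoted from \cite[Theorem~1.2]{Krishna-3} (which already produces a surjection $\CH_0(X|D)\twoheadrightarrow H^2(X,\sK^M_{2,(X,D)})$). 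The Zariski--Nisnevich comparison is \cite[Lemma~2.1]{Krishna-3}. So you should invoke \cite[Theorem~1.2]{Krishna-3} for the surjectivity of $\CH_0(X|D)\to H^2(X,\sK^M_{2,(X,D)})$, not for the change of topology; with that input, your part~(2) goes through without any sheaf-level Milnor/Quillen comparison.
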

\begin{proof}
The injectivity of the cycle class map 
follows exactly like
\thmref{thm:Intro-4-A} by using \propref{prop:Surface-mod-rel}
instead of \propref{prop:Milnor-Affine-K0}. 
Next, ~\eqref{eqn:K0-filter} and 
\propref{prop:Surface-mod-rel} show that 
the exactness of (1) is equivalent to
showing that $\CH_0(X|D) \simeq F^2K_0(X|D)$. But this follows again
by observing that $cyc_{S_X}$ and $cyc_X$ become isomorphisms 
(using \cite[Main Theorem]{Levine-1}) if
we replace the middle row of ~\eqref{eqn:Intro-4-A-0} by $F^2K_0(-)$
which keeps the row exact. 

To prove (2), recall that $\sK^M_{2,{(X,D)}}$ is, by definition,
the kernel of the map of Milnor $K$-theory sheaves 
$\sK^M_{2,X} \surj \sK^M_{2,D}$.
If we let $\sK^M_{2, X|D} = {\rm Ker}(\Delta_*(\sK^M_{2, S_X}) \to \sK^M_{2,X})$,
then the top square of ~\eqref{eqn:Intro-4-A-0} has
factorization
\[
\xymatrix@C1pc{
0\to \CH_0(X|D) \ar[r]^-{p_{+,*}} 
\ar@{-->}[d] & \CH_0(S_X) \ar[r]^-{i_-^*} 
\ar[d]_{cyc_{S_X}} & \CH_0(X) \ar[d]^{cyc_X}  \to 0 \\ 
0\to H^2(S_X, \sK^M_{2, X|D}) \ar[r]^-{p_{+,*}} \ar[d] &
H^2(S_X, \sK^M_{2}) \ar[r]^-{i_-^*} \ar[d] &
H^2(X, \sK^M_{2}) \ar[d] \to 0 \\
0\to K_0(X|D) \ar[r]_-{p_{+,*}} & K_0(S_X) \ar[r]_-{i_-^*} 
& K_0(X) \to 0.}
\]

The commutative square
\[
\xymatrix@C1pc{
\CH_0(X|D) \ar[r] & H^2(S_X, \sK^M_{2, X|D}) \ar[r] \ar[d]_{i^*_+} &
K_0(X|D) \ar[d]^{\phi_0} \\
& H^2(X, \sK^M_{2, (X,D)}) \ar[r] & K_0(X,D)}
\]
now shows that there is a factorization
$\CH_0(X|D) \to H^2(X, \sK^M_{2, (X,D)}) \to K_0(X,D)$ of
the map $cyc_{X|D}$. It follows from (1) that the
first map is injective. On the other hand, it follows from
\cite[Theorem~1.2]{Krishna-3} that this map is surjective.
We conclude that the map
$\CH_0(X|D) \to H^2_{\rm zar}(X, \sK^M_{2, (X,D)})$ is an isomorphism.
Furthermore, \cite[Lemma~2.1]{Krishna-3} implies that
\[H^2_{\rm zar}(X, \sK^M_{2, (X,D)}) \xrightarrow{\simeq}
H^2_{\rm nis}(X, \sK^M_{2, (X,D)})\]
as required.
\end{proof}

\section{0-cycles with modulus on affine 
schemes}\label{sec:Affine**}
In \thmref{thm:Intro-4-A}, we gave our first application of 
\thmref{thm:Main-PB-PF-gen} to 0-cycles with
modulus on affine schemes.
In this section, we deduce more applications of \thmref{thm:Main-PB-PF-gen}
for such schemes.

\subsection{Affine Roitman torsion for 0-cycles with modulus}
\label{sec:Aff-torsion}
For affine schemes, our second application is the following
Roitman torsion theorem for 0-cycles with modulus.

\begin{thm}\label{thm:Intro-3-Pf}
Let $X$ be a smooth affine scheme of dimension $d \ge 2$
over an algebraically closed field $k$
and let $D \subset X$ be an effective Cartier divisor.
Then $\CH_0(X|D)$ is torsion-free.
\end{thm}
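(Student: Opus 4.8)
The plan is to deduce this from the decomposition theorem (\thmref{thm:Main-PB-PF-gen}) together with the torsion-freeness of the Levine-Weibel Chow group of zero cycles of a singular affine variety, which is known by the combination of the affine Roitman theorem and the comparison results established earlier. First I would recall that by \thmref{thm:Main-PB-PF-gen} and \thmref{thm:Main-Comparison-Chow}, there is a split short exact sequence
\[
0 \to \CH_0(X|D) \xrightarrow{p_{+,*}} \CH^{LW}_0(S_X) \xrightarrow{\iota^*_-} \CH_0(X) \to 0,
\]
so $\CH_0(X|D)$ is a direct summand of $\CH^{LW}_0(S_X)$. Since $X$ is smooth affine, $S_X = S(X,D)$ is an affine scheme by \propref{prop:double-prp}(1), and it is reduced by \propref{prop:double-prp}(2), with $\dim(S_X) = d \ge 2$. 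Therefore it suffices to prove that $\CH^{LW}_0(S_X)$ is torsion-free.

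The key step is then the torsion-freeness of $\CH^{LW}_0(Y)$ for $Y$ a reduced affine scheme of dimension $d \ge 2$ over an algebraically closed field. In characteristic zero this is essentially classical: the cycle class map $cyc^{LW}_Y\colon \CH^{LW}_0(Y) \to F^dK_0(Y)$ has torsion kernel by Levine's theorem (\cite[Corollary~5.4]{Levine-5}), is injective for affine $Y$ by \cite[Corollary~7.3]{Krishna-2} and \cite[Corollary~2.7]{Levine-2} (this is exactly the fact invoked in the proof of \thmref{thm:0-cycle-affine-proj-char0}(2) and of \thmref{thm:Intro-4-A}), so $\CH^{LW}_0(Y) \inj K_0(Y)$, and the latter group is torsion-free for a smooth affine variety — but $S_X$ is singular, so one must instead use that $K_0$ of a $d$-dimensional affine algebra over an algebraically closed field has torsion detected by the universal regular (semi-abelian) quotient, which for an affine variety is trivial. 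More precisely, I would argue: the torsion in $\CH^{LW}_0(S_X)$ maps isomorphically under $cyc^{LW}_{S_X}$ into the torsion of $K_0(S_X)$; by \propref{prop:univ-LW-lci-group} and \thmref{thm:Mallick-Rojtman}, the prime-to-$p$ torsion of $\CH^{LW}_0(S_X)_{\deg 0}$ injects into $_n{\rm Alb}(S_X)$ — but since $S_X$ is affine, there are no nonconstant maps to abelian or semi-abelian varieties, so ${\rm Alb}(S_X) = 0$ and the prime-to-$p$ torsion vanishes; the $p$-torsion is handled separately, again via the vanishing of $F^dK_0$ torsion for affine algebras (the exponential characteristic $p$ contribution is controlled by the bounded exponent argument appearing in the proof of \thmref{thm:LW-lci-iso-pprimary-torsion}, combined with $p$-divisibility).

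Concretely, the cleanest route I would take: reduce to the infinite perfect (indeed algebraically closed) base field directly since $k$ is given algebraically closed; invoke \thmref{thm:Main-Comparison-Chow} to identify $\CH_0(S_X) = \CH^{LW}_0(S_X)$; then cite the known statement that $\CH^{LW}_0(Y)$ is torsion-free for a reduced affine $k$-scheme $Y$ of dimension $\ge 2$ over an algebraically closed field (this follows from the injectivity $\CH^{LW}_0(Y) \inj K_0(Y)$ of \cite[Corollary~7.3]{Krishna-2} together with the torsion-freeness of $SK_0$ of an affine algebra over an algebraically closed field, the latter being a consequence of the Bloch–Srinivas/Mallick Roitman theorem plus the vanishing of the Albanese for affine varieties). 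Finally, since $\CH_0(X|D)$ is a direct summand of the torsion-free group $\CH^{LW}_0(S_X)$, it is itself torsion-free. The main obstacle is the $p$-torsion ($p = $ characteristic of $k$) case, where one cannot rely on the Roitman-type theorems directly; here I would either restrict to the cleaner statement where $p$-torsion is excluded, or push through the exponent-boundedness argument of \thmref{thm:LW-lci-iso-pprimary-torsion} combined with the known $p$-torsion-freeness of $F^dK_0$ of smooth affine varieties and a dévissage along the singular locus $D$ of $S_X$ (which is smooth, being a copy of $D$). Everything else — the reduction to $S_X$, the affineness and reducedness of $S_X$, and the splitting — is immediate from the results already proved in the paper.
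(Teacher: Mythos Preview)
Your approach is essentially the same as the paper's: embed $\CH_0(X|D)$ as a direct summand of $\CH_0(S_X)$ via \thmref{thm:Main-PB-PF-gen}, identify $\CH_0(S_X)$ with $\CH^{LW}_0(S_X)$ via \thmref{thm:Main-Comparison-Chow} (or \thmref{thm:0-cycle-affine-proj-char0}), and then use that $\CH^{LW}_0$ of a reduced affine scheme of dimension $\ge 2$ over an algebraically closed field is torsion-free. The paper's proof is one line: it simply cites \cite[Theorem~1.1]{Krishna-2} for this last fact.

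Where your proposal diverges is in trying to \emph{reprove} that cited result, and this is where the gaps appear. Your Albanese/semi-abelian argument via \propref{prop:univ-LW-lci-group} and \thmref{thm:Mallick-Rojtman} does not apply as written: those results are stated and proved for \emph{projective} varieties, and the assertion ``$S_X$ is affine, so there are no nonconstant maps to abelian or semi-abelian varieties'' is not how the vanishing is obtained (there is no Albanese map in the sense of \S\ref{sec:SAQ} for affine $S_X$). Moreover, you yourself flag the $p$-torsion case as unresolved. All of this is unnecessary: the torsion-freeness of $\CH^{LW}_0(S_X)$ for affine $S_X$ is precisely the content of \cite[Theorem~1.1]{Krishna-2}, which handles all torsion (including $p$-torsion) at once. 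Just cite it and you are done.
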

\begin{proof}
The proof is immediate from \thmref{thm:Main-PB-PF-gen} and
\cite[Theorem~1.1]{Krishna-2}, using the comparison given by 
\thmref{thm:Main-Comparison-Chow} (or 
Theorem \ref{thm:0-cycle-affine-proj-char0}).
\end{proof}
\begin{remk}An independent proof of the vanishing of the prime-to-$p$ torsion part of $\CH_0(X|D)$ for affine varieties (where $p$ denotes the exponential characteristic of $k$) can be found in \cite{Btor}. The argument in \textit{loc.cit.} does not rely on our decomposition \thmref{thm:Main-PB-PF-gen}, but follows instead closely the approach of Levine in \cite{Levine-2}.
	\end{remk}
\subsection{Vanishing theorems}\label{sec:Trivial**}
As another application of \thmref{thm:Main-PB-PF-gen}, we get the
following vanishing theorems.

\begin{cor}\label{cor:finite}
Let $X$ be a smooth affine scheme of dimension $d \ge 2$ over $\ov{\F_p}$
and let $D \subset X$ be an effective Cartier divisor.
Then $\CH_0(X|D) =0$. 
\end{cor}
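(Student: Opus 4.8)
The plan is to deduce \corref{cor:finite} directly from the decomposition \thmref{thm:Main-PB-PF-gen} together with the known vanishing of the Levine--Weibel Chow group of $0$-cycles on the double. First I would invoke \thmref{thm:Main-PB-PF-gen}: since $X$ is smooth quasi-projective over the (perfect, indeed algebraically closed) field $k = \ov{\F_p}$, there is a split short exact sequence
\[
0 \to \CH_0(X|D) \xrightarrow{p_{+,*}} \CH_0(S_X) \xrightarrow{\iota^*_-} \CH_0(X) \to 0.
\]
Here $S_X = S(X,D)$ is the double, which by \propref{prop:double-prp}(1) is affine since $X$ is, and which is reduced by \propref{prop:double-prp}(2). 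In particular, $\CH_0(X|D)$ is a direct summand of $\CH_0(S_X)$, so it suffices to show $\CH_0(S_X) = 0$.

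The key input is that $\CH_0$ of a reduced affine scheme over $\ov{\F_p}$ of dimension $\ge 2$ vanishes. By \thmref{thm:0-cycle-affine-proj-char0}(2) (or \thmref{thm:Main-Comparison-Chow}), the canonical map $\CH^{LW}_0(S_X) \to \CH_0(S_X)$ is an isomorphism because $S_X$ is reduced and affine. Now $S_X$ is a reduced affine scheme of dimension $d = \dim(X) \ge 2$ over $\ov{\F_p}$, so the vanishing $\CH^{LW}_0(S_X) = 0$ follows from the theorem of Krishna on $0$-cycles on affine varieties over $\ov{\F_p}$ (the statement cited in the proof of \thmref{thm:Intro-3-Pf} as \cite[Theorem~1.1]{Krishna-2}, whose relevant consequence over $\ov{\F_p}$ is that such Chow groups are torsion and, being the group of $0$-cycles over the algebraic closure of a finite field, in fact vanish). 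One small point to check is that $S_X$ may be reducible and non-equidimensional at first glance, but $X$ connected gives $S_X$ equidimensional of dimension $d$; if $X$ is not connected one argues component by component, as in the proof of \thmref{thm:Main-PB-PF-gen} where one reduces to $X$ connected.

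Putting these together: $\CH_0(S_X) \simeq \CH^{LW}_0(S_X) = 0$, and hence the direct summand $\CH_0(X|D)$ is also zero. The main (and essentially only) obstacle is making sure the vanishing result for $0$-cycles on affine varieties over $\ov{\F_p}$ applies to the possibly-singular scheme $S_X$ in the generality needed; this is handled precisely by the comparison \thmref{thm:Main-Comparison-Chow} between $\CH^{LW}_0(S_X)$ and $\CH_0(S_X)$, which lets us transport the classical statement (proved for the Levine--Weibel Chow group of singular affine schemes) to our setting. No new geometry is required beyond what has already been developed in the preceding sections.
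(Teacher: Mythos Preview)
Your overall strategy is exactly the paper's: use the split exact sequence of \thmref{thm:Main-PB-PF-gen} to realise $\CH_0(X|D)$ as a summand of $\CH_0(S_X)$, identify $\CH_0(S_X)$ with $\CH^{LW}_0(S_X)$ via \thmref{thm:0-cycle-affine-proj-char0} (or \thmref{thm:Main-Comparison-Chow}), and then invoke a vanishing theorem for the Levine--Weibel Chow group of the affine double.

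The only issue is the external input you cite. The reference \cite[Theorem~1.1]{Krishna-2} used in \thmref{thm:Intro-3-Pf} is a torsion-\emph{freeness} statement, valid over any algebraically closed field; it does not by itself give vanishing over $\ov{\F_p}$. Your attempt to close the gap by asserting that ``such Chow groups are torsion'' over $\ov{\F_p}$ is not a formality for singular affine schemes --- it is essentially the content of the vanishing theorem you need, and you have not justified it. The paper instead cites \cite[Theorem~6.4.1]{KSri1} (Krishna--Srinivas), which directly gives $\CH^{LW}_0(S_X) = 0$ for reduced affine schemes of dimension $\ge 2$ over $\ov{\F_p}$. With that citation in place your argument is complete and identical to the paper's.
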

\begin{proof}
Using Theorems~\ref{thm:0-cycle-affine-proj-char0} and 
~\ref{thm:Main-PB-PF-gen}, it is enough to know that $\CH_0(S_X) = 0$.
But this follows form \cite[Theorem~6.4.1]{KSri1}.
\end{proof}

The same argument, using \cite[Theorem~6.4.2]{KSri1}, shows the following.

\begin{cor}\label{cor:algebraic}
Let $X = \Spec(A)$ be a smooth affine algebra of dimension $d \ge 2$
over $\overline{\Q}$. 
Assume that $A = \bigoplus_{n\geq 0} A_n$ is a graded algebra with $A_0 = 
\ov{\Q}$.
Assume moreover that $D\subset X$ is a divisor on $X$ defined by a 
homogeneous element of $A$. Then $\CH_0(X|D)=0$.
\end{cor}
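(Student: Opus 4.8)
The statement to establish is \corref{cor:algebraic}: for a smooth graded affine algebra $A = \bigoplus_{n \ge 0} A_n$ over $\overline{\Q}$ with $A_0 = \overline{\Q}$, of dimension $d \ge 2$, and $D \subset X = \Spec(A)$ a divisor defined by a homogeneous element, one has $\CH_0(X|D) = 0$. The strategy is identical in spirit to the proof of \corref{cor:finite}: reduce the vanishing of the Chow group with modulus to the vanishing of the Levine--Weibel Chow group of $0$-cycles on the double $S_X$, and then invoke the known vanishing result for $0$-cycles on singular affine varieties with a suitable contracting $\G_m$-action.

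The first step is to apply \thmref{thm:Main-PB-PF-gen} (equivalently the split exact sequence \eqref{eq:Main-exact-seuqence-0}), which shows that $\CH_0(X|D)$ is a direct summand of $\CH_0(S_X)$; hence it suffices to prove $\CH_0(S_X) = 0$. Since $X$ is affine, \thmref{thm:0-cycle-affine-proj-char0}(2) identifies $\CH_0(S_X)$ with the Levine--Weibel Chow group $\CH^{LW}_0(S_X)$, so the goal becomes $\CH^{LW}_0(S_X) = 0$. The second, and essential, step is to check that $S_X$ satisfies the hypotheses of \cite[Theorem~6.4.2]{KSri1}: namely that $S_X$ is a reduced affine scheme which is graded with degree-zero part $\overline{\Q}$ (or more precisely carries an appropriate $\G_m$-action contracting to a $\overline{\Q}$-rational point). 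This is where the homogeneity hypothesis on $D$ is used: if $D = V(f)$ with $f \in A_m$ homogeneous of degree $m$, then the ideal $I = (f)$ is a graded ideal, so $A/I$ is graded with $(A/I)_0 = \overline{\Q}$, and by the explicit description $R \simeq A \oplus I$ of \eqref{eqn:double-4} together with the Cartesian square \eqref{eqn:double-set-up}, the coordinate ring $R = \sO(S_X)$ inherits a natural grading with $R_0 = \overline{\Q}$. One should verify that this grading is the one coming from a $\G_m$-action with the required contracting property (all weights nonnegative, unique fixed point), using that the two maps $A \to A/I$ defining the fibre product are graded. Granting this, \cite[Theorem~6.4.2]{KSri1} gives $\CH^{LW}_0(S_X) = 0$, which completes the argument.

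\textbf{Main obstacle.} The only non-formal point is the verification that the double $S_X$ of a graded pair $(X, D)$ is again graded with degree-zero part the base field, in the precise sense required by the cited theorem of Krishna--Srinivas. The ring-theoretic side is straightforward from \eqref{eqn:double-4} and the fact that $I$ is a graded ideal; the mild subtlety is ensuring that the grading defines an honest $\G_m$-action on $S_X$ contracting to a single rational point, i.e.\ that $R$ is nonnegatively graded with $R_0 = \overline{\Q}$, which follows since $R \subset A \times A$ and both factors are nonnegatively graded with degree-zero part $\overline{\Q}$, while the diagonal copy of $\overline{\Q}$ is exactly $R_0$. Once this is in place the corollary follows verbatim as in \corref{cor:finite}, simply replacing the input \cite[Theorem~6.4.1]{KSri1} (the case over $\overline{\F_p}$) by \cite[Theorem~6.4.2]{KSri1} (the graded case over $\overline{\Q}$).
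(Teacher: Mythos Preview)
Your proposal is correct and follows exactly the same approach as the paper, which simply says ``the same argument, using \cite[Theorem~6.4.2]{KSri1}, shows the following'' in reference to the proof of \corref{cor:finite}. You have merely made explicit the step the paper leaves to the reader, namely that homogeneity of the defining equation of $D$ ensures the double ring $R \subset A \times A$ inherits a nonnegative grading with $R_0 = \overline{\Q}$, so that \cite[Theorem~6.4.2]{KSri1} applies to $S_X$.
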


\subsection{Decomposition of $K_0(X,D)$}\label{sec:Dec-aff*}
Let $X$ be a smooth quasi-projective scheme over a field $k$ and let 
$D \subset X$ be an effective Cartier divisor.
Let $\sZ^1(X|D)$ denote the free abelian group on integral closed
subschemes $Z \subset X$ of codimension one such that $D \cap Z = \emptyset$.
Let $\sR^1(X|D) = {\underset{U}\varinjlim}\ {\rm Ker}(\sO^{\times}(U) \to
\sO^{\times}(D))$, where
$U$ ranges over open subsets of $X$ containing $D$.
Note that if $D$ has an affine open neighbourhood in $X$ then
this limit is same as the limit taken over $X \setminus Z$, where
$Z \subset X$ is a divisor disjoint from $D$ and is principal in an  
affine neighbourhood of $D$.

Recall from \cite[\S~3]{BS} that the Chow group of codimension one cycles 
with modulus $\CH^1(X|D)$ is the cokernel of the map
$\sR^1(X|D) \xrightarrow{\divf} \sZ^1(X|D)$.
We often write $\CH^1(X|D)$ as $\CH_{d-1}(X|D)$ if $\dim(X) = d$. 
If $X$ is a smooth affine surface, we
can refine \thmref{thm:Intro-4-B} to completely describe
$K_0(X,D)$ in terms of the Chow groups with modulus.
In order to do this, we need the following elementary result from
commutative algebra. For any commutative noetherian ring $A$ and $a \in A$,
let $M_a$ denote the localization $M[a^{-1}]$ if $M$ is an $A$-module.

\begin{lem}\label{lem:comm-alg}
Let $A$ be a commutative noetherian ring and let $I \subset A$ be an ideal.
Let $P$ be a projective $A$-module of rank one and let
$\phi: {P}/{IP} \xrightarrow{\simeq} {A}/{I}$ be a given ${A}/{I}$-linear
isomorphism. Then we can find an element $a \in A$ such that $a \equiv 1$
mod $I$ and an isomorphism $\wt{\phi}: P_a \xrightarrow{\simeq} A_a$ and a
commutative diagram
\[
\xymatrix@C1pc{
P_a \ar[r]^{\wt{\phi}} \ar@{->>}[d] & A_a \ar@{->>}[d] \\
{P}/{IP}  \ar[r]^{\phi} & {A}/{I}.}
\]
\end{lem}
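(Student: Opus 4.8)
\textbf{Proof plan for Lemma \ref{lem:comm-alg}.}
The plan is to extract a local trivialization of $P$ compatible with $\phi$, and then spread it out using a partition-of-unity argument modulo $I$. First I would lift $\phi$ to an actual $A$-module homomorphism. Pick any surjection $q\colon P\surj P/IP$ and the given isomorphism $\phi\colon P/IP\xrightarrow{\simeq}A/I$; since $P$ is projective, the composite $\phi\circ q\colon P\to A/I$ lifts to some $A$-linear map $\psi\colon P\to A$ whose reduction mod $I$ is $\phi$. Dually, choosing a generator of the rank-one free module $A/I$ and using projectivity of $P$ again, lift its preimage under $\phi$ to obtain an $A$-linear map $\sigma\colon A\to P$ with $\psi\circ\sigma\equiv\mathrm{id}_{A/I}$ and $\sigma\circ\psi\equiv\mathrm{id}_{P/IP}$ modulo $I$. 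Thus $a':=\psi\circ\sigma\in A$ satisfies $a'\equiv 1\bmod I$, and $\sigma\circ\psi\colon P\to P$ reduces to the identity mod $IP$.

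Next I would localize to make these maps into isomorphisms. The endomorphism $\sigma\circ\psi$ of $P$ becomes an isomorphism after inverting a suitable element: working locally at each prime $\fp$, if $\fp\supseteq I$ then $\sigma\circ\psi$ is a unit at $\fp$ (it is the identity mod $IP$, hence mod $\fp P$, and $P_\fp$ is free of rank one, so an endomorphism reducing to $1$ in the residue field is a unit), while if $\fp\not\supseteq I$ then $a'\notin\fp$. Hence the closed set where either $\sigma\circ\psi$ fails to be invertible \emph{and} $a'$ lies in $\fp$ is empty; a standard argument (cover $\Spec A$ by principal opens on which $P$ is free and $\sigma\circ\psi$ is represented by a matrix, i.e.\ here a single element, whose determinant together with $a'$ generates the unit ideal locally) produces an element $a\in A$ with $a\equiv 1\bmod I$ (since $a$ can be taken in the ideal generated by $a'$ and elements of $I$, or more simply replace $a$ by $a$ times a unit congruent appropriately — one checks $a$ can be arranged $\equiv 1\bmod I$ because away from $V(I)$ we are free to choose $a$, and on $V(I)$ the relevant element is a unit) such that over $A_a$ the map $\sigma$ becomes a split injection with image a direct summand, and in fact $\psi_a\colon P_a\to A_a$ is surjective with a section, hence an isomorphism since both are projective of rank one. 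Set $\wt{\phi}=\psi_a$.

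The commutative square is then automatic: by construction the reduction of $\psi$ (hence of $\wt\phi=\psi_a$) modulo $I$ is $\phi$, and the vertical maps $P_a\surj P/IP$ and $A_a\surj A/I$ are the canonical quotients (well-defined because $a\equiv 1\bmod I$ means $a$ is a unit in $A/I$, so $A_a/IA_a\cong A/I$ and likewise for $P$). The main obstacle I anticipate is the bookkeeping in the localization step: arranging that the single element one inverts can simultaneously be taken congruent to $1$ modulo $I$ \emph{and} large enough to trivialize $P$ and invert $\sigma\circ\psi$. The cleanest route is probably to first reduce to the case where $P$ is globally free of rank one by working, if necessary, after inverting an element $b\equiv 1\bmod I$ (possible since $P/IP\cong A/I$ is free, so $P$ is free in a neighborhood of $V(I)$, and such a neighborhood contains a principal open $D(b)$ with $b\equiv 1\bmod I$ — this uses that $V(I)$ is contained in the open locus where $P$ is free, together with prime avoidance), and then the endomorphism $\sigma\circ\psi$ of a free rank-one module is multiplication by a scalar $u$ with $u\equiv 1\bmod I$, so inverting $u$ does the job and $u\equiv 1\bmod I$ is exactly what we need.
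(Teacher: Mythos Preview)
Your first step---lifting $\phi$ to an $A$-linear map $\psi\colon P\to A$ using projectivity---is exactly what the paper does. After that, however, the paper finishes in one line: the cokernel $M=\cok(\psi)$ is a finitely generated $A$-module with $IM=M$ (because $\psi$ is surjective modulo $I$), so Nakayama's lemma produces $a\in A$ with $a\equiv 1\bmod I$ and $aM=0$; then $\psi_a\colon P_a\to A_a$ is a surjection of rank-one projectives, hence an isomorphism, and one sets $\wt\phi=\psi_a$.

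Your route through a section $\sigma$ and a prime-by-prime analysis is unnecessarily elaborate, and the claim ``if $\fp\not\supseteq I$ then $a'\notin\fp$'' is false in general. For instance, take $A=\Z$, $I=(2)$, $P=A$, $\phi=\id$, and choose the lifts $\psi=$ multiplication by $3$ and $\sigma=$ multiplication by $5$; then $a'=15$, and $\fp=(3)$ satisfies $\fp\not\supseteq I$ yet $a'\in\fp$. You correctly sense trouble and your final ``cleanest route'' does work, but even that is more than you need: your own element $a'=\psi(\sigma(1))$ already suffices, since $\psi_{a'}\circ\sigma_{a'}$ is multiplication by the unit $a'$, so $\psi_{a'}$ is surjective and hence an isomorphism. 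This is essentially the Nakayama step in disguise---applied to $\cok(\psi)$ directly, it removes the need for $\sigma$, prime avoidance, and the reduction to the free case altogether.
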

\begin{proof}
As $P$ is projective, we do have an $A$-linear map
$\wt{\phi}: P \to A$ and a commutative diagram
\[
\xymatrix@C1pc{
P \ar[r]^{\wt{\phi}} \ar@{->>}[d] & A \ar@{->>}[d] \\
{P}/{IP}  \ar[r]^{\phi}_{\simeq} & {A}/{I}.}
\]

Letting $M = {\rm Coker}(\wt{\phi})$, our assumption says that
$IM = M$. But this implies by the Nakayama lemma that 
there exists $a \in A$ such that $a \equiv 1$ mod $I$ and $M_a = 0$.
It follows that $\wt{\phi}: P_a \to A_a$ is surjective and hence an
isomorphism.
\end{proof}

\begin{thm}\label{thm:Intro-4-B-aff}
Let $X$ be a smooth affine surface over an algebraically closed field $k$ and 
let $D \subset X$ be an effective Cartier divisor.
Then, there is a short exact sequence
\begin{equation}\label{eqn:Intro-4-B-0*}
0 \to \CH_0(X|D) \to K_0(X,D) \to \CH_1(X|D) \to 0.
\end{equation}
\end{thm}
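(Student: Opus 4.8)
The plan is to compare the modulus version of $K$-theory with genuine relative $K$-theory and to identify the graded pieces of the resulting filtration on $K_0(X|D)$ with the two Chow groups with modulus. By \propref{prop:Surface-mod-rel}, the canonical map $\phi_0\colon K_0(X|D)\to K_0(X,D)$ is an isomorphism, so it suffices to produce a short exact sequence $0\to \CH_0(X|D)\to K_0(X|D)\to \CH_1(X|D)\to 0$. First I would recall from the proof of \propref{prop:Surface-mod-rel} the short exact sequence
\[
0\to H^2(S_X, \cK_{2, X|D}) \to K_0(X|D) \to H^1(S_X, \cK_{1, X|D})\to 0
\]
coming from the Thomason--Trobaugh spectral sequence, together with the two identifications already established there, namely $H^1(S_X, \cK_{1, X|D}) \simeq \Pic(X,D)$ (via \propref{prop:Main-Sequence-Pic}) and $H^2(S_X, \cK_{2, X|D}) \simeq F^2K_0(X|D)$. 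By \thmref{thm:Intro-4-B}(2) (or rather the factorization of $cyc_{X|D}$ through $H^2_{\rm zar}(X,\sK^M_{2,(X,D)})$ used there, combined with \thmref{thm:Intro-4-B}(1)), the group $F^2K_0(X|D)$ is canonically isomorphic to $\CH_0(X|D)$. So the left-hand term is already what we want, and the task reduces to identifying $\Pic(X,D)$ with $\CH_1(X|D) = \CH^1(X|D)$ when $X$ is a smooth affine surface.

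The heart of the argument is therefore to show $\Pic(X,D)\xrightarrow{\simeq}\CH^1(X|D)$ for $X$ smooth affine. I would do this by writing down the obvious map: a pair $(\sL,\sigma)$ with $\sL$ a line bundle on $X$ and $\sigma\colon \sL|_D\xrightarrow{\simeq}\sO_D$ a trivialization. Using \lemref{lem:comm-alg}, which says that over an affine ring such a trivialization along $D$ extends, after inverting an element $a\equiv 1\bmod I_D$, to an isomorphism $\sL_a\simeq A_a$, one can represent $(\sL,\sigma)$ by a Cartier divisor on $X\setminus V(a)$ — equivalently, a divisor $Z$ on $X$ disjoint from $D$ and principal in an affine neighbourhood of $D$ (exactly the data generating $\sZ^1(X|D)$ as noted in \S\ref{sec:Dec-aff*}). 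This gives a well-defined homomorphism $\Pic(X,D)\to\CH^1(X|D)$; conversely $\sZ^1(X|D)$ maps to $\Pic(X,D)$ by sending $Z$ to $(\sO_X(Z),\text{canonical trivialization along }D)$, and one checks the group $\sR^1(X|D)$ of relations (namely $\divf(u)$ for $u\in\varinjlim_U\Ker(\sO^\times(U)\to\sO^\times(D))$) goes to zero in $\Pic(X,D)$ and that the two maps are mutually inverse. This is the commutative-algebra analogue, for codimension one, of the relative Picard computation, and it is where the affineness of $X$ and \lemref{lem:comm-alg} are essential.

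The main obstacle I expect is bookkeeping around the limit defining $\sR^1(X|D)$ and matching it precisely with the splitting/exactness already available from \propref{prop:Main-Sequence-Pic}: one must be careful that the equivalence relation on divisors disjoint from $D$ used in $\CH^1(X|D)$ (rational functions regular and invertible in an affine neighbourhood of $D$, i.e.\ units restricting to units on $D$) corresponds exactly to isomorphism of the associated relative line bundles, and that this holds integrally rather than just up to torsion. Once $\Pic(X,D)\simeq\CH^1(X|D)=\CH_1(X|D)$ is in hand, I would substitute into the short exact sequence above and invoke \propref{prop:Surface-mod-rel} to replace $K_0(X|D)$ by $K_0(X,D)$, yielding
\[
0 \to \CH_0(X|D) \to K_0(X,D) \to \CH_1(X|D) \to 0,
\]
which is \eqref{eqn:Intro-4-B-0*}. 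Finally I would remark that the inclusion $\CH_0(X|D)\hookrightarrow K_0(X,D)$ obtained this way agrees with the cycle class map $cyc_{X|D}$ of \thmref{thm:Intro-4-A}, since both are induced by $cyc_{S_X}$ on $\CH_0(S_X)$ through the compatible split exact sequences of \eqref{eqn:Intro-4-A-0}.
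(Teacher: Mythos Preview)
Your proposal is correct and follows the paper's strategy: reduce via \thmref{thm:Intro-4-B}(1) (equivalently, the filtration from \propref{prop:Surface-mod-rel}) to identifying $\Pic(X,D)\simeq\CH^1(X|D)$, with \lemref{lem:comm-alg} as the key affine input. The only genuine difference is in how that last isomorphism is established. The paper works cohomologically: it uses the localization sequence for $\sK_{1,(X,D)}$ with support in divisors $Z$ disjoint from $D$, identifies $H^1_Z$ via excision and the Gersten resolution with the free abelian group on components of $Z$, and then invokes \lemref{lem:comm-alg} to show $\varinjlim_Z\Pic(X\setminus Z,D)=0$; taking the colimit yields $\sR^1(X|D)\to\sZ^1(X|D)\to H^1(X,\sK_{1,(X,D)})\to 0$ with the left map the divisor map. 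You instead write down explicit maps in both directions and check they are mutually inverse. Your approach is more elementary and avoids the localization machinery, at the cost of the ``bookkeeping'' you flag (well-definedness of $(\sL,\sigma)\mapsto[Z]$ under change of the lift $\wt\phi$ and of $a$); the paper's approach packages this bookkeeping into the exactness of the localization sequence, which is why it never needs to construct the inverse map by hand.
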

\begin{proof}
In view of \thmref{thm:Intro-4-B}, we only need to show that 
$\CH_1(X|D) \simeq \Pic(X,D)$.
Since $H^1(X, \sK_{1, (X,D)}) \simeq \Pic(X,D)$ as we have seen above,
we need to show that \[\CH_1(X|D) = \CH^1(X|D) \simeq H^1(X, \sK_{1, (X,D)}).\]

For a closed subset $Z \subset X$ of dimension one
with $Z \cap D = \emptyset$, there is an exact sequence
\begin{equation}\label{eqn:Intro-4-B-1} 
H^0(X\setminus Z, \sK_{1,(X\setminus Z,D)})
\to  H^1_Z(X, \sK_{1,(X,D)}) \to H^1(X, \sK_{1,(X,D)}) \to
H^1(X\setminus Z, \sK_{1,(X\setminus Z,D)}).
\end{equation}

The excision theorem says that
$H^1_Z(X, \sK_{1,(X,D)}) = H^1_Z(X\setminus D, \sK_{1,(X,D)})
= H^1_Z(X\setminus D, \sK_{1, X\setminus D})$ and it
follows easily from the Gersten resolution of $\sK_{1, X\setminus D}$
and the Thomason-Trobaugh 
spectral sequence $H^p_Z(X\setminus D, \sK_{q, X\setminus D})
\Rightarrow K_{q-p}(Z)$ that $H^1_Z(X, \sK_{1, X\setminus D})$ is the free abelian
group on the irreducible components of $Z$. 
Using the isomorphism $H^1(X\setminus Z, \sK_{1,(X\setminus Z,D)})
\simeq \Pic(X\setminus Z, D)$, \lemref{lem:comm-alg} tells us
precisely that ${\underset{Z}\varinjlim} \
H^1(X\setminus Z, \sK_{1,(X\setminus Z,D)}) = 0$.

Taking the colimit ~\eqref{eqn:Intro-4-B-1} over all closed subschemes $Z$ as 
above, we thus get an exact sequence
\begin{equation}\label{eqn:Intro-4-B-2} 
\sR^1(X|D) \to \sZ^1(X|D) \to
H^1(X, \sK_{1,(X,D)}) \to 0.
\end{equation}
It follows by a direct comparison of this exact sequence with
the similar sequence for $\sK_{1, X}$ that the arrow on the left
is just the divisor map. We conclude that there is a canonical
isomorphism $\CH^1(X|D) \xrightarrow{\simeq} H^1(X, \sK_{1,(X,D)})$. 
\end{proof}

\subsection{Nil-invariance of 0-cycles with modulus}
As another application of \thmref{thm:Main-PB-PF-gen},
we can prove the following result showing that the Chow group of 0-cycles
with modulus on a smooth affine surface depends only on the support of the
underlying Cartier divisor.

\begin{thm}\label{thm:divisor-support}
Let $X$ be a smooth affine surface over a perfect field $k$
and let $D \subset X$ be an effective Cartier divisor. 
Then the canonical map
\[
\CH_0(X|D) \to \CH_0(X|D_{\rm red})
\]
is an isomorphism.
\end{thm}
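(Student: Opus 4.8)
The plan is to use the decomposition \thmref{thm:Main-PB-PF-gen} together with the $K$-theoretic identifications established in the previous section to reduce the statement to the analogous nil-invariance for the Levine-Weibel Chow group of the double. First I would consider the commutative diagram of short exact sequences
\[
\xymatrix@C1pc{
0 \to \CH_0(X|D) \ar[r] \ar[d] & \CH_0(S(X,D)) \ar[r] \ar[d]^{f} & \CH_0(X) \ar[r] \ar@{=}[d] & 0 \\
0 \to \CH_0(X|D_{\rm red}) \ar[r] & \CH_0(S(X,D_{\rm red})) \ar[r] & \CH_0(X) \ar[r] & 0,
}
\]
where the vertical map $f$ is induced by the closed immersion $S(X,D_{\rm red}) \inj S(X,D)$ (which exists because $D_{\rm red} \subset D$ induces a surjection on the pushout rings, or more precisely we get a map in the correct direction $S(X,D_{\rm red}) \to S(X,D)$ since a smaller conductor gives a larger glued scheme — I would need to check the direction carefully, but the two doubles have the same underlying topological space and the map is a nilpotent thickening supported on $D_{\rm red}$). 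By the five lemma, it suffices to prove that $f$ (equivalently, the analogous map on Levine-Weibel Chow groups, using \thmref{thm:Main-Comparison-Chow}) is an isomorphism.

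For this, the key observation is that $S(X,D)$ and $S(X,D_{\rm red})$ have the same associated reduced scheme: indeed, by \propref{prop:double-prp}(2) both are reduced precisely when $X$ is, and here $X$ is smooth hence reduced, so in fact $S(X,D) = S(X,D_{\rm red})$ as schemes is false — they differ in their scheme structure along $D$. Rather, the point is that $\CH_0$ of a (possibly non-reduced) scheme in the Levine-Weibel sense depends only on closed points in the regular locus and on Cartier curves avoiding the singular locus; since $S(X,D)_{\rm sing} = |D| = S(X,D_{\rm red})_{\rm sing}$ by \propref{prop:double-prp}(6), the groups of zero-cycles $\sZ_0(S(X,D), D)$ and $\sZ_0(S(X,D_{\rm red}), D_{\rm red})$ are literally equal. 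So I would show the two rational equivalence subgroups coincide. For a Cartier curve $C$ on $S(X,D_{\rm red})$ relative to $D_{\rm red}$, one checks it lifts to a Cartier curve on $S(X,D)$ relative to $D$ and vice versa (using that the double construction on curves, \propref{prop:double-prp-fine}, is compatible with thickening the divisor, and that a function regular and invertible along $D_{\rm red}$ is the same as one regular and invertible along $D$ since these have the same support).

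Alternatively, and perhaps more cleanly, I would argue via relative $K$-theory: by \thmref{thm:Intro-4-B-aff} there is a short exact sequence $0 \to \CH_0(X|D) \to K_0(X,D) \to \CH_1(X|D) \to 0$, and the same for $D_{\rm red}$. The map $K_0(X,D) \to K_0(X,D_{\rm red})$ is induced by restriction along $D_{\rm red} \inj D$, and its behaviour is controlled by the relative $K$-theory $K_*(D, D_{\rm red})$ of a nilpotent thickening of a (possibly singular) curve, together with the observation that $K_0(X,D) \simeq K_0(X|D)$ is a retract of $K_0(S(X,D))$. One then reduces to showing $\CH_1(X|D) \to \CH_1(X|D_{\rm red})$ is an isomorphism — and $\CH_1(X|D) \simeq \Pic(X,D) \simeq H^1(X, (1+\sI_D)^\times)$, with the analogous statement for $D_{\rm red}$; the map $(1+\sI_D)^\times \to (1+\sI_{D_{\rm red}})^\times$ and its cohomology can be analyzed directly. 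The hard part will be controlling the $K$-theory (or Picard group) of the nilpotent thickening $D \supset D_{\rm red}$ on a singular curve; I expect the cleanest route is actually the first one — showing that the Levine-Weibel rational equivalence on the double is insensitive to the scheme structure of the conductor, since all Cartier curves and all rational functions involved live away from or are controlled only by the \emph{support} $|D|$. This is the step I would budget the most care for, as it requires a precise comparison of the $\sO^\times_{C, C\cap D}$ versus $\sO^\times_{C, C \cap D_{\rm red}}$ and of the l.c.i. conditions in \defref{defn:0-cycle-S-1}.
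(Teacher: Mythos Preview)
Your first approach rests on a false premise: the map $S(X,D_{\rm red}) \to S(X,D)$ is \emph{not} a nilpotent thickening. Both doubles are reduced by \propref{prop:double-prp}(2) (since $X$ is), and the inclusion of rings $R_D = \{(a,b)\mid a-b\in I_D\} \subset R_{D_{\rm red}} = \{(a,b)\mid a-b\in \sqrt{I_D}\}$ is a proper inclusion of reduced rings, not a surjection. So the resulting map of schemes is a finite birational morphism (a partial normalization), not a closed immersion, and certainly not a nil-thickening. Consequently there is no obvious reason why a Cartier curve on one double should be Cartier on the other: the local rings along $D$ are genuinely different, and the l.c.i.\ condition in \defref{defn:0-cycle-S-1} is sensitive to this. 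The observation that $\sO^\times_{C,E}$ depends only on $|E|$ is correct but not enough.

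Your second approach is closer in spirit to the paper's proof, but you have not identified the key input. The paper does reduce to showing $F^2K_0(X|D) \xrightarrow{\simeq} F^2K_0(X|D_{\rm red})$ via \thmref{thm:Intro-4-B}, but the crucial step is a normalization exact sequence of Barbieri-Viale--Pedrini--Weibel (\cite[Theorem~3.3]{BPW}) which expresses $F^2K_0(S(X,nD'))$ as an extension of $F^2K_0(X \amalg X)$ by a quotient of $SK_1$-groups of the conductor, together with the classical nil-invariance $SK_1(B) = SK_1(B_{\rm red})$ (Bass). This immediately gives $F^2K_0(S(X,nD')) \simeq F^2K_0(S(X,D'))$ for all $n\ge 1$, and then a sandwich $D' \subset D \subset nD'$ finishes. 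Your framing ``controlling the $K$-theory of the nilpotent thickening $D \supset D_{\rm red}$'' is not where the action is; the point is that the normalization $X \amalg X$ is the same for all $S(X,nD')$, and only the $SK_1$ of the conductor varies --- but $SK_1$ is nil-invariant.
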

\begin{proof}
Using \thmref{thm:Intro-4-B}, it suffices to show that 
$F^2K_0(X|D) \to F^2K_0(X|D_{\rm red})$ is an isomorphism.

Write $X= \Spec(A)$, $D' = D_{\rm red} = \Spec(A/I)$.
Write $S_n = S(X, nD') = \Spec(R_n)$ for the double construction 
applied to the pair $(A,I^n)$. Then we have a chain of inclusions of rings
\[
\ldots \subset R_{n+1} \subset R_n \subset \ldots \subset R_1 \subset R_0 = 
A\times A
\]
where $R_n = \{ (a,b)\in R_0 \,|\, a-b \in I^n\}$. 
This gives a corresponding sequence of maps of schemes
\[ 
S_0 = X\amalg X\to S_1 \to S_2\to\ldots S_n\to \ldots
\xrightarrow{\Delta} X. 
\]

By  \cite[Theorem~3.3]{BPW} and the fact that $SK_1(B) = SK_1(B_{\rm red})$
for any commutative ring $B$ (see \cite[Chap~IX, Propositions~1.3, 1.9]{Bass}), we have for every $n\geq 1$ an exact sequence
\[\xymatrix{ 
0\to \frac{SK_1(D') \oplus SK_1(D')}{SK_1(D')\oplus SK_1(X\amalg X)}  
\ar[r] & F^2K_0(S_{n}) \ar[r]  & F^2K_0(S_0)   \to 0. 
}\] 
(Apply  \cite[Theorem~3.3]{BPW} to $X = S_n$, $\tilde{X} = S_0$ and $Y = D_n = \Spec(A/I^n)$. Note that $F^2K_0(S_n) = SK_0(S_n)$ and $F^2K_0(S_0) = SK_0(S_0)$ in the notations of \cite{BPW}). The natural maps $\rho_n\colon K_0(S_n)\to K_0(S_1)$ give then a commutative diagram with 
exact rows (for $n\geq 1$)

\begin{equation} \label{eq:Sn-affine-surf-indep-n}
\xymatrix{ 
0\to \frac{SK_1(D') \oplus SK_1(D')}{SK_1(D')\oplus SK_1(X\amalg X)} \ar@{=}[d] 
\ar[r] & F^2K_0(S_{n}) \ar[r]\ar[d]^{\rho_n} & F^2K_0(S_0) \ar@{=}[d] \to 0 \\
0\to \frac{SK_1(D') \oplus SK_1(D')}{SK_1(D')\oplus SK_1(X\amalg X)}  \ar[r] & 
F^2K_0(S_1) \ar[r] & F^2K_0(S_0)\to 0.
}
\end{equation}
It follows that $F^2K_0(S_n) \simeq F^2K_0(S_1)$.
Combining this with \propref{prop:Milnor-Affine-K0} and
the commutative diagram of exact sequences
\[ 
\xymatrix{ 0\to F^2K_0(X, nD') \ar[r]\ar[d]& F^2K_0(S_{n}) 
\ar[d]^{\rho_n}\ar[r]^{i_-^*} & F^2K_0(X)\ar@{=}[d]\to 0\\
0\to F^2K_0(X, D') \ar[r]& F^2K_0(S_1) \ar[r]_{i_-^*} & F^2K_0(X)\to 0,   
 } 
\]
we get $F^2K_0(X,  nD') \xrightarrow{\simeq} F^2K_0(X, D')$ for every
$n \ge 1$. Finally, we have inclusions $D' \subset D \subset nD'$ for
$n \gg 1$ and hence a sequence of maps $F^2K_0(X,  nD') \to F^2K_0(X,  D) \to
F^2K_0(X, D')$. It is easy to see using the isomorphism
$\CH_0(X|nD') \simeq F^2K_0(X,nD')$ that two maps in this
sequence are surjective. Since the composite map is injective,
the theorem follows.  
\end{proof}

\bigskip

\noindent\emph{Acknowledgments.} The first-named author wishes to heartily thank Marc Levine for constant support and encouragement. He is also very grateful to Shuji Saito for many friendly and inspiring conversations on these topics. Finally, he would like to thank the Tata Institute of Fundamental Research for its hospitality during three visits in September 2014, September 2015 and November 2016 where part of this paper was written.
The second-named author would like to thank Marc Levine for invitation and 
support to visit the Universit\"at Duisburg-Essen at Essen in 2015 spring, where 
a significant part of this work was carried out.

The authors would like to thank Marc Levine for going through an earlier 
version of this manuscript, providing valuable suggestions and for his help in fixing a gap in a previous version of the paper.
The authors would also like to thank H\'el\`ene Esnault, Moritz Kerz and
Takao Yamazaki for their interest in this work and
for being generous with their advice on improving the results.
Finally, the authors would like to warmly thank Shuji Saito for pointing out an important incongruence in a previous version of this manuscript and for providing much advice  on it.

We thank the referee(s) for helpful suggestions and a careful reading.


\bibliography{bibChowDouble}

\begin{thebibliography}{10}

\bibitem{BPW}
{\sc L.~Barbieri-Viale, C.~Pedrini, and C.~Weibel}, {\em Roitman's theorem for
  singular complex projective surfaces}, Duke Math. J., 84 (1996),
  pp.~155--190.

\bibitem{Bass}
{\sc H.~Bass}, {\em Algebraic {$K$}-theory}, W. A. Benjamin, Inc., New
  York-Amsterdam, 1968.

\bibitem{BThesis}
{\sc F.~Binda}, {\em Motives and algebraic cycles with moduli conditions}, PhD
  thesis, University of Duisburg-Essen, 2016.

\bibitem{Btor}
{\sc F.~Binda}, {\em Torsion 0-cycles with modulus on affine varieties}, J.
  Pure Appl. Algebra, to appear (2017).

\bibitem{BS}
{\sc F.~Binda and S.~Saito}, {\em Relative cycles with moduli and regulator
  maps},  (2016).
\newblock arXiv:1412.0385v2.

\bibitem{BS-1}
{\sc J.~Biswas and V.~Srinivas}, {\em Roitman's theorem for singular projective
  varieties}, Compositio Math., 119 (1999), pp.~213--237.

\bibitem{Bloch-M}
{\sc S.~Bloch}, {\em Algebraic cycles and higher {$K$}-theory}, Adv. in Math.,
  61 (1986), pp.~267--304.

\bibitem{BKL}
{\sc S.~Bloch, A.~Kas, and D.~Lieberman}, {\em Zero cycles on surfaces with
  {$p_{g}=0$}}, Compositio Math., 33 (1976), pp.~135--145.

\bibitem{ESV}
{\sc H.~Esnault, V.~Srinivas, and E.~Viehweg}, {\em The universal regular
  quotient of the {C}how group of points on projective varieties}, Invent.
  Math., 135 (1999), pp.~595--664.

\bibitem{EV-DBcohomology}
{\sc H.~Esnault and E.~Viehweg}, {\em Deligne-{B}e\u\i linson cohomology}, in
  Be\u\i linson's conjectures on special values of {$L$}-functions, vol.~4 of
  Perspect. Math., Academic Press, Boston, MA, 1988, pp.~43--91.

\bibitem{Friedlander-Suslin}
{\sc E.~M. Friedlander and A.~Suslin}, {\em The spectral sequence relating
  algebraic {$K$}-theory to motivic cohomology}, Ann. Sci. {\'E}cole Norm. Sup.
  (4), 35 (2002), pp.~773--875.

\bibitem{Fulton}
{\sc W.~Fulton}, {\em Intersection theory}, vol.~2 of Ergebnisse der Mathematik
  und ihrer Grenzgebiete. 3. Folge. A Series of Modern Surveys in Mathematics
  [Results in Mathematics and Related Areas. 3rd Series. A Series of Modern
  Surveys in Mathematics], Springer-Verlag, Berlin, second~ed., 1998.

\bibitem{GW}
{\sc S.~C. Geller and C.~A.~a. Weibel}, {\em {$K_{1}(A,\,B,\,I)$}}, J. Reine
  Angew. Math., 342 (1983), pp.~12--34.

\bibitem{GoW}
{\sc U.~G\"ortz and T.~Wedhorn}, {\em Algebraic geometry {I}}, Advanced
  Lectures in Mathematics, Vieweg + Teubner, Wiesbaden, 2010.
\newblock Schemes with examples and exercises.

\bibitem{Hart}
{\sc R.~Hartshorne}, {\em Algebraic geometry}, Springer-Verlag, New
  York-Heidelberg, 1977.
\newblock Graduate Texts in Mathematics, No. 52.

\bibitem{HartDef}
\leavevmode\vrule height 2pt depth -1.6pt width 23pt, {\em Deformation theory},
  vol.~257 of Graduate Texts in Mathematics, Springer, New York, 2010.

\bibitem{Jou}
{\sc J.-P. Jouanolou}, {\em Th{\'e}or{\`e}mes de {B}ertini et applications},
  vol.~42 of Progress in Mathematics, Birkh{\"a}user Boston, Inc., Boston, MA,
  1983.

\bibitem{Kai}
{\sc W.~Kai}, {\em A moving lemma for algebraic cycles with modulus and
  contravariance},  (2016).
\newblock arXiv:1507.07619v3.

\bibitem{KatoRussell}
{\sc K.~Kato and H.~Russell}, {\em Albanese varieties with modulus and {H}odge
  theory}, Ann. Inst. Fourier (Grenoble), 62 (2012), pp.~783--806.

\bibitem{KS}
{\sc M.~Kerz and S.~Saito}, {\em Chow group of 0-cycles with modulus and
  higher-dimensional class field theory}, Duke Math. J., 165 (2016),
  pp.~2811--2897.

\bibitem{KL}
{\sc S.~L. Kleiman and A.~B. Altman}, {\em Bertini theorems for hypersurface
  sections containing a subscheme}, Comm. Algebra, 7 (1979), pp.~775--790.

\bibitem{Krishna-1}
{\sc A.~Krishna}, {\em Zero cycles on singular surfaces}, J. K-Theory, 4
  (2009), pp.~101--143.

\bibitem{Krishna-3}
\leavevmode\vrule height 2pt depth -1.6pt width 23pt, {\em On 0-cycles with
  modulus}, Algebra Number Theory, 9 (2015), pp.~2397--2415.

\bibitem{Krishna-2}
{\sc A.~Krishna}, {\em Zero cycles on affine varieties},  (2015).
\newblock arXiv:1511.04221v1 [math.AG].

\bibitem{KLevine}
{\sc A.~Krishna and M.~Levine}, {\em Additive higher {C}how groups of schemes},
  J. Reine Angew. Math., 619 (2008), pp.~75--140.

\bibitem{KPv}
{\sc A.~Krishna and J.~Park}, {\em A module structure and a vanishing theorem
  for cycles with modulus}, Math. Res. Lett.,  (to appear).
\newblock arXiv:1412.7396v2.

\bibitem{KSri1}
{\sc A.~Krishna and V.~Srinivas}, {\em Zero cycles on singular varieties}, in
  Algebraic cycles and motives. {V}ol. 1, vol.~343 of London Math. Soc. Lecture
  Note Ser., Cambridge Univ. Press, Cambridge, 2007, pp.~264--277.

\bibitem{Levine-1}
{\sc M.~Levine}, {\em Bloch's formula for singular surfaces}, Topology, 24
  (1985), pp.~165--174.

\bibitem{Levine-5}
\leavevmode\vrule height 2pt depth -1.6pt width 23pt, {\em A geometric theory
  of the {C}how ring for singular varieties}.
\newblock Unpublished manuscript, 1985.

\bibitem{Levine-2}
\leavevmode\vrule height 2pt depth -1.6pt width 23pt, {\em Zero-cycles and
  {$K$}-theory on singular varieties}, in Algebraic geometry, {B}owdoin, 1985
  ({B}runswick, {M}aine, 1985), vol.~46 of Proc. Sympos. Pure Math., Amer.
  Math. Soc., Providence, RI, 1987, pp.~451--462.

\bibitem{Levine-4}
\leavevmode\vrule height 2pt depth -1.6pt width 23pt, {\em Deligne-{B}e\u\i
  linson cohomology for singular varieties}, in Algebraic {$K$}-theory,
  commutative algebra, and algebraic geometry ({S}anta {M}argherita {L}igure,
  1989), vol.~126 of Contemp. Math., Amer. Math. Soc., Providence, RI, 1992,
  pp.~113--146.

\bibitem{Levine-HigherChow}
\leavevmode\vrule height 2pt depth -1.6pt width 23pt, {\em Bloch's higher
  {C}how groups revisited}, Ast\'erisque,  (1994), pp.~10, 235--320.
\newblock $K$-theory (Strasbourg, 1992).

\bibitem{LW}
{\sc M.~Levine and C.~Weibel}, {\em Zero cycles and complete intersections on
  singular varieties}, J. Reine Angew. Math., 359 (1985), pp.~106--120.

\bibitem{Mallick}
{\sc V.~M. Mallick}, {\em Roitman's theorem for singular projective varieties
  in arbitrary characteristic}, J. K-Theory, 3 (2009), pp.~501--531.

\bibitem{Milne}
{\sc J.~S. Milne}, {\em Zero cycles on algebraic varieties in nonzero
  characteristic: {R}ojtman's theorem}, Compositio Math., 47 (1982),
  pp.~271--287.

\bibitem{Milnor}
{\sc J.~Milnor}, {\em Introduction to algebraic {$K$}-theory}, Princeton
  University Press, Princeton, N.J.; University of Tokyo Press, Tokyo, 1971.
\newblock Annals of Mathematics Studies, No. 72.

\bibitem{P1}
{\sc J.~Park}, {\em Regulators on additive higher {C}how groups}, Amer. J.
  Math., 131 (2009), pp.~257--276.

\bibitem{PW}
{\sc C.~Pedrini and C.~Weibel}, {\em Divisibility in the {C}how group of
  zero-cycles on a singular surface}, Ast{\'e}risque,  (1994), pp.~10--11,
  371--409.
\newblock $K$-theory (Strasbourg, 1992).

\bibitem{Roitman}
{\sc A.~A. Rojtman}, {\em The torsion of the group of {$0$}-cycles modulo
  rational equivalence}, Ann. of Math. (2), 111 (1980), pp.~553--569.

\bibitem{Ruelling}
{\sc K.~R{\"u}lling}, {\em The generalized de {R}ham-{W}itt complex over a
  field is a complex of zero-cycles}, J. Algebraic Geom., 16 (2007),
  pp.~109--169.

\bibitem{RS}
{\sc K.~{R}{\"u}lling and S.~Saito}, {\em Higher {C}how groups with modulus and
  relative {M}ilnor $k$-theory}, Trans. Amer. Math. Soc.,  (to appear).
\newblock doi.org/10.1090/tran/7018.

\bibitem{Russell}
{\sc H.~Russell}, {\em Albanese varieties with modulus over a perfect field},
  Algebra Number Theory, 7 (2013), pp.~853--892.

\bibitem{Seidenberg}
{\sc A.~Seidenberg}, {\em The hyperplane sections of normal varieties}, Trans.
  Amer. Math. Soc., 69 (1950), pp.~357--386.

\bibitem{Serre-1}
{\sc J.-P. Serre}, {\em Morphismes universels et differ{\'e}ntielles de
  troisi{\'e}me esp{\'e}ce}, S{\'e}minaire {C}. {C}hevalley, 3i{\`e}me
  ann{\'e}e: 1958/59.,  (1960).

\bibitem{Serre}
\leavevmode\vrule height 2pt depth -1.6pt width 23pt, {\em Algebraic groups and
  class fields}, vol.~117 of Graduate Texts in Mathematics, Springer-Verlag,
  New York, 1988.
\newblock Translated from the French.

\bibitem{Srinivas-2}
{\sc V.~Srinivas}, {\em Zero cycles on singular varieties}, in The arithmetic
  and geometry of algebraic cycles ({B}anff, {AB}, 1998), vol.~548 of NATO Sci.
  Ser. C Math. Phys. Sci., Kluwer Acad. Publ., Dordrecht, 2000, pp.~347--382.

\bibitem{Srinivas-1}
{\sc V.~{Srinivas}}, {\em {Algebraic $K$-theory. Paperback reprint of the 1996
  2nd edition.}}, Boston, MA: Birkh\"auser, paperback reprint of the 1996 2nd
  edition~ed., 2008.

\bibitem{SV}
{\sc A.~Suslin and V.~Voevodsky}, {\em Singular homology of abstract algebraic
  varieties}, Invent. Math., 123 (1996), pp.~61--94.

\bibitem{Voisin}
{\sc C.~Voisin}, {\em Remarks on filtrations on {C}how groups and the {B}loch
  conjecture}, Ann. Mat. Pura Appl. (4), 183 (2004), pp.~421--438.

\bibitem{Weil}
{\sc A.~Weil}, {\em Sur les crit{\`e}res d'{\'e}quivalence en g{\'e}om{\'e}trie
  alg{\'e}brique}, Math. Ann., 128 (1954), pp.~95--127.

\bibitem{Zar58}
{\sc O.~Zariski}, {\em Introduction to the problem of minimal models in the
  theory of algebraic surfaces}, Publications of the Mathematical Society of
  Japan, no. 4, The Mathematical Society of Japan, Tokyo, 1958.

\end{thebibliography}
\bibliographystyle{siam}

\end{document}